\newtheorem{lemma}{Lemma}
\newcommand{\mysection}{\setcounter{equation}{0} \section}
\newcommand{\N}{\mathbb{N}}
\newcommand{\R}{\mathbb{R}}
\newtheorem{defi}{Definition}
\newtheorem{THM}{Theorem}   
\newtheorem{prop}{Proposition} 
\newtheorem{remark}{Remark}   
\newtheorem{cor}[THM]{Corollary} 
\newtheorem{PROP}[prop]{Proposition}
\def\1{\mbox{1\hspace{-0.25em}l}}
\newcommand \A[1]{{\bf (#1)}}
\def\leftB{[\![}
\def\rightB{]\!]}
\def\btheta{{\boldsymbol{\theta}}}
\def\0{{\mathbf{0}}}
\title{\textbf{
		%Well-posedness of 
		Transport equations in H\"older space by vanishing viscosity and applications}
	%by vanishing viscosity
		%H\"older estimates for parabolic and transport equations with Besov coefficients
}
\author{\textbf{Igor Honor\'e}\footnote{Univ Lyon, CNRS, Université Claude Bernard Lyon 1, UMR5208, Institut Camille Jordan, F-69622 Villeurbanne, France. E-mail: honore@math.univ-lyon1.fr}}
\begin{document}
\maketitle

\begin{abstract}
	
%	We consider the linear transport equation with a globally Hölder
%	continuous and bounded vector field, with an integrability condition on the
%	divergence. While uniqueness may fail for the deterministic PDE, we prove
%	that a multiplicative stochastic perturbation of Brownian type is enough to
%	render the equation well-posed. This seems to be the first explicit example
%	of a PDE of fluid dynamics that becomes well-posed under the influence of
%	a (multiplicative) noise. The key tool is a differentiable stochastic flow con-
%	structed and analyzed by means of a special transformation of the drift of
%	Itô-Tanaka type.
We obtain 
%a  modulus  maximum principle like for 
a sharp \textit{limit H\"older continuity} of the solution 
 for the transport equations thanks to a vanishing viscosity analysis. We also derive the same control for parabolic equations and  for inviscid Burgers' equation.

%	Even though well-posedness of the linear transport equation with coefficients in H\"older space can be false in a general framework, w
Eventually,  under a structural hypothesis on the coefficients, we provide existence and uniqueness
	of a H\"older continuous solution. %selected by vanishing viscosity
	
	 % (The Peano counter-example being a particular case). %(possibly lying in a negative Besov space). 
%	For the best of the author's knowledge, it is the first time that vanishing viscosity  restores uniqueness of an equation.
%
%We develop two new methods in the analysis  : a \textit{cut locus} trick  and a \textit{time decomposition}, which allow to handle with transport equations and parabolic equations with coefficients lying in negative Besov space.
%We finally give an application of our techniques to the  inviscid Burgers' equation.
\end{abstract}

{\small{\textbf{Keywords:} 
		%Regularisation by turbulence, 
		Transport equations, Parabolic equations, 
		Besov spaces, %Product of distributions, Para-product, 
		Inviscid Burgers' equation.}}

{\small{\textbf{MSC:} Primary: 35K40, 35J75; Secondary: 35A02, %35Q35, 
		46E35}}

\tableofcontents

\mysection{Introduction}

\subsection{Statement of the problem}
%\textcolor{black}{A dire, pour les deux articles, contrairement a la regumarisation par un bruit qui regularise a l'infini (brownien fractionaire) comme Gubinelli et Catelier, on considere suelement le Brownien qui donne une solution reguliere peu importe la regularite du drift mais changer le bruit pourrait aider pour l'unicite}

For a given $d \in \N$, we consider the following $d$-dimensional Cauchy problem:
\begin{eqnarray}
\label{transport_equation}
\begin{cases}
\partial_t u(t,x)+ \langle b(t,x),  \nabla u(t,x)\rangle =f(t,x),\ (t,x)\in \R_+ \times \R^{d},\\
 u(0,x)=g(x),\ x\in \R^{d}.
 \end{cases}
 \end{eqnarray}
 We suppose that the transport coefficient, $b(t,\cdot)$, 
 %for any 
 $t \in \R_+$, lies in the non-homogeneous Besov H\"older space $  B_{\infty,\infty}^{- \beta }$, for a finite $\beta \in \R$. 
 %The regularity $\tilde \gamma$ can possibly be non-negative
 %; we possibly consider $\tilde \gamma>0$, 
 %which can be regarded as a H\"older space with negative regularity.
 % $C^{-\tilde \gamma}$. %These spaces typically contain the derivative of H\"older continuous functions.

One of our main result in this article, %that if there is a solution $u$, 
we derive a \textit{limit H\"older continuity modulus} of the mollified and viscous version of the solution $u$ of \eqref{transport_equation}, this is a generalisation of the control of \cite{cha:jeo:23}.  Under some structural assumptions on $b$, %$u$ is H\"older continuous 
we show that the limit solution is unique if $-\beta\in (0,1)$  is large enough.
% and potentially not unique.
%The
%%To give a 
%meaning of the product of distributions $\langle b(t,x),  \nabla u(t,x)\rangle $ is given through
% %we introduce a kind of
%the definition of a  vanishing viscosity solution.
The solution of transport equation \eqref{transport_equation} is approached with 
%We indeed consider 
the second order parabolic equations whose second order term $\nu$, called viscosity, goes to $0$,
	\begin{equation}\label{Parabolic_Equation_Moll_Itro}
\begin{cases}
\partial_t u^{m,\nu}(t,x)+  \langle b_m(t,x),  \nabla u^{m,\nu} (t,x)\rangle -\nu \Delta u^{m,\nu}(t,x)=f_m(t,x),\ (t,x)\in (0,T]\times \R^{d},\\
u^{m,\nu}(0,x)=g_m(x),\ x\in \R^{d};
\end{cases}
\end{equation}
the function $b_m$ is a mollified version of the distributional valued $b$, also $g_m$ and $f_m$ stand respectively for a mollified version of $g$ and $f$.
\\

%The uniform control in $L^\infty$ is direct by a probabilistic representation of the solution, see \cite{kara:shre:91},
%%see \cite{hono:21}, 
%or by usual maximum principle techniques (see \cite{lieb:96}); this estimate can also be regarded as a by-product of our analysis.
%% without any probability.

%For the H\"older norm, w
We write the solution as a perturbation of a PDE with constant components. These constants correspond to the first order term $b_m$ taking at a \textit{freezing point} throw the corresponding flow, as done in \cite{chau:hono:meno:18}.
 However, to estimate the \textit{limit H\"older norm}, we have to distinguish two regimes, like for Schauder estimates in a parabolic context, the \textit{diagonal} and the \textit{off-diagonal} ones.
In each regime, the choice of \textit{freezing points} changes in order to get a negligible first order contribution when $\nu \to 0$, or with a time decomposition trick.
\\

%In our analysis, some singularities are overwhelmed by a small $\nu$, we call this phenomenon a \textit{regularisation by turbulence}.
%As for the fluid mechanics, we can consider an associated Reynolds number which goes to $+ \infty$ when $\nu \to 0$, corresponding to a turbulence regime, see for instance \cite{foia:manl:rosa:tema:01}.
%A similar situation occurs with a random noise which can restore uniqueness of a non-smooth determinist equation, this is called  \textit{regularisation by noise}. For more information,  let us mention for instance \cite{flan:11}, also the illustration in Section \ref{sec_Peano} further.

%, which can be seen as limmit zero noise, and the inversion operator developed in \cite{hono:20}.
%This approach allows to obtain a non-unique smooth selection principle which does not work in a usual regularisation limit, see \cite{ciam:crip:spir:20}.
%We postulate that, in our framework, uniqueness fails in general as soon as $b$ is not Lipschitz continuous.
%%, but if the source functions are Lipschitz continuous we provide in a companion paper \cite{hono_Lipschitz:22} a well-posedness result of the transport equation for a given vanishing sequence $(\nu_n)_{n \in \N}$.

%But the uniqueness of this built solution is not clear.

One of the crucial consequences of our analysis is 
%that, we obtain first
a  general meaning of a classic product of distributions,
%.
%Indeed, we succeed in giving a meaning of 
$\langle b(t,x),  \nabla u(t,x)\rangle $, where $b $ and $\nabla u$  have negative regularity.
This is written as a weak limit of a sub-sequence of a smooth parabolic approximation.
 The price to pay in this representation is that we do not have usual uniqueness of the limit in this rough case.
\\

Thanks to the techniques developed for the transport equation \eqref{transport_equation}, we also succeed to extend  our analysis to parabolic equation \eqref{Parabolic_Equation_Moll_Itro} without vanishing viscosity. Our result includes negative regularity for $b$ without use of rough path theory\footnote{see \cite{lyon:98}.}.

%Because 
The \textit{a priori} estimates %\eqref{ineq_THEO_SCHAU_INTRO} 
being independent on $b$, we are able to use a fixed-point argument to handle with the inviscid Burgers' equation
\begin{equation}\label{BURGERS_EQUATION}
\begin{cases}
	\partial_t u(t,x)+  u(t,x)  \partial_x u(t,x) %-\nu \partial_{xx}^2 u(t,x)
	=f(t,x),\ (t,x)\in (0,T]\times \R^{d},\\
	u(0,x)=g(x),\ x\in \R^{d},
\end{cases}
\end{equation}
%in dimension $1$, 
see \textit{e.g.} \cite{bori:kuks:21} for more details on this equation.
%Importantly, again
 We deduce, 
 %by \textit{regularisation by turbulence}, 
 some kind of uniqueness of the solution $u$ of \eqref{BURGERS_EQUATION} in  case of the existence of H\"older continuous solution (true before a blow-up time).
Precisely, on the one hand there is uniqueness of selection whatever the choice of vanishing viscosity, and on the other hand there is uniqueness whatever the way to mollify; nevertheless we do not succeed to get both uniquenesses for the same solution (which should imply usual uniqueness).

\subsection{Existing results}
% on the transport equation}
\label{sec_exist_intro}

\subsubsection{On the transport equations}
%\cite{bouc:jame:manc:05}$

The Lipschitz framework is classic \textit{via} the characteristic method. Indeed if $b \in L^\infty([0,T];C^1(\R^d,\R^d))$, considering the ODE $\dot X_t = b(t,X_t)$, thanks to the Cauchy–Lipschitz theorem, there is a unique solution $u \in L^\infty([0,T];C^1(\R^d,\R))$ of the transport equation \eqref{transport_equation}.
Out of this regular context,  the analysis has to be more involved.

For instance, a meaning of the equation \eqref{transport_equation}, when the coefficients are in a suitable Sobolev space, can be given by a renormalisation procedure developed by DiPerna and Lions  \cite{dipe:lion:89}. 
When $b$ is divergence free and Log-Lipschitz continuous, \cite{baho:chem:94} establish that there is a unique solution with a loss of regularity in the Sobolev spaces. 
If $b \in L^1([0,T];W^{1,1}_{loc}(\R^d,\R))$ and $ div (b) \in L^1([0,T];L^\infty(\R^d,\R)) $, they establish that the Cauchy problem \eqref{transport_equation} is well-posed in $L^\infty$.

When, $b$ is only supposed to have bounded variations in space, Ambrosio \cite{ambr:04} extends this result for $b \in L^1([0,T];BV_{loc}(\R^d,\R))$, $div (b)_- \in L^1([0,T];L^\infty(\R^d,\R))$.

For other references on transport equation in the non Lipschitz case, see for instance to \cite{mode:szek:18}, 
\cite{xiao:19}.

If $b$ is only H\"older continuous then the Cauchy problem \eqref{transport_equation} is not well-posed any-longer, % in a strong sense, 
this is illustrated by the well-known counter-example 
\begin{equation}\label{counter_example}
	b(x)= \frac 1{1-\gamma} sign(x) (|x| \wedge R) ^\gamma,  \gamma \in (0,1).
\end{equation}  
% in Section \ref{sec_counter_exam}. %$b(x)= \frac 1{1-\gamma} sign(x) (|x| \wedge R) ^\gamma$, $\gamma \in (0,1)$}, 
%alors le probl√®me ne l‚Äôest plus. % bien-pos√©. % dans $L^\infty$.
With a multiplicative noise, Flandoli Gubinelli  and Priola  \cite{flan:gubi:prio:10}, see also \cite{flan:gubi:prio:12} and \cite{moll:oliv:17}, establish that the following Stochastic Partial Differential Equation
\begin{equation}\label{EDPS}
\begin{cases}
d_t  u+\langle b, \nabla   u \rangle dt +\nabla   u \circ dW_t=0,\\   u(0,\cdot)=   u_0(\cdot),
\end{cases}
\end{equation}
with $b \in L^\infty([0,T],C_b^\alpha)$ and $ div (b) \in L^p$ is well-posed. Here, the symbol $\circ$ corresponds to the stochastic Stratonovich integral.
This is a typical consequence of the regularisation by the noise, let us mention \cite{fred:flan:13}, \cite{atta:flan:11}, \cite{cate:16}.

% to be in accordance with the Wong Zakai principle (a Stochastic Differential Equation driven by a regular noise and converging towards a Brownian motion, converges toward a SDE in the Stratonovich sense and not in the It\^o sense).
%When the SPDE \eqref{EDPS} is driven by a fractional Brownian motion %with $H \in (\frac{1}{3},1)$ as a Hurst index, 
%Catellier show, through a rough-path analysis, that the stochastic problem  
%is well-posed in a similar framework. 

%\textcolor{black}{see also %\cite{clar:oliv:tudo:16},
	% \cite{oliv:tudo:15}, 
	%\cite{moha:nils:pros:15}, 
	%\cite{fred:flan:13},
%, \cite{maur:11},
 % Peut etre a garder... \cite{kim:10},
 
 % \cite{zhan:10}}
%even for $b$ in a H\"older Besov space with negative parameter greater than $-\frac{1}{2H}$. 
%\\

This stochastic approach seems to be hopeless to get uniqueness by zero limit noise selection, indeed from
\cite{atta:flan:09} : if $b$ is defined as in \eqref{counter_example}, then the following equation
\begin{equation*}
\partial_t u^\varepsilon + b  \cdot \partial_x u^\varepsilon = \varepsilon\nabla u^\varepsilon \circ dW_t,
\end{equation*}
has a weak convergence of the corresponding probability $P^\varepsilon$ towards $\frac{\delta_{u_1}+ \delta_{u_2}}{2}$, when $\varepsilon \to 0$, where $u_1 $ and $u_2$ are two different solutions of the associated transport equation.
Other counter-examples are stated in  \cite{depa:03}.
\\

%Here, the symbol $\circ$ corresponds to the stochastic Stratonovich integral.
%This is a typical consequence of the regularisation by the noise, see also \cite{fred:flan:13}, \cite{atta:flan:11}.
In \cite{ciam:crip:spir:20}, the authors show that
there is $b \in L_{loc}^p$, $p \in [1,\frac 43]$, such that
\begin{equation*}
\partial_t u^m + b_m  \cdot \nabla u^m =0,
\end{equation*}
where $b_m$ is a regularisation of $b$, for $d=3$,  and such that there is no uniqueness of bounded distributional solutions when $m \to + \infty$.
The conclusion is the same, in \cite{dele:giri:22}, for
%there is 
a compactly supported divergence-free vector field $ b \in L^\infty$.
To put it another way, there is no smooth selection principle by regularisation.
\\

Finally, let us mention \cite{colo:crip:sore:22}, where the authors build a first order coefficients $b$ H\"older continuous such that the vanishing viscosity procedure from a parabolic approximation yields to several different solutions, uniqueness fails to be true in $L^2$.
In spite of this counter-example, 
%in \cite{colo:crip:sore:22}, 
there is no contradiction with our uniqueness result, stated in Theorem \ref{THEO_SCHAU_non_bounded_optimal}, as we only consider uniqueness in a H\"older space, defined further. The non-uniqueness seems to occur only in non-smooth functional spaces.
 %(but $L^2$).
  Then we are able to affirm that  a selection principle by vanishing viscosity may happen, which is a partial positive answer to the question (Q3) in \cite{ciam:crip:spir:20}. %Specifically, uniqueness stands for a given sequence $(\nu)$ going to $0$; namely for another vanishing sequence $(\tilde \nu)$ the associated viscous vanishing solution may be different.
%\vspace{0.2cm}
\\

We propose in this article a new approach to handle with the determinist transport equation \eqref{transport_equation},
%.
%This method does not rely on some flow property as in the previous mentioned works, hence we do not need any hypothesis on the gradient of $b$.
%Furthermore, w
%We aim at considering the most rough coefficient as possible.
%We consider %solution which is a mix of
 by \textit{vanishing viscous solution}, see e.g. \cite{evan:98}.
% , which is different to the \textit{viscous solution} introduced by Crandall and Lions \cite{cran:lion:83} for the Hamilton-Jacobi equation.
% We do not consider upper or lower solution, but  a smooth well-posed parabolic equation and we take the limit, up to sub-sequence selection, of the mollification parameter and of the viscosity.

%and a \textit{mild} solution used in \cite{hono:20}.
% In this framework, we establish existence and uniqueness of transport equation \eqref{transport_equation} for $b$ in a negative regularity Besov H\^older space, see the precise definition in \eqref{def_norm_besov_inhomo}. 
%\\

For the best author's knowledge, the notion of vanishing viscosity has been already used for several classes of evolution PDEs, e.g. hyperbolic ones in \cite{bian:bres:05},
but it was not developed to establish the regularity control of the solution of a general transport equation.
% only for some controllability results, see \cite{glas:10} for $d=1$.
Finally, with this method, we also deduce existence and uniqueness of a H\"older continuous solution of the inviscid Burgers' equation.

\subsubsection{On the parabolic equations}
%\label{sec_existing_result}

Historically, a \textit{full} control of the parabolic equation \eqref{Parabolic_Equation_Moll_Itro} with bounded H\"older continuous 
%and bounded 
coefficients, called 
%the first 
Schauder estimates was first proved by Friedman \cite{frie:64} thanks to a \textit{parametrix} approach.
Let us mention also the major references of the parabolic equation:
 %studies by 
 Ladyzenskaja, Solonnikov and  Ural'ceva  \cite{lady:solo:ural:68},
 %.
%Some equivalent results can be found in a wide %and more recent
%literature, see for instance 
 Krylov %\cite{gilb:trud:83},
\cite{kryl:96} and Lieberman \cite{lieb:96}.%,  \cite{evan:98}.
\\

%To the best of the author's knowledge, 
The first article handle with unbounded coefficients in \eqref{Parabolic_Equation_Moll_Itro} is due to Krylov and Priola \cite{kryl:prio:10}.
In parabolic and elliptic framework, they 
%show 
get the parabolic bootstrap through Schauder estimates.
Let us also mention \cite{kruz:kast:lope:75}, for a first partial result in a unbounded context; 
%Also in a unbounded context, the case
 when coefficients are ``merely" measurable in time is handled in \cite{lore:11}.
\\

%If $a$ is constant in space,
%	In a non trivial case, the authors of \cite{kryl:prio:17} establish Schauder estimates where the dependency on $T$ on the upper-bound constant is sharp and where the inequality does not depend on the dimension. This is similar to our results for the $C^{\gamma}$ controls, $0 < \gamma <1$, stated in Theorem \ref{THEO_SCHAU_non_bounded_para} below.
%	

In a degenerate framework, 
%when $a$ and $b$ satisfies to a degenerate chain 
with some H\"ormander conditions, Lunardi establishes, in her  work \cite{lun:97}, Schauder estimates for a linear $b$.
% and with an asymptotic constraint on $a$. 
%Let us also mention \cite{difr:poli:06} who obtain similar results using an alternative notion of continuity of $a$.
%In \cite{lore:05:schau}, the author get ride of the asymptotic assumption on the degenerate matrix valued $a$.
%When the non-degenerate part of $b$ is non-linear Priola \cite{prio:06} also prove the associated Schauder estimates.
For a fully non-linear H\"older continuous drift, $b$, Chaudru de Raynal et al.  \cite{chau:hono:meno:18} get the corresponding controls, see also \cite{prio:06}.
To extend our result to a degenerate chain, the principal points would be to control the regularity gain of the flow $\btheta$ associated with $b$ through the chain which is not direct when the non-degenerate components of the drift are distributional valued. 
This regularity control is crucial as the \textit{proxy} density does depend on this flow.
\\

When $b$ is a tempered distribution, in particular when $b \in B_{\infty,\infty}^{- \beta}$ with $\beta < \frac{2}{3}$ some controls of the solution of \eqref{Parabolic_Equation_Moll_Itro}  are established in 
%\cite{flan:isso:russ:17}.
\cite{dela:diel:16} and \cite{cann:chou:18}, in order to build a ``polymer measure", an important object to study some stochastic partial differential equations such as the Kardar-Parisi-Zhang equation.
\\

In \cite{danc:07} and \cite{baho:chem:danc:11}, when $b$ lies in a more general Besov space, $B_{p,q}^{-\beta}$, $p,q \neq + \infty$, some \textit{a priori} estimates in Besov space is established for the solution of some parabolic equations.
% (called therein Transport-Diffusion equations). 
In particular, these controls are crucial in some approach to handle with the Navier-Stokes equation.
%However, with our approach, it seems intricate to consider such first order term $b$
%because in our solution representation there is no extra integration of the mollification of $b$ required to exhibit the appropriate Besov control for our strategy. 
%or Triebel-Lizorkin  norm 
%(see Section  \ref{SEC_HOLDER_BESOV_SPACE} below).
%\\

%Survey \cite{ance:poli:19}

%\subsection{Some issues about Schauder estimates}

%We aim also in a future work to adapt our analysis for 
%Considering more general Besov or Triebel-Lizorkin spaces %will be a future work. This extension 
%would allow us to deal with other usual spaces in the analysis of PDEs.
%Let us recall some correspondences 
%%between the Besov spaces and Triebel-Lizorkin spaces 
%with some ``common" functional spaces:
%%.
%%Indeed, let us recall that 
%the Lebesgue space is identified by a Triebel-Lizorkin space $L^p = F_{p,2}^0, \  1<p<+ \infty$, also we can write the functional space equality for the Sobolev space and Bessel potential space
%$
%W^{k,p} = L^p_k =F_{p,2}^k, \  1<p<+ \infty, \ k \in \N,
%$ and for the Morrey-Campanato space, see the theorem page 50 in \cite{trie:83}, it is known that $ C_p^\alpha=  F_{p,\infty}^\alpha, \ 0<p\leq + \infty, \ \alpha> d(\frac{1}{p}	-1)_+$. These extensions require to carefully introduce estimates of some new integrals, instead of point-wise controls furnished in our analysis.
%%, which is not obvious in view of our solution representation below. 

The paper is organised as following.
%In Section \ref{sec_exist_intro}, we recall some well-known results on the transport equation.
The notations and the definitions used are gathered in Section \ref{sec_def}.
%We bestow a User's guide in Section \ref{sec_user}, where we exhibit the highlights of the article.
A first result about the transport equation under existence assumption
is stated %and proved 
in Section \ref{sec_main_transport}.
 We provide a discussion on the consequence in term of distributions product in Section \ref{sec_prod_distri}.
%Our main result is stated in Section \ref{sec_b_non_bounded} with some comments.
%For the sake of clarity, we sum up the strategy of the analysis in a brief Guide to the proof in Section \ref{sec_guide}.
The complete analysis is detailed in Section \ref{sec_Proof_transport}.
In particular, we write an extension on the structural assumption (denoted by \A{A} in the following) in Section \ref{sec_time_depend}. 
%further in Section \ref{sec_proof1}.
%We give some examples of existence in a non-Lipschitz framework in Section \ref{sec_exist}.
Comments and results on the parabolic equation and as well as the full proof are featured  in Section \ref{sec_parab}. %{sec_def_parab}.
%We adapt this second result to the transport equation with a complete formulation of Theorem \ref{THEO_INTRO},  in Section \ref{sec_second_transport}, which is a substantial improvement of the first result on the transport equation. 
%We provide

 Our last result is stated %the statement 
 and as well the proof of the regularity of a solution to the inviscid Burgers' equation in Section \ref{sec_Buregers}.

We gather in Section \ref{sec_apriori_global}, some results about regularity controls on the solution of linear parabolic of second order 
%in Sections \ref{sec_grad}, \ref{sec_flow} and  \ref{sec_proof_apriori_bis}; a control for 
and on the non-linear Burgers' case.
% is proved in Section \ref{sec_proof_apriori_ter}.

%In order to shorten the size of the proof, s

Also, in Appendix,  some property of the Besov spaces are developed in Sections \ref{sec_conv_molli_Dpsi}-\ref{sec_free_mol}.
Precisely, in Section \ref{sec_conv_molli_Dpsi}, we establish that the space $C_b^\infty$ functions are dense in the space of multi-differentiated H\"older continuous functions. Some inequalities over the norm of Besov-H\"older distributions with their derivative are established in Section \ref{sec_interpom}.
Also in Section \ref{sec_free_mol}, we detail why the limit of regularised distribution in Besov-H\"older space does not depend on the choice of mollification procedure.
%\textcolor{black}{A completer}
%where we recall in Section \ref{sec_recall_R_Hold} a analysis performed in \cite{hono:20}.

Eventually, some precise and long comments of the analysis are postponed in Section \ref{sec_com}.

\mysection{Notations and Definitions}
\label{sec_def}

	From now on, we denote by $C>0$ and $c>1$ generic constants that may change from line to line but only depends on known parameters such as $\gamma$, $d$. Importantly, these constants do not depend on $\beta$.

%We also write, f
For $\varepsilon, \tilde \varepsilon>0$, we use the usual notation of asymptotic domination:
\begin{equation}\label{def_ll}
	\varepsilon \ll \tilde \varepsilon, \textit{ if } \frac{\varepsilon}{\tilde \varepsilon} \longrightarrow 0.
\end{equation}
We also write $	\xrightarrow[(\varepsilon,\tilde \varepsilon)\to (0,0)]{\eqref{def_ll}}
$ or $	\underset{(\varepsilon,\tilde \varepsilon)\to (0,0)}{\overset{\eqref{def_ll}}{\lim}}
$, the  limit, up to some subsequence selection, under the condition \eqref{def_ll}.

\subsection{Tensor and Differential notations} 

%As in \cite{hono:21}, we introduce the following notations.

For any $z \in  \R^d$, we use the decomposition $z= z_1 e_1+ \hdots z_d e_d$, where $(e_1,\hdots, e_d)$ is the canonical base of $\R^d$.

We usually use the notation $\partial_t$ for the derivative in time $t \in [0,T]$ also $\partial_{z_k}$, $k \in \N$, is the derivative in the variable $z_k$.

The gradient in space is denoted by $\nabla$,
% or by $D$,
%Next, $D_z$ denotes the gradient in the variable $z \in \R^d$, 
in other words $\nabla=\partial_{z_1} e_1 + \hdots + \partial_{z_d} e_d$,
%When there is no ambiguity, we will also write $D$ for the gradient.
%When we consider a $\R^d$-function, $F: \R^d \mapsto \R^d$, the Jacobian matrix is written by $D_z \odot F=\big ( \partial_{z_j} F_i \big )_{1 \leq i,j\leq d}$.
%The 
the divergence by %write 
$\nabla \cdot=div$ and is defined for any $\R$-function $f: \R^d \mapsto \R$ by $\nabla  \cdot f= \sum_{k=1}^d \partial_{z_k} f$.

%From now on, t
The symbol ``$\cdot$" between two tensors is the usual tensor contraction. 
For example, if $M \in \R^d \otimes \R^d \otimes\R^d$ and $N \in \R^d$ then $M \cdot N$  is a $d \times d$ matrix. If the two considered tensors are vectors then ``$\cdot$" matches with the scalar product which is also denoted by $\langle \cdot , \cdot \rangle$.
\\

For any $ \R^d \mapsto \R$, we define the Hessian matrix $D_z^2f=\big (\partial_{z_i} \partial_{z_j} f \big )_{1 \leq i,j\leq d}$, and the usual Laplacian operator $\Delta f=\sum_{1 \leq i \leq d} \partial_{z_i}^2  f$.

More generally, for any $k \in \N$, $D^k_zf$ denotes the  order $k$ tensor $(\partial_{z_{i_1}} \hdots \partial_{z_{i_k}}f)_{(i_1, \hdots, i_k) \in \leftB 1,d \rightB^k}$. For any multi-index $\alpha=(\alpha_1,\hdots,\alpha_d) \in \N_0^d$, we write $D^\alpha_z f =  \partial_{z_1}^{\alpha_1} \hdots \partial_{z_k}^{\alpha_k} f$, in particular if, for $ i \in  \leftB 1,d \rightB$, $\alpha_i=0$, there is no derivative in $z_i$ in the expression of $D^\alpha_z f$. 

We also denote for any $\alpha=(\alpha_1,\cdots,\alpha_m)\in \N^m $, the order of this multi-index by $|\alpha|=\sum_{i=1}^{m} \alpha_i $.

\subsection{Associated H\"older, Besov spaces}
\label{SEC_HOLDER_BESOV_SPACE}

%In this section, we provide some useful notations and functional spaces. 
 
\subsubsection{H\"older spaces}
\label{sec_def_Holder}

For any, 
%all  $k \in \N$ and 
$\tilde \gamma\in (0,1) $, 
$\|\cdot\|_{C^{\delta}(\R^m,\R^\ell)},\ m\in \{1,d\} $, $\ell \in \{1,d, d\otimes d \} $\footnote{we write $\R^{d\otimes d}$ for $\R^d \otimes \R^d$ the space of square matrices of size $d$.} is the usual homogeneous H\"older norm, see e.g. Lunardi \cite{luna:95} or Krylov \cite{kryl:96}. Precisely, for all $\psi \in C^{\delta}(\R^m,\R^\ell) $,   
%$\alpha=(\alpha_1,\cdots,\alpha_m)\in \N^m $, %and $|\alpha|=\sum_{i=1}^{m} \alpha_i $, 
we set the semi-norm:
\begin{equation}
%\|\psi\|_{C^{k+\delta}(\R^m,\R^\ell)}&:=& \sum_{i=1}^k \sup_{|\alpha|=i} \|D^{\alpha} \psi\|_{L^\infty(\R^m,\R^\ell)}+ \sup_{|\alpha|=k} [D^\alpha \psi]_\delta,\notag \\
\phantom{BOUHHH} \| \psi\|_{C^\delta}  
=[\psi]_\delta 
:= \sup_{(x,y)\in (\R^m)^2,x\neq y} \frac{| \psi(x)- \psi(y)|}{|x-y|^\delta},
 \label{USUAL_HOLDER_SPACE}
\end{equation} 
the notation $|\cdot| $ is the Euclidean norm on the considered space. 
We denote by:
$$C_b^{\delta }(\R^m,\R^\ell):=\{\psi \in C^{\delta }(\R^m,\R^\ell): \|\psi\|_{L^{\infty}(\R^m,\R^{\ell})}<+\infty\},$$  
the associated subspace with bounded elements (non-homogeneous H\"older space).
The corresponding H\"older norm is defined by:
\begin{equation}
\label{BD_HOLDER}
\|\psi\|_{C_b^{\delta}(\R^m,\R^\ell)}:=\|\psi\|_{C^{\delta}(\R^m,\R^\ell)}+\|\psi\|_{L^\infty(\R^m,\R^\ell)}.
\end{equation}
For the sake of notational simplicity, from now on we write:
\begin{equation*}
\|\psi\|_{L^\infty}:=\|\psi\|_{L^\infty(\R^{d},\R^\ell)}, \
\|\psi\|_{C^{\delta}}:=\|\psi\|_{C^{\delta}(\R^{d},\R^\ell)}
, \
\|\psi\|_{C_{b}^{\delta}}:= \|\psi\|_{C_{b}^{\delta}(\R^{d},\R^\ell)}.
\end{equation*}
For time dependent functions, $\varphi_1 \in L^{\infty}\big ([0,T ],C_{b}^{\delta}(\R^{m},\R^\ell) \big) $ and $ \varphi_2 \in   L^{\infty}\big ([0,T ],C^{\delta}(\R^{m},\R^\ell) \big)$, we define the norms:
\begin{eqnarray*}
\|\varphi_1\|_{L^\infty(C_{b}^{\delta})}&:=& \sup_{t \in [0,T]}\|\varphi_1(t, \cdot)\|_{C_{b}^{\delta}(\R^{m},\R^\ell)},
\nonumber \\
\|\varphi_2\|_{L^\infty(C^{\delta})}&:=& \sup_{t \in [0,T]} \|\varphi_2(t, \cdot)\|_{C^{\delta}(\R^{m},\R^\ell)},
\nonumber \\
\|\varphi_2\|_{L^\infty}&:=& \sup_{t \in [0,T]} \|\varphi_2(t, \cdot)\|_{L^\infty(\R^{m},\R^\ell)}
.
\end{eqnarray*}

%In this article, we will be as precise as possible on the norm control. We do not necessarily upper-bound by the complete norm of the consider function space.
% In particular, even if $f \in L^\infty([0,T],C^\gamma(\R^d,\R))$, we will sometimes give some controls in term of $\|f\|_{L^\infty(C^\gamma)}$ instead of $\|f\|_{L^\infty(C_b^\gamma)}$ which is crucial in the choice of the solution representation. 
 
 The test functions for some weak formulations of different solutions will be %$C_0(\R^d,\R)$ or 
 in $C_0^\infty(\R^d,\R)$, which corresponds to the space of smooth functions infinitely differentiable with bounded derivatives and with a compact support. 
 
% \textcolor{black}{Ajout 21/12/2023}
 
 For any sequence of functions $(\psi^{m,\varepsilon})_{m,\varepsilon>0} $ lying in  $C^\delta(\R^d,\R)$, 
 we denote, for any $\eta>0$ by 
 \begin{equation}
 	[\psi^{m,\varepsilon}]_{\delta,\eta}:=
 	\sup_{(x,y)\in (\R^m)^2,|x-y| \leq \eta} \frac{| \psi^{m,\varepsilon}(x)- \psi^{m,\varepsilon}(y)|}{|x-y|^\delta},
 	\label{Def_modulus_continuity_nonlimit}
 \end{equation}  
 the local H\"older modulus without the limit.
 
 For any function $F : \R_+^2  \rightarrow \R_+$ s.t.
 \begin{equation}\label{condi_F}
 	F(\varepsilon,m) \ll 1,
 \end{equation}
 and
 similar to \cite{gilb:trud:83} and \cite{cha:jeo:23}, we introduce the \textit{limit H\"older continuity} as the modulus of continuity defined by
 \begin{equation}
 	[\psi^{m,\varepsilon}]_{\delta,\eta}^{\eqref{condi_F}}:=
 	 \overset{\eqref{condi_F}}{\lim_{(m,\varepsilon) \to (+ \infty,0)}} \sup_{(x,y)\in (\R^m)^2,|x-y| \leq \eta} \frac{| \psi^{m,\varepsilon}(x)- \psi^{m,\varepsilon}(y)|}{|x-y|^\delta}.
 	\label{Def_modulus_continuity}
 \end{equation}  
% Next, we derive the norm 
% \begin{equation}
% 	[\psi]_{\delta,0}:=
% 	\lim_{\varepsilon \to 0} \sup_{(x,y)\in (\R^m)^2,|x-y| \leq \varepsilon} \frac{| \psi(x)- \psi(y)|}{|x-y|^\delta},
% 	\label{Def_modulus_continuity_FINAL}
% \end{equation}  
 \begin{remark}
%This modulus mimics the one considered in \cite{cha:jeo:23}.
This semi-norm reveals a partial regularity of a limit function, when $\varepsilon\to 0$. The condition $|x-y| \leq \varepsilon$ allows to avoid some potential singularities arising; without this criterion, we fail to use the \textit{cut-locus} trick and so to prove the \textit{a priori} control of the H\"older modulus, where obviously 
\begin{equation*}
	\max( 	[\psi^{m,\varepsilon}]_{\delta,0}^{\eqref{condi_F}},  	[\psi^{m,\varepsilon}]_{\delta,\eta}) \leq  	[\psi^{m,\varepsilon}]_{\delta}.
\end{equation*}
 \end{remark}

The above modulus, can be replaced through all the article by a fractional derivative defined, for any $\gamma \in (0,1)$,  
for all $\varphi \in C^\infty_b (\R^d,\R)$ and $x \in \R^d$:
 \begin{equation}\label{def_grunwald_deriv}
 	D^\gamma \varphi (x):= \lim_{h \to 0} \sum_{k=0}^{+ \infty} \binom{\gamma}{k} \frac{\varphi(x)- \varphi(x-kh)}{h^\gamma},
 \end{equation}
 with 
 \begin{equation}\label{def_combi}
 	\binom{\gamma}{k} := \frac{\gamma (\gamma-1)(\gamma-2)\cdots (\gamma-k+1)}{k!},
 \end{equation}
the generalised binomial coefficient.

This operator is called the Gr\"unwald-Letnikov derivative, see for instance \cite{diet:10}.

 \subsubsection{Thermic characterization of the Besov space}
 \label{subsec_besov_trieb_def}

% Through this article, we consider the thermic characteristic of the Besov spaces.
% The decomposition of Littlewood-Paley does not seem natural, see for instance Remark \ref{rem_littlewood_paley} further. 
% 
% In this subsection, we recall some definitions/characterizations from Section 2.6.4 of 
We define the Besov spaces thanks to a thermic characteristic, see Triebel \cite{trie:83} Section 2.6.4.
For all $\alpha \in \R$, $q\in (0,+\infty]$, $p \in (0,\infty] $, 
%we recall the associated norms:
\begin{equation}
\label{def_norm_besov_inhomo}
\|f\|_{ B_{p,q}^\alpha}:= \|\varphi(D) f\|_{L^p(\R^d)}+ \|f\|_{ \ddot B_{p,q}^\alpha}, \text{ with }
\|f\|_{\ddot B_{p,q}^\alpha}:= %\|\varphi(D) f\|_{L^p(\R^d)}+ 
\Big(\int_0^1 \frac {dv}{v} v^{(m-\frac \alpha 2)q}    \|\partial_v^m h_{v}\star f\|_{L^p(\R^d)}^q \Big)^{\frac 1q},
\end{equation}
where we define the heat kernel
\begin{equation}\label{def_hv}
h_{v}(z):=\frac{1}{(2\pi v)^{ d/2}}\exp\left(-\frac{|z|^2}{2v} \right),
%= \tilde p(0,1,z,0),
\end{equation}
%and
%\begin{equation}
%\label{def_norm_trie_lizo_inhomo}
%\|f\|_{ F_{p,q}^\alpha}:= \|\varphi(D) f\|_{L^p(\R^d)}+ 
%\|f\|_{\dot F_{p,q}^\alpha}.
%\end{equation}
and $\varphi(D)f:= (\varphi \hat f)^{\vee} $ with $\varphi \in C_0^\infty(\R^d)$  such that
% (smooth function with compact support), 
$\varphi(0)\neq 0 $,   $\hat f$ and $(\varphi \hat f)^\vee $ respectively denote the Fourier transform of $f$ and the inverse  Fourier transform of $\varphi \hat f $.
Note that, when $\alpha> d(\frac{1}{p}-1)_+=d\max(0,\frac{1}{p}-1)$ then in \eqref{def_norm_besov_inhomo}, it possible to replace $\|\varphi(D) f\|_{L^p(\R^d)}$ by 
$\| f\|_{L^p(\R^d)}$.

%For all $\alpha \in \R, q\in (0,+\infty] ,p \in (0,\infty] $,
%the semi-norm associated with the homogeneous Besov space $\dot B_{p,q}^\alpha(\R^d,\R)$ is defined  for any $f \in \mathcal S'$ (the space of tempered distribution) by
% %$B_{p,q}^\alpha(\R^d):=\{f\in {\mathcal S}'(\R^d): \|f\|_{{\mathcal H}_{p,q}^\alpha}<+\infty \} $ where ${\mathcal S}(\R^d) $ stands for the Schwartz class and 
%\begin{equation}
%\label{def_norm_besov_homo}
%\|f\|_{\ddot B_{p,q}^\alpha}:= %\|\varphi(D) f\|_{L^p(\R^d)}+ 
%\Big(\int_0^1 \frac {dv}{v} v^{(m-\frac \alpha 2)q}    \|\partial_v^m h_{v}\star f\|_{L^p(\R^d)}^q \Big)^{\frac 1q},
%\end{equation}
%%with $\varphi \in C_0^\infty(\R^d)$ (smooth function with compact support) is s.t. $\varphi(0)\neq 0 $, $\varphi(D)f:= (\varphi \hat f)^{\vee} $ where $\hat f$ and $(\varphi \hat f)^\vee $ respectively denote the Fourier transform of $f$ and the inverse  Fourier transform of $\varphi \hat f $. 
%where the parameter $m \in \N_0$ s.t. $m> \alpha/ 2 $ and for $v>0$, $z\in \R^d $, \begin{equation}\label{def_hv}
%h_{v}(z):=\frac{1}{(2\pi v)^{ d/2}}\exp\left(-\frac{|z|^2}{2v} \right)
%\end{equation}
% is the  heat kernel on $\R^d$. %We point out that the quantities in \eqref{def_norm_besov_homo} are well defined for $q<\infty $. 
 When $p=q=+\infty $, we naturally write:
 \begin{equation*}
%\label{def_norm_besov_homo}
\|f\|_{ \ddot B_{p,q}^\alpha}= % \|\varphi(D) f\|_{L^\infty(\R^d)} + 
\sup_{v \in [0,1]}  v^{m-\frac \alpha 2}    \|\partial_v^m h_{v}\star f\|_{L^\infty(\R^d)},
\end{equation*}
and if $\alpha > d(\frac 1p -1)_+$,
 \begin{equation*}
%\label{def_norm_besov_homo}
\|f\|_{ B_{p,q}^\alpha}= \|f\|_{L^\infty} + \sup_{v \in [0,1]}  v^{m-\frac \alpha 2}    \|\partial_v^m h_{v}\star f\|_{L^\infty(\R^d)}.
\end{equation*}
We carefully point out that %, \textit{a prior},
 the homogeneous term $\|f\|_{ \ddot B_{p,q}^\alpha}$ does not define a norm associated to a Banach space.
To consider the whole  homogeneous Besov space we have to consider $v \in \R_+$ in the definition \eqref{def_norm_besov_inhomo}, for $\alpha <0$, see e.g. Theorem 2.34 in \cite{baho:chem:danc:11}.
Somehow, for the inhomogeneous norm defined in \eqref{def_norm_besov_inhomo}, the contribution of the heat kernel convolution for $v>1$ is ``hidden" in the inhomogeneous term $\|\varphi(D) f\|_{L^p(\R^d)}$.
%, see e.g. the proof of Corollary \ref{coro_Besov_ineq} in Appendix Section \ref{sec_interpom}.

For $\alpha>0$, the homogeneous and respectively inhomogeneous  H\"older spaces match with  Besov space, namely $C^\alpha= \dot B_{\infty,\infty}^\alpha$ and $C_b^\alpha=B_{\infty,\infty}^\alpha$, see \cite{trie:83} for details.
%From now on, we will indifferently use the H\"older modulus defined by \eqref{USUAL_HOLDER_SPACE} or by \eqref{def_norm_besov_homo}.

Our analysis tackles with inhomogeneous Besov spaces, in order to extend our analysis to the homogeneous ones some sophisticated changes should be performed as the homogeneous Besov spaces are \textit{a priori} not Banach spaces;
%We should choose 
and we should consider the realisation of the space of homogeneous Besov spaces as a space of distributions defined quotiented by polynomials, see e.g. Proposition 3.8 in \cite{lemar:02} to make it a Banach space.
%Furthermore, cf., \textcolor{black}{meilleur ref ? Proposition 6.3.1 in the course of Isabelle Gallagher}, 

If $\alpha<0$ then it is known that $\dot B_{p,q}^\alpha \subset B_{p,q}^\alpha$, i.e. there is a constant $C>0$ such that, for any $\alpha<0$,
\begin{equation}\label{ineq_Besov_homo_inhomo}
\| \cdot \|_{ B_{p,q}^\alpha}  \leq - \frac{C}{\alpha} \| \cdot \|_{\dot B_{p,q}^\alpha}.
\end{equation}
%That is why, without loss of generality, we will consider the inhomogeneous spaces which is a distributional and Banach space. 
%\\

We also introduce the distributions that can be approached by a mollification procedure.
We put a \textit{tilde} in order to mean that we consider the closure of $C^\infty_b$ in the considered space\footnote{As in Proposition 3.6 in \cite{lemar:02} for the closure Schwartz space, but we do not need the mollified versions of the considered distributions to be rapidly decreasing functions.}. Namely, for any $(\alpha,p,q) \in \R \times (1, + \infty] \times (1, + \infty]$ we define:
\begin{equation}
\tilde B_{p,q}^\alpha:= \textbf{cl}_{B_{p,q}^\alpha}(C^\infty_b), \
\tilde {\dot B}_{p,q}^\alpha:= \textbf{cl}_{\dot B_{p,q}^\alpha}(C^\infty_b). % \
%\tilde F_{p,q}^\alpha:= \textbf{cl}_{F_{p,q}^\alpha}(C^\infty_b). %, \
%\tilde {\dot F}_{p,q} \alpha:= \textbf{cl}_{\dot F_{p,q}^\alpha}(C^\infty_b).
\end{equation}

\begin{remark}\label{remark_density_Cinfty}
If there is $\psi \in C^\gamma$, $ \gamma \in (0,1)$, such that $b= D^\alpha \psi$, $\alpha \in \N_0^d$, so $b \in \dot B_{\infty,\infty}^{-\beta}$  
with $-\beta= -|\alpha| + \gamma$, and in Appendix Section \ref{sec_conv_molli_Dpsi}, we show that $b \in {\tilde {\dot B}}_{\infty,\infty}^{-\beta}$.

The last constraint on $b$, being the derivative of a H\"older function, is quiet natural when we consider the structure theorem of the tempered distributions $\mathcal S'$,
see Theorem 8.3.1 in \cite{frie:98}.
We recall indeed that any $b \in \mathcal S'$ writes $b= D^\alpha \psi$ where $\alpha \in \N_0^d$ and $\psi$ is a continuous function with polynomial growth. 

%Obviously, we could consider for $b$ any finite sum of the type $\sum_k D^{\mathds 1_k} \psi_k$, with for each $k$, $\mathds 1 \in \N_0$, $|\mathds 1|=1$ and  $\psi_k \in L^\infty([0,T],C^{\tilde \gamma_k}(\R^d,\R^d))$, $0<\tilde \gamma_k<\gamma$.

Out of the Besov-H\"older space, namely if $ 1  \leq p,q < +\infty$, then $\tilde B_{p,q}^\alpha=  B_{p,q}^\alpha$ and $\tilde {\dot B}_{p,q}^\alpha=  \dot B_{p,q}^\alpha$, see Theorem 4.1.3 in \cite{adam:hedb:96}, Proposition 2.27 and Proposition 2.74 in \cite{baho:chem:danc:11}.
For more Besov properties, we also mention \cite{peet:76} and \cite{jawe:77}.

We might consider the non-homogeneous low-frequency cut-off in the Littlewood-Paley characterisation instead of usual mollification by convolution, as performed in the current paper, and adapt Lemma 2.73 in \cite{baho:chem:danc:11} for the space $B_{\infty,\infty}^{-\beta}$. 
\end{remark}
%
%	\textbf{Examples}: From the above functional spaces definition, we can see that the following functions with blow-up singularities lies in the the Besov spaces:
%% and Triebel-Lizorkin spaces:
%
%\begin{itemize}
%	\item For $d=1$, the logarithm function $\ln \in \dot B_{\infty,\infty}^0$, 
%	\item For $d=1$, the inverse function $x \mapsto \frac{1}{x} $ lies in $  B_{\infty,\infty}^{-1}$,
%	\item For $d\in \N$,  $x \mapsto \frac{x}{|x|^2} $ lies in $  B_{\infty,\infty}^{-1}$, this is a natural kernel appearing in Patlak-Keller-Segel model of chemotaxis, or in the Biot-Savart law for the representation of Navier-Stokes equation.
%	%\textcolor{black}{\cite{jabi:wang:18}, ref plus pertinante ?}
%	\item For $d \in \N$, the multidimensional Dirac distribution $\delta$ lies in $B_{\infty,\infty}^{-d}$.
%\end{itemize}
\subsubsection{Besov duality}

In our analysis, we thoroughly use the Besov duality.
%For the sake of clarity, we recall in this section one of the main argument used in this article, the duality of Besov spaces, which is a crucial tool to control integral 
%of tempered distributions. From this duality, we derive a control of an integral 
%of a product of two tempered distributions. 
The full proof of the duality of Besov spaces is established for example in Proposition 3.6 in \cite{lemar:02} thanks to a Littlewood-Paley decomposition.
\begin{PROP}\label{prop_dualite}
	For all $1\leq p,q\leq + \infty$ and $\alpha \in \R$, we have for all $ \varphi, \psi \in \mathcal S '$:
	\begin{equation*}
	\Big |\int_{\R^d} \varphi(y) \psi (y) dy \Big | \leq C_{d,p,q,\alpha} \|\varphi\|_{B_{p,q}^{\alpha}} \|\psi\|_{B_{p',q'}^{-\alpha}},
	\end{equation*}
	%and in the homogeneous Besov space
	%\begin{equation*}
	%\Big |\int_{\R^d} \varphi(y) \psi (y) dy \Big | \leq C \|\varphi\|_{\dot B_{p,q}^{\alpha}} \|\psi\|_{\dot B_{p',q'}^{-\alpha}},
	%\end{equation*}
	with $1 \leq p',q' \leq + \infty$ such that $\frac{1}{p}+ \frac{1}{p'}=1$ and $\frac{1}{q}+ \frac{1}{q'}=1$.
	%
	%Moreover, if  $ \psi \in \mathcal S$ then
	%\begin{equation*}
	%\Big |\int_{\R^d} \varphi(y) \psi (y) dy \Big | \leq C_{d,p,q,\alpha} \|\varphi\|_{\dot B_{p,q}^{\alpha}} \|\psi\|_{\dot B_{p',q'}^{-\alpha}}.
	%\end{equation*}
\end{PROP}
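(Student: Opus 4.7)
The plan is to follow the strategy of Proposition 3.6 in \cite{lemar:02}, adapted to the thermic characterisation \eqref{def_norm_besov_inhomo} used throughout the paper. Since the paper defines Besov norms via heat-kernel convolutions rather than Littlewood--Paley dyadic blocks, I would either invoke the equivalence of both characterisations (see \cite{trie:83}) and apply the standard dyadic proof, or work directly from \eqref{def_norm_besov_inhomo} through a Calder\'on-type reproducing formula, as outlined below.

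First, one has to give a meaning to $\int_{\R^d} \varphi(y)\psi(y)\, dy$, because the product of two tempered distributions is not a priori defined. I would interpret the integral as
\begin{equation*}
    \int_{\R^d} \varphi(y)\psi(y)\, dy := \lim_{v\to 0^+} \int_{\R^d} (h_v\star \varphi)(y)\, \psi(y)\, dy,
\end{equation*}
where $(h_v)_{v>0}$ is the heat kernel from \eqref{def_hv}. The first task is then to show the right-hand bound for the regularised pairing uniformly in $v$, and deduce both existence of the limit and the claimed inequality.

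The core step is a Calder\'on-type reproducing identity for the heat semigroup. Fixing an integer $m$ with $m > \alpha/2$ and iterating
\[
    h_v \star f - \lim_{v'\to +\infty} h_{v'}\star f = -\int_v^{+\infty} \partial_{v'}(h_{v'}\star f)\, dv',
\]
$m$ times by integration by parts, one obtains a decomposition of $f \in \mathcal{S}'$ as the sum of a low-frequency piece $\varphi(D)f$ (handling the behaviour for $v\geq 1$) plus a high-frequency piece of the form $c_m\int_0^1 v^{m-1}\,\partial_v^m h_v\star f\, dv$. Writing this decomposition for $\varphi$ and inserting it into $\int (h_v\star \varphi)\psi$ produces two contributions: a low-frequency pairing, which is controlled by $\|\varphi(D)\varphi\|_{L^p}\|\psi(D)\psi\|_{L^{p'}}$ via H\"older in space, and a main integral
\[
    c_m \int_0^1 v^{m-1} \int_{\R^d} \bigl(\partial_v^m h_v \star \varphi\bigr)(y)\, \psi(y)\, dy\, dv,
\]
where the derivatives can be shifted by a factor of $v^{m'}$ onto $\psi$ using the semigroup property $h_v = h_{v/2}\star h_{v/2}$ and $\partial_v^m h_v = 2^{-m}\Delta^m h_v$, distributing $m=m_1+m_2$ derivatives so that both arguments appear in the form prescribed by \eqref{def_norm_besov_inhomo}.

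Once the pairing is reduced to a double integral of the shape
\[
    \int_0^1 \frac{dv}{v}\Bigl[ v^{m_1-\alpha/2}\,\|\partial_v^{m_1} h_v\star \varphi\|_{L^p(\R^d)}\Bigr] \Bigl[v^{m_2+\alpha/2}\,\|\partial_v^{m_2} h_v\star \psi\|_{L^{p'}(\R^d)}\Bigr],
\]
I would apply H\"older's inequality twice: once in the space variable with exponents $(p,p')$, which produces the $L^p$ and $L^{p'}$ norms of the heat convolutions; and once in $v$ with respect to the measure $dv/v$ and exponents $(q,q')$. The two resulting factors are precisely $\|\varphi\|_{\ddot{B}^\alpha_{p,q}}$ and $\|\psi\|_{\ddot{B}^{-\alpha}_{p',q'}}$, so adding the low-frequency contribution yields the claimed bound with a constant depending only on $d,p,q,\alpha$. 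The limit $v\to 0^+$ is then justified by dominated convergence applied to the smooth regularisations $h_v\star \varphi$.

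The main obstacle I anticipate is the interplay between the low-frequency cut-off $\varphi(D)$ of the inhomogeneous norm and the non-integrability at $v\to +\infty$ of the thermic contribution when $\alpha\leq 0$: care is needed to ensure that the integration by parts produces no unwanted boundary terms, and that the decomposition of $f$ truly reconstructs $f$ modulo polynomials (which is harmless here because both $\varphi$ and $\psi$ come with a low-frequency $L^p$/$L^{p'}$ control). The endpoint cases $p\in\{1,+\infty\}$ or $q\in\{1,+\infty\}$ also need the standard essential-supremum adaptation of H\"older's inequality, exactly as in \eqref{def_norm_besov_inhomo} when $p=q=+\infty$.
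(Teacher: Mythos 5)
Your plan is sound, but it follows a genuinely different route from the paper. The paper's own proof is only a sketch by citation: it invokes Proposition 3.6 of \cite{lemar:02}, i.e.\ the known Littlewood--Paley duality statement that $B_{p,q}^{\alpha}$ is the dual of the closure of $\mathcal S$ in $B_{p',q'}^{-\alpha}$, together with density of $\mathcal S$ in $B_{p,q}^{\alpha}$ (assuming w.l.o.g.\ $1<p,q<\infty$), and does not construct the pairing explicitly. You instead give a self-contained argument directly in the thermic characterisation \eqref{def_norm_besov_inhomo}: regularise the pairing by $h_v\star\varphi$, reconstruct $\varphi$ by an $m$-fold Calder\'on reproducing formula for the heat semigroup, split $\partial_v^m h_v$ through the semigroup identity $h_v=h_{v/2}\star h_{v/2}$ so that $m_1$ derivatives land on $\varphi$ and $m_2=m-m_1$ on $\psi$, and then apply H\"older twice (in $x$ with $(p,p')$, in $v$ against $dv/v$ with $(q,q')$), recovering exactly the two thermic norms. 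What your approach buys is independence from the dyadic characterisation (everything stays in the norms actually used in the paper), an explicit meaning for the pairing of two distributions, and a treatment that covers the endpoint exponents $p,q\in\{1,+\infty\}$ rather than reducing to the reflexive range; what the paper's route buys is brevity and the avoidance of the technical points you correctly flag, namely the boundary terms in the iterated integration by parts, the interaction of the low-frequency cut-off of $\varphi$ with the full distribution $\psi$ (which requires a compatible low-frequency decomposition of $\psi$, not just $\|\varphi(D)\varphi\|_{L^p}\|\psi(D)\psi\|_{L^{p'}}$ as written — also beware the notational clash between the cut-off $\varphi$ and the distribution $\varphi$), and the equivalence of the thermic norms for different admissible values of $m$, which you implicitly use when choosing $m_1>\alpha/2$ and $m_2>-\alpha/2$. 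None of these is a gap in principle, but they are the places where the details must be written out carefully.
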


\begin{proof}[Sketch of the proof]
	%As indicated above, it is easily seen from \eqref{THERMIC_CAR_DEF} that $C_{b}^{\tilde \alpha_i}(\R^d,\R)=B_{\infty,\infty}^{\tilde \alpha_i}(\R^d,\R) $.
	%For the first non-homogeneous case, 
	Let us suppose that w.l.o.g. that $1<p,q<+\infty$ (the analysis is identical if we suppose that $1<p',q'<+ \infty$).
	It is known that $B_{p,q}^{\alpha} (\R^d,\R)$ and $B_{p',q'}^{-\alpha} (\R^d,\R)$ are in duality (Proposition 3.6 in \cite{lemar:02}). Precisely, $B_{p,q}^{ \alpha}$ is the dual of the closure of the Schwartz class $\mathcal S$ in $B_{p',q'}^{-\alpha}$. But $\mathcal S$ is dense in  $B_{p,q}^{\alpha}$ (see for instance %Theorem 4.3.3 
	%\textcolor{black}{
		4.1.3.
%	}
 in \cite{adam:hedb:96}).
	%\\
	%
	%\textcolor{black}{The homogeneous case analysis is similar.}
	
	%Proposition 3.9 in \cite{lemar:02}
	%Remark under Theorem  4.1.4 in \cite{adam:hedb:96}
\end{proof}
%The
It is possible to adapt this result to homogeneous spaces, %counterpart of this result requires additional assumptions on the considered distributions, 
see for instance Proposition 2.29 in \cite{baho:chem:danc:11}.

\subsubsection{Usual tools for the Gaussian function}
\label{sec_Gaussian_properties}

One of the reason to use the thermic representation of the Besov space comes from a well-known and important result about the Gaussian function: 
for any $\delta >0$, there is $ C_{\delta}=C_{\delta}(\delta)>1$ such that:
\begin{equation}\label{ineq_absorb}
\forall x \in \R^d, \ |x|^\delta e^{-|x|^2} \leq C_{\delta} e^{-C_{\delta}^{-1} |x|^2}.
\end{equation}

Furthermore, we often use the cancellation principle: for all $f \in C^\gamma$, $\gamma \in (0,1)$, $x \in \R^d$ and $\sigma>0$
\begin{equation}\label{eq_cancell}
\nabla _x \int_{\R^d} e^{-\frac{|x-y|^2}{2\sigma}} f(y) dy =  \int_{\R^d} \nabla_x e^{-\frac{|x-y|^2}{2\sigma }} [f(y)-f(x)] dy,
\end{equation}
as the Gaussian function, up to a renormalisation by a multiplicative constant, is a probabilistic distribution, hence $\nabla_x \int_{\R^d} e^{-\frac{|x-y|^2}{2\sigma}} dy =0$.
Hence, we obtain,
\begin{eqnarray*}
	(2\pi \sigma)^{\frac{d}{2}}\Big | \nabla_x \int_{\R^d} e^{-\frac{|x-y|^2}{2 \sigma}} f(y) dy \Big | 
	&\leq& 
	(2\pi \sigma)^{\frac{d}{2}} [f]_\gamma  \int_{\R^d} e^{-\frac{|x-y|^2}{2\sigma}}\frac{|y-x|}{\sigma} |y-x|^\gamma dy
	\nonumber \\
	&\leq& 
	{C}_{\gamma}
	(2\pi \sigma)^{\frac{d}{2}} [f]_\gamma \sigma^{\frac{\gamma-1}{2}} \int_{\R^d} e^{-{C}_{\gamma}^{-1}\frac{|x-y|^2}{2\sigma}} dy
	\nonumber \\
	&=& {C}_{\gamma} [f]_\gamma \sigma^{\frac{\gamma-1}{2}}.
\end{eqnarray*}
The penultimate identity comes from the absorbing property \eqref{ineq_absorb}.

\subsection{Different definitions of a solution to the transport equation problem}
\label{sec_def_type_solu}

As said in the introduction, we need to carefully defined the suitable notion of solution, no strong solution can be in a negative Besov space or even in a H\"older space, implying a product of distributions which is obviously not point-wisely defined.

\begin{defi}[mild vanishing viscous]\label{DEFINITION_MILD}
	A function $u$ is said to be a \textit{mild vanishing viscous} solution in $L^\infty\big ([0,T]; C_b^{\gamma}(\R^d,\R)\big )$ of equation \eqref{transport_equation} if for a sequence $(b_m)_{m \in \N}$ in $L^\infty  ( [0,T];C_b^\infty(\R^d,\R^d) )$ such that there is $\beta \in \R$,
	\begin{equation}\label{converg_b_def_mild_viscous}
	\forall \varepsilon>0, % < -\tilde \gamma -\lfloor -\tilde \gamma \rfloor, 
	\ 
	\lim_{m \to + \infty}\|b_m-b\|_{ L^\infty  ( [0,T];  B_{\infty,\infty}^{-\beta-\varepsilon}(\R^d,\R^d) )}=0,
	\end{equation}
	for any $t \in [0,T]$,  there exists a sub-sequence of  $(u^{m,\nu}(t,\cdot))_{(m,\nu) \in \R_+^2}$ lying in $
	%L^\infty ([0,T]; 
	C_b^{\gamma}(\R^d,\R) $ converging in the space $%L^\infty ([0,T]; 
	C_b^{\gamma-\tilde \varepsilon}(K,\R) $, $ 0< \tilde \varepsilon<\gamma$, for any compact subset $K \subset \R^d$, when $\nu \to 0$ and $m \to + \infty$  towards $u(t,\cdot) \in  %C^1_b ([0,T]; 
	C_b^{\gamma}(K,\R) $ and satisfying, for all $m \in \N$ and $\nu \in \R_+$,
	\begin{equation}\label{parabolic_moll_def}
	\begin{cases}
	\partial_t u^{m,\nu}(t,x)+  \langle b_m(t,x),  \nabla u^{m,\nu} (t,x)\rangle -\nu \Delta u^{m,\nu}(t,x)=f_m(t,x),\ (t,x)\in (0,T]\times \R^{d},\\
	u^{m,\nu}(0,x)=g_m(x),\ x\in \R^{d},
	\end{cases}
	\end{equation}
%	where for any $(t,x)\in [0,T] \times \R^d$
%	\begin{equation}\label{def_w}
%	w^{m,\nu}(t,x)= \int_0^t u^{m,\nu}(s,x) ds.
%	\end{equation}
\end{defi}
where $(f_m,g_m) \underset{m \to + \infty}\longrightarrow (f,g)$ in $L^\infty([0,T]; C^\gamma(\R^d,\R))\times C^\gamma(\R^d,\R)$.
\\

We point out that such a sequence $(b_m)_{m \geq 0}$ exists if $b \in L^\infty([0,T];\tilde{ B}_{\infty,\infty}^{-\beta}(\R^d,\R^d))$; or in particular if $b$ is the derivative of a bounded H\"older continuous function but in this former case the limit result \eqref{converg_b_def_mild_viscous} has to be in the homogeneous space $ L^\infty([0,T];\tilde{ \dot  B}_{\infty,\infty}^{-\beta}(\R^d,\R^d))$, see Appendix Section \ref{sec_conv_molli_Dpsi}, and identity \eqref{converg_b_def_mild_viscous} is implied by \eqref{ineq_Besov_homo_inhomo}. 

Moreover, it is important to notice that the choice of sub-sequence may depend on the current time $t$.
We do not succeed in getting uniform continuity in time $t$ (only boundedness in $L^\infty$), thus the impossibility to apply a suitable compact argument in time, we need to consider the problem at a fixed $t$.
%time. 
Nevertheless, for the sake of simplicity we write for the sub-sequence $u^{m,\nu}(t,\cdot)$ instead of a notation of the kind $u^{m_t,\nu_t}(t,\cdot)$.

\begin{remark}\label{rem_int_u_mild}
We could consider another formulation of \textit{mild vanishing viscous} solution, where the considered function is 
 	\begin{equation}\label{def_w}
 w^{m,\nu}(t,x)= \int_0^t u^{m,\nu}(s,x) ds.
 \end{equation}
% From estimates
 Under structural assumption \A{A} and the result stated in 
 %Theorem \ref{THEO_INTRO} and in 
 Theorem \ref{THEO_SCHAU_non_bounded} below, we have that $(t,x) \mapsto w^{m,\nu}(t,x)$ lies, uniformly in $(m,\nu)$, in $C^1_b([0,T]; C^\gamma_b(\R^d,\R))$, by the  Arzel\`a-Ascoli theorem, we obtain a convergence in all compacts $[0,T] \times K$ of $[0,T] \times \R^d$ towards a function $w\in  C^1_b ([0,T]; C_b^{\gamma}(K,\R) )$.
\end{remark}

Let us define an alternative form of solution which is a mixed version between \textit{mild} and \textit{weak} notions of solutions.

\begin{defi}[mild-weak solution]\label{DEFINITION_MILD_WEAK}
	A function $u$ is a mild-weak solution in $L^\infty ([0,T]; C_b^{\gamma}(\R^d,\R) )$ of equation \eqref{transport_equation} if $u$ is a \textit{mild vanishing viscous} solution,
%	 i.e.
%	%$b \in L^\infty \big ( [0,T];B_{\infty,\infty}^{\tilde \gamma}(\R^d)\big )$ and 
%	there is a smooth function sequence $(b_m)_{m \in \R_+} \in L^\infty  ( [0,T];C_b^\infty(\R^d,\R^d) )$ such that there is $\tilde \gamma \in \R$,
%	\begin{equation}
%	\forall \varepsilon>0, \ % < -\tilde \gamma -\lfloor -\tilde \gamma \rfloor, \ 
%	\lim_{m \to + \infty}\|b_m-b\|_{L^\infty  ( [0,T];  B_{\infty,\infty}^{-\tilde \gamma-\varepsilon}(\R^d,\R^d) )}=0,
%	\end{equation}
%	there exists a sub-sequence of  $(u^{m,\nu})_{(m,\nu) \in \R_+^2}$ in $L^\infty  ([0,T]; C_b^{\gamma}(\R^d,\R) )$ strong solution of \eqref{parabolic_moll_def} 
%	converging, for any compact $K \subset \R^d$, in $L^\infty ([0,T]; C_b^{\gamma- \tilde \varepsilon}(\R^d,\R) )$, $ 0<\tilde \varepsilon<\gamma$,  towards the H\"older continuous $u \in  L^\infty  ([0,T]; C_b^{\gamma}(\R^d,\R) )$ 
and 
	such that 
	for any function $\varphi \in C_0^\infty([0,T]\times \R^d,\R)$, we have, up to a sub-sequence selection, for any $t \in [0,T]$,
	\begin{eqnarray}\label{KOLMO_mild_weak}
	%\begin{cases}
%&&
 \lim_{m \to  \infty, \nu \to 0}\int_{\R^d} \Big \{   \varphi(t,y) u^{m,\nu}(t,y) %dy 
  + \int_0^t  %\int_{\R^d} 
 \big \{-\partial_t\varphi(s,y) u^{m,\nu}(s,y)+  \langle b_m(s,y),  \nabla u^{m,\nu}(s,y)\rangle \varphi(s,y) 
%	+\nu    \varphi(t,y) \Delta  u^{m,\nu}(t,y) 
\big \} ds \Big \} dy  
	\nonumber \\
%&=&
=  \int_{\R^d}   \varphi(0,y) g(y) dy+   \int_{\R^d}\int_0^t  \varphi(s,y) f(s,y)  ds \, dy .%\\\varepsilon
	% u_m(T,x)=g(x),\ x\in \R^{d}.\varepsilon
	% \end{cases}
	\nonumber \\
	\end{eqnarray}
	%The derivatives correspond to the usual ones as the solution $u$ is a $C^2$ function in space. 
\end{defi}
	The distributional formulation allows to give a sense to the potential irregularities of $b$ and of $ \nabla u^{\nu,m}$ when $\nu \to 0$, $m \to + \infty$, and to consider the whole space $[0,T] \times \R^d$ (instead of any compact), the cut-off of $\R^d$ required for compact argument
	% to use the  Arzel\`a-Ascoli theorem 
	is included in the test function $\varphi$.
%A mild-weak solution is more restrictive than a mild one, we require indeed a convergence form of the PDE. 
%\\
%
%We define now the usual \textit{weak} solution.

\begin{defi}[weak solution]\label{DEFINITION_WEAK}
	A function $u$ is a weak solution in $L^\infty\big ([0,T]; C_b^{\gamma}(\R^d,\R)\big )$ of equation \eqref{transport_equation} if $u$ is a \textit{mild vanishing viscous} solution and satisfies, for any function $\varphi \in C_0^\infty([0,T]\times \R^d,\R)$,
	% we have:
	%sequence $(b_m)_{m \in \N}$ in $L^\infty \big ( [0,T];C_b^\infty(\R^d)\big )$ such that
	%\begin{equation}
	%\lim_{m \to + \infty}\|b_m-F\|_{L^\infty\big ([0,T]; B_{p,q}^{-\alpha}(\R^d)\big )}=0,
	%\end{equation}
	%there exists a sub-sequence of the sequence $(u_m)_{m\in \N}$ lying in $L^\infty\big ([0,T]; C^{2+\gamma}(\R^d)\big )$ converging toward $u$ in $L^\infty\big ([0,T]; C^{2+\gamma}(\R^d)\big )$ and such that for any $m \in \N$ 
	\begin{eqnarray}\label{KOLMO_weak}
	%\begin{cases}
	&&\int_{\R^d} \Big \{   \varphi(t,y) u(t,y) %dy 
	+ \int_0^t  %\int_{\R^d} 
	\big \{-\partial_t\varphi(s,y) u(s,y)+  \langle b(s,y),  \nabla u(s,y)\rangle \varphi(s,y) 
	%	+\nu    \varphi(t,y) \Delta  u^{m,\nu}(t,y) 
	\big \} ds \Big \} dy  
	\nonumber \\
	&=& \int_{\R^d}   \varphi(0,y) g(y) dy +\int_{\R^d} \int_0^t    \varphi(s,y) f(s,y) ds \, dy .%\\
	% u_m(T,x)=g(x),\ x\in \R^{d}.
	% \end{cases}
	\end{eqnarray}
\end{defi}

\begin{remark}
	%This is the most restrictive solution (except for some mollified version of the PDE) of linear equations that we consider in this article, w
	We cannot hope to define classic solution in our irregular context.
	Indeed, even if  roughly speaking $\partial_t u +  \langle b ,\nabla u \rangle $ is supposed to lie in $L^\infty(C_b^\gamma)$, 
	%see Corollary \ref{b_nabla_u_in_C_b_gamma} below, 
	we cannot \textit{a priori} define point-wisely the classic scalar product between $b$ and $\nabla u$. If $b$ has a blow up at a point $x_0 \in \R^d$ then, as $u$ is solution of \eqref{parabolic_moll_def}, $\lim_{x \to x_0} \int_0^t \langle b(s,x), \nabla u(s,x)\rangle ds $ is necessary finite for any $t \in [0,T]$ but we cannot give a meaning of $ \langle b(t,x_0), \nabla u(t,x_0) \rangle $ in a point-wise sense. Roughly speaking, to handle distributional drift we have to stay in a distributional formulation of the solution.
	%, mild or weak.
\end{remark}

\begin{defi}[Uniqueness]\label{DEFINITION_Unique}
	There is a unique solution of \eqref{transport_equation} if for two solutions $u^{m,\nu}$ and $\bar u^{m,\bar \nu}$ of 
	\begin{equation*}%\label{parabolic_moll_def}
		\begin{cases}
			\partial_t u^{m,\nu}(t,x)+  \langle b_m(t,x),  \nabla u^{m,\nu} (t,x)\rangle -\nu \Delta u^{m,\nu}(t,x)=f_m(t,x),\ (t,x)\in (0,T]\times \R^{d},\\
			u^{m,\nu}(0,x)=g_m(x),\ x\in \R^{d},
		\end{cases}
	\end{equation*}
	and respectively
	\begin{equation*}%\label{parabolic_moll_def}
		\begin{cases}
			\partial_t \bar u^{m,\bar \nu}(t,x)+  \langle \bar b_m(t,x),  \nabla \bar u^{m,\bar \nu} (t,x)\rangle -\bar \nu \Delta \bar u^{m,\bar \nu}(t,x)=f_m(t,x),\ (t,x)\in (0,T]\times \R^{d},\\
			\bar u^{m,\bar \nu}(0,x)=g_m(x),\ x\in \R^{d},
		\end{cases}
	\end{equation*}
	for $\nu,\bar \nu>0$,
	with
	\begin{equation}\label{converg_b_def_mild_viscous_bis}
		\forall \varepsilon>0, % < -\tilde \gamma -\lfloor -\tilde \gamma \rfloor, 
		\ 
		\lim_{m \to + \infty}\|b_m-b\|_{ L^\infty  ( [0,T];  B_{\infty,\infty}^{-\beta-\varepsilon}(\R^d,\R^d) )}+\lim_{n \to + \infty}\|\bar b_n-b\|_{ L^\infty  ( [0,T];  B_{\infty,\infty}^{-\beta-\varepsilon}(\R^d,\R^d) )}=0,
	\end{equation}
	converging, up to sub-sequence selection, towards two H\"older continuous solutions $u$, $\bar u$, when $(m,\nu,\bar \nu ) \to (+ \infty,0,0)$, then $u= \bar u$. 
\end{defi}
%Importantly, the viscosity between the two considered parabolic equations is equal. For different viscosity, uniqueness fails to be true in general, some counter-examples can be founded in \cite{colo:crip:sore:22}.
The mollification procedure associated with $\bar b_n$ can be different from the one performed for $b_m$.

%\part{Under existence assumption}

\mysection{Statement under existence assumption}
\label{sec_main_transport}

When $b$ lies in a H\"older-Besov space, we succeed in obtaining
a \textit{maximum principle} like for the \textit{limit H\"older continuity}, matching
 the regularity of 
 %the solution as for 
 $f$ and $g$.
The type of solution strongly depends on the regularity of $b$, however to be sure that our approximation converges towards a true H\"older continuous solution, we need to introduce an assumption framework.

\A{A} We suppose that there is a function $\psi : \R^d \rightarrow \R^d$, such that 
\begin{equation*}
	\forall (t,x) \in [0,T] \times \R^d, \ b(t,x) = (\nabla \psi (x))^{-1},
\end{equation*}
 satisfying
	\begin{equation*}
	\forall (\tau,x) \in [0,T] \times \R, \	\frac{ \nabla \psi  (x)}{\nabla  \psi \big (\psi^{-1}(c\tau +\psi (x))\big )}<+ \infty,
\end{equation*}
and
\begin{equation*}
	\psi_m ^{-1}\big (C(t-s)+\psi_m (x)\big ) \underset{m \to + \infty}{\longrightarrow} 		\psi^{-1}\big (C(t-s)+\psi (x)\big ) \text{ in } C^1(\R^d, \R^d),
\end{equation*}
with $\psi_m$ a mollified version of $\psi$, e.g. $\psi_m := \rho_m \star \psi$.
%converges in $C^1(\R^d, \R^d)$.
\\

For more details, see Section \ref{sec_example_b} further, in particular, we develop the case $\psi(x)= |x|^{1+\beta}$ in Section \ref{sec_example_polynome}.
	\\
	
	We could suppose that $b$ does depend on the time $t$, if $b$ is smooth enough in time, thanks to the time decomposition performed in Section \ref{sec_decoup_temps}.
	The only use of this assumption is to get the convergence of the solution $u^{m,\nu}$ toward $u$ in a suitable H\"older space.
	
%	The limit hypothesis of $\theta_{ s,t }^m$ stated in Theorem \ref{THEO_SCHAU_non_bounded_optimal} means here, that 
%	\begin{equation*}
%		\psi_m ^{-1}(C(t-s)+\psi_m (x)) \underset{m \to + \infty}{\longrightarrow} 		\psi^{-1}(C(t-s)+\psi (x)) \text{in} C^1(\R^d, \R^d),
%	\end{equation*}
%	with $\psi_m := \rho_m \star \psi$. 
%	converges in $C^1(\R^d, \R^d)$.

%, as soon as the H\"older index of $b$ is big enough.
%\\
%
%\textcolor{black}{Utiliser les calculs a priori pour unicite ??? comme ca $g=0$ et $f=(b_m-b_n)\cdot\nabla u_n^\nu $. Garder la partie unicité ou bien changer control Holder en remplacant critere $m,n $ $t-s)$ et faire apparaitre $b_m-b_n$ dans la partie $u^m$???}

\begin{THM}[Rough transport equation in H\"older spaces] \label{THEO_SCHAU_non_bounded_optimal}
	%	Let us suppose \A{E}.
	For  %$\gamma\in (0,1)$, 
	$\beta \in \R^*$  and $0 <\gamma <1$ be given, let $f\in L^\infty([0,T];  C_b^{\gamma}(\R^d,\R))$ and $ g \in C_b^{\gamma}(\R^d,\R)$.
	For a distribution $ b \in L^\infty([0,T], \tilde B_{\infty,\infty}^{-\beta} (\R^d,\R^d))$, 
%	there is (not necessary unique) a flow $\theta_{ s,t }(\cdot) \in C^1(\R^d,\R^d )$, $ 0 \leq s \leq t \leq T$, satisfying 
%	\begin{equation*}
%		\theta_{ s,t }(x)= x + \int_s^t b(s,\theta_{ \tilde s,t }(x)) d\tilde s,
%	\end{equation*}
%%and 
%and being the limit in $ C^1(\R^d,\R^d )$ of $		\theta_{ s,t }^m(x)= x + \int_s^t b_m(s,\theta_{ \tilde s,t }^m(x)) d\tilde s$,
	%s.t. $\nabla \cdot b=0$,
	%if $ \delta\tilde \gamma \geq \gamma$ then 
then for $u^{m,\nu}$ defined in \eqref{parabolic_moll_def}
we have
%
%
%	if 
%	\begin{equation}\label{ineq_Holder_THEO}
%		\nu
%		\ll T^{-1} m^{-\frac{2-\gamma}{1-\gamma}} (t  \| f\|_{L^\infty(C^\gamma)}+ [g]_{\gamma} ) ^{-\frac{2}{1-\gamma}}
%		\exp \Big (-c\frac{m^{1+\beta} T \|b\|_{L^\infty( B_{\infty,\infty}^{-\beta})}} {1-\gamma}\Big ),
%	\end{equation}
%\textcolor{black}{	
%	or
%	\begin{equation}\label{Condi_Lambda_THEO}
%		1\gg
%		%	 	m^{1-\tilde \gamma}  \|b\|_{L^\infty( B_{\infty,\infty}^{\tilde \gamma})}	  
%		%	(O_m(t)+O_m^{(2)}(t))
%		%	\nu^{\frac{1-\gamma}2}\Big ( \frac{T}{n}\Big )^{\frac{1-\gamma}2} T
%		%	\nonumber \\
%		%	&&+
%		%	m^{1-\tilde \gamma} O_m(t)
%		\nu^{\frac{1-\gamma}{2}} (1+\frac Tn m^{1-\gamma} \|f\|_{L^\infty(C^\gamma)}) \|b\|_{L^\infty(B_{\infty,\infty}^{-\beta})} 
%		\exp \big (C m^{1+\beta} \|b\|_{L^\infty(B_{\infty,\infty}^{\beta})} T\big ) 
%		\Big (\frac Tn \Big )^{\frac{(1-\gamma)}2 }T,
%	\end{equation}
%}
	 then, 
	 %$u$ satisfies
	% satisfying 
	\begin{eqnarray}
		\label{ineq_THEO_SCHAU}
	%\sup_{t \in [0,T]}	[ u(t,\cdot)]_{\gamma,0}^{\eqref{ineq_Holder_THEO}}
%	\textcolor{black}{+ 
	\lim_{n \to + \infty}		\sup_{t \in [0,T]}	[ u^{m,\nu}(t,\cdot)]_{\gamma,(\nu T/n)^{1/2}}%^{\eqref{Condi_Lambda_THEO}}
		%}
		&\leq&
		%(1-\kappa)^{-1}  (
		 	\lim_{n \to + \infty}	\int_0^T [ f(t,\cdot)]_{\gamma,(\nu T/n)^{1/2}}+ 		 	\lim_{n \to + \infty} [ g]_{\gamma,(\nu T/n)^{1/2}},
		%+ \tilde \kappa  ),
		\nonumber \\
		\|  u^{m,\nu} \|_{L^\infty} &\leq&  T\| f \|_{L^\infty}+ \|  g \|_{L^\infty}  .
	\end{eqnarray}
Moreover under if \A{A} is in force there is 
a
\textit{mild  vanishing viscosity} solution $u \in L^\infty([0,T];C_b^{\gamma}(\R^{d},\R))  $ of \eqref{transport_equation}
and
	\begin{trivlist}
		\item[i) 
		%Negative continuity and i
		\textbf{Incompressibility.}] If $\beta <\gamma$ and $\nabla \cdot b=0$ then the solution $u$ is also a \textit{weak}
		and  a \textit{mild-weak}
		solution.
		%		\footnote{Also a \textit{mild-weak} solution with the definition introduced in \cite{hono_transport:22}.}.
		\item[ii) \textbf{Positive regularity.}] If $\beta <-1+\gamma$, namely if $b \in L^\infty([0,T]; C_b^{\beta}(\R^d,\R^d))$, $\beta=-\tilde \gamma>1-\gamma$,  the solution $u$ is also a weak solution 
		and if we also suppose that
		%		\begin{equation}\label{CONDINU}
			%		\nu \ll  \big ( T \| f\|_{L^\infty}+[g]_\gamma\big )^{-1}\Big ( 	m^{2-\gamma}  + C T m^{2+\tilde \gamma }	\|b\|_{L^\infty( B_{\infty,\infty}^{\tilde \gamma})}	\Big )^{-1}
			%		\exp(-2 T m^{1-\tilde \gamma} \|b\|_{L^\infty( B_{\infty,\infty}^{\tilde \gamma})}),
			%		\end{equation}
		%	or if 
		\begin{equation}\label{CONDINU_Holder_bis}
			%\nu \bigg (	\Big ( C \nu ^{\frac{\gamma}{2}-1} T^{\frac \gamma 2}  \| f\|_{L^\infty(C^\gamma)}+
			\nu \|\nabla ^2g_m\|_{L^\infty} 
			%\Big ) 
			+ 
			\nu ^{\frac{\gamma}{2}}	m^{1-\tilde \gamma}	
			%\|b\|_{L^\infty(C^{\tilde \gamma})}
			%		\Big ( T \|  f\|_{L^\infty(C^\gamma)}+\| g\|_{C^\gamma} \Big ) 		\nu ^{\frac{\gamma}{2}-1} 
			%T^{\frac{\gamma}{2}} \bigg)
			\ll 1,
		\end{equation}
		then
		$\partial_t u (t,\cdot) \in B_{\infty,\infty}^{-1+\gamma}(\R^d,\R)$, for any $t \in (0,T]$.
		\item[iii) \textbf{Greater regularity.}] If $\beta = - \tilde \gamma > \frac{1}{1+\gamma}$, and if
		% for any $0 < \varepsilon <1$,
		\begin{equation}\label{CONDINU_UNIQ}
			%\begin{equation*}
			%m^{1-\gamma-\tilde \gamma }+ 
			\nu \|\nabla ^2g_m\|_{L^\infty}  +	m^{
				%- \tilde \gamma+
				1-2\tilde \gamma} \nu ^{\frac{\gamma-1}{2}} + m^{1-\tilde \gamma} \nu^{\frac \gamma 2}\ll 1,
		\end{equation}
		%		(	m^{\varepsilon- \gamma+1-\tilde \gamma} + m^{\varepsilon-\tilde \gamma+1-\tilde \gamma}) \nu ^{\frac{\gamma-1}{2}} \ll 0,
		%		\end{equation}
	the solution is   unique\footnote{Namely 
		%for a given sequence $(\nu_n)_{n \in \N}$, 
		the limit solution does not depend on the choice of sequence $(m,\nu)$ and on the way to mollify $b$.}.
	% in \eqref{eq_para_mol}. 
	
	%		The condition \eqref{CONDINU_UNIQ} is not compatible with \eqref{CONDINU}.
	% is compatible with \eqref{CONDINU} if $\tilde \gamma <-\max(\frac 12,1-\gamma)$, namely if $b \in L^\infty([0,T]; C_b^{\tilde \gamma}(\R^d,\R))$, $\tilde \gamma=-\tilde \gamma>	\max( 1-\gamma, \frac{1}{2} )$.
\end{trivlist}
\end{THM}

%\teSxtcolor{red}{$u$ SOlution forte ou bien $\int_0^t u ds $ ? possibilité de permuter derivee en temps avec la limite ?}

%\begin{remark}

Except for the control of the time derivative and for uniqueness, %\eqref{ineq_partial_t_u}, 
there is no condition on the vanishing viscosity unlike in Theorem \ref{THEO_SCHAU_non_bounded}.
% \cite{hono_transport:22}. 
This is due to the possibility to use the time decomposition trick which allows to get estimates independent on $b$, see Section \ref{sec_time_depend}.

\begin{remark}
	We derive a \textit{limit H\"older modulus} in a more general setting than \cite{cha:jeo:23} and \cite{driv:elgi:la:23} where $f$ is supposed to be null, and the consider modulus is a ${\rm log}$-H\"older and an Osgood modulus, see also
%	\textcolor{black}{
 \cite{driv:elgi:23}.
%}
Let us insist that estimate \eqref{ineq_THEO_SCHAU} does not depend on the condition \A{A} which only allows to consider the limit solution $u^{m,\nu}$ in a H\"older space.
\end{remark}

\begin{remark}
%	For the \textit{mild vanishing viscous} solution there is no restriction explicit on $b$, roughly speaking there is an ``infinite regularisation by turbulence" over the coefficients. 
	
	Importantly, the above controls \eqref{ineq_THEO_SCHAU} do not depend on the \textit{drift} $b$; which  seems to be paradoxical with the usual gradient control of the transport equation by characteristic method.
	%, we carefully explain the reasons in Section \ref{sec_characteristic} further.
	This independence on $b$ is crucial to consider very rough coefficients as well as non-linear equation such as the inviscid Burgers' equation studied in Section \ref{sec_Buregers}.
	
	While for the usual weak solution, there is no more such infinite regularisation effect. 
	The \textbf{Incompressibility} framework, %the divergence free condition,
	i.e. $ \nabla \cdot b=0$, allows to still consider a negative regularity of $b$.
	Such divergence free condition for non-smooth distributions already exists for instance for Leray's solution of Navier-Stokes equation \cite{lera:34}.
	
	In the last case, i.e. \textbf{Positive regularity}, the considered \textit{drift} $b$ is supposed to be H\"older continuous in space, in particular lying in $L^\infty([0,T]; C_b^\alpha(\R^d,\R^d))$, $\alpha>1-\gamma$ which is the Bony's para-product assumption, see Section \ref{sec_prod_distri} below for more details. 
%\end{remark}

%\begin{remark}
	%Like in \cite{hono_transport:22}, 
	%There is still no constraint on $b$ for the \textit{mild vanishing viscous} solution,
	%% there is no restriction on $b$, roughly speaking there is an 
	%we observe an ``infinite regularisation".
%	The two other types of solutions requires more constraints on $b$. %, see Theorem \ref{THEO_SCHAU_non_bounded}.
	
	We even obtain uniqueness for the \textbf{Greater regularity} case, 
	%which is not reached in \cite{hono_transport:22},
	% (this may be false for $f,g$ H\"older continuous),
	we use a kind of regularisation by turbulence which makes negligible the second order term. 
	The condition on $g$ means that the initial function has to be smooth enough such that $\nu \|\nabla ^2g_m\|_{L^\infty}  $ is negligible, obvious if $g \in C^2(\R^d,\R)$.
	Again, this a partial positive answer to the question (Q3) in \cite{ciam:crip:spir:20}, but we fail to consider uniqueness in a rough (negative regularity) case,
	%, because there is no possibility to balance the $m$ with $\nu$, 
	see Remark \ref{Rem_uniq_negativ} further.

	In this H\"older case, the product $\langle b, \nabla u \rangle$ falls into the usual case of the para-product by the Bony's microlocal analysis \cite{bony:81}.
	% of two bounded functions.
	In the negative regularity case, we again obtain a result on the \textit{limit H\"older modulus} of the product of distributions.

	%for more details see \cite{hono_transport:22}.
	
\end{remark}

%The usual non-uniqueness is not surprising even in a H\"older framework where there are
%%is no contradiction with 
%usual counter-examples of non-uniqueness of solution in the H\"older space. 
%
%
%We succeed to obtain a uniqueness result only for a given choice of sequence of vanishing viscosity, see Definition \ref{DEFINITION_Unique} further;  but we still fail to obtain uniqueness of the definition of this distributional product. The sole case of uniqueness is established for $b$ H\"older continuous function which is obviously not a challenging case in term of functions product.

%\subsection{Some condition of existence}
%\label{sec_exist}

For the sake of completeness, we introduce some examples of coefficient $b$ where the existence of solution $u \in L^\infty([0,T],C^\gamma_b(\R^d,\R^d))$ is granted.
In particular, in the following section, we detail a framework taking into account the Peano counter-example and the Coulombian interactions.

%\begin{PROP}\label{Prop_existence}
%	If $b \in B_{\infty,\infty}^{-\beta}$ is such that there is (not necessary unique) a flow $\theta_{ s,t }(\cdot) \in C^1(\R^d,\R^d )$ satisfying 
%	\begin{equation*}
%		\theta_{ s,t }(x)= x + \int_s^t b(s,\theta_{ \tilde s,t }(x)) d\tilde s.
%	\end{equation*}
%	then there is a solution $u \in L^\infty([0,T], C^{\gamma}_b(\R^d,\R^d)) $ of \eqref{transport_equation} as soon as $f \in L^{\infty}([0,T],C^\gamma(\R^d, \R^d))$ and $g \in  C^\gamma_b(\R^d,\R^d)$.
%\end{PROP}
%%\begin{proof}
%The analysis follows exactly the proof of Theorem \ref{THEO_SCHAU_non_bounded_optimal}, and considering the full H\"older modulus, in particular the case $|x-x'|$ separated from $0$ as presented in Section \ref{sec_comment_Holder_modulus}.
%%\end{proof}

\subsection{The flow $\theta_{ s,t }$ under hypothesis \A{A}}
\label{sec_example_b}

\subsubsection{A structural example in $\R^d$}

%\textcolor{black}{Assumption base on this example :
If the coefficient of \eqref{transport_equation} is, for any $x \in \R^d$,
\begin{equation*}
	b(x)= c (\nabla \psi (x))^{-1},
\end{equation*}
where $\psi : \R^d \to \R^d$ is a function  such that
\footnote{Which is granted if there is $c>0$ such that $\forall x,y \in \R^d$, $c|y|^2<\langle y, \nabla \psi(x)y\rangle <c^{-1}|y|^2 $ yielding that $b \in L^\infty(\R^d,\R^d)$.}
%there is $c>0$ such that $\forall x \in \R$, $c<\psi'(x)<c^{-1} $.
\begin{equation*}
	\forall (\tau,x) \in [0,T] \times \R, \	\frac{ \nabla \psi  (x)}{\nabla  \psi (\psi^{-1}(c\tau +\psi (x)))}<+ \infty.
\end{equation*}

Then the associated flow is
\begin{equation*}
	\theta_{ s,t } (x)=\psi ^{-1}(c(t-s)+\psi (x)).
\end{equation*}
Indeed, differentiating w.r.t. $s$ gives
\begin{equation*}
	\dot \theta_{ s,t }(x)= c \nabla (\psi ^{-1})(c(t-s)+ \psi (x))= \frac{c}{\nabla \psi \circ \psi ^{-1} (c(t-s)+ \psi (x))}= b(\psi ^{-1}(c(t-s)+\psi (x)))= b(\theta_{ s,t }(x)).
\end{equation*}
Furthermore,
\begin{equation*}
	\nabla \theta_{ s,t } (x)=\frac{ \nabla \psi  (x)}{ \nabla \psi  (\psi^{-1}(c(t-s)+\psi (x)))},
\end{equation*}
which is in $L^\infty$.

Recalling that we obtain the example in Section \ref{sec_example_polynome} below, taking $\psi(x)= |x|^{1+\beta}$.
\\
%
%The limit hypothesis of $\theta_{ s,t }^m$ stated in Theorem \ref{THEO_SCHAU_non_bounded_optimal} means here, that 
%\begin{equation*}
%	\psi_m ^{-1}(C(t-s)+\psi_m (x)),
%\end{equation*}
%with $\psi_m := \rho_m \star \psi$, 
%converges in $C^1(\R^d, \R^d)$.
 \subsubsection{A polynomial example}
 \label{sec_example_polynome}
 
 %\textbf{Example :}
 
 For a given constant $c>0$, and $\beta \geq 0$, if the transport coefficient write\footnote{The definition of the norm $|x|$ does not change the analysis. The term $x|x|^{-1}$ stands for a multidimensional version of the ${\rm sign}$ function.}
 \begin{equation}\label{exemple_poly}
 b(x)= c x |x|^{-(1+\beta )} \in B_{\infty,\infty}^{-\beta}(\R^d , \R^d),
 \end{equation}
 then we can see that a flow can be written as
 %TIf $b(x)=\beta {\rm sgn}(x) |x|^\alpha$, $\alpha \in (- \infty,1]$, then
 \begin{equation}\label{theta_exemple}
 \theta_{ s,t }(x)= 
 %	\begin{cases}
 %\Big ( \beta (1-\alpha)(t-s)+x^{1-\alpha}\Big )^{\frac{1}{1-\alpha}}, \ \text{if } x\geq 0, \\
 %	{\rm sgn}(x)
 \frac{x}{|x|}	\Big ( c (1+\beta)(t-s)+|x|^{1+\beta}\Big )^{\frac{1}{1+\beta}}.
 %\ \text{if } x< 0,
 %	\end{cases}
 \end{equation}
 Indeed, differentiating in $s$, 
 \begin{equation*}%\label{theta_exemple}
 \dot \theta_{ s,t }(x)= 
 %	\begin{cases}
 %\Big ( \beta (1-\alpha)(t-s)+x^{1-\alpha}\Big )^{\frac{1}{1-\alpha}}, \ \text{if } x\geq 0, \\
 %	{\rm sgn}(x)
 %\frac{x}{|x|} \beta 	\Big ( \beta (1-\alpha)(t-s)+|x|^{1-\alpha}\Big )^{\frac{1}{1-\alpha}-1}
 %=
 -	\frac{x}{|x|} c 	\Big ( c (1+\beta)(t-s)+|x|^{1+\beta}\Big )^{-\frac{\beta}{1+\beta}}
 .
 %\ \text{if } x< 0,
 %	\end{cases}
 \end{equation*}
 and 
 \begin{eqnarray*}
 	b(\theta_{ s,t }(x))
% 	&=& c 	\frac{x}{|x|}	\Big ( c (1+\beta)(t-s)+|x|^{1+\beta}\Big )^{\frac{1}{1+\beta}} |	\frac{x}{|x|}	\Big ( c (1+\beta)(t-s)+|x|^{1+\beta}\Big )^{\frac{1}{1+\beta}}|^{-(1+\beta )} 
% 	\nonumber \\
 	&=& c 	\frac{x}{|x|}	\Big ( c (1+\beta)(t-s)+|x|^{1+\beta}\Big )^{-\frac{\beta }{1+\beta}} 
 	\nonumber \\
 	&=&-\dot \theta_{ s,t }(x).
 	%|	\frac{x}{|x|}	\Big ( c (1-\beta)(t-s)+|x|^{1-\beta}\Big )^{\frac{1}{1-\beta}}|^{-(1+\beta )} 
 \end{eqnarray*}
 Furthermore, taking the gradient of the flow,
 \begin{equation}%\label{theta_exemple}
 \nabla	\theta_{ s,t }(x)= 
 \frac{(\beta (1-\beta)(t-s)+|x|^{1+\beta})^{-\frac{\beta}{1+\beta}}}{|x|^{-\beta}}.
 %\ \text{if } x< 0,
 %	\end{cases}
 \end{equation}
 We have, for $\beta\geq 0$, and 
 \begin{equation}%\label{theta_exemple}
 \lim_{x \to 0}	\nabla \theta_{ s,t }(x) =
 \begin{cases}
 0 \text{ if } s<t, \\
 1 \text{ if } s=t.
 \end{cases}
\text{ and }
% \nonumber \\
 \lim_{|x| \to +\infty}	\nabla \theta_{ s,t }(x) =  1.
 %\begin{cases}
 %	0 \text{ if } s<t, \\
 %	1 \text{ if } t=s.
 %\end{cases}
 \end{equation}
 In other words, for any $0\leq s \leq t$, $\nabla \theta_{ s,t } \in L^\infty$.
 \\
%  \textcolor{black}{ICIIII CUTTER la singularitee et passer à la limite}
 
 In order to illustrate an example of a singular $b$ satisfying all conditions \A{A}, let us consider, for any $n >0$, 
 \begin{equation*}%\label{exemple_poly}
 b^{n}(x)=   c \frac {x}{|x|} (|x|+n^{-1})^{-\beta},
 \end{equation*}
see Appendix Section \ref{sec_free_mol} for the non importance of the mollification procedure.

We deduce from \eqref{theta_exemple}, the ``almost" associated flow,
 \begin{equation*}
 \tilde \theta_{ s,t }^n(x)= 
 %	\begin{cases}
 %\Big ( \beta (1-\alpha)(t-s)+x^{1-\alpha}\Big )^{\frac{1}{1-\alpha}}, \ \text{if } x\geq 0, \\
 %	{\rm sgn}(x)
 \frac{x}{|x|}	\Big ( c (1+\beta)(t-s)+|x|^{1+\beta}+ n^{-1}\Big )^{\frac{1}{1+\beta}}.
% \textcolor{black}{=: \theta_{ s,t }^n\big (\frac{x}{|x|}(|x|^{1+\beta }+n^{-1})^{\frac{1}{1+\beta}} \big )} ,
 %\ \text{if } x< 0,
 %	\end{cases}
 \end{equation*}
We indeed get, 
%$\theta_{ t,t }^n(x)=\frac{x}{|x|}(|x|^{1+\beta }+n^{-1})^{\frac{1}{1+\beta}}\textcolor{black}{=\tilde \theta_{ t,t }^n\big (\frac{x}{|x|}(|x|^{1+\beta }+n^{-1})^{\frac{1}{1+\beta}} \big )} \underset{n \to + \infty}{\longrightarrow} x$ in $C^{1}(\R^d,\R^d)$ and
\begin{eqnarray*}
	\dot {\tilde \theta}_{ s,t }^n(x) &=&-	\frac{x}{|x|} c 	\Big ( c (1+\beta)(t-s)+|x|^{1+\beta}+ n^{-1}\Big )^{-\frac{\beta}{1+\beta}}
	\nonumber \\
	% 	&=&- 	\frac{x}{|x|} c 	\Big ( c (1+\beta)(t-s)+|x|^{1+\beta}\Big )^{-\frac{\beta}{1+\beta}}
	% 	\nonumber \\
	&=& - b^n( \theta_{ s,t }^n(x)).
	%=-b^n\big (\textcolor{black}{\tilde \theta_{ s,t }^n\big (\frac{x}{|x|}(|x|^{1+\beta }+n^{-1})^{\frac{1}{1+\beta}} \big )} \big ).
\end{eqnarray*}
Next, we have the following gradient identity
\begin{equation}%\label{theta_exemple}
	\nabla \tilde 	\theta_{ s,t }^n(x)
%	= \nabla \textcolor{black}{\tilde \theta_{ s,t }^n\big (\frac{x}{|x|}(|x|^{1+\beta }+n^{-1})^{\frac{1}{1+\beta}} \big )}
	=
	\frac{(\beta (1-\beta)(t-s)+|x|^{1+\beta}+n^{-1})^{-\frac{\beta}{1+\beta}}}{|x|^{-\beta}} \in L^\infty(\R^d,\R^d).
	%\ \text{if } x< 0,
	%	\end{cases}
\end{equation}
However, we get from \eqref{theta_exemple}, before transportation
 $\tilde \theta_{ t,t }^n(x)=\frac{x}{|x|}(|x|^{1+\beta }+n^{-1})^{\frac{1}{1+\beta}}$
 %\textcolor{black}{=: \theta_{ t,t }^n\big (\frac{x}{|x|}(|x|^{1+\beta }+n^{-1})^{\frac{1}{1+\beta}} \big )}$
  which goes to %\underset{n \to + \infty}{\longrightarrow} 
 $x$ in $C^{1}(\R^d,\R^d)$ as $n \to + \infty$.
 \\
 
 In order to get the right terminal condition of the flow, we define the corrected flow $\theta_{ s,t }^n$ by
  \begin{equation*}
 	\tilde \theta_{ s,t }^n(x) %= 
 	%	\begin{cases}
 		%\Big ( \beta (1-\alpha)(t-s)+x^{1-\alpha}\Big )^{\frac{1}{1-\alpha}}, \ \text{if } x\geq 0, \\
 		%	{\rm sgn}(x)
 	%	\frac{x}{|x|}	\Big ( c (1+\beta)(t-s)+|x_i|^{1+\beta}+ n^{-1}\Big )^{\frac{1}{1+\beta}}.
 		 \textcolor{black}{=: \theta_{ s,t }^n\big (\frac{x}{|x|}(|x|^{1+\beta }+n^{-1})^{\frac{1}{1+\beta}} \big )} .
 		%\ \text{if } x< 0,
 		%	\end{cases}
 \end{equation*}
We  equivalently\footnote{$y=\frac{x}{|x|}(|x|^{1+\beta }+n^{-1})^{\frac{1}{1+\beta}}$ implies $\frac{y}{|y|}=\frac{x}{|x|}$ and $|y|^{1+\beta}= |x|^{1=\beta}+n^{-1}$ which implies that $x= \frac{y}{|y|}(|y|^{1+\beta}-n^{-1})^{\frac{1}{1+\beta}}$.} define, for any  $y \in \R^d$,
  \begin{equation*}
	\theta_{ s,t }^n(y) 
		:= \tilde \theta_{ s,t }^n\big (\frac{y}{|y|}(|y|^{1+\beta }-n^{-1})^{\frac{1}{1+\beta}} \big ) .
\end{equation*}

The analysis for this example follows exactly the computations below,
%, changing the \textit{freezing} point $\xi = \frac{x}{|x|}(|x|^{1+\beta }+n^{-1})^{\frac{1}{1+\beta}}$ instead $x$,
 see Section \ref{sec_holder} further.
\subsection{On the product of distributions}
\label{sec_prod_distri}

%\textcolor{black}{A faire sausameter ?????}

The sense of some particular products of distributions is very challenging, and is related with many long-standing problems.
For instance, Hairer in \cite{hair:14} introduce a regularity structure theory which after some renormalisation allows to handle with products of distribution, and to give a meaning of stochastic partial differential equation such as KPZ \cite{hair:13}.
However, such renormalisation leads to blowing-up constants which is not the case in Theorem \ref{THEO_SCHAU_non_bounded}; the price that we have to pay is the potential non-uniqueness of the limit.
\\

From the different formulations above, we define different meanings of the product $\langle b,\nabla u\rangle$.
First of all, let us remark that by rough \textit{a priori} controls, see Lemma \ref{lemma_apriori} below,
% Appendix Sections \ref{sec_D2_u_m_nu} and \ref{sec_D3_u_m_nu},
%	\sup_{t \in [0,T]}	[ u(t,\cdot)]_{\gamma,0}^{\eqref{ineq_Holder_THEO}}
\begin{eqnarray*}
[\Delta u^{m,\nu}(t,\cdot)]_\gamma &\leq& 2 ^{1-\gamma}\|\nabla^2  	u^{m,\nu}(t,\cdot) \|_{L^\infty}^{1-\gamma} \|\nabla^3 	u^{m,\nu}(t,\cdot) \|_{L^\infty}^\gamma
	\nonumber \\
	&\leq & 
	C	 	m^{2+\gamma}\big ( t \| f\|_{L^\infty(C^\gamma)}+[g]_\gamma\big ) 
	\exp(C  m^{1-\beta}t \|b\|_{L^\infty( B_{\infty,\infty}^{-\beta})}).
\end{eqnarray*}
Hence, if 
\begin{equation}\label{CONDINU_prod_distr}
	\nu \ll 
C	 	m^{-(2+\gamma)}\big ( t \| f\|_{L^\infty(C^\gamma)}+[g]_\gamma\big )^{-1} 
\exp(-C t m^{1+ \beta } \|b\|_{L^\infty( B_{\infty,\infty}^{-\beta})}),
\end{equation}  %implied by \eqref{CONDINU}, 
then, up to subsequence choice, $\nu  \Delta u^{m,\nu}(t,\cdot) \underset{(m,\nu)\to(+ \infty,0)} \longrightarrow 0$ in  $C_b^\gamma(\R^d,\R)$.

Also, for a given $t \in [0,T]$, we see from the definition of \textit{mild vanishing viscous} solution, up to subsequence choice according to the condition \eqref{CONDINU_prod_distr}, that
\begin{equation}\label{distribution_product_mild}
\lim_{(m,\nu)\to (+\infty,0)}	\int_0^t \langle b_m(s,\cdot ),  \nabla u^{m,\nu}(s,\cdot )\rangle ds= g-u(t,\cdot )-\int_0^t f(s,\cdot) ds \in C_b^\gamma(K,\R),
\end{equation}
for any compact $K \subset \R^d$.
We highly point out that $b$ lies in any arbitrary negative regularity in space $L^\infty([0,T];B_{\infty,\infty}^{-\beta}(\R^d,\R^d))$, $\beta \in \R_+$, and $\nabla u (s,\cdot ) \in B_{\infty,\infty}^{-1+\gamma}(\R^d,\R^d)$.

In other words, thanks to the time averaging, we get a new para-product condition.
Indeed, in general from Bony's microlocal analysis \cite{bony:81}, 
%see also \cite{gubi:imke:perk:15},
 for all $\varphi \in B_{\infty,\infty}^{\alpha_1}$ and $\psi \in B_{\infty,\infty}^{\alpha_2}$, we have
\begin{equation}\label{condi_Bony}
\phi \psi \in B_{\infty,\infty}^{\alpha_1\wedge\alpha_2} \text{, if } \alpha_1+\alpha_2>0.
\end{equation}
However, the uniqueness of the limit in \eqref{distribution_product_mild} seems to be false in general.
%l, which is consistent with the non-uniqueness of classic solution for $b$ H\"older continuous, see example \eqref{counter_example}.

Also in the weak formulation, from Theorem \ref{THEO_SCHAU_non_bounded}, we obtain, if $\nabla \cdot b=0$ and  $-\beta=\tilde \gamma < \gamma$ (regularity condition weaker than \eqref{condi_Bony}), a distributional meaning of $\langle b(s,\cdot ),  \nabla u(s,\cdot )\rangle $, but we still do not know in this case if the limit is unique.
% But the regularity condition is still weaker than \eqref{condi_Bony}.
\\

Finally, we point out that in the \textbf{Positive regularity} framework, the hypothesis matches with the Bony's para-product condition \eqref{condi_Bony}.
Indeed, for any $t \in [0,T]$, $b(t,\cdot) \in B_{\infty,\infty}^{\alpha}$, $\alpha=-\beta $, and $ \nabla u(t,\cdot) \in B_{\infty,\infty}^{-1+\gamma}$ with $\alpha-1+\gamma>0$.
%But we do not 
We are able quantify the regularity, $\langle b(t,\cdot) , \nabla u(t,\cdot) \rangle \in B_{\infty,\infty}^{-1+\gamma}$ by para-product detailed further in Section \ref{sec_control_partial_t}. Moreover, the time averaging version $\int_0^t \langle b(s,\cdot) , \nabla u(s,\cdot)\rangle ds$ in the sense of \eqref{distribution_product_mild} is $\gamma$-H\"older. We remark, as $ \alpha \wedge (-1+\gamma)= -1+\gamma$ then there  is  a $+1$ gain of regularity comparing with the usual para-product result.

\mysection{Proof of Theorem \ref{THEO_SCHAU_non_bounded}}
\label{sec_Proof_transport}
%\begin{proof}
\subsection{Parabolic approximation procedure}
\label{sec_approx}

	Let us first smoothen the drift and the source functions of the parabolic approximation, 
	%to directly derive 
	\begin{equation}
	\label{KOLMOLLI}
	\begin{cases}
	\partial_t  u^{m, \nu}(t,x)+   \langle b_m, \nabla   u ^{m, \nu} \rangle (t,x) -  \nu \Delta  u ^{m, \nu} (t,x)=
	  f_m(t,x),\ (t,x)\in (0,T]\times \R^{d},\\
	u^{m, \nu}(0,x)=  g_m(x),\ x\in \R^{d},
	\end{cases}
	\end{equation}
	where the mollified functions are defined by 
%	Let us rewrite the system by
%	\begin{equation*}
%	%\label{KOLMO}
%	\begin{cases}
%	\partial_t  u_v(t,x)+   b_m(t,x)\cdot  \nabla   u(t,x) -  \nu \nabla^2  u(t,x)=
%	[ b_m- b] (t,x)\cdot  \nabla   u(t,x) 
%	-  f_m(t,x),\ (t,x)\in (0,T]\times \R^{d},\\
%	 u(0,x)=  g_m(x),\ x\in \R^{d},
%	\end{cases}
%	\end{equation*}
%	with 
	\begin{eqnarray}\label{def_b_epsilon}
	b_m (t,x) &:=&\int_{\R^d} \rho_m (x-y) b (t,y) dy,
	\nonumber \\
		f_m (t,x) &:=& \int_{\R^d} \rho_m (x-y) f (t,y) dy,
			\nonumber \\
		g_m (t,x) &:=& \int_{\R^d} \rho_m (x-y) g (y) dy,
	\end{eqnarray}
	for $\rho_m(\cdot ):= m^{d} \rho(m \cdot )$ where $\rho$ is a non-negative smooth function $\rho_m$, such that $\int_{\R^d} \rho_m(x-y) dy =1$.
In particular, we choose $\rho= h_1$, the heat kernel defined in \eqref{def_norm_besov_inhomo}. In Appendix Sections \ref{sec_conv_molli_Dpsi} and \ref{sec_free_mol}, we see that the limit of $b_m$ does not depend on the choice of the mollification procedure, whereas the limit of $u^{ m,\nu}$ potentially does.

In our analysis, we use some point-wise controls of the mollified functions or distributions whose blowing-up in the regularisation parameter $m$ is stated below.

\begin{lemma}\label{lemme_ineq_b_m}
	For all $m >1$, and $\beta<0$, 
	if $b \in L^\infty([0,T], B_{\infty,\infty}^{-\beta}(\R^d,\R^d))$,
	%under \A{S${}_{\tilde \gamma,\gamma}$}, 
	we have for any $(t,x) \in [0,T] \times \R^d$:
	\begin{eqnarray}\label{ineq_b_Db_m_lambda}
	|b_m(t,x)| &\leq& C m^{\beta} \|b\|_{L^\infty(  B_{\infty,\infty}^{-\beta})}, 
	\nonumber \\
	|\nabla  b_m(t,x)| &\leq& C
	m^{1+ \beta }  \| b\|_{L^\infty(   B_{\infty,\infty}^{- \beta})} ,
	\end{eqnarray}
	%\begin{equation}
	%|b_m^\lambda(t,x)-b_m^\lambda(t,x')| \leq \lambda ^2
	% (m^{-1})^{\frac{-1-\tilde \gamma}{2}}  \| F\|_{L^\infty( B_{\infty,\infty}^{- \tilde \gamma})} |x-x'| 
	%\end{equation}
	where $\nabla b_m$ stands for the Jacobian matrix of $b_m$;
	also if $\beta \leq 0$,
	\begin{eqnarray*}%\label{ineq_b_Db_m_lambda}
	|b_m(t,x)| &\leq&  \|b\|_{L^\infty}, 
	\nonumber \\
	|\nabla  b_m(t,x)| &\leq& C
	m^{1+\beta}  \| b\|_{L^\infty( C^{- \beta})} .
	\end{eqnarray*}
\end{lemma}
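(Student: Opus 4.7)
The plan is to observe that the paper's choice $\rho=h_1$ makes the mollifier a rescaled Gaussian: from $\rho_m(\cdot)=m^d h_1(m\cdot)$ and the explicit formula for $h_v$ in \eqref{def_hv}, a direct computation gives the identification $\rho_m = h_{1/m^2}$. Hence $b_m(t,\cdot)=h_{1/m^2}\star b(t,\cdot)$ is literally the heat-semigroup applied to $b(t,\cdot)$ at time $1/(2m^2)$, and the announced bounds should fall out of the thermic characterisation \eqref{def_norm_besov_inhomo} of negative Besov norms combined with elementary Gaussian estimates.

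For the case $\beta>0$, the first bound is almost a direct reading of the Besov norm. Since $-\beta<0$, one may take $m_0=0$ in \eqref{def_norm_besov_inhomo}, so that
\[
\sup_{v\in(0,1]}v^{\beta/2}\|h_v\star b(t,\cdot)\|_{L^\infty}\;\leq\;\|b(t,\cdot)\|_{\ddot B_{\infty,\infty}^{-\beta}}\;\leq\;\|b\|_{L^\infty(B_{\infty,\infty}^{-\beta})}.
\]
Specialising to $v=1/m^2\le 1$ (allowed since $m>1$) yields $|b_m(t,x)|\leq m^\beta\|b\|_{L^\infty(B_{\infty,\infty}^{-\beta})}$. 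For the gradient estimate, I would exploit the semigroup property $h_{1/m^2}=h_{1/(2m^2)}\star h_{1/(2m^2)}$ and write
\[
\nabla b_m(t,\cdot)=\bigl(\nabla h_{1/(2m^2)}\bigr)\star\bigl(h_{1/(2m^2)}\star b(t,\cdot)\bigr).
\]
Young's convolution inequality together with the elementary identity $\nabla h_v(z)=-v^{-1}z\,h_v(z)$ — which gives $\|\nabla h_v\|_{L^1}=v^{-1}\int|z|h_v(z)\,dz\leq Cv^{-1/2}$ by Gaussian absorption as in \eqref{ineq_absorb} — leads to
\[
\|\nabla b_m(t,\cdot)\|_{L^\infty}\leq C(2m^2)^{1/2}\cdot(2m^2)^{\beta/2}\|b\|_{L^\infty(B_{\infty,\infty}^{-\beta})},
\]
which is the claimed $Cm^{1+\beta}$ bound.

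For $\beta\leq 0$ the $L^\infty$ estimate is trivial since $\int\rho_m=1$. For the gradient I would use the cancellation $\int\nabla\rho_m(x-y)\,dy=0$ to write
\[
\nabla b_m(t,x)=\int_{\R^d}\nabla\rho_m(x-y)\bigl[b(t,y)-b(t,x)\bigr]dy,
\]
bound the bracket by $\|b\|_{L^\infty(C^{-\beta})}|y-x|^{-\beta}$, and use $\nabla\rho_m(z)=-m^2z\,\rho_m(z)$ to reduce to $m^2\int|z|^{1-\beta}\rho_m(z)\,dz$; a change of variables $u=mz$ gives exactly $Cm^{1+\beta}$.

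No step is genuinely delicate; the main point requiring care is that one must work with the inhomogeneous Besov norm and choose the heat-kernel scale to be exactly $1/m^2$, so that the constant $C$ produced does not depend on $\beta$ (this is what the authors emphasise in the notational convention of Section \ref{sec_def}). The semigroup splitting for the gradient bound is what avoids an unnecessary factor involving $\beta$ that would appear if one tried to read $\nabla b_m$ directly off a thermic definition with $m_0=1$.
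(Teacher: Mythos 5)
Your proof is correct, and for the main (gradient, $\beta>0$) estimate it takes a genuinely different route from the paper. The paper keeps the derivative on $b$: it writes $\nabla_x b_m = h_{m^{-2}}\star\nabla b$, reads off the thermic norm of $\nabla b$ at scale $v=m^{-2}$, and then invokes the mapping property $\|\nabla b\|_{L^\infty(B_{\infty,\infty}^{-1-\beta})}\leq C\|b\|_{L^\infty(B_{\infty,\infty}^{-\beta})}$ (cited from Peetre and proved in the appendix, Corollary~\ref{coro_Besov_ineq}, for part of the range). You instead keep the derivative on the kernel, split $h_{1/m^2}=h_{1/(2m^2)}\star h_{1/(2m^2)}$, and combine Young's inequality with $\|\nabla h_v\|_{L^1}\leq Cv^{-1/2}$ and the thermic bound $\|h_{1/(2m^2)}\star b\|_{L^\infty}\leq(2m^2)^{\beta/2}\|b\|_{L^\infty(B_{\infty,\infty}^{-\beta})}$. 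Your argument is more self-contained (no differentiation theorem for Besov spaces is needed), while the paper's argument is shorter modulo its appendix lemma; your first bound and your treatment of $\beta\leq 0$ (which the paper simply calls standard) coincide with the intended arguments. One small caveat: your claim that the semigroup splitting yields a constant free of $\beta$ is not quite accurate, since the split produces the factor $2^{(1+\beta)/2}$; this is harmless for the lemma as used (and the paper's own route, via Corollary~\ref{coro_Besov_ineq} or Peetre, also carries index-dependent constants), but you should either accept the mild $\beta$-dependence or absorb it by noting $2^{(1+\beta)/2}m^{1+\beta}\leq C(\sqrt{2}\,m)^{1+\beta}$.
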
 

\begin{proof}[Proof of Lemma \ref{lemme_ineq_b_m}]
	From the mollification definition \eqref{def_b_epsilon}, we see, from \eqref{def_hv}, that $\rho_m=h_{m^{-2}}$, and from our scaling choice, we get for any $m>1$
	\begin{eqnarray*}
	\big | b_m(t,x)\big |  
	& =& 
	\big | \int_{\R^d}   h_{m^{-2}}( x-y) b( t,y) dy \big | 
	\nonumber \\
	&\leq&     m^{\beta}  \sup_{\tilde m^{-2} \in [0,1], \ x \in \R^d} 
	\tilde m^{-\beta}
	\big |  \int_{\R^d}   h_{\tilde m^{-2}}( x- y) b( t,  y) dy \big | .
	\end{eqnarray*}
	We readily get by the thermic definition of the Besov norm \eqref{def_norm_besov_inhomo}:
	% semi-norms \eqref{def_norm_besov_homo}:
	\begin{equation*}
	 | b_m(t,x) | 
	\leq 
	m^{\beta}  \| b\|_{L^\infty( \ddot  B_{\infty,\infty}^{-\beta})}  \leq 
	 m^{\beta}  \| b\|_{L^\infty(   B_{\infty,\infty}^{-\beta})}.
	\end{equation*}
	
	%In particular, if $ \lambda= m^{\frac{1}{2}}$,
	%\begin{equation}
	%|b_m^\lambda(t,x)| \leq \lambda  \| F\|_{L^\infty( B_{\infty,\infty}^{\tilde \gamma}}.
	%\end{equation}
%	To prove the second inequality in \eqref{ineq_b_Db_m_lambda}, we  differentiate the Gaussian  kernel of the mollification and  by integration by parts we get:
%	\begin{eqnarray*}
%		\big |  D_x b_m(t,x) \big | 
%		%&=&
%		% \lambda 
%		% \big |  D_x  \int_{\R^d} h_{m^{-1}}(\lambda x- y) \cdot b(\lambda ^2 t,  y) dy \big |
%		%\nonumber \\
%		&=&
%		\big |  \int_{\R^d}D_x  h_{m^{-2}}( x- y)  b( t,  y) dy \big | 
%		\nonumber \\
%		&=&   %\sup_{m^{-1} \in [0,1], \ x \in \R^d}
%		%m^{-\frac{1+\tilde \gamma}{2}}
%		\big |  \int_{\R^d} h_{m^{-2}}( x- y) D_y \cdot b( t,  y) dy \big | .
%	\end{eqnarray*}
	For the second inequality, it is known that for any $t \in [0,T]$, $\nabla  b( t,  \cdot) \in   B_{\infty,\infty}^{-1-\beta}$, see Theorem 9 of Chapter 3 in \cite{peet:76},
	% Peetre's ``New Thoughts on Besov Spaces'': 
	in particular
	 if $\beta  \in  (0,-1)$ see Corollary \ref{coro_Besov_ineq} in Appendix Section \ref{sec_interpom}.
	 Hence,
	\begin{eqnarray*}
		\big |  \nabla _x b_m(t,x) \big | 
		&\leq&    m^{1+ \beta }   \sup_{ \tilde m^{-2} \in [0,1], \ \tilde x \in \R^d}
		\tilde m^{-\beta} 
		\big | \int_{\R^d} h_{\tilde m^{-2}}(\tilde x- y) \nabla _y b(t,  y) dy \big | 
		\nonumber \\
		&=&
		 m^{1+ \beta }  \| \nabla  b\|_{L^\infty( \ddot {B}_{\infty,\infty}^{-1-\beta })}
		.
	\end{eqnarray*}
%	\textcolor{black}{from the well-known semi-norm equivalence: $\forall \alpha \in \R$, $\| \cdot \|_{L^\infty( B_{\infty,\infty}^{\alpha })} \asymp \| D \cdot \|_{L^\infty( B_{\infty,\infty}^{-1+ \alpha })}$, see in Appendix Section \ref{sec_norm_equi} 
%		Proposition \ref{Prop_Df_besov_general}.}
	We deduce that there is a constant $C=C(d)>0$ such that:
	\begin{equation*}
	\big |  \nabla _x b_m(t,x) \big | 
	\leq  C 
	 m^{1+ \beta }  \| b\|_{L^\infty( B_{\infty,\infty}^{-\beta})}
	,
	\end{equation*}
	see Corollary \ref{coro_Besov_ineq} in Appendix Section \ref{sec_interpom}.
	
	The two last inequalities, i.e. for the case $\beta \leq 0$, are standard.
\end{proof}

	\subsubsection{\textit{Proxy} choice}
\label{sec_proxy_choice}

%	To derive the other estimates from Duhamel formulation, w
	We approximate the Cauchy problem around the flow associated to the smooth function $b_ m$, which is unique by Cauchy-Lipschitz theorem. % as $ b_ m$ is Lipschitz continuous. 
	Namely, let us consider the unique function defined %for any $(t,x) \in [0,T] \times \R^d$ and ,
	 for any  \textit{freezing} point $(\tau,\xi) \in [0,T] \times  \R^d$ by,
	\begin{equation}\label{def_theta}
	\theta_{s,\tau}^m(\xi):= \xi+ \int_s^\tau  b_m(\tilde s,\theta_{\tilde s,\tau}^m(\xi)) d \tilde s, \ s \in [0,\tau] .
	\end{equation}
	In other words, for any $t \in [0,\tau]$,
	\begin{equation*}%\label{def_theta}
	\dot \theta_{t,\tau}^m(\xi)=- b_m(t,\theta_{t,\tau}^m(\xi)) , \ 	 \theta_{\tau ,\tau}^m(\xi) =\xi .
	\end{equation*}
	We again rewrite the system of linear parabolic PDEs \eqref{def_b_epsilon},
		\begin{equation}
	\label{KOLMOLLI_xi}
	\partial_t  u^{m,\nu} (t,x)
	+  b_m(t,\theta_{t,\tau}^m(\xi))\cdot  \nabla  u^{m,\nu}(t,x) -   \nu \Delta  u^{m,\nu}(t,x)
	%\nonumber \\
	 =
	%	[ b_m(t,\xi)- b_m]
		b_{\Delta}^{m} [\tau,\xi](t,x) \cdot  \nabla  u^{m,\nu} (t,x) 
%	+[ b_m- b] \cdot  \nabla   u (t,x)  
	+  f_m(t,x),
\end{equation}
where we have %define
	\begin{equation}\label{def_Delta_b}
	  b_{\Delta}^{m} [\tau,\xi] (t,x):= b_m(t,\theta_{t,\tau}^m(\xi))-b _m(t,x).
	\end{equation}
%	and
%	\begin{equation}\label{def_B}
%	B^{m} (t,x):= [b_m- b](t,x).
%	\end{equation}

	For such a fixed freezing point $(\tau,\xi) \in [0,T] \times \R^d$, we use the corresponding Duhamel formula:
	\begin{equation}\label{Duhamel_u}
		 u^{m, \nu} (t,x) = \hat P^{\tau,\xi}  g_m (t,x) + \hat G^{\tau,\xi} f_m(t,x) 
		+ \hat  G^{\tau,\xi}  \big ( b _{\Delta}^{m} [\tau,\xi]  \cdot  \nabla  u^{m, \nu} \big )(t,x) ,
		%+ \int_0^t \hat P_{t-s} [ _m(s,\theta^m_s(\xi))- _m(s,\cdot )] \cdot  \nabla  u(s,\cdot ) ds 
		%+\hat G \big [ B _m  \cdot  \nabla  u \big ](t,x) ,
	\end{equation}
	where we define, for any $   f\in C^{1,2}_0((0,T]\times \R^{d},\R )$, the Green operator associated with the perturbed parabolic equation  with constant coefficients \eqref{KOLMOLLI_xi},
	%{KOLMOLLI}, % and $m\ge 0 $ \textit{small enough},  
	\begin{equation}\label{def_hat_G}
	\forall (t,x) \in (0,T]\times\R^{d}, \ \hat  G ^{\tau,\xi}   f(t,x):= \int_0^{t}  \int_{\R^{d}}  \hat{p}^{\tau,\xi} (s,t, x,y)   f (s,y) \ dy \ ds,
	\end{equation}
	and for any $   g\in C^{2}_0( \R^{d},\R)$, the associated semi-group
	\begin{equation}\label{def_hat_P}
	\hat  P^{\tau,\xi}    g(t,x):=\int_{\R^{d}}\hat  p^{\tau,\xi}  (0,t,x,y)   g(y) \ dy,
	\end{equation}
	where the perturbed heat kernel is
	\begin{equation}\label{def_hat_p}
	\hat{p}^{\tau,\xi}  (s,t,x,y)
	:= \frac{1}{(4\pi \nu (t-s))^{\frac d 2} } 
	\exp \bigg ( -\frac {\left |x+ \int_s^t  b_m (\tilde s,\theta_{\tilde s,\tau }^m(\xi))d \tilde s-y \right|^2}{4\nu(t-s)} \bigg ). 
	%\exp\left (-\frac{|A_{s,t}(\xi)^{-1/2}(x-y)|^2}{4 }\right)
	\end{equation}
	We carefully point out that, from definition \eqref{def_theta}, if $(\tau,\xi) =(t,x)$, 
	\begin{equation*}
	\hat p^{t,x}  (s,t,x,y)
	= \frac{1}{(4\pi \nu (t-s))^{\frac d 2} } 
	\exp \bigg ( -\frac {\left |\theta_{s,t }^m(x)-y \right|^2}{4\nu(t-s)} \bigg ). 
	%\exp\left (-\frac{|A_{s,t}(\xi)^{-1/2}(x-y)|^2}{4 }\right)
	\end{equation*}
%	In particular, for $\xi =x$, we have
%	\begin{equation}\label{def_hat_p}
%	\hat p^x (s,t,x,y)
%	= \frac{1}{(4\pi \nu )^{\frac d 2}}
%		\exp \Big ( -\frac { \left  |\theta_{t}^m(x)-y \right|^2}{4\nu(t-s)} \Big ).
%	%\exp\left (-\frac{|A_{s,t}(\xi)^{-1/2}(x-y)|^2}{4 }\right)
%	\end{equation}
	%\textcolor{black}{Harmoniser les $(t-s)^{-1}$ avec les $\frac{1}{t-s}$...} 
	We have for each $\alpha\in \N^d$ that there is a constant $C_{\alpha}>1$ s.t.
	\begin{eqnarray}\label{FIRST_deriv_CTR_DENS_flot}
	|D^\alpha \hat  p^{\tau,\xi}(s,t,x,y)| 
	&\leq&  
	\frac{C_{\alpha}
		[ \nu (t-s)] ^{-\frac {|\alpha|}2}}{(4\pi \nu (t-s))^{\frac d 2} } 
	\exp \Big ( - C_{\alpha}^{-1}\frac {\big  |x+ \int_s^t  b_m (\tilde s,\theta_{\tilde s,\tau }^m(\xi))d \tilde s-y \big |^2}{4\nu(t-s)} \Big )
	%\bar p_{C_{\alpha}^{-1}\nu  } \Big (s,t,x+\int_0^t b _m (\tilde s,\theta_{\tilde s }^m(\xi))d \tilde s,y \Big )
	\nonumber \\
	&=:& C [ \nu (t-s)]^{-\frac {|\alpha|}2}\bar p^{\tau,\xi}  (s,t,x,y ),
	\end{eqnarray}
	and also, after the derivatives we can choose $(\tau,\xi) =(t,x)$, and $\gamma \in [0,1]$,
	\begin{eqnarray}\label{FIRST_deriv_CTR_DENS_flot_absorb}
	|D^\alpha \hat  p^{t,x}(s,t,x,y)| \times  \big |y-x-\int_s^t b _m (\tilde s,\theta_{\tilde s,\tau }^m(\xi))d \tilde s  \big |^\gamma
	&=&
		|D^\alpha \hat  p^{t,x}(s,t,x,y)| \times  \big |y-\theta_{ s,t }^m(x) \big |^\gamma
	%	&\leq&  C_{\alpha}
	%	\Big ( \nu (t-s)\Big ) ^{-\frac {|\alpha|}2}\bar p_{C_{\alpha}^{-1}\nu  } \Big (s,t,x+\int_0^s  _m (\tilde s,\theta_{\tilde s}^m(\xi))d \tilde s,y \Big )
		\nonumber \\
	&\leq&  C  [\nu (t-s)] ^{-\frac {|\alpha|}2+ \frac{\gamma}{2}} \bar p^{t,x} (s,t,x,y ),
	\end{eqnarray}
	from absorbing property \eqref{ineq_absorb}.
	It also clear, for any $0\leq s<t$, that
	\begin{equation}\label{eq_hat_p}
		\partial_t \hat  p^{\tau,\xi}(s,t,x,y) = \nu \Delta \hat  p^{\tau,\xi}(s,t,x,y) - \langle b _m (t,\theta_{t,\tau }^m(\xi)), \nabla \hat  p^{\tau,\xi}(s,t,x,y) \rangle,
	\end{equation}
	which naturally implies that the function $u^{m,\nu}$ defined in \eqref{Duhamel_u} is indeed solution to \eqref{KOLMOLLI} and to \eqref{KOLMOLLI_xi}.
	\\
	
	Finally, we will marginally use the ``pure" heat kernel already defined in \eqref{def_tilde_p},
	\begin{equation}\label{def_tilde_p}
	\tilde {p}  (s,t,x,y)= 	\hat{p}^{\tau,\xi} \Big  (s,t,x- \int_s^t  b_m (\tilde s,\theta_{\tilde s,\tau }^m(\xi))d \tilde s,y \Big )
	= \frac{1}{(4\pi \nu (t-s))^{\frac d 2} } 
	\exp \bigg ( -\frac {\left |x-y \right|^2}{4\nu(t-s)} \bigg ),
	%\exp\left (-\frac{|A_{s,t}(\xi)^{-1/2}(x-y)|^2}{4 }\right)
	\end{equation}
recalling the  corresponding Green operator 
	\begin{equation}\label{def_tilde_G}
	\forall (t,x) \in (0,T]\times\R^{d}, \ \tilde  G   f (t,x):= \int_0^{t}  \int_{\R^{d}}  \tilde {p} (s,t, x,y)   f (s,y) \ dy \ ds,
	\end{equation}
	and the associated semi-group
	\begin{equation}\label{def_tilde_P}
	\tilde P    g(t,x):=\int_{\R^{d}} \tilde p  (0,t,x,y)   g(y) \ dy.
	\end{equation}
	
	\subsection{Convergence of the solution}
	\label{sec_conv}
	
	To prove that $u^{m,\nu}$ indeed converges toward $u$, the analysis is similar to the one perform in Section \ref{sec_uniq} replacing $\bar u^{m,\bar \nu}$ by the solution
	\begin{equation}
		%	\label{KOLMO_moll_bar_u}
		\begin{cases}
			\partial_t  u (t,x)+ \langle  b(t,x), \nabla  u(t,x) \rangle=f(t,x) ,\ t\in [0,T),\\
			\bar  u_n(0,x)=g(x),
		\end{cases}
	\end{equation}
	supposed to exist in $L^\infty([0,T],C^\gamma(\R^d,\R^d))$ because the flow $	\theta_{s,\tau}(\xi):= \xi+ \int_s^\tau  b(\tilde s,\theta_{\tilde s,\tau}(\xi)) d \tilde s, \ s \in [0,\tau] $ is assumed to be Lipschitz in space.
	
%we have that 
%	Let us denote the mollified flow
%	\begin{equation}\label{def_theta_MOLL}
%		\theta_{s,\tau} \star \rho_m (\xi):= \xi+ \int_s^\tau \rho_m  \star [b(\tilde s,\theta_{\tilde s,\tau}(\xi)) ]d \tilde s, \ s \in [0,\tau] .
%	\end{equation}
%	From the characteristic method \textcolor{black}{voir la représentation parabolique et prendre la limite et les termes freezer donne dans les fonction }, we see that
%	\begin{equation*}
%		\tilde u^m(t,x) = g(\rho_m \star \theta_{ 0,t }(x)) +\int_0^t f(s,\rho_m \star \theta_{ s,t }(x)) ds
%	\end{equation*}
%	is solution to the transport equation:
%	\begin{equation*}
%		\partial_t \tilde u^m + \langle b_m, \nabla u^m  \rangle = f. 
%	\end{equation*} 
%	Because, we suppose that $ \theta_{s,t} \in C^1(\R^d, \R^d)$, we get, for any $\forall \varepsilon>0$:
%	\begin{equation}\label{conv_u_tilde_um}
%		\|u- \tilde u^m \|_{L^\infty(C^{\gamma-\varepsilon})} \leq m^{-\varepsilon} ( [g]_\gamma + T \|f\|_{L^\infty(C^\gamma)} )\|\nabla \theta_{ 0,t }\|_{L^\infty}^\gamma \underset{m \to + \infty}{\longrightarrow}0.
%	\end{equation}
%	Also, 
	Under assumption \eqref{CONDINU_prod_distr}, we have $\nu  \Delta u^{m,\nu}(t,\cdot) \underset{(m,\nu)\to(+ \infty,0)} \longrightarrow 0$ in  $C_b^\gamma(\R^d,\R)$.
	Also under the assumption on the limit flow in \A{A},  %from \eqref{conv_u_tilde_um}, 
	$\forall \varepsilon>0$,
	\begin{equation}\label{convergence_um_nu}
		\|u-  u^{m,\nu} \|_{L^\infty(C^{\gamma-\varepsilon})}   	%\|u- \tilde u^{m} \|_{L^\infty} + 	\|\tilde u^m-  u^{m,\nu} \|_{L^\infty}
		 \underset{(m,\nu)\to(+ \infty,0)} \longrightarrow 0.
	\end{equation}
%up to a sub-sequence selection.

	\subsection{Compactness arguments}
\label{sec_compact}

In this section, we suppose that hypothesis \A{A} is in force, which yields existence of uniform control of $u^{m,\nu}$, because the associated flow $\theta_{ s,t }$ is Lipschitz continuous, see Section \ref{sec_example_b}. 

\subsubsection{Mild vanishing viscous}
\label{sec_mild_conv}
%\textcolor{black}{Ne fonctionne plus !!!!!!!!!!!!!!!!!!!!! FAIRE LE LIEN AVEC UNICITE ???}
In order to pass to the limit $m \to + \infty$, $\nu \to 0$, according to the vanishing condition \eqref{CONDINU}, we consider a subsequence given by \eqref{convergence_um_nu} or by the usual Arzel\`a-Ascoli theorem.
However, this former result is available for uniform continuous function in a compact space.
From the lack of smoothness, uniformly on $\nu$, in space of $u^{m,\nu}(t,\cdot)$ (only $\gamma$-H\"older continuous, $\gamma<1$), we are stuck at a convergence in a compact subset of $\R^d$. For instance, the analysis performed in \cite{hono:21} to get rid of the compactness convergence criterion for quasi-linear equations does not work here as there is no hope to obtain any strong formulation of the PDE \eqref{transport_equation}. 

We do not succeed to obtain any positive regularity on $t$, which would imply dependency on $b$ %like in the linear case \eqref{identite_flow_b_const};
and so we cannot exploit uniform continuity in time to get a convergence of a sub-sequence of $u^{m,\nu}$ in $[0,T] \times \R^d$. 
Thus the convergence at any given time in all compacts set of the \textit{mild vanishing viscous} solution in Theorem \ref{THEO_SCHAU_non_bounded}.
\\

Nevertheless, we are still able to include a truncation procedure into a weak formulation in order to obtain a convergence in a distributional meaning and not a point-wise one as for the \textit{mild vanishing viscous} solution.

%To be in this framework, we first need to consider a have a uniformly continuous function in time (which is not provided \textit{a priori}), next we have to truncate the spatial space. 
%
%
%The following section explain how to handle with a truncation of $[0,T] \times \R^d$.
%The usual way to deduce convergence of a subsequence is to use Arzel\`a-Ascoli theorem; however the starting space has to be compact.

%To show that the operator is compact, we use the sequential characteristic:
%for each bounded sequence $(\mathbf b_m)_{m \in \N}$ in $L^\infty([0,T]; C_b ^1(\R^d,\R))$ there is a subsequence of $\mathscr H(\mathbf f, \mathbf g)[\mathbf b_m]$ which converges in  $L^\infty([0,T]; C_b ^1(\R^d,\R^ r))$, i.e. $\mathscr H(\mathbf f, \mathbf g)[\cdot ]$ is relatively compact.

\subsubsection{Truncation procedure}
%\begin{equation}\label{ineq_partial_t_u_Holder}
%\|\partial_t u^\lambda_m\|_{L^\infty(C^\gamma)} = \lambda^{-\gamma}\|\partial_t u\|_{L^\infty(C^\gamma)} \leq C \|f\|_{L^\infty(C^\gamma)} + C \kappa^{-1}  \|f\|_{L^\infty(C^\gamma)}.
%\end{equation}
%
%\newpage

%The usual way to deduce convergence of a subsequence is to use Arzel\`a-Ascoli theorem; however the starting space has to be compact.
%Hence, to apply this theorem we need to use a 
The method is highly inspired by the one in \cite{hono:21},
we also consider a smooth cut-off $\vartheta_{y,R} \in \mathcal D$ supported in a ball $B_d(y,R)=\{x \in \R^d; |x-y| \leq R\} $, $y \in \R^d$ and defined by
\begin{equation}\label{def_vartheta}
	\vartheta_{y,R}(x)= \vartheta_y(\frac{x}{R}),
\end{equation}
where $\vartheta_y: \R^d \to [0,1]^d$ is function lying in $C_0^\infty(\R^d,\R^d)$ s.t. 
\begin{equation*}
	\vartheta_y(x)=\begin{cases}
		x, \ \text{ if } |x-y| < 1,	\\
		0, \ \text{ if } |x-y| > 2.
	\end{cases}
\end{equation*}
The corresponding truncated function is, for any $(t,x ) \in [0,T] \times \R^d$,
\begin{equation}\label{def_u_R}
	u_{y,R}^{m,\nu}(t,x):=  u^{m,\nu} (t,\vartheta_{y,R}(x)).
\end{equation}
We highlight the particular case
\begin{equation}\label{u_Rxx}
	u_{x,R}^{m,\nu}(t,x)=  u^{m,\nu} (t,x).
\end{equation}

%But to directly get the suitable convergence when $R \to + \infty$, we need some integrability properties of $ u^{m,\nu}$.
The above truncation solution \eqref{def_u_R} naturally appears when we write a weak formulation of the parabolic equation \eqref{parabolic_moll_def}.
% which allows to consider a truncated solution as in \eqref{def_u_R}.

\subsubsection{Weak solution of the parabolic approximating equation}
\label{sec_weak}

For any 
%$(y,R) \in \R^d \times \R_+$, let us consider a 
smooth function $\varphi_R$ supported on $B_d(0,R)$, a d-ball of radius $R>0$ and center $(0,\hdots, 0) \in \R^d$. % as radius in $\R^d$.
We consider a weak formulation of the parabolic solution $u^{m,\nu}$,
%= \mathscr H (\mathbf f, \mathbf g)[\mathbf b]$, i.e.
for any $(t,x) \in [0,T]\times \R^d$:
\begin{eqnarray*}
	%\label{eq_weak_Kolmo}
	\int_0^t \int_{\R^d} \Big \{ - \partial_t \varphi_R(s,y)  u^{m,\nu}(s,y) 
	+ \varphi_R(s,y) \langle  b_m(s,y) ,\nabla   u^{m,\nu}(s,y) \rangle+ \nu \Delta \varphi_R(s,y) u^{m,\nu}(s,y)  \Big \}dy \, ds
	\nonumber \\
	%&=&  
	= \int_{\R^d} \varphi_R(0,y)  g_m(y) dy-\int_{\R^d} \varphi_R(t,y) u^{m,\nu}(t,y) dy + \int_0^t \int_{\R^d} \varphi_R(s,y)   f_m(s,y) dy \, ds ,
	%	\end{eqnarray}
%	and
%	\begin{eqnarray}
	%	\label{condition_weak_Kolmo_initial}
	%	\nonumber \\
	%	&&\int_{\R^d} \varphi_R(y) u(0,y) dy =	\int_{\R^d} \varphi_R(y)   g_m(y) dy .
	%	\nonumber \\
\end{eqnarray*}
where in l.h.s. the limit of the first order term, $\langle  b_m(s,y) ,\nabla   u^{m,\nu}(s,y) \rangle$, has \textit{a priori} no point-wise limit neither in term of the usual distributional meaning of Schwartz.
Indeed, as already enunciated in Section \ref{sec_prod_distri}, the usual distribution theory does not provide any interpretation of a product of distributions, to get any limit result we have to thoroughly use the PDE.

%, even for usual 
% 
%thanks to the incompressible assumption $\nabla \cdot b=0$. Without this hypothesis 
%??????????????????????????????????

By the cut-off definition, we equivalently have
% of the support of $\varphi_R$, the first line of the above equation writes equivalently by
\begin{eqnarray}
	\label{eq_weak_Kolmo}
	\int_0^t \int_{\R^d} \Big \{ - \partial_t \varphi_R(s,y)  u_{0,R}^{m,\nu}(s,y) 
	+  \varphi_R(s,y) \langle  b_m(s,y) ,\nabla   u_{0,R}^{m,\nu}(s,y) \rangle+ \nu \Delta \varphi_R(s,y) u_{0,R}^{m,\nu}(s,y)  \Big \}dy \, ds
	\nonumber \\
	=   \int_{\R^d} \varphi_R(0,y) g_{m}(y) dy -\int_{\R^d} \varphi_R(s,y) u_{0,R}^{m,\nu}(s,y) dy + \int_0^t \int_{\R^d} \varphi_R(s,y)  f_{m}(s,y) dy \, ds .
	%	\end{eqnarray}
%	and
%	\begin{eqnarray}
	%	\label{condition_weak_Kolmo_initial}
	%	\nonumber \\
	%	&&\int_{\R^d} \varphi_R(y) u(0,y) dy =	\int_{\R^d} \varphi_R(y)  g(y) dy .
	\nonumber \\
\end{eqnarray}
Now, from compact argument developed in Section \ref{sec_mild_conv}, we have that $u_{0,R}^{m,\nu}(s,\cdot)$ converges in $ C^\gamma_b(K,\R^d)$, $K=B_d(0,R)$, towards a function $u_{0,R}(s,\cdot)$ when $(m,\nu)\to (+ \infty,0)$ and the condition \eqref{CONDINU} is satisfied.
In other words, $u_{0,R}(s,\cdot)$ is a \textit{mild vanishing viscous} solution of \eqref{transport_equation}.
%	\subsubsection{mild solution}
%
%
%There is nothing to do to get a mild solution which corresponds exactly to the regularisation procedure used in the proof, specifically in Subsection \ref{sec_Asco}. 
%The constraints are that the mollified version of the first order term $b_m$ has to converge towards $b$ in $L^\infty([0,T]; B_{\infty,\infty}^{-\tilde \gamma -\varepsilon}(\R^d,\R))$, $0 < \varepsilon<-\tilde \gamma -\lfloor -\tilde \gamma \rfloor$ and we need to get uniqueness which is proved in Section \ref{sec_uniq}. 
%Note, that, from the result in Appendix Section \ref{sec_free_mol}, we can use any smooth sequence approximating $b$, not necessarily as a convolution with a heat kernel performed in \eqref{def_b_a_m}. 

\subsubsection{Mild-weak solution of the transport equation}
\label{sec_existence_mild_weak}

To get a \textit{mild-weak} solution we have to pass to the limit in the weak formulation \eqref{eq_weak_Kolmo} of the mollified parabolic equation \eqref{parabolic_moll_def}.
In equation \eqref{eq_weak_Kolmo}, up to a sub-sequence selection, except for the first order term $ \varphi_R(s,y) \langle  b(s,y) ,\nabla   u_{0,R}^{m,\nu}(s,y) \rangle  $, each contribution obviously has the good converge property by the Arzel\`a-Ascoli theorem.
In particular, from \eqref{ineq_Linfty}, we have
\begin{equation}\label{lim_int_Laplace_u_weak}
	%\lim_{(m,\nu)\to (+ \infty,0)}	
	\nu	\int_0^t \int_{\R^d}  \Delta \varphi_R(s,y) u_{0,R}^{m,\nu}(s,y)  dy \, ds
	\xrightarrow[(m,\nu)\to (+ \infty,0)]{\eqref{CONDINU}}0.
\end{equation}
To deal with the \textit{drift} part, we write by integration by parts
\begin{eqnarray}
	&&\int_{\R^d}   \langle b_m(t,y),   \nabla u_{0,R}^{m,\nu}(t,y) \rangle \varphi_R(t,y)  dy
	\nonumber \\
	&=&
	\int_{\R^d}  \nabla \varphi_R(t,y) \cdot  b_m(t,y) u_{0,R}^{m,\nu}(t,y) dy
	+
	\int_{\R^d}   \varphi_R(t,y) \nabla \cdot  b_m(t,y) u_{0,R}^{m,\nu}(t,y) dy
	\nonumber \\
	&=: & B_1+ B_2. 
\end{eqnarray}
By the Besov duality property, see Proposition \ref{prop_dualite}, we write
%for instance  Proposition 3.6 in \cite{lemar:02}.
%Indeed, we write
\begin{equation}\label{ineq_B1_dual}
	|B_1|
	%\Big | \int_{\R^d}   \langle b_m(t,y),   \nabla u_m(t,y) \rangle \varphi(t,y)  dy
	%\Big |
	\leq 
	\|b_m\|_{L^\infty(B_{\infty,\infty}^{-\beta})}
	\|\nabla \varphi_R u_{0,R}^{m,\nu}\|_{L^\infty(B_{1,1}^{-\beta})},
\end{equation}
and
\begin{equation}\label{ineq_B2_dual}
	|B_2|
	%\Big | \int_{\R^d}   \langle b_m(t,y),   \nabla u_m(t,y) \rangle \varphi(t,y)  dy
	%\Big |
	\leq 
	\|\nabla \cdot b_m\|_{L^\infty(B_{\infty,\infty}^{-1-\beta})}
	\| \varphi_R u_{0,R}^{m,\nu}\|_{L^\infty(B_{1,1}^{1+\beta })},
\end{equation}
with $B_2=0$ for $b$ incompressible.
\\

\textbf{Control of $\|\nabla  \varphi_R u_{0,R}^{m,\nu}\|_{L^\infty(B_{1,1}^{\beta })}$}
\\

Let us prove that $\|\nabla  \varphi_R u_{0,R}^{m,\nu}\|_{L^\infty(B_{1,1}^{\beta })}$ is controlled uniformly in $(m,\nu)$.
By the thermic representation of the Besov norms \eqref{def_norm_besov_inhomo}, we have
\begin{eqnarray*}
	%&&
	\|\nabla \varphi_R u_{0,R}^{m,\nu}\|_{L^\infty(B_{1,1}^{\beta })}
	%\nonumber \\
	&=&  \|\nabla \varphi_R u_{0,R}^{m,\nu}\|_{L^1}+ \|\nabla \varphi_R u_{0,R}^{m,\nu}\|_{L^\infty(\ddot B_{1,1}^{\beta})}
	\nonumber \\
	&=& \|\nabla \varphi_R u_{0,R}^{m,\nu}\|_{L^1} + \int_0^1 \frac{1}{v}v^{1+\frac{\beta }{2}}\int_{\R^d} \Big | \int_{\R^d} \partial_v h_v(z-y) u_{0,R}^{m,\nu}(t,y) \nabla \varphi_R(t,y) dy \Big | dz \, dv.
\end{eqnarray*}
The first contribution in the r.h.s. above is obviously bounded uniformly in $(m,\nu)$ by
\begin{equation*}
	\|\nabla \varphi_R u_{0,R}^{m,\nu}\|_{L^1} \leq \|\nabla \varphi_R\|_{L^1} \|u_{0,R}^{m,\nu}\|_{L^\infty}
	\leq C  \|\nabla \varphi_R\|_{L^1} \big (T   \|f\|_{L^\infty}+\|g\|_{L^\infty} \big ),
\end{equation*}
by uniform estimate \eqref{ineq_Linfty}.

For the second one, we need to deeply use the already known regularity of $u_{0,R}^{m,\nu}$.
By %cancellation and by 
triangular inequality, we obtain
\begin{eqnarray*}
	%	&&\int_0^1 \frac{dv}{v}v^{1-\frac{\beta}{2}}\int_{\R^d} \Big | \int_{\R^d} \partial_v h_v(z-y) u_{0,R}^{m,\nu}(t,y) \nabla \varphi(t,y) dy \Big | dz
	%	\nonumber \\
	\|\nabla \varphi_R u_{0,R}^{m,\nu}\|_{L^\infty(\ddot B_{1,1}^{\beta})}	&\leq & 
	\int_0^1 \frac{1}{v}v^{1-\frac{\beta}{2}}\int_{\R^d} \Big | \int_{\R^d}  \partial_v h_v(z-y) \cdot \big \{  [u_{0,R}^{m,\nu}(t,y) -u_{0,R}^{m,\nu}(t,z) ]\nabla \varphi_R(t,y)
	\nonumber \\
	&& +u_{0,R}^{m,\nu}(t,z)[ \nabla  \varphi (t,y)-\nabla  \varphi(t,z)]  \big \} dy \Big | dz \, dv
	\nonumber \\
	&\leq & 
	\int_0^1 \frac{1}{v}v^{1-\frac{\beta}{2}}\int_{\R^d} \Big | \int_{\R^d}  \partial_v h_v(z-y) \cdot \big \{  [u_{0,R}^{m,\nu}(t,y) -u_{0,R}^{m,\nu}(t,z) ]\nabla \varphi_R(t,y)
	\nonumber \\
	&& +u_{0,R}^{m,\nu}(t,z) (y-z) \cdot \int_0^1 D^2 \varphi_R (t,z+\mu(y-z)) d \mu  \big \} dy \Big | dz\, dv,
	%	\nonumber \\
\end{eqnarray*}
by Taylor expansion.
Next, with the exponential absorbing property \eqref{ineq_absorb},
\begin{eqnarray*}
	&&\|\nabla \varphi u_{0,R}^{m,\nu}\|_{L^\infty(\ddot B_{1,1}^{\beta})}	
	%\int_0^1 \frac{dv}{v}v^{1-\frac{\beta}{2}}\int_{\R^d} \Big | \int_{\R^d} \partial_v h_v(z-y) u_{0,R}^{m,\nu}(t,y) \nabla \varphi(t,y) dy \Big | dz
	\nonumber \\
	&\leq & 
	C 	\|u_{0,R}^{m,\nu}\|_{L^\infty(C^\gamma)}
	\int_0^1 \frac{1}{v}v^{-\frac{\beta}{2}}\int_{\R^d}  \int_{\R^d}  h_{C^{-1}v}(z-y) | y-z|^\gamma |\nabla \varphi_R(t,y)| dy \, dz \, dv
	\nonumber \\
	&& +
	C	\|u_{0,R}^{m,\nu}\|_{L^\infty}
	\int_0^1 \frac{1}{v}v^{-\frac{\beta}{2}}\int_{\R^d}  \int_{\R^d}  h_{C^{-1}v}(z-y) | y-z|   \int_0^1\big | D^2 \varphi_R (t,z+\mu(y-z)) \big | d \mu   dy \, dz \, dv
	\nonumber \\
	&\leq & 
	C 	\|u_{0,R}^{m,\nu}\|_{L^\infty(C^\gamma)} \|\nabla \varphi_R \|_{L^1}
	\int_0^1 \frac{1}{v}v^{\frac{\gamma-\beta}{2}}  dv
	+
	C \|u_{0,R}^{m,\nu}\|_{L^\infty} \|D^2 \varphi_R \|_{L^1}
	\int_0^1 \frac{1}{v}v^{\frac{1-\beta}{2}} dv
	,
	%	\nonumber \\
\end{eqnarray*}
which is finite if $\beta <\gamma$.
\\

\textbf{Control of $\| \varphi_R u_{0,R}^{m,\nu}\|_{L^\infty(B_{1,1}^{1+\beta})}$}
\\

The analysis is similar as before, replacing $\beta$ by $1+\beta$ and $\nabla \varphi_R$ by $\varphi_R$:
\begin{equation*}
	\| \varphi_R u_{0,R}^{m,\nu}\|_{L^\infty(B_{1,1}^{1+\beta})}
	%	&=& \|\nabla \varphi u_{0,R}^{m,\nu}\|_{L^1} + \int_0^1 \frac{dv}{v}v^{1-\frac{1+\beta}{2}}\int_{\R^d} \Big | \int_{\R^d} \partial_v h_v(z-y) u_{0,R}^{m,\nu}(t,y)  \varphi(t,y) dy \Big | dz.
	%	\nonumber \\
	= \| \varphi_R u_{0,R}^{m,\nu}\|_{L^1}+ \| \varphi_R u_{0,R}^{m,\nu}\|_{L^\infty(\ddot B_{1,1}^{1+\beta})}.
\end{equation*}
We readily get
\begin{equation*}
	\| \varphi_R u_{0,R}^{m,\nu}\|_{L^1}
	\leq \| \varphi_R \|_{L^1} \|  u_{0,R}^{m,\nu}\|_{L^\infty}
	\leq   \| \varphi_R\|_{L^1} \big (T   \|f\|_{L^\infty}+\|g\|_{L^\infty} \big ),
\end{equation*}
and
\begin{equation*}
	\| \varphi_R u_{0,R}^{m,\nu}\|_{L^\infty(\ddot B_{1,1}^{1+\beta})}	
	%\int_0^1 \frac{dv}{v}v^{1-\frac{\beta}{2}}\int_{\R^d} \Big | \int_{\R^d} \partial_v h_v(z-y) u_{0,R}^{m,\nu}(t,y) \nabla \varphi(t,y) dy \Big | dz
	%\nonumber \\
	\leq  
	C 	\|u_{0,R}^{m,\nu}\|_{L^\infty(C^\gamma)} \| \varphi_R \|_{L^1}
	\int_0^1 \frac{1}{v}v^{\frac{\gamma-\beta-1}{2}}dv
	+
	C \|u_{0,R}^{m,\nu}\|_{L^\infty} \|\nabla  \varphi_R \|_{L^1}
	\int_0^1 \frac{1}{v}v^{\frac{-\beta}{2}} dv
	,
	%	\nonumber \\
\end{equation*}
this is finite if $1+\beta <\gamma$ $\Longleftrightarrow$ $\beta <-1+\gamma<0$.
Let us carefully notice that if there is the incompressible assumption $\nabla \cdot b=0$, then $B_2=0$ and this former constraint disappears.
Thus the different cases considered in Theorem \ref{THEO_SCHAU_non_bounded}.
\\

The Bolzano-Weierstrass theorem then yields the result.

\subsubsection{Weak solution}
\label{sec_weak}

The difficulty for the usual weak solution, here, is to prove that, up to a subsequence extraction,
\begin{eqnarray}\label{exist_weak_sol}
	\lim_{m \to \infty}\int_{\R^d}\int_{0}^T \langle  b_m(t,y) ,  \nabla u_{0,R}^{m,\nu}(t,y) \rangle \varphi_R(t,y)  dy \ dt
	&=& \int_{0}^T\int_{\R^d}  \langle b(t,y),  \nabla u_{0,R}(t,y) \rangle \varphi_R(t,y)  dy \ dt
	\nonumber \\
	&=& \int_{0}^T\int_{\R^d}  \langle b(t,y),  \nabla u(t,y) \rangle \varphi_R(t,y)  dy \ dt
	, \nonumber \\
\end{eqnarray}
where $b \in L^\infty([0,T];\tilde B_{\infty,\infty}^{-\beta }(\R^d,\R^d))$ is the drift of the initial Cauchy problem \eqref{transport_equation} and coincides with the limit of $b_m$ in $L^\infty([0,T];B_{\infty,\infty}^{-\beta -\varepsilon}(\R^d,\R^d))$, for any $0<\varepsilon$ when $m\to \infty$; also $u(s,\cdot) \in C_b^{\gamma}(K,\R)$, $K= B_d(0,R)$, is the limit of $u^{m,\nu}(s,\cdot)$, up to a subsequence selection possibly depending on the current time $s$, in $C_b^{\gamma-\tilde \varepsilon}(\R^d,\R)$ for any $0<\tilde \varepsilon<\gamma$, see Section \ref{sec_mild_conv}.%\textcolor{black}{Corriger espace H\"older si besoin avec unicité??}

Let us recall that $\tilde B_{\infty,\infty}^{-\beta }$ is the closure space of $C^\infty_b$ in $B_{\infty,\infty}^{-\beta }$, from Appendix Section \ref{sec_free_mol}, we still can take the regular sequence $(b_m)_{m \geq 1}$ defined in \eqref{def_b_epsilon} to approximate $b$ in $L^\infty([0,T];B_{\infty,\infty}^{-\beta -\varepsilon}(\R^d,\R^d))$, for any $0<\varepsilon$; whereas the considered solution $u(s,\cdot)$ may depend on the choice of mollification. In other words, we have:
\begin{equation}\label{lim_b_m_b}
	\lim_{m \to \infty}\|b_m -b\|_{L^\infty(B_{\infty,\infty}^{-\beta -\varepsilon})}
	%+ \lim_{m \to \infty}\|\nabla \cdot b_m - \nabla \cdot b\|_{L^\infty(B_{\infty,\infty}^{-1-\tilde \gamma-\varepsilon})}
	=0. 
\end{equation}
For all $m >0$, $t \in [0,T]$, we write by integration by parts that
\begin{eqnarray}\label{ineq_A1_A2}
	&&\Big | \int_{\R^d} \langle b_m(t,y),   \nabla u_{0,R}^{m,\nu}(t,y) \rangle \varphi_R(t,y)  dy
	- \int_{\R^d} \langle  b(t,y) , \nabla u_{0,R}(t,y) \rangle \varphi_R(t,y)  dy \Big|
	\nonumber \\
	&\leq & 
	\Big |\int_{\R^d}  [b_m-b](t,y)   u^{m,\nu}(t,y)\cdot \nabla \varphi_R(t,y)  dy
	\Big |
	+
	\Big | \int_{\R^d}   b(t,y)  [u-u^{m,\nu}](t,y) \cdot \nabla  \varphi_R(t,y)  dy \Big |
	\nonumber \\
	&&+ 
	\Big |\int_{\R^d}  \nabla \cdot [b_m-b](t,y)   u^{m,\nu}(t,y) \varphi_R(t,y)  dy
	\Big |
	+
	\Big | \int_{\R^d}   \nabla \cdot b(t,y)  [u-u^{m,\nu}](t,y) \  \varphi_R(t,y)  dy \Big |
	\nonumber \\
	&=:& \tilde B_1+\tilde B_2+ \tilde B_3 + \tilde B_4.
\end{eqnarray}
To deal with the first contribution, we aim to use \eqref{lim_b_m_b}. We have by the Besov duality result of Proposition \ref{prop_dualite}:
\begin{equation}\label{ineq_A1_dual}
	\tilde B_1 \leq  
	\|b_m -b\|_{L^\infty(B_{\infty,\infty}^{-\beta -\varepsilon})}
	\|u^{m,\nu} \nabla \varphi_R\|_{L^\infty(B_{1,1}^{\beta +\varepsilon})},
\end{equation}
it then remains to control $\|u^{m,\nu} \nabla \varphi_R\|_{L^\infty(B_{1,1}^{\beta+\varepsilon})}$.
Similar computations as in Section \ref{sec_existence_mild_weak} yields that $\|u^{m,\nu} \nabla \varphi_R\|_{L^\infty(B_{1,1}^{\beta+\varepsilon})}$ is finite if $\beta < \gamma-\varepsilon$.

Also for the third $\tilde B_3$ and the fourth term $\tilde B_4$, which are null if $\nabla \cdot b=0$.%, 
%we have that
%\begin{equation}\label{ineq_A3_dual}
%\tilde B_3\leq  
%\|\nabla \cdot b_m - \nabla \cdot b\|_{L^\infty(B_{\infty,\infty}^{-\tilde \gamma-\varepsilon})}
%\|u^{m,\nu}  \varphi_R\|_{L^\infty(B_{1,1}^{\tilde \gamma+\varepsilon})},
%\end{equation}
%is finite if $\tilde \gamma < -1+ \gamma-\varepsilon$.

Hence, from \eqref{lim_b_m_b}, we obtain
\begin{equation}\label{lim_A1}
	%\lim_{m \to \infty}  
	\tilde B_1	\xrightarrow[(m,\nu)\to (+ \infty,0)]{\eqref{CONDINU}}0.
	%, 
	%\text{ and } \tilde B_3	\xrightarrow[(m,\nu)\to (+ \infty,0)]{\eqref{CONDINU}}0.
\end{equation}

Now, let us handle with the second term in \eqref{ineq_A1_A2}.
We aim here to use the convergence of $u^{m,\nu}(s,\cdot)  $ towards $u(s,\cdot)$ in the ball $B_d(0,R)$. 
%the property \eqref{conv_u_m_u_Ascoli_gamma_epsilon}. 

Again by the Besov duality result of Proposition \ref{prop_dualite}, we have:
\begin{eqnarray}\label{ineq_A2}
	\tilde B_2&=&
	\Big | \int_{\R^d}  b(t,y)   [u-u^{m,\nu}](t,y) \cdot \nabla \varphi_R(t,y)  dy \Big |
	\nonumber \\
	&\leq&
	\|b\|_{L^\infty(B_{\infty,\infty}^{-\beta })}
	\|(u^{m,\nu}-u) \nabla \varphi_R\|_{L^\infty(B_{1,1}^{\beta })}.
	%\\
	%\\
	%\|u-u_m\|_{L^\infty(B_{\infty,\infty}^{\gamma-\varepsilon})}
	%\|b \nabla \varphi\|_{L^\infty(B_{1,1}^{-1-\gamma+\varepsilon})}
	%\nonumber \\
	%&\leq&
	%C \|u-u_m\|_{L^\infty(C_b^{\gamma-\varepsilon})}
	%\|b \nabla  \varphi\|_{L^\infty(\dot B_{1,1}^{-\gamma+\varepsilon})},
\end{eqnarray}
%Like in Section \ref{sec_existence_mild_weak}, 
%By Arzel\`a-Ascoli theorem,  w
We have, from Section \ref{sec_conv}, % \textcolor{black}{07/02/2024 A corriger !!!!}

Arzel\`a-Ascoli theorem,  we have
\begin{equation*}
\|\nabla \varphi_R (u^{m,\nu}-u)\|_{L^\infty} \leq \|\nabla \varphi_R\|_{L^\infty} \|(u^{m,\nu}-u)\|_{L^\infty} 
%\underset{(m,\nu)\to (+ \infty,0)}\longrightarrow 
\xrightarrow[(m,\nu)\to (+ \infty,0)]{\eqref{CONDINU}}
0.
\end{equation*}
For the homogenous part of the Besov norm, we also mimic the analysis in the previous section replacing $u^{m,\nu}$ by $(u^{m,\nu}-u)$, for any $\varepsilon \in (0,\gamma)$:
\begin{eqnarray*}
&&
\|(u^{m,\nu}-u) \nabla \varphi_R\|_{L^\infty(\ddot B_{1,1}^{\beta})}
%\int_0^1 \frac{1}{v}v^{1-\frac{\beta}{2}}\int_{\R^d} \Big | \int_{\R^d} \partial_v h_v(z-y) (u^{m,\nu}-u)(t,y) \nabla \varphi_R(t,y) dy \Big | dz\, dv
\nonumber \\
&\leq & 
C 	\|(u^{m,\nu}-u)\|_{L^\infty(C^{\gamma-\varepsilon})}
\int_0^1 \frac{1}{v}v^{-\frac{\beta}{2}}\int_{\R^d}  \int_{\R^d}  h_{C^{-1}v}(z-y) | y-z|^{\gamma-\varepsilon} |\nabla \varphi_R(t,y)| dy \, dz \, dv
\nonumber \\
&& +
C	\|(u^{m,\nu}-u)\|_{L^\infty}
\int_0^1 \frac{1}{v}v^{-\frac{\beta}{2}}\int_{\R^d}  \int_{\R^d}  h_{C^{-1}v}(z-y) | y-z|   \int_0^1 \big | D^2 \varphi_R (t,z+\mu(y-z))\big |  d \mu  dy \, dz \, dv
\nonumber \\
&\leq & 
C 	\|(u^{m,\nu}-u)\|_{L^\infty(C^{\gamma-\varepsilon})} \|\nabla \varphi_R \|_{L^1}
\int_0^1 \frac{1}{v}v^{\frac{\gamma-\varepsilon-\beta}{2}} dv
+
C \|(u^{m,\nu}-u)\|_{L^\infty} \|D^2 \varphi_R \|_{L^1}
\int_0^1 \frac{1}{v}v^{\frac{1-\beta}{2}} dv
,
%	\nonumber \\
\end{eqnarray*}
which is finite as soon as $\beta < \gamma-\varepsilon<0$, also 
by Arzel\`a-Ascoli theorem,  we have the converging result $ \|(u^{m,\nu}-u)\|_{L^\infty(C^{\gamma-\varepsilon})} 
\xrightarrow{(m,\nu)\to (+ \infty,0)}
%{\eqref{CONDINU}}
%\longrightarrow_{(m,\nu)\to (+ \infty,0)} 
0$.

%IIIIIIIICIIIIIIIIIIIIIIIIIIIIIIIII

%with \eqref{conv_u_m_u_Ascoli_gamma_epsilon}, 
Therefore, we even get
\begin{equation}\label{lim_A2}
%\lim_{m \to \infty}
\tilde B_2	\xrightarrow{(m,\nu)\to (+ \infty,0)}%{\eqref{CONDINU}}
0.
\end{equation}
The last contribution $\tilde B_4$, null if $\nabla \cdot b=0$, is similar replacing $\nabla \varphi_R$ by $\varphi_R$ and $\beta $ by $\beta+1$.
Namely, we have 
\begin{eqnarray}\label{ineq_A4}
\tilde B_4
&\leq&
\|\nabla b\|_{L^\infty(B_{\infty,\infty}^{-1-\beta})}
\|(u^{m,\nu}-u)  \varphi_R\|_{L^\infty(B_{1,1}^{1+\beta})}
\nonumber \\
&=&\|\nabla b\|_{L^\infty(B_{\infty,\infty}^{-1-\beta})}(
\|(u^{m,\nu}-u)  \varphi_R\|_{L^1}+
\|(u^{m,\nu}-u)  \varphi_R\|_{L^\infty(\ddot B_{1,1}^{1+\beta})})
,
%\\
%\\
%\|u-u_m\|_{L^\infty(B_{\infty,\infty}^{\gamma-\varepsilon})}
%\|b \nabla \varphi\|_{L^\infty(B_{1,1}^{-1-\gamma+\varepsilon})}
%\nonumber \\
%&\leq&
%C \|u-u_m\|_{L^\infty(C_b^{\gamma-\varepsilon})}
%\|b \nabla  \varphi\|_{L^\infty(\dot B_{1,1}^{-\gamma+\varepsilon})},
\end{eqnarray}
with
\begin{equation*}
\|(u^{m,\nu}-u)  \varphi_R\|_{L^1}\leq  \|u^{m,\nu}-u\|_{L^\infty}\|  \varphi_R\|_{L^1},
\end{equation*}
and
\begin{eqnarray*}
&&\|(u^{m,\nu}-u)  \varphi_R\|_{L^\infty(\ddot B_{1,1}^{1+\beta})}
\nonumber \\
&=&\int_0^1 \frac{1}{v}v^{1-\frac{1+\beta}{2}}\int_{\R^d} \Big | \int_{\R^d} \partial_v h_v(z-y) (u^{m,\nu}-u)(t,y)  \varphi_R(t,y) dy \Big | dz \, dv
\nonumber \\
&\leq & 
C 	\|u^{m,\nu}-u\|_{L^\infty(C^{\gamma-\varepsilon})} \| \varphi_R \|_{L^1}
\int_0^1 \frac{1}{v}v^{\frac{\gamma-\varepsilon-1-\beta}{2}} dv
+
C \|u^{m,\nu}-u\|_{L^\infty} \|D^2 \varphi_R \|_{L^1}
\int_0^1 \frac{1}{v}v^{\frac{-\beta}{2}}dv
,
%	\nonumber \\
\end{eqnarray*}
which is finite if $\beta <-1+\gamma-\varepsilon$ and by \eqref{convergence_um_nu}, % Arzel\`a-Ascoli theorem,  
we deduce
% $ \|(u^{m,\nu}-u)\|_{L^\infty(C^{\gamma-\varepsilon})} \longrightarrow_{(m,\nu)\to (+ \infty,0)} 0$.
%with \eqref{conv_u_m_u_Ascoli_gamma_epsilon}, 
\begin{equation}\label{lim_A4}
%\lim_{m \to \infty} 
\tilde B_4	\xrightarrow{%[
(m,\nu)\to (+ \infty,0)}%]%{\eqref{CONDINU}}
0.
\end{equation}

Hence, from \eqref{ineq_A1_A2}, \eqref{lim_A1}, \eqref{lim_A2} and \eqref{lim_A4} we deduce \eqref{exist_weak_sol}.

\subsection{Uniqueness}
\label{sec_uniq}
%\subsection{For $b$ H\"older continuous}

%\textcolor{black}{Transformer unicité en convergence, comparer $u ^{m, \nu}$ avec $u$ solution du transport et voir que $U$ tend vers $0$}

Let us insist that uniqueness of \textit{vanishing viscous} solution does not mean uniqueness of usual solution.
Indeed, this question arises for the uniqueness of the limits of any sub-sequence of $(u^{m,\nu})_{m,\nu \geq 0}$ and for the non-dependency of the limit on the regularisation procedure; also the smooth selection principle established here does not depend on the choice of the vanishing sequence $(\nu)$.

%To get any point-wise uniqueness, we naturally suppose that $\tilde \gamma<0$ which means that $b$ is $\tilde \gamma$-H\"older continuous in space, $\tilde \gamma=-\tilde \gamma$.

Let us suppose that there are two \textit{vanishing viscous} solutions $u$ and $\bar u$ of \eqref{transport_equation} satisfying estimates \eqref{ineq_THEO_SCHAU}.
We then consider the associated mollified version $(u^{m,\nu})_{m \geq 0}$ and $(\bar u^ {m,\bar \nu})_{m \geq 0}$ solutions, for any $x \in \R^d$, to
\begin{equation}
	\label{KOLMO_moll_u}
	\begin{cases}
		\partial_t u^{m,\nu}(t,x)+ \langle b_m(t,x) ,  \nabla u^{m,\nu}(t,x) \rangle - \bar \nu \Delta u^{m,\nu}(t,x)=f_m(t,x) ,\ t\in [0,T),\\
		u^{m,\nu}(0,x)=g_m(x),
	\end{cases}
\end{equation}
where $b_m $ is a mollified version of $b$ as in \eqref{def_b_epsilon} by a convolution with the Gaussian mollifier  $\rho_m$,
and 
\begin{equation}
	\label{KOLMO_moll_bar_u}
	\begin{cases}
		\partial_t \bar u^{m,\bar \nu}(t,x)+ \langle \bar b_m(t,x), \nabla \bar u^{m,\bar \nu}(t,x) \rangle- \bar \nu \Delta \bar u^{m,\bar \nu}(t,x)=f_m(t,x) ,\ t\in [0,T),\\
		\bar  u_n(0,x)=g_m(x),
	\end{cases}
\end{equation}
where $\bar b_m$ is a mollified  version of $b$ which is potentially defined differently as in \eqref{def_b_epsilon}, and such that 
\begin{equation}\label{cond_lim_b_b_n_0}
	\forall 0 < \varepsilon < 1, \ 
	\lim_{m \to + \infty} \|\bar b_m-b\|_{L^\infty([0,T];C^{\tilde \gamma-\varepsilon}(\R^d,\R^d))}=0.
\end{equation}
%Importantly, the viscous coefficient $\nu$ is the same in the two Cauchy problems \eqref{KOLMO_moll_u} and \eqref{KOLMO_moll_bar_u}.
From the linearity of the equations, we then derive that $U_{m}:=u^{m,\nu}-\bar u^{m,\bar \nu}$ solves the following Cauchy problem for any $(t,x)\in [0,T) \times \R^d$:
\begin{equation}
	\label{KOLMO_moll_u1_u2}
	\begin{cases}
		\partial_t U_{m}(t,x)+ \langle \bar b_m ,  \nabla U_{m} \rangle(t,x) -\bar \nu \Delta U_{m}(t,x)
		=  (\nu-\bar \nu ) \Delta u^{m,\nu} (t,x)
		- \langle [b_m-\bar b_m],  \nabla  u^{m,\nu} \rangle(t,x) ,\\
		U_{m}(0,x)=0.
	\end{cases}
\end{equation}
%Let us point out that we choose to write the above PDE where the drift is $b_m$ and not $\bar b_m$ because in this way we will be able to directly exploit the \textit{a priori} controls already performed.
%Indeed, b
By uniform control \eqref{ineq_Linfty}, we directly derive that
\begin{eqnarray}\label{ineq_Umn_1}
	\|U_{m}(t,\cdot)\|_{L^\infty}
	&\leq &
	\int_0^t \|\langle [b_m-\bar b_m], \nabla u^{m,\nu}\rangle (s,\cdot)\|_{L^\infty}ds
	+ 
	\int_0^t (\nu-\bar \nu ) \| \Delta u^{m,\nu} (s,\cdot)\|_{L^\infty} ds
	\nonumber \\
	&\leq & 
	T  \|b_m-\bar b_m\|_{L^\infty} \| \nabla  u^{m,\nu}\|_{L^\infty}
	+ 
	T |\nu-\bar \nu | \| \Delta u^{m,\nu} \|_{L^\infty} .
	%= G_{T}^{m}  \langle [b_m-\bar b_m],  D\bar u_{m}^{m,\bar \nu}\rangle .
\end{eqnarray}
%\eqref{ineq_grad_u_bis}\eqref{ineq_grad_u_bis}\eqref{ineq_grad_u_bis}
It is clear that if $b$ is $\tilde \gamma$-H\"older continuous then %for any $ \varepsilon \in (0,1)$, 
$\|b_m-\bar b_m\|_{L^\infty}  \leq C m^{ -\tilde \gamma}$. To take advantage of the convergence of $\|b_m-\bar b_m\|_{L^\infty}$ towards $0$, we need to use other \text{a priori} controls.
\begin{lemma}\label{lemma_apriori_bis}
	If $u^{m,\nu}$ is solution of \eqref{KOLMO_moll_u}, then, for any $(t,x) \in [0,T] \times \R^d$, we get the gradient estimate
	\begin{equation*}%\label{ineq_grad_u_bis}
		|\nabla u^{m,\nu}(t,x) |
		\leq
		\int_0^t \|\nabla  f_m(s,\cdot)\|_{L^\infty}ds +\|\nabla  g_m\|_{L^\infty}
		%		m^{1-\gamma} 
		%		\Big ( %\nu^{\frac {\gamma-1}2}t^{\frac {\gamma+1}2} 
		%		t \|  f\|_{L^\infty(C^\gamma)}+\| g\|_{C^\gamma} \Big ) 
		+ C
		m ^{1-\tilde \gamma}	\|b_m\|_{L^\infty(C^{\tilde \gamma})}
		%\| u^{m,\nu}\|_{L^\infty}
		%(C^\gamma)} 
	\Big ( t \|  f_m\|_{L^\infty}+\| g_m\|_{L^\infty} \Big ) 
	%	\nu ^{\frac{\tilde \gamma}{2}-1} 
	t^{\frac{1}2}%\tilde \gamma}{2}} 
%	m^{1-\gamma} 
%	\Big ( %\nu^{\frac {\gamma-1}2}t^{\frac {\gamma+1}2} 
%	t \|  f\|_{L^\infty(C^\gamma)}+\| g\|_{C^\gamma} \Big ) 
%	+ C
%	\|b_m\|_{L^\infty(C^1)}
%	\| u^{m,\nu}\|_{L^\infty}
%	%(C^\gamma)} 
%	%	\Big ( t \|  f_m\|_{L^\infty(C^\gamma)}+\| g_m\|_{C^\gamma} \Big ) 
%	\nu ^{\frac{\gamma}{2}-1} 
%	t^{\frac{\gamma}{2}} 
,
\end{equation*}
and  he Hessian estimate %see \eqref{ineq_hessian_u_bis},
\begin{eqnarray*}
%&&
|\nabla^2 u^{m,\nu}(t,x) |
%	\nonumber \\
&\leq&
\Big ( C \nu ^{\frac{\gamma}{2}-1} t^{\frac \gamma 2}  \| f_m\|_{L^\infty(C^\gamma)}+\|\nabla ^2g_m\|_{L^\infty} \Big ) 
\nonumber \\
&& +C
m^{1-\tilde \gamma}\|b_m\|_{L^\infty(C^{\tilde \gamma})}
%\Big (
% t \|  f_m\|_{L^\infty(C^\gamma)}+\| g_m\|_{C^\gamma}
%m^{1-\gamma} 
\nu ^{-\frac{1}{2}} 
t^{\frac{1}{2}} 
\big ( %\nu^{\frac {\gamma-1}2}t^{\frac {\gamma+1}2} 
t \|\nabla  f_m\|_{L^\infty}+\|\nabla  g_m\|_{L^\infty}
%t \|  f\|_{L^\infty(C^\gamma)}+\| g\|_{C^\gamma} 
\big ) 
\nonumber \\
&& + C m^{2(1-\tilde \gamma)}
\|b_m\|_{L^\infty(C^{\tilde \gamma})}^2
%\| u^{m,\nu}\|_{L^\infty}
%(C^\gamma)} 
\big ( t \|  f_m\|_{L^\infty}+\| g_m\|_{L^\infty} \big ) 
\nu ^{- \frac 12} 
t ^{ \frac 12}.
\end{eqnarray*}
\end{lemma}
%see \eqref{ineq_grad_u_bis} in Appendix for the proof;
The proof is deferred in Section \ref{sec_proof_apriori_bis}.\\

Hence, 
\begin{eqnarray*}%\label{ineq_Umn_1}
&&	\|U_{m}(t,\cdot)\|_{L^\infty}
%	&\leq &
%	\int_0^t \|\langle [b_m-\bar b_m], \nabla \bar u_{m}^{m,\bar \nu}\rangle (s,\cdot)\|_{L^\infty}ds
\nonumber \\
&\leq & 
T  m^{ -\tilde \gamma} \|b\|_{L^\infty(C^{\tilde \gamma})}
%+n^{ -\tilde \gamma})
%	\nonumber \\
%	&& 
%	\times
\bigg (
t \|\nabla  f_m\|_{L^\infty}+\|\nabla  g_m\|_{L^\infty}
%	m^{1-\gamma} 
%	\Big ( %\nu^{\frac {\gamma-1}2}t^{\frac {\gamma+1}2} 
%	t \|  f\|_{L^\infty(C^\gamma)}+\| g\|_{C^\gamma} \Big ) 
+ C
\|b_m\|_{L^\infty(C^{\tilde \gamma})}
%\| u^{m,\nu}\|_{L^\infty}
%(C^\gamma)} 
\Big ( t \|  f_m\|_{L^\infty}+\| g_m\|_{L^\infty} \Big ) 
\nu ^{\frac{\tilde \gamma}{2}-1} 
t^{\frac{\tilde \gamma}{2}}  \bigg ) 
\nonumber \\
&& +
T |\nu-\bar \nu | \Bigg ( 
\Big ( C \nu ^{\frac{\gamma}{2}-1} t^{\frac \gamma 2}  \| f_m\|_{L^\infty(C^\gamma)}+\|\nabla ^2g_m\|_{L^\infty} \Big ) 
\nonumber \\
&& + C
\|b_m\|_{L^\infty(C^{\tilde \gamma})}
%\Big (
% t \|  f_m\|_{L^\infty(C^\gamma)}+\| g_m\|_{C^\gamma}
%m^{1-\gamma} 
\nu ^{\frac{\tilde \gamma}{2}-1} 
t^{\frac{\tilde \gamma}{2}} 
\big ( %\nu^{\frac {\gamma-1}2}t^{\frac {\gamma+1}2} 
t \|\nabla  f_m\|_{L^\infty}+\|\nabla  g_m\|_{L^\infty}
%t \|  f\|_{L^\infty(C^\gamma)}+\| g\|_{C^\gamma} 
\big ) 
%	\|b_m\|_{L^\infty(C^{\tilde \gamma})}
%	%\Big (
%	% t \|  f_m\|_{L^\infty(C^\gamma)}+\| g_m\|_{C^\gamma}
%	m^{1-\gamma} 
%	\nu ^{\frac{\tilde \gamma}{2}-1} 
%	t^{\frac{\tilde \gamma}{2}} 
%	\big ( %\nu^{\frac {\gamma-1}2}t^{\frac {\gamma+1}2} 
%	t \|  f\|_{L^\infty(C^\gamma)}+\| g\|_{C^\gamma} \big ) 
+ C
\|b_m\|_{L^\infty(C^{\tilde \gamma})}^2
%\| u^{m,\nu}\|_{L^\infty}
%(C^\gamma)} 
\big ( t \|  f_m\|_{L^\infty}+\| g_m\|_{L^\infty} \big ) 
\nu ^{\tilde \gamma-2} 
t^{\tilde \gamma}
\Bigg )
.
%= G_{T}^{m}  \langle [b_m-\bar b_n],  D\bar u_{m}^{m,\bar \nu}\rangle .
\end{eqnarray*}
%Choosing $n \sim m$ or $n \gg m$ (in other case, we can permute the role of $m$ and of $n$) yields to consider
From this estimate, to prove uniqueness, we need to consider 
\begin{equation}\label{condi_uniq_proof_transport}
(m^{-\tilde \gamma }+ \nu^{\frac{\tilde \gamma }{2}})( \|\nabla  f_m\|_{L^\infty}+\|\nabla  g_m\|_{L^\infty})+ \textcolor{black}{m^{- \tilde \gamma }\nu^{\frac{\tilde \gamma}{2}-1} } + \nu \|\nabla ^2g_m\|_{L^\infty}  +	
%m^{- \tilde \gamma+1-\tilde \gamma} \nu ^{\frac{\gamma-1}{2}} + m^{1-\tilde \gamma}  
\nu^{\tilde \gamma- 1}  \ll 1,
\end{equation}
the first and the third term are indeed negligible if $\tilde \gamma> \max(1-\gamma,\frac 12)$.

We also write for the two last terms,
\begin{equation*}
\nu ^{\frac{\gamma-1}{2}} \ll	m^{-1+2\tilde \gamma}  \text{, and } m^{1-\tilde \gamma} \ll \nu^{-\frac \gamma 2}.
\end{equation*}
Combining the two terms, we get
\begin{equation*}
m^{1-\tilde \gamma} \ll \nu^{-\frac \gamma 2} \ll \big (m^{\frac{2(-1+2\tilde \gamma)}{1-\gamma}} \big )^{\frac \gamma 2} = m^{\frac{\gamma(-1+2\tilde \gamma)}{1-\gamma}} .
\end{equation*}
Hence, we have to suppose that
\begin{equation*}
1-\tilde \gamma < \frac{\gamma(-1+2\tilde \gamma)}{1-\gamma}.
\end{equation*}
This is equivalent to
\begin{equation*}
1-\tilde \gamma - \gamma + \gamma \tilde \gamma < \gamma(-1+2\tilde \gamma),
\end{equation*}
and
%\begin{equation*}
%	1-\tilde \gamma - \gamma + \gamma \tilde \gamma < -\gamma+2\tilde \gamma\gamma,
%\end{equation*}
%finally
\begin{equation*}
1 < \tilde \gamma\gamma +\tilde \gamma= \tilde \gamma(1+\gamma),
\end{equation*}
thus the condition $\tilde \gamma > \frac{1}{1+ \gamma} > \max ( 1-\gamma, \frac{1}{2})$ of Theorem \ref{THEO_SCHAU_non_bounded_optimal}.
\\

We can have the equality case (useful for the Burgers' equation):
%\textcolor{black}{If 
$\tilde \gamma= \gamma$, if $P(\gamma)= \gamma^2+ \gamma-1>0$ which means that $\gamma > \frac{-1+ \sqrt{5}}2 \asymp 0.6180.$
%}

\begin{remark}\label{Rem_uniq_negativ}
We fail to get uniqueness for negative Besov regularity of $b$.
Indeed, from the analysis performed in Section \ref{sec_decoup_temps}, and because 
we do not differentiate $U_{m} $ in order to take unsuccessfully take advantage of Gr\"onwall's lemma,
we  need to consider the term by Besov duality
\begin{eqnarray*}
\|\nabla \tilde p(s,t,x,\cdot)  U_{m} (s,\cdot)\|_{\ddot B_{1,1}^{-\tilde \gamma}}
&=& 
\int_{0}^1 v^{-1} v^{1+\frac{\tilde \gamma}{2}} 
\int_{\R^d} \Big | \int_{\R^d} \partial_v h_v(z-y) \nabla \tilde p(s,t,x,y)  U_{m} (s,y) dy \Big | dz \, dv
\nonumber \\
&\leq &
C \| U_{m} (s,\cdot)\|_{L^\infty}  \int_{0}^1 v^{-1} v^{-\frac{\tilde \gamma}{2}} [\nu (t-s)]^{-\frac{1}{2}} dv,
%	\nonumber \\
%	&\leq &
%	C \| U_{m} (s,\cdot)\|_{L^\infty}   [\nu (t-s)]^{-\frac{1}{2}} ,
\end{eqnarray*}
which is finite if only $\tilde \gamma<-1$, that means that $b$ has to be Lipschitz continuous.
\end{remark}

\subsection{Control of $\|\partial_t u(t,\cdot)\|_{B_{\infty,\infty}^{-1+\gamma}}$}
%\gamma,\nu }$ \textcolor{black}{ICCIII}}%\gamma}}$}
\label{sec_control_partial_t}
%\end{proof}

If $b \in L^\infty([0,T]; C^{\tilde \gamma}(\R^d,\R^d))$, $0<1-\gamma<\tilde \gamma$, we derive an upper-bound of $\|\partial_t u(t,\cdot)\|_{B_{\infty,\infty}^{-1+\gamma}}$
%+\gamma}}$ 
by the equation \eqref{transport_equation} and by para-product result.
But first of all, let us precise why we have, point-wisely, with the viscous condition \eqref{CONDINU}, 
\begin{equation}\label{convergence_nu_Delta_u}
\lim_{(m,\nu)\to (+ \infty,0)}\nu \Delta u ^{m,\nu}(t,\cdot) =0.
\end{equation}
Recalling that, from Lemma \ref{lemma_apriori}, 
%Appendix Section \ref{sec_D2_u_m_nu}, we establish that
\begin{equation}\label{ineq_D2_u_m_nu_proof}
\|\nabla^2 u^{m,\nu}(t,\cdot ) \|_{L^\infty}
\leq 
\Big ( 	m^{2-\gamma}\big ( t \| f\|_{L^\infty(C^\gamma)}+[g]_\gamma\big )  + C t m^{2-\tilde \gamma }	\|b\|_{L^\infty( B_{\infty,\infty}^{\tilde \gamma})}	O_m(t)\Big )
\exp(t m^{1-\tilde \gamma} \|b\|_{L^\infty( B_{\infty,\infty}^{\tilde \gamma})}).
\end{equation}
Hence, for 
\begin{equation*}
\nu \ll \Big ( 	m^{2-\gamma}\big ( T \| f\|_{L^\infty}+[g]_\gamma\big )  + C T m^{2-\tilde \gamma }	\|b\|_{L^\infty( B_{\infty,\infty}^{\tilde \gamma})}	O_m(t)\Big )^{-1}
\exp(-T m^{1-\tilde \gamma} \|b\|_{L^\infty( B_{\infty,\infty}^{\tilde \gamma})}),
\end{equation*}
we deduce \eqref{convergence_nu_Delta_u}.

We are able to take the limit of equation \eqref{parabolic_moll_def}, up to sub-sequence selection as explained in Section \ref{sec_conv},
%defined in the  Arzel\`a-Ascoli theorem,
%in Section \ref{sec_conv_molli_Dpsi}, 
for any $t \in (0,T]$,
\begin{equation}\label{lim_b_nabla_u}
\lim_{(m,\nu) \to (+ \infty,0)} \partial_t u^{m,\nu}(t,\cdot)= 
\lim_{(m,\nu) \to (+ \infty,0)} \langle b_m(t,\cdot), \nabla u^{m,\nu}(t,\cdot) \rangle+ f(t,\cdot).
\end{equation}
But from para-product \eqref{condi_Bony}, we know that $\langle b_m(t,\cdot), \nabla u^{m,\nu}(t,\cdot) \rangle \in B_{\infty,\infty}^{-1+\gamma}
%+\gamma}
(\R^d,\R)$, $\nabla u^{m,\nu}(t,\cdot)$ being in $B_{\infty,\infty}^{-1+\gamma}$,
%+\gamma}$, 
the result then follows.

\subsection{Control of the \textit{limit H\"older modulus}}

In this section, let us suppose that $b \in L^\infty([0,T],B_{\infty,\infty}^{-\beta}(\R^d,\R^d))$.

\begin{prop}[Partial result for the transport equation]
	%[Existence of solution to rough transport equation] 
	\label{THEO_SCHAU_non_bounded}
	%	Let us suppose \A{E}.
	For  $\gamma\in (0,1)$, $\beta \in \R ^*$  be given.
	For all $ b \in L^\infty([0,T], \tilde B_{\infty,\infty}^{-\beta} (\R^d,\R^d))$, 
	%s.t. $\nabla \cdot b=0$,
	$f\in L^\infty([0,T];  C_b^{\gamma}(\R^d,\R))$ and $ g \in C^{\gamma}_b(\R^d,\R)$, 
	%if $ \delta\tilde \gamma \geq \gamma$ then 
	%if	there is 
	%	a
	%	\textit{mild  vanishing viscosity} solution $u \in L^\infty([0,T];C_b^{\gamma}(\R^{d},\R))  $ of \eqref{transport_equation} 
	if the conditions on the vanishing viscosity $ 0 <\nu <T^{-1}$, for a given constant $C>0$ depending only on $(\gamma,d)$, %\textcolor{black}{Check condi $\nu$}
	%for any $\kappa \in (0,1)$
	\begin{equation}\label{CONDINU}
		1 \ll \nu^{-1}T^{-1} m^{-\frac{2-\gamma}{1-\gamma}} (t  \| f\|_{L^\infty(C^\gamma)}+ [g]_{\gamma} ) ^{-\frac{2}{1-\gamma}}
		\exp \Big (-c\frac{m^{1+\beta} T \|b\|_{L^\infty( B_{\infty,\infty}^{-\beta})}} {1-\gamma}\Big ),
	\end{equation}
	are satisfied,
	then $u^{m\nu}$ satisfies
	%	for any $(\kappa,\tilde \kappa) \in (0,1)^2$
	\begin{eqnarray}\label{Schauder_ineq}
		[u^{m,\nu}]_{\gamma,\nu^{1/2}T^{1/2}}^{\eqref{CONDINU}}
		&\leq& 
		%(1-\kappa)^{-1}  (
		T \sup_{t \in [0,T]}[f(t,\cdot)]_{\gamma,\nu^{1/2}T^{1/2}}^{\eqref{CONDINU}}+ [g]_{\gamma,\nu^{1/2}T^{1/2}}^{\eqref{CONDINU}},
		%+ \tilde \kappa  ),
		\nonumber \\
		\|  u \|_{L^\infty} &\leq&  T\| f \|_{L^\infty}+ \|  g \|_{L^\infty}  ,
	\end{eqnarray}
	
\end{prop}

\begin{remark}\label{rem_vitesse}
	The exponential criterion in \eqref{CONDINU} relies on an \textit{a priori} control by $\|u^{m,\nu}\|_{L^\infty}$, see Section \ref{sec_control_diag_u_C1} for more details.
	This condition prevents us to hope any balance between $m$ and $\nu$ required to get usual uniqueness. 
	Indeed, when we expand the computations, we see only polynomial dependency on $(m,\nu)$ in the upper-bounds. But the contribution on $\nu$ goes in the wrong way, and cannot be overwhelmed by polynomial converging terms in $m$, because at the best $m \sim |\ln(\nu)|$ from \eqref{CONDINU}.

	Even for $b$ lying in a H\"older space, namely with a positive regularity, we cannot avoid 
	%fail to avoid 
	an exponential criterion like in \eqref{CONDINU} by the current analysis, see again Section \ref{sec_control_diag_u_C1}.
\end{remark}

	\subsubsection{Some a priori controls}
\label{sec_apriori}

It is well-known that the unique solution $u^{m,\nu}$ of \eqref{KOLMOLLI} is smooth, see \cite{frie:64}.
Some \textit{a priori} controls, potentially blowing up in $m$ and $\nu$, are required in our analysis.
\begin{lemma}\label{lemma_apriori}
	For $u^{m,\nu}$ strong solution of \eqref{KOLMOLLI}, we have
	\begin{equation}\label{ineq_O}
		\|\nabla u^{m,\nu}(t,\cdot)\|_{L^\infty} \leq \Big ( T \| f_m\|_{L^\infty(C^1)}+\|g_m\|_{C^1} \Big ) \exp \Big ( C m^{1+\beta} t \|b\|_{L^\infty( B_{\infty,\infty}^{-\beta})} \Big )=: O_m(t),
	\end{equation}
	and 
	\begin{equation}\label{ineq_D2_u_m_nu_FACILE}
		\|\nabla^2 u^{m,\nu}(t,\cdot ) \|_{L^\infty}
		\leq 
		C	 	m^{2-\gamma}\big ( t \| f\|_{L^\infty(C^\gamma)}+[g]_\gamma\big )  
		\exp(C  m^{1+\beta}t \|b\|_{L^\infty( B_{\infty,\infty}^{-\beta})})
		=: O^{(2)}_m(t),
	\end{equation}
	also
	\begin{equation}\label{ineq_D3_u_m_nu}
		\|\nabla^3 u^{m,\nu}(t,\cdot ) \|_{L^\infty}
		\leq
		C	 	m^{3-\gamma}\big ( t \| f\|_{L^\infty(C^\gamma)}+[g]_\gamma\big ) 
		\exp(C  m^{1+\beta}t \|b\|_{L^\infty( B_{\infty,\infty}^{-\beta})}).
	\end{equation}
	
\end{lemma}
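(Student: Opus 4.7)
The plan is to treat the three estimates as three applications of the same scheme: differentiate \eqref{KOLMOLLI} in space, view the resulting PDE for $\nabla^k u^{m,\nu}$ as a linear transport-diffusion equation with the \emph{same} first-order coefficient $b_m$ and an explicit source, and conclude with a pointwise bound followed by Gronwall's lemma. Since $b_m$ is smooth, the standard parabolic maximum principle, or equivalently the Feynman-Kac formula for the SDE $dX_t = b_m(t,X_t)\,dt + \sqrt{2\nu}\,dW_t$, gives for any smooth bounded solution of $\partial_t w + b_m \cdot \nabla w - \nu \Delta w = F$ with $w(0,\cdot) = w_0$ the inequality $\|w(t,\cdot)\|_{L^\infty} \leq \|w_0\|_{L^\infty} + \int_0^t \|F(s,\cdot)\|_{L^\infty}\,ds$. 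This is the only analytic ingredient, combined with Lemma \ref{lemme_ineq_b_m} and standard mollification estimates.

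For \eqref{ineq_O}, differentiating the equation once in $x_i$, the function $\partial_{x_i} u^{m,\nu}$ satisfies the same transport-diffusion operator with source $\partial_{x_i} f_m - (\partial_{x_i} b_m) \cdot \nabla u^{m,\nu}$. Injecting the bound $\|\nabla b_m\|_{L^\infty} \leq C m^{1+\beta}\|b\|_{L^\infty(B_{\infty,\infty}^{-\beta})}$ of Lemma \ref{lemme_ineq_b_m} into the pointwise inequality above leads to an integral inequality for $\|\nabla u^{m,\nu}(t,\cdot)\|_{L^\infty}$ whose resolution by Gronwall's lemma is precisely $O_m(t)$.

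For \eqref{ineq_D2_u_m_nu_FACILE} and \eqref{ineq_D3_u_m_nu}, the same differentiation produces extra source terms of the form $(\nabla^{j+1} b_m) \cdot \nabla^{k-j} u^{m,\nu}$ for $1 \leq j \leq k-1$ together with $\nabla^k f_m$. Since the mollifier is chosen to be the Gaussian $\rho = h_1$, the cancellation technique of Section \ref{sec_Gaussian_properties} gives $\|\nabla^k g_m\|_{L^\infty} \leq C m^{k-\gamma}[g]_\gamma$ and $\|\nabla^k f_m\|_{L^\infty} \leq C m^{k-\gamma}\|f\|_{L^\infty(C^\gamma)}$ for every integer $k \geq 1$; combined with Lemma \ref{lemme_ineq_b_m}, which gives $\|\nabla^k b_m\|_{L^\infty} \leq C m^{k+\beta}\|b\|_{L^\infty(B_{\infty,\infty}^{-\beta})}$, every explicit quantity is put on the right polynomial scale. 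At order two the worst contribution is $C m^{2+\beta}\|b\|_{L^\infty(B_{\infty,\infty}^{-\beta})}\int_0^t O_m(s)\,ds$; substituting the explicit exponential form of $O_m$ and integrating in time releases an inverse factor $(m^{1+\beta}\|b\|_{L^\infty(B_{\infty,\infty}^{-\beta})})^{-1}$ which brings the prefactor back to the target order $m^{2-\gamma}$, up to multiplying the exponential by a constant; a final Gronwall in the remaining $\nabla^2 u^{m,\nu}$ term closes \eqref{ineq_D2_u_m_nu_FACILE}. The same scheme at order three, now using \eqref{ineq_D2_u_m_nu_FACILE} to control the new source term carrying $\nabla^2 u^{m,\nu}$, yields \eqref{ineq_D3_u_m_nu}.

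The main obstacle I anticipate is this bookkeeping at orders two and three: each additional spatial derivative of $b_m$ costs an \emph{extra} factor $m^{1+\beta}$ rather than the target factor $m$, and one must verify in detail that this systematic excess is absorbed at each step by the inverse factor produced when one integrates the exponential coming from Gronwall's lemma at the previous order. Equally important is that \emph{all} spatial derivatives are spent on the coefficients and data rather than on the heat kernel $\tilde p$, so that no spurious factor of $\nu^{-1}$ enters; this is what makes the three bounds $\nu$-independent, in accordance with the statement of the lemma.
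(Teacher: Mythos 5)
Your argument is correct, but it follows a genuinely different route from the paper. The paper never differentiates the PDE: it works from the Duhamel representation \eqref{Duhamel_u} built on the frozen-flow Gaussian proxy $\hat p^{\tau,\xi}$, lets exactly one derivative fall on the kernel (which costs $[\nu(t-s)]^{-1/2}$ by \eqref{FIRST_deriv_CTR_DENS_flot}) and compensates it with the Lipschitz increment $|b_m(s,\theta^m_{s,\tau}(\xi))-b_m(s,y)|\leq \|b_m\|_{L^\infty(C^1)}|\theta^m_{s,\tau}(\xi)-y|$ plus the absorption property \eqref{ineq_absorb}, while the remaining derivatives at orders two and three are pushed by Leibniz onto $b_m$ and $\nabla u^{m,\nu}$ precisely because a second derivative on the kernel could not be resorbed; after that the structure is the same as yours, namely $C m^{1+\beta}\|b\|_{L^\infty(B^{-\beta}_{\infty,\infty})}\int_0^t\|\nabla^k u^{m,\nu}(s,\cdot)\|_{L^\infty}\,ds$ plus data terms, closed by Gr\"onwall and by absorbing polynomial factors of $m^{1+\beta}t\|b\|$ into the exponential (your variant, integrating $O_m$ and $O_m^{(2)}$ in time to release the factor $(m^{1+\beta}\|b\|)^{-1}$, achieves the same absorption). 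Your route — differentiating \eqref{KOLMOLLI} $k$ times, viewing $\nabla^k u^{m,\nu}$ as a solution of the same transport--diffusion operator with source $\nabla^k f_m-\sum_{j\geq 1}\binom{k}{j}(\nabla^j b_m)\cdot\nabla^{k-j+1}u^{m,\nu}$, and invoking the maximum principle / Feynman--Kac bound $\|w(t)\|_{L^\infty}\leq\|w_0\|_{L^\infty}+\int_0^t\|F(s)\|_{L^\infty}\,ds$ — is more elementary in that it dispenses with the proxy kernel, the freezing point and the absorption estimates altogether, and it makes the $\nu$-independence transparent; its only extra requirement is that the maximum principle may be applied to the differentiated equations, which is licit here because $u^{m,\nu}$ is smooth with bounded derivatives for the mollified data (the paper's appeal to \cite{frie:64}), and your bookkeeping of the mollification scales $\|\nabla^k(f_m,g_m)\|_{L^\infty}\lesssim m^{k-\gamma}$, $\|\nabla^k b_m\|_{L^\infty}\lesssim m^{k+\beta-1}\cdot m$ via Lemma \ref{lemme_ineq_b_m} reproduces exactly the prefactors in \eqref{ineq_O}, \eqref{ineq_D2_u_m_nu_FACILE} and \eqref{ineq_D3_u_m_nu}.
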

The proof is postponed in Section \ref{sec_grad}.
	
	\subsection{First part of the control of the limit modulus of continuity: the \textit{cut locus} trick}
	\label{sec_holder}
	
		For any $(t,x,x') \in [0,T] \times \R^d\times \R^d$, 
		we choose the associated \textit{freezing} points $\big ((\tau,\xi),(\tau',\xi')\big ) \in \big ( [0,T]\times \R^d \big )^2$.
		Like in \cite{chau:hono:meno:18}, we take $\tau=\tau'$.
		The previous Duhamel formula  \eqref{Duhamel_u} yields
	\begin{eqnarray}\label{Holder_Duhamel}
	&&	|u^{m, \nu}(t,x) -u^{m, \nu}(t,x')| 
	\nonumber \\
	&\leq&  |\hat G^{\tau,\xi}   f_m(t,x)-\hat G^{\tau,\xi '}   f_m(t,x')|+ |\hat P^{\tau,\xi}   g_m(t,x)-\hat P^{\tau,\xi '}   g_m(t,x')|
		\nonumber \\
%		&&+
%		%+		C t^{\frac{1+\gamma}{2}} m^\gamma [\mathbf b(t,\cdot)]_\gamma \|\mathbf D \mathbf u \|_{L^\infty}
%		\Big | \int_0^t \int_{\R^d} [ \hat {p}^{\tau,\xi} (s,t,x,y)-\hat {p}^{\tau,\xi} (s,t,x',y)]	[ b(t,y)- b_m(s,y)] \cdot  \nabla  u(s,y) dy \, ds \Big |
%		\nonumber \\
		&&+
		\Big | \int_0^t \int_{\R^d}   \hat {p}^{\tau,\xi} (s,t,x,y) [ b_m(s,\theta_{s,\tau}^m(\xi))- b_m(s,y )] \cdot  \nabla  u^{m, \nu}(s,y) dy \, ds
		\nonumber \\
				&&-
	\int_0^t \int_{\R^d}   \hat {p}^{\tau',\xi'} (s,t,x,y) [ b_m(s,\theta_{s,\tau'}^m(\xi'))- b_m(s,y )] \cdot  \nabla  u^{m, \nu}(s,y) dy \, ds \Big |
		\nonumber \\
		&=:& |\hat G^{\tau,\xi}   f_m(t,x)-\hat G^{\tau,\xi '}   f_m(t,x')|+ |\hat P^{\tau,\xi}   g_m(t,x)-\hat P^{\tau,\xi '}   g_m(t,x')|+  |R^{\tau,\xi,\xi'}(t,x,x') |.
	\end{eqnarray}
	However, our analysis need different choices of \textit{freezing} point which yields extra contributions in the above inequality, the final Duhamel like identity is stated in \eqref{identi_Holder_final} further.
	\\

\begin{remark}
	It seems to be useless to proceed with any integration by parts to transfer the gradient from $u^{m, \nu}$ to $\hat {p}^{\tau,\xi} (s,t,x,y) b^m_{\Delta} [\tau,\xi]$
	and  to upper-bound by $\|u^{m,\nu}\|_{L^\infty(C^\gamma_b)}$, a well-controlled norm, instead of $\|\nabla u^{m,\nu}\|_{L^\infty}$, a blowing up term.
	
	Indeed, in diagonal regime, $\hat {p}^{\tau,\xi} (s,t,x,y)$ yields a term of the type $[\nu (t-s)]^{-\frac{1}{2}}$ which has to smoothen by Lipschitz norms $\|u^{m,\nu}(t,\cdot)\|_{C^1}$  in order to get a contribution of $|x-x'|^\gamma$ and a positive contribution of $\nu$.
	For more details, we provide in Section \ref{sec_control_diag_u_C1} a full computation associated with the remainder $R^{\tau,\xi,\xi'}(t,x,x')$ which necessarily implies to upper-bound by a Lipschitz norm of $u^{m,\nu}$.

This last value is finite but increases exponentially with $m$, see Lemma \ref{lemma_apriori}. %&Appendix Section \ref{sec_grad}. 
	This exponential blowing-up, without the \textit{time decomposition} trick introduced in Section \ref{sec_parab}, yields the limit criterion \eqref{CONDINU}  of $(m,\nu)$, and  prevents us to get any balance between $m$ and $\nu$ to obtain uniqueness of solution.
\end{remark}

		\subsubsection{Main terms}
	\label{sec_Green_semi_Holder}
	
	For the main contributions associated with $f$ and $g$,
	\textcolor{black}{we have to suppose that the \textit{diagonal} regime specified further is in force. Precisely, we assume that there are $\alpha_1,\alpha_2>0$ such that 
		\begin{equation}\label{Condi_diagonal_Semi_group}
			|x-x'| \leq \nu^{\alpha_1}		T^{\alpha_2}.
		\end{equation}
		%(t-s)
 In this case,}
	 we choose the \textit{freezing} parameters to be $\tau=t$ and
	$\xi=\xi'=x$.
	
	%, we choose the same parameters as in the \textit{diagonal} regime specified further. 
	\textcolor{black}{Let us carefully point out that condition \eqref{Condi_diagonal_Semi_group} leads to the limit modulus of continuity of the type of \eqref{Def_modulus_continuity} when $\nu \to 0$, instead of the usual H\"older modulus. We precise the reverse condition in Section \ref{sec_comment_Holder_modulus}}
	\\
	
	This constraint is crucial, in order to avoid the sensitivity of the flow on different points $x,x'$. 
	However, the assumption \eqref{Condi_diagonal_Semi_group} yields to consider the limit H\"older continuity defined in \eqref{Def_modulus_continuity}.
	\\
	
	\textbf{Semi-group}
	\\
	
%	\textcolor{black}{VALABLE SI $t_0=t-\nu^{-1/2}|x-x'|^2 \geq 0$}
	We readily derive by change of variables:
	\begin{eqnarray}\label{ineq_P_g_Holder}
&&		|\hat P^{\tau,\xi}  g_m(t,x) - \hat P^{\tau,\xi'}  g_m (t,x')| \Big |_{\tau=t,\xi=\xi'=x}
\nonumber \\
&=& 	\Big | \int_{\R^d} [ \hat {p}^{\tau,\xi} (0,t,x,y)-\hat {p}^{\tau, \xi '} (0,t,x',y)]	 g_m(y) dy  \Big | \Bigg |_{\tau=t,\xi=\xi'=x}
		\nonumber \\
		&=& 	\Big | \int_{\R^d} [ \hat {p}^{t,x} (0,t,0,y)[ g_m(x+y)- g_m(x'+y)]	 dy  \Big | %\Bigg |_{(\tau,\xi)=(t,x)}
		\nonumber \\
		&\leq & [g]_{\gamma,\nu^{\alpha_1}(t-s)^{\alpha_2}}|x-x'|^\gamma.
	\end{eqnarray}
	
%		\textcolor{black}{SINON $t_0=t-\nu^{-1/2}|x-x'|^2 <0$ $ \Leftrightarrow $ $t\nu^{1/2} <|x-x'|^2$, on prend $\xi '=x'$}

%	!!!!!!!!!!!!!!!!!!!!!!!
	\textbf{Green operator}
	\\
	
	We also get by change of variables:
	\begin{eqnarray}\label{ineq_Gf_Holder}
		&&	|\hat G^{\tau,\xi}  f_m(t,x) - \hat G^{\tau,\xi'}  f_ m (t,x')| \Big |_{\tau=t,\xi=\xi'=x}
		\nonumber \\
		&=& 	\Big | \int_0^t  \int_{\R^d} [ \hat {p}^{t,x} (s,t,x,y)-\hat {p}^{t,x} (s,t,x',y)]	 f_m(s,y) dy \, ds  \Big | 
		%\Bigg |_{\tau=t,\xi=\xi'=x}
		\nonumber \\
		&=& 	\Big | \int_0^t \int_{\R^d} [ \hat {p}^{t,x} (s,t,0,y)[ f_m(s,x-y)- f_m(s,x'-y)] ds \, dy  \Big | 
		%\Bigg |_{(\tau,\xi)=(t,x)}
		\nonumber \\
		&\leq & 
		%\sup_{t \in [0,T]}
		\int_0^t [f(s,\cdot)]_{\gamma,\nu^{\alpha_1}(t-s)^{\alpha_2}} ds |x-x'|^\gamma.
		% t.
	\end{eqnarray}

	\subsubsection{Remainder term}
	\label{sec_remainders}
	
	To analyse the H\"older modulus of the remainder term, the core  of the \textit{a priori} controls, we separate the \textit{diagonal} regime from the \textit{off-diagonal} one, as performed in \cite{chau:hono:meno:18}.
	This strategy is natural in view with the vanishing viscous solution selected by a parabolic approximation.
	
	However, in the vanishing viscosity context, we have to carefully track the dependency on $\nu$ which yields, for our first approach, to consider a unusual criterion of  \textit{diagonal} / \textit{off-diagonal} regime, unlike for the standard parabolic scaling.

	Specifically, for any $x,x' \in \R^d$ and for given parameters $(\alpha_1, \alpha_2) \in \R^2$, to be tailored further, we call \textit{off-diagonal} regime the case $|x-x'|> \nu^{\alpha_1} (t-s)^{\alpha_2} \Leftrightarrow s> t_0$
	with 
	\begin{equation}\label{def_t0}
		t_0:=t- \nu^{-\frac{\alpha_1}{\alpha_2}} |x-x'|^{\frac 1 {\alpha_2}},
	\end{equation}
called the \textit{cut locus} time.

On the contrary the \textit{diagonal} regime holds when $|x-x'|\leq  \nu^{\alpha_1} (t-s)^{\alpha_2} \Leftrightarrow s\leq t_0$.
	This regime can be in force only if $|x-x'|$ is small enough, specifically, only if $|x-x'|\leq \nu ^{\alpha_1}T^{\alpha_2}$, thus the definition of the limit modulus of continuity \eqref{Def_modulus_continuity}.
	
	The point $t_0$ can be regarded as a \textit{cut-locus} point where we ``catch" the shortest path from $u^{m,\nu}(t,x)$ to $u^{m,\nu}(t,x')$ if $t \in [0,t_0]$ and we choose another way if $t_0<t$.

This procedure yields an extra contribution in \eqref{Holder_Duhamel}, this is detailed in Sections \ref{sec_discon_freez}, \ref{sec_extra_discon}. 
	
	We specify in Section \ref{sec_justification_freez} below why it is possible to choose different \textit{freezing} parameters for the remainder term according to the current regime; meanwhile the semi-group and the Green operator dealt in Section \ref{sec_Green_semi_Holder} has somehow to stay in the \textit{diagonal} regime.

	\subsubsection{Diagonal regime}
	\label{sec_diag}
	
	If  $|x-x'|\leq \nu^{\alpha_1} (t-s)^{\alpha_2} \Leftrightarrow s\leq t_0=t- \nu^{-\frac{\alpha_1}{\alpha_2}} |x-x'|^{\frac 1 {\alpha_2}}$, the points $x$ and $x'$ are supposed to be closed from each other, then we pick $\xi=\xi'=x$, also $\tau=t$, then we define the associated space 
	\begin{equation}\label{def_A}
		A(x,x',\nu,t)(s):=\{|x-x'|\leq \nu^{\alpha_1} (t-s)^{\alpha_2}\},
	\end{equation}
	with the indicator function
		\begin{equation*}%\label{def_A}
	\mathds 1_{A(x,x',\nu,t)}(s):=
	\begin{cases}
	1 \text{ if }
	|x-x'|\leq \nu^{\alpha_1} (t-s)^{\alpha_2},
	\\
	0 \text{ if }
	|x-x'|> \nu^{\alpha_1} (t-s)^{\alpha_2},
	\end{cases}
	\end{equation*}
also the associated remainder term
\begin{eqnarray}\label{def_R1_R2_A}
%		A_1 &=&  \Big | \int_0^t \int_{\R^d} [\nabla \hat {p}^{\tau,\xi} (s,t,x,y)-\nabla \hat {p}^{\tau,\xi} (s,t,x',y)]\cdot 	[ b(t,y)- b_m(s,y)] \times   [u(s,y)-u(s,\theta_{s,\tau}^m(\xi))] dy \, ds \Big |,
%		\nonumber \\
%&&
R_{A}^{\tau,\xi,\xi'}(t,x,x')  
%\nonumber \\
&:=&
\int_0^t \mathds 1_{A(x,x',\nu,t)}(s)
%\nonumber \\
%&& 
\int_{\R^d}  
\Big [
 \hat {p}^{\tau,\xi} (s,t,x,y) [ b_m(s,\theta_{s,\tau}^m(\xi))- b_m(s,y )] \cdot  \nabla  u^{m, \nu}(s,y)
 % dy \, ds
 \nonumber \\
 &&-
 %\int_0^t \int_{\R^d} 
   \hat {p}^{\tau',\xi'} (s,t,x,y) [ b_m(s,\theta_{s,\tau'}^m(\xi'))- b_m(s,y )] \cdot  \nabla  u^{m, \nu}(s,y) \Big ] dy \, ds
,
\end{eqnarray}
which equivalently write,
\begin{eqnarray}\label{parab_def_alternative_R1}
	&&
	R_{A}^{\tau,\xi,\xi'}(t,x,x')  
	\nonumber \\
	&:=&
	\int_0^{t_0}  \int_{\R^d} \Big [  \hat {p}^{\tau,\xi} (s,t,x,y)  [ b_m(s,\theta_{s,\tau}^m(\xi))- b_m(s,y )]  \cdot 
	\nabla u^{m, \nu}(s,y)
	% dy \,  ds 	  
	\nonumber \\
	&&-  \hat {p}^{\tau',\xi'} (s,t,x',y)  [ b_m(s,\theta_{s,\tau}^m(\xi'))- b_m(s,y )] 
	\cdot \nabla u^{m, \nu}(s,y) \Big ] dy \, ds 
	%\nonumber
	\\
	&=:&
	\hat G^{\tau,\xi}_{0,t_0} \Big \{ \big ( b_m(\cdot ,\theta_{\cdot,\tau}^m(\xi))- b_m \big )
	\cdot \nabla  u^{m, \nu}  \Big \}(t,x)
	%\nonumber \\
	%&&
	-
	\hat G^{\tau,\xi'}_{0,t_0}  \Big \{   \big ( b_m(\cdot ,\theta_{\cdot,\tau}^m(\xi'))- b_m \big )
	\cdot \nabla  u^{m, \nu}  \Big \}(t,x').
	\nonumber 
\end{eqnarray}	
%Unlike in Section \ref{sec_holder}, we do not proceed with an integration by parts which seems to useless as detailed therein.
A simple change of variable gives
%\footnote{Specifically, we choose the new variable $y'= y-x$.}, we directly get
\begin{eqnarray*}
	&&
	|R_{A}^{\tau,\xi,\xi'}(t,x,x')| \big  |_{\tau=t,\xi=\xi'=x} 
	\nonumber \\
	&=& 
	\Big | \int_0^{t_0} 
	%\mathds 1_{A(x,x',\nu,t)}(s)
	\int_{\R^d}   \hat {p}^{\tau,\xi} (s,t,0,y)
	%	\nonumber \\
	%	&&
	\Big \{ \big ( b_m(s,\theta_{s,\tau}^m(\xi))- b_m(s,x+y )\big ) \cdot    \nabla  u^{m,\nu}(s,x+y)
	\nonumber \\
	&& - \big ( b_m(s,\theta_{s,\tau}^m(\xi'))- b_m(s,x'+y ) \big ) \cdot    \nabla  u^{m,\nu}(s,x'+y) \Big \} dy \, ds \Big | \Bigg |_{\tau=t,\xi=\xi'=x},
\end{eqnarray*}
%!!!!!!!!!!!!!!!!!!!!!!!!!!!!!!!!!!!!!!!!!!!!!!!!!
%
%by parity of the Gaussian density, %$\hat {p}^{\tau,\xi} (s,t,0,y)$ 
%specifying that 
%\begin{equation*}
%	\hat {p}^{\tau,\xi} (s,t,0,y)	= \frac{1}{(4\pi \nu (t-s))^{\frac d 2} } 
%	\exp \bigg ( -\frac {\left | \int_s^t  b_m (\tilde s,\theta_{\tilde s,\tau }^m(\xi))d \tilde s-y \right|^2}{4\nu(t-s)} \bigg ),
%\end{equation*}
%is an even function,
%also 
%which equivalently writes by
%and expanding the terms above, in the r.h.s., gives us
%\begin{eqnarray*}
%	&&|R_{A}^{\tau,\xi,\xi'}(t,x,x')| \big  |_{\tau=t,\xi=\xi'=x} 
%	\nonumber \\
%	&=& 
%	\Big | \int_0^{t_0}
%	\mathds 1_{A(x,x',\nu,t)}(s) 
%	\int_{\R^d}   \hat {p}^{\tau,\xi} (s,t,0,y)
%	\Big \{ b_m(s,\theta_{s,\tau}^m(x)) \cdot  \big ( \nabla u^{m,\nu}(s,x+y)- \nabla u^{m,\nu}(s,x'+y) \big )
%	\nonumber \\
%		&& + \big ( b_m(s,x+y)- b_m(s,x'+y ) \big ) \cdot \nabla  u^{m,\nu}(s,\theta_{s,\tau}^m(\xi)) 
%		\nonumber \\
%	&&+  b_m(s,x'+y )\cdot \nabla u^{m,\nu}(s,x'+y)- b_m(s,x+y ) \cdot  \nabla u^{m,\nu}(s,x+y)
%	\Big \}dy \, ds \Big |\Bigg |_{\tau=t,\xi=\xi'=x},
%\end{eqnarray*} 
%and finally
%and putting together the corresponding contribution
%
%!!!!!!!!!!!!!!!!!!!!!!!!!!!!!!!!!!!!!!!!!!!!!!!!!!!!!!!
\begin{eqnarray*}
	&& 
	|R_{A}^{\tau,\xi,\xi'}(t,x,x')| \big  |_{\tau=t,\xi=\xi'=x} 
	\nonumber \\
	&=& 
	\Big | \int_0^{t_0} 
	%\mathds 1_{A(x,x',\nu,t)}(s)  
	\int_{\R^d}   \hat {p}^{\tau,\xi} (s,t,0,y) \Big \{ b_m(s,\theta_{s,\tau}^m(x)) \cdot   \big (\nabla u^{m,\nu}(s,x+y)- \nabla u^{m,\nu}(s,x'+y)\big )
	\nonumber \\
	&&
	% +\big ( b_m(s,x+y)- b_m(s,x'+y )\big )   \cdot \nabla u^{m,\nu}(s,\theta_{s,\tau}^m(\xi)) 
	+  \big (b_m(s,x'+y )-b_m(s,x+y )\big )\cdot \nabla u^{m,\nu}(s,x'+y)
	\nonumber \\
	&&- b_m(s,x+y )\cdot  \big (\nabla u^{m,\nu}(s,x+y)- \nabla u^{m,\nu}(s,x'+y)\big ) 
	\Big \}dy \, ds \Big |\Bigg |_{\tau=t,\xi=\xi'=x}.
\end{eqnarray*}
Hence, we readily derive that
% integration by parts
\begin{equation}\label{parab_ineq_R_1_diag_1}
	%&&
	|R_{A}^{\tau,\xi,\xi'}(t,x,x')| \big  |_{\tau=t,\xi=\xi'=x}
	%	\nonumber \\
	\leq 
	C 	|x-x'| 
	\|b_m\|_{L^\infty(C_b^{1})}\|\nabla u^{m, \nu}\|_{L^\infty(C_b^{1})}
	\int_0^{t_0} 
	%\mathds 1_{A(x,x',\nu,t)}(s) 
	ds .
\end{equation}
Recalling that we have to use the \textit{a priori} controls of the gradient and the Hessian in Lemma \ref{lemma_apriori}, 
%by Gr\"onwall's lemma we get, recalled 
%established in Appendix Section \ref{sec_grad}:
\begin{eqnarray}\label{parab_ineq_u_C1_1}
	\|u^{m,\nu}(t,\cdot)\|_{C^1} &\leq& \Big ( T \| f_m\|_{L^\infty(C^1)}+\|g_m\|_{C^1} \Big ) \exp \Big ( m^{1+\beta} t \|b\|_{L^\infty( B_{\infty,\infty}^{-\beta})} \Big )= O_m(t),
	\nonumber \\
	%\end{equation}
	%also
	%\begin{eqnarray}
	\|\nabla^2 u^{m,\nu}(t,\cdot ) \|_{L^\infty}
	&\leq& 
	C	 	m^{2-\gamma}\big ( t \| f\|_{L^\infty(C^\gamma)}+[g]_\gamma\big )  
	\exp(C  m^{1+\beta}t \|b\|_{L^\infty( B_{\infty,\infty}^{-\beta})})
	= O^{(2)}_m(t).
\end{eqnarray}
From the definition of the \textit{diagonal} regime, $	|x-x'|  \leq 	|x-x'| ^\gamma \nu^{\alpha_1 (1-\gamma)} (t-s)^{(1-\gamma)\alpha_2} $, we deduce
\begin{equation}\label{ineq_RA_parab}
	%&& 
	|R_{A}^{\tau,\xi,\xi'}(t,x,x')| \big  |_{\tau=t,\xi=\xi'=x}
	%\nonumber \\
	\leq 
	C 	  |x-x'|^\gamma
	\|b\|_{L^\infty( B_{\infty,\infty}^{-\beta})}
	%\|u\|_{L^\infty(C^{ \gamma})}
	m^{1+\beta} (O_m(t)+O_m^{(2)}(t))
	\int_0^{t_0} \nu^{\alpha_1 (1-\gamma)} (t-s)^{(1-\gamma)\alpha_2} ds.
\end{equation}
If $(1-\gamma)\alpha_2 >-1 $ 
%and $ (1-\gamma)\alpha_2-\frac 12+ \alpha_2  >-1 $ 
$\Leftrightarrow $ $\alpha_2 >
%-\frac{1}{2(2-\gamma)}>
-\frac{1}{1-\gamma} >-1$, the above time integral is finite, and after integration,
\begin{equation}\label{ineq_R_Holder_diag}
|R_{A}^{\tau,\xi,\xi'}(t,x,x')| \big  |_{\tau=t,\xi=\xi'=x}
%\nonumber 
%\\
	\leq 
	C 	  |x-x'|^\gamma %\nu^{\alpha_1(1-\gamma)}
	%t^{1+(1-\gamma)\alpha_2}
	\|b\|_{L^\infty(B_{\infty,\infty}^{-\beta})}%\|u\|_{L^\infty(C^{ \gamma})}
	m^{1+\beta} O_m(t) %(O_ \varepsilon+ \varepsilon^{\tilde \gamma-1}) 
	%\nonumber \\
	%&&\times
	%\Big ( 
	\nu^{\alpha_1(1-\gamma)} t^{1+(1-\gamma)\alpha_2} 
	%+\nu^{-\frac 12+\alpha_1(2-\gamma)}t^{\frac 12+\alpha_2(2-\gamma)}\Big )
	. 
%	\nonumber
\end{equation}
To consider vanishing viscous, it is necessary to have $\alpha_1(1-\gamma)>0$ 
%and $-\frac 12 + \alpha_1(2-\gamma)>0$ 
$\Leftrightarrow$  $\alpha_1 
%> \frac{1}{2(2-\gamma)}
> 0$ for $\gamma<1$.
\\

% let us calibrate the parameter s.t. $	 \alpha_1(1-\gamma)=-\frac 12+\alpha_1(2-\gamma)$, which yields $\alpha_1= \frac{1}{2}\geq \frac{1}{2(2-\gamma)}$.
%We also choose the other parameter $\alpha_2$ s.t. $1+(1-\gamma)\alpha_2=\frac 12+\alpha_2(2-\gamma)$, i.e. $\alpha_2=\frac{1}{2}$, and yields 
%\begin{equation*}
%R_m^{\tau,\xi,\xi'}(t,x,x') |_{\tau=t,\xi=\xi'=x} 
%\leq 
%C 	  |x-x'|^\gamma %\nu^{\alpha_1(1-\gamma)}
%%t^{1+(1-\gamma)\alpha_2}
%\|b\|_{L^\infty(C^{\tilde \gamma})}%\|u\|_{L^\infty(C^{ \gamma})}
%m^{1-\tilde \gamma} O_m(t) %(O_ \varepsilon+ \varepsilon^{\tilde \gamma-1}) 
%%\nonumber \\
%%&&\times
%\nu^{\frac{1-\gamma}{2}} t^{\frac{3-\gamma}{2}}.
%%\Big ( \nu^{\alpha_1(1-\gamma)} t^{1+(1-\gamma)\alpha_2} +\nu^{-\frac 12+\alpha_1(2-\gamma)}t^{\frac 12+\alpha_2(2-\gamma)}\Big ).
%\end{equation*}

%
To sum up, %for a given $\varepsilon>0$, 
we consider the constraints
%\begin{eqnarray}\label{Contrainte_0}
%	\alpha_1 &\geq& 0,
%	\nonumber \\
%\alpha_2  &>&-1,
%\end{eqnarray}
%%and
\begin{eqnarray}\label{Contrainte_1}
\alpha_1 &>& 0,
%\frac{1}{2(2-\gamma)},
\nonumber \\
\alpha_2  &>&-\frac{1}{1-\gamma}.
\end{eqnarray}
%%IN particular if $\alpha_1=0$, the constraint should be $\gamma+ \tilde \gamma=2$.

\subsubsection{Off-diagonal regime}
\label{sec_off_diag}

In this case, $x$ and $x'$ are supposed to be ``far away" from each other, then we pick $\xi=x$ and $\xi'=x'$.
If $|x-x'|> \nu^{\alpha_1} (t-s)^{\alpha_2} \Leftrightarrow s>t- \nu^{-\frac{\alpha_1}{\alpha_2}} |x-x'|^{\frac{1}{\alpha_2}}$, we recall the corresponding \textit{cut locus} time 
\begin{equation*}%\label{def_t0}
	t_0=t- \nu^{-\frac{\alpha_1}{\alpha_2}} |x-x'|^{\frac{1}{\alpha_2}}.
\end{equation*} 
In this case, we choose as \textit{freezing} parameters $\xi=x$ and $\xi'=x'$.
In the \textit{off-diagonal} regime, the associated space is
\begin{equation}\label{def_AC}
	A^c(x,x',\nu,t)(s):=\{|x-x'|> \nu^{\alpha_1} (t-s)^{\alpha_2}\},
\end{equation}
the indicator function
\begin{equation*}%\label{def_A}
	\mathds 1_{A^c(x,x',\nu,t)}(s)=1-\mathds 1_{A(x,x',\nu,t)}(s)=
	\begin{cases}
		1 \text{ if }
		|x-x'|> \nu^{\alpha_1} (t-s)^{\alpha_2},
		\\
		0 \text{ if }
		|x-x'|\leq  \nu^{\alpha_1} (t-s)^{\alpha_2},
	\end{cases}
\end{equation*}
and the associated remainder terms are
\begin{eqnarray*}%\label{def_R1_R2_AC}
	%		A_1 &=&  \Big | \int_0^t \int_{\R^d} \big (\nabla \hat {p}^{\tau,\xi} (s,t,x,y)-\nabla \hat {p}^{\tau,\xi} (s,t,x',y)\big )\cdot 	\big ( b(t,y)- b_m(s,y)\big ) \times   \big (u(s,y)-u(s,\theta_{s,\tau}^m(\xi))\big ) dy \, ds \Big |,
	%		\nonumber \\
	&&R_{A^c}^{\tau,\xi,\xi'}(t,x,x')  
	\nonumber \\
	&:=& %\nonumber \\
	%&=&
	R^{\tau,\xi,\xi'}(t,x,x') -R_{A}^{\tau,\xi,\xi'}(t,x,x') 
	\nonumber \\
	%&=&
	%\int_0^t \mathds 1_{A^c(x,x',\nu,t)}(s) \int_{\R^d}  \Big [   \hat {p}^{\tau,\xi} (s,t,x,y)  \big ( b_m(s,\theta_{s,\tau}^m(\xi))- b_m(s,y )\big ) 
	%\cdot \nabla u^{m, \nu}(s,y)  
	%\nonumber \\
	%&&-
	% \hat {p}^{\tau',\xi'} (s,t,x',y)  \big ( b_m(s,\theta_{s,\tau}^m(\xi'))- b_m(s,y )\big ) 
	%\cdot \nabla u^{m, \nu}(s,y) \Big] dy \, ds ,
	%\end{eqnarray*}
	%Like in the \textit{diagonal} regime, we can also rewrite the above remainder terms
	%\begin{eqnarray}\label{parab_def_alternative_R1_off}
	%	R_{A^c}^{\tau,\xi,\xi'}(t,x,x')  
	%	%\nonumber \\
	&=&
	\int_{t_0}^{t}  \int_{\R^d}  \Big [   \hat {p}^{\tau,\xi} (s,t,x,y)  \big ( b_m(s,\theta_{s,\tau}^m(\xi))- b_m(s,y )\big ) 
	\cdot \nabla u^{m, \nu}(s,y)
	\nonumber \\
	&&-
	\hat {p}^{\tau',\xi'} (s,t,x',y)  \big ( b_m(s,\theta_{s,\tau}^m(\xi'))- b_m(s,y )\big ) 
	\cdot \nabla u^{m, \nu}(s,y) \Big] dy \, ds 
	%\nonumber 
	\\
	&=:&
	\hat G^{\tau,\xi}_{t_0,t} \Big \{ \big ( b_m(\cdot ,\theta_{\cdot,\tau}^m(\xi))- b_m \big )
	\cdot \nabla u^{m, \nu} \Big \}(t,x)
	%\nonumber \\
	%&&
	-
	\hat G^{\tau,\xi'}_{t_0,t}  \Big \{   \big ( b_m(\cdot ,\theta_{\cdot,\tau}^m(\xi'))- b_m \big )
	\cdot \nabla u^{m, \nu} \Big \}(t,x'). \nonumber
\end{eqnarray*}
The analysis is direct by triangular inequality
\begin{equation*}%\label{parab_ineq_R_Holder_off_diag}
	%&&
	|R_{A^c}^{\tau,\xi,\xi'}(t,x,x') | \big |_{\tau=t,\xi=x,\xi'=x'} 
	%\nonumber \\
	\leq 
	2 C  \|b_ m\|_{L^\infty(C^{1})} \|u^{m,\nu}\|_{L^\infty(C^{ 1})}  %\max_{\xi,\xi'}
	\sup_{x \in \R^d}	\int_{t_0}^t \int_{\R^d} \bar {p}^{t,x} (s,t,x,y) |y-\theta_{s,t}^m(x)| dy \, ds ,
	%\Bigg |_{\tau=t,\xi=x},
\end{equation*}
%Next, 
the absorbing property of the exponential \eqref{ineq_absorb} gives
\begin{eqnarray}\label{ineq_RAc_parab}
	%&&
	|R_{A^c}^{\tau,\xi,\xi'}(t,x,x') | \big |_{\tau=t,\xi=x,\xi'=x'} 
	%\nonumber \\
	&\leq & 
	C m^{1+\beta}\|b\|_{L^\infty( B_{\infty,\infty}^{-\beta})} O_m(t)   %\max_{\xi,\xi'} 
	\int_{t_0}^t
	% \int_{\R^d}
	\big ( \nu (t-s)\big )^{\frac{  1 }{2}} 
	%\bar {p}_\theta ^x (s,t,x,y) dy \,
	ds 
	\nonumber \\
	&\leq & 
	C m^{1+\beta}\|b\|_{L^\infty( B_{\infty,\infty}^{-\beta})} O_m(t)   %\max_{\xi,\xi'} 
	\int_{t_0}^t
	% \int_{\R^d}
	\nu^{\frac 12}
	(t-s)^{\frac{  1 -\varepsilon}{2}} 
	\nu^{-\frac{\alpha_1 \varepsilon }{2\alpha_2}} |x-x'|^{\frac{\varepsilon}{2\alpha_2}}
	%\bar {p}_\theta ^x (s,t,x,y) dy \,
	ds 
	,
	\nonumber \\
\end{eqnarray}
for a given $0<\varepsilon<1$, and because the \textit{off-diagonal} regime is in force, $|x-x'| \geq  \nu^{\alpha_1}(t-s)^{\alpha_2}$.

The solution $u^{m,\nu}$ is supposed to have a $\gamma$ regularity in the limit modulus of continuity \eqref{Def_modulus_continuity_nonlimit}
% be $\gamma$-H\"older continuous 
then
\begin{equation*}
	\gamma= \frac{\varepsilon}{2 \alpha_2} 
	\ \Rightarrow \
	\varepsilon = 2 \alpha_2 \gamma>0.
\end{equation*}
Also, we have to suppose that $\varepsilon<1$ (for positive time contribution purpose) which implies
\begin{equation}\label{condi_alpha2_off}
	\alpha_2 < \frac{1}{2 \gamma}.
\end{equation}
Let us consider equality between the two regimes from \eqref{ineq_RA_parab} and \eqref{ineq_RAc_parab}, i.e.
\begin{equation}\label{parab_CONDI_alpha}
	(1-\gamma) \alpha_2= \frac{1-2 \alpha_2 \gamma}{2}
	\
	\Leftrightarrow
	\
	\alpha_2= \frac{1}{2}<\frac{1}{2\gamma},
\end{equation}
which is exactly the parabolic scale, and also
\begin{equation*}
	\varepsilon = \gamma.
\end{equation*}
This is another way to see the difficulty to get a Lipschitz control, in our framework, because $\varepsilon$ has to be strictly lower than $1$.
%\textcolor{black}{j'ai changé les alphas}
\begin{eqnarray}%\label{ineq_RAc_parab}
%&&
|R_{A^c}^{\tau,\xi,\xi'}(t,x,x') | \big |_{\tau=t,\xi=x,\xi'=x'} 
%\nonumber \\
&\leq & 
C m^{1+\beta}\|b\|_{L^\infty( B_{\infty,\infty}^{-\beta})} O_m(t)   %\max_{\xi,\xi'} 
\int_{t_0}^t
% \int_{\R^d}
\nu^{\frac {1-\gamma}2}
(t-s)^{\frac{  1 -\gamma}{2}} |x-x'|^{\gamma}
%\bar {p}_\theta ^x (s,t,x,y) dy \,
ds 
,
\nonumber \\
\end{eqnarray}

\subsubsection{Final control of $	R^{\tau,\xi,\xi'}(t,x,x')$}
We then conclude
\begin{eqnarray*}
	&& |R^{\tau,\xi,\xi'}(t,x,x')| \big  |_{\tau=t,\xi=\xi'=x}
	\nonumber \\
	&\leq &
	C 	m^{1+\beta}  \|b\|_{L^\infty( B_{\infty,\infty}^{-\beta})}	  |x-x'|^\gamma
	% \Big (
	%\|u\|_{L^\infty(C^{ \gamma})}
	(O_m(t)+O_m^{(2)}(t))
	\int_0^t  ( \nu^{\alpha_1 (1-\gamma)}+ 	\nu^{\frac{1}{2}-\alpha_1 \gamma }  ) (t-s)^{\frac{1-\gamma}2} ds
	.
\end{eqnarray*}
Let us also consider the equality of the exponent of $\nu$,
\begin{equation*}
	\alpha_1(1-\gamma)= \frac{1}{2}-\alpha_1 \gamma 
	\ 
	\Leftrightarrow
	\
	\alpha_1 = \frac{1}{2},
\end{equation*}
%We retrieve 
the usual parabolic scale $\alpha_1= \alpha_2= \frac 12$ is finally in force. 
We deduce,
\begin{equation}\label{parab_ineq_R}
	%&&
	|R^{\tau,\xi,\xi'}(t,x,x')| \big  |_{\tau=t,\xi=\xi'=x}
	%	\nonumber \\
	\leq 
	C 	m^{1+\beta}  \|b\|_{L^\infty( B_{\infty,\infty}^{-\beta})}	  |x-x'|^\gamma
	% \Big (
	%\|u\|_{L^\infty(C^{ \gamma})}
	(O_m(t)+O_m^{(2)}(t))
	\int_0^t   \nu^{\frac{1-\gamma}2}(t-s)^{\frac{1-\gamma}2} ds
	.
\end{equation}

	\subsubsection{On the discontinuous choice of freezing parameters : consequence for $u^{m,\nu}(t,x')$}
\label{sec_discon_freez}
\label{sec_discontinuous_u_x}

Let us carefully point out that even if the solution $u^{m,\nu}$ \textbf{does not} depend on the corresponding freezing parameter $\xi$, the choice of $\xi$ in this section \textbf{does} depend on the current time variable of integration $s$.

%In order to be clear as possible and to finely exhibit the dependency  $\xi$  we add the index on the suitable operators.

Therefore,  like for the approach developed in \cite{chau:hono:meno:18}, the \textit{cut locus} time yields an additional contribution.
\\
%\subsubsection{Consequence for $u^{m,\nu}(t,x')$}
%\label{sec_discontinuous_u_x}

Previously, in the H\"older norm controls, we considered two points $(x,x') \in \R^d\times \R^d$.
Let us specify how to write the solution $u^{m,\nu}(t,x')$ with the different choices of freezing parameter $\xi'$ depending on the time variable of integration $s$. 
To do so, we first rewrite the theoretical representation of the solution 
% \eqref{reresentation_theorique_semi_group_Green_operator} %{ident_semi_goup_green_kernel_moll} 
where the initial time is $r \in [0,T]$, and the initial function is replaced by $u^{m,\nu}(r,x')$,
\begin{equation}\label{representation_sol_u_r}
u^{m,\nu}(t,x')=  P_{r}^{m,\nu} u^{m,\nu}(r,x') + G_{r}^{m,\nu}f_m(t,x') ,
\end{equation}
where $ P_{r,t}^{m,\nu} $ and $ G_{r,t}^{m,\nu}$ stand respectively for the semi-group and the Green operator associated with the Cauchy problem 	
\begin{equation}\label{Kolmo_intial_r}
\begin{cases}
\partial_t u^{m,\nu}(t,x)+  \langle b_m(t,x),  \nabla u^{m,\nu} (t,x)\rangle -\nu \Delta u^{m,\nu}(t,x)=f_m(t,x),\ (t,x)\in (r,T]\times \R^{d},\\
u^{m,\nu}(r,x)=u^{m,\nu}(r,x),\ x\in \R^{d}.
\end{cases}
\end{equation}
We also write
	\begin{equation}\label{Duhamel_u_r}
u^{m, \nu} (t,x) = \hat P_r^{\tau,\xi}  u^{m,\nu}(r,\cdot) 
%g_m 
(t,x) + \hat G_r^{\tau,\xi} f_m(t,x) 
+ \hat  G_r^{\tau,\xi}  \big ( b _{\Delta}^{m} [\tau,\xi]  \cdot  \nabla  u^{m, \nu} \big )(t,x) ,
%+ \int_0^t \hat P_{t-s} [ _m(s,\theta^m_s(\xi))- _m(s,\cdot )] \cdot  \nabla  u(s,\cdot ) ds 
%+\hat G \big [ B _m  \cdot  \nabla  u \big ](t,x) ,
\end{equation}
where the operators are defined by
	\begin{equation}\label{def_hat_G_r}
\forall (t,x) \in (0,T]\times\R^{d}, \ \hat  G_r ^{\tau,\xi}   f_m(t,x):= \int_r^{t}  \int_{\R^{d}}  \hat{p}^{\tau,\xi} (s,t, x,y)   f (s,y)  dy \, ds,
\end{equation}
and 
%for any $   g\in C^{2}_0( \R^{d},\R)$, the associated semi-group
\begin{equation}\label{def_hat_P_r}
\hat  P_r^{\tau,\xi}    g_m(t,x):=\int_{\R^{d}}\hat  p^{\tau,\xi}  (r,t,x,y)   g_m(y)  dy.
\end{equation}

Let us recall the definition of the transition time
\begin{equation} \label{def_t_0}
t_0:=t-\nu^{-\frac{\alpha_1}{\alpha_2}}|x-x'|^{\frac{1}{\alpha_2}}=t-\nu^{-1}
%\frac{\gamma(1+\gamma)}{2(2+\gamma)}}
|x-x'|^{2}
.%\frac{2\gamma}{2+\gamma}}.
\end{equation} 
%We never have $T \leq t_0$, the \textit{off-diagonal} regime is in force, then we pick $\xi'=x'$ and there is no intricate choice of the \textit{freezing} parameter.

	If $t_0 \leq 0$ $\Leftrightarrow$ $t \leq \nu^{-1}|x-x'|^{2}$, the \textit{off-diagonal} regime is in force, then we pick $\xi'=x'$ and there is no intricate choice of the \textit{freezing} parameter. 
	\textcolor{black}{However, we cannot have the same control as \eqref{ineq_P_g_Holder}, we indeed need to handle with the flow regularity by the choice of different \textit{proxy}, see Section \ref{sec_comment_Holder_modulus} further.}
	%\textcolor{black}{JE CROIS QU'IL FAUT ABSOLUMENT TRAITER CE CAS POUR LE SEMI-GRoupE....!!!}
\\

However, if $t_0>0$ $\Leftrightarrow$ $t > \nu^{-1}|x-x'|^{2}$, we need to be more subtle to handle with the dependency on $s$ for the value choice of $\xi' \in \R^d$.
From now on, we suppose that $t_0>0$.
\textcolor{black}{In this case, we can get identity \eqref{ineq_P_g_Holder}, but the condition $|x-x'|^2 < \nu t $ implies that we do consider the \textit{limit H\"older continuity} defined in \eqref{Def_modulus_continuity} instead of the usual H\"older modulus.}
\\

%However, if $T > t_0$, we need to be more subtle to handle with the dependency on $s$, in the Duhamel formula, for the value choice of $\xi'$.
We next differentiate \eqref{Duhamel_u_r}%{representation_sol_u_r} 
w.r.t. $r$ 
\begin{equation}
0 =  \partial_r \big (\hat P_{r}^{\tau,\xi'} u^{m,\nu}(r,\cdot) \big )(t,x') + \partial_r \hat G_{r}^{\tau,\xi'}f _m(t,x') +\partial_r \hat  G_r^{\tau,\xi'}  \big ( b _{\Delta}^{m} [\tau,\xi']  \cdot  \nabla  u^{m, \nu} \big )(t,x').
\end{equation}
We integrate the variable $r$ between $[t_0,t]$ with the \textit{proxy} parameter $\xi ' \in \R^d$, 
%and by the semi-group representation \eqref{representation_sol_u_r}, we get %and $[t_0,T_\lambda]$,
\begin{equation*}
	0= u^{m,\nu}(t,x') -  [P_{t_0}^{\tau,\xi'} u^{m,\nu}(t_0,\cdot)](t,x')-  G_{t_0}^{\tau,\xi'}f_m(t,x')-  \hat  G_{t_0}^{\tau,\xi'}  \big ( b _{\Delta}^{m} [\tau,\xi']  \cdot  \nabla  u^{m, \nu} \big )(t,x')
%	\nonumber \\
%	&=& 
%	-\tilde P_{t_0,t}^{\xi'}u^{m,\nu}(t_0,x') -\tilde G_{t_0,t}^{\xi'}\circ \big ( (I-R_{t_0}^{\lambda,m,\xi'})^{-1}\partial_T R_{t_0}^{\lambda,m,\xi'} \big )u^{m,\nu}(t_0,x')
%	\nonumber \\
%	&&
%	-   u^{m,\nu}(t,x')
%	-\tilde G_{t_0,t}^{\xi'}\circ (I-R_{t_0}^{\lambda,m,\xi'})^{-1}f^\lambda(t,x')
	,
\end{equation*}
which yields for $t \in [t_0,T]$
\begin{equation}\label{eq_off_diag_discon}
u^{m,\nu}(t,x')=
  [\hat P_{t_0}^{\tau,\xi'} u^{m,\nu}(t_0,\cdot)](t,x')+  G_{t_0}^{\tau,\xi'}f_m(t,x')+  \hat  G_{t_0}^{\tau,\xi'}  \big ( b _{\Delta}^{m} [\tau,\xi']  \cdot  \nabla  u^{m, \nu} \big )(t,x').
%P_{t_0,t}^{m,\nu} u^{m,\nu}(t_0,x') -  G_{t_0,t}^{m,\nu}f_m(t,x').
%\nonumber \\
%&=& 
%-\tilde P_{t_0,t}^{\xi'} u^{m,\nu}(t_0,x') -\tilde G_{t_0,t}^{\xi'}\circ \big ( (I-R_{t_0}^{\lambda,m,\xi'})^{-1}\partial_T R_{t_0}^{\lambda,m,\xi'} \big )u^{m,\nu}(t_0,x')
%\nonumber \\
%&&
%-\tilde G_{t_0,t}^{\xi'}\circ (I-R_{t_0}^{\lambda,m,\xi'})^{-1}f^\lambda(t,x').
\end{equation}
Next, we integrate in time between $[0,t_0]$ with a different \textit{freezing} parameter $\tilde \xi' \in \R^d$,
\begin{eqnarray*}%\label{eq_diag_discon}
0&=&   [\hat P_{t_0}^{\tau,\tilde \xi'} u^{m,\nu}(t_0,\cdot)](t,x')-\hat  P_{0}^{\tau,\tilde \xi'} g_m(t,x')
+  \hat G_{t_0}^{\tau,\tilde \xi'}f_m(t,x') -  \hat G_{0}^{\tau,\tilde \xi'}f_m(t,x')
\nonumber \\
&&
+ \hat  G_{t_0}^{\tau,\tilde \xi'}  \big ( b _{\Delta}^{m} [\tau,\tilde \xi']  \cdot  \nabla  u^{m, \nu} \big )(t,x')
- \hat  G_{0}^{\tau,\tilde \xi'}  \big ( b _{\Delta}^{m} [\tau,\tilde \xi']  \cdot  \nabla  u^{m, \nu} \big )(t,x')
%\nonumber \\
%&=& 
%-\tilde P_{T_\lambda,t_0}^{\tilde \xi'} g^\lambda(x') -\tilde G_{T_\lambda,t_0}^{\tilde \xi'}\circ \big ( (I-R_{T_\lambda}^{\lambda,m,\tilde \xi'})^{-1}\partial_T R_{T_\lambda}^{\lambda,m,\tilde \xi'} \big )g^\lambda(x')
%\nonumber \\
%&&
%+ \tilde P_{T_\lambda,t_0}^{\tilde \xi'} u^{m,\nu}(t_0,x') +\tilde G_{t_0,t}^{\tilde \xi'}\circ \big ( (I-R_{t_0}^{\lambda,m,\tilde \xi'})^{-1}\partial_T R_{t_0}^{\lambda,m,\tilde \xi'} \big )u^{m,\nu}(t_0,x')
%\nonumber \\
%&&
%-\tilde G_{T_\lambda, t_0}^{\tilde \xi'}\circ (I-R_{T_\lambda}^{\lambda,m,\tilde \xi'})^{-1}f^\lambda(t,x')
.
\end{eqnarray*}
Hence,
\begin{eqnarray*}%\label{eq_u_r_final}%off_diag_discon}
u^{m,\nu}(t,x')&=&
[\hat P_{t_0}^{\tau,\xi'} u^{m,\nu}(t_0,\cdot)](t,x')+  G_{t_0}^{\tau,\xi'}f_m(t,x')+  \hat  G_{t_0}^{\tau,\xi'}  \big ( b _{\Delta}^{m} [\tau,\xi']  \cdot  \nabla  u^{m, \nu} \big )(t,x')
%P_{t_0,t}^{m,\nu} u^{m,\nu}(t_0,x') -  G_{t_0,t}^{m,\nu}f_m(t,x').
\nonumber \\
 &&- [\hat P_{t_0}^{\tau,\tilde \xi'} u^{m,\nu}(t_0,\cdot)](t,x')+\hat  P_{0}^{\tau,\tilde \xi'} g_m(t,x')
- \hat G_{t_0}^{\tau,\tilde \xi'}f_m(t,x') +  \hat G_{0}^{\tau,\tilde \xi'}f_m(t,x')
\nonumber \\
&&
- \hat  G_{t_0}^{\tau,\tilde \xi'}  \big ( b _{\Delta}^{m} [\tau,\tilde \xi']  \cdot  \nabla  u^{m, \nu} \big )(t,x')
+ \hat  G_{0}^{\tau,\tilde \xi'}  \big ( b _{\Delta}^{m} [\tau,\tilde \xi']  \cdot  \nabla  u^{m, \nu} \big )(t,x').
%&=& 
%-\tilde P_{t_0,t}^{\xi'} u^{m,\nu}(t_0,x') -\tilde G_{t_0,t}^{\xi'}\circ \big ( (I-R_{t_0}^{\lambda,m,\xi'})^{-1}\partial_T R_{t_0}^{\lambda,m,\xi'} \big )u^{m,\nu}(t_0,x')
%\nonumber \\
%&&
%-\tilde G_{t_0,t}^{\xi'}\circ (I-R_{t_0}^{\lambda,m,\xi'})^{-1}f^\lambda(t,x').
\end{eqnarray*}
Defining 
	\begin{equation}\label{def_hat_G_rt}
\forall (t',x) \in [0,t]\times\R^{d}, \ \hat  G_{r,t'} ^{\tau,\xi}   f_m(t,x):= \int_r^{t'}  \int_{\R^{d}}  \hat{p}^{\tau,\xi} (s,t, x,y)   f (s,y)  dy \, ds,
\end{equation}
we  write
\begin{eqnarray}\label{eq_u_r_final}%off_diag_discon}
%&&
u^{m,\nu}(t,x')
%\nonumber \\
&=&
[\hat P_{t_0}^{\tau,\xi'} u^{m,\nu}(t_0,\cdot)](t,x')
- [\hat P_{t_0}^{\tau,\tilde \xi'} u^{m,\nu}(t_0,\cdot)](t,x')
+  G_{t_0}^{\tau,\xi'}f_m(t,x')
%P_{t_0,t}^{m,\nu} u^{m,\nu}(t_0,x') -  G_{t_0,t}^{m,\nu}f_m(t,x').
\nonumber 
\\
&&
+  \hat  G_{t_0}^{\tau,\xi'}  \big ( b _{\Delta}^{m} [\tau,\xi']  \cdot  \nabla  u^{m, \nu} \big )(t,x')
+\hat  P_{0}^{\tau,\tilde \xi'} g_m(x')
+  \hat G_{0,t_0}^{\tau,\tilde \xi'}f_m(t,x')
\nonumber 
\\
&&
+ \hat  G_{0,t_0}^{\tau,\tilde \xi'}  \big ( b _{\Delta}^{m} [\tau,\tilde \xi']  \cdot  \nabla  u^{m, \nu} \big )(t,x').
%&=& 
%-\tilde P_{t_0,t}^{\xi'} u^{m,\nu}(t_0,x') -\tilde G_{t_0,t}^{\xi'}\circ \big ( (I-R_{t_0}^{\lambda,m,\xi'})^{-1}\partial_T R_{t_0}^{\lambda,m,\xi'} \big )u^{m,\nu}(t_0,x')
%&&
%-\tilde G_{t_0,t}^{\xi'}\circ (I-R_{t_0}^{\lambda,m,\xi'})^{-1}f^\lambda(t,x').
\end{eqnarray}
There is an extra contribution $[\hat P_{t_0}^{\tau,\xi'} u^{m,\nu}(t_0,\cdot)](t,x')
- [\hat P_{t_0}^{\tau,\tilde \xi'} u^{m,\nu}(t_0,\cdot)](t,x')$ due to the discontinuous \textit{freezing} choice, the other terms match with the ones appearing in the above computations.

\subsubsection{Extra contribution $[\hat P_{t_0}^{\tau,\xi'} u^{m,\nu}(t_0,\cdot)](t,x')
	- [\hat P_{t_0}^{\tau,\tilde \xi'} u^{m,\nu}(t_0,\cdot)](t,x')$ }
\label{sec_extra_discon}

Thanks to a change of variables, we readily obtain
\begin{eqnarray*}
&&	[\hat P_{t_0}^{\tau,\xi'} u^{m,\nu}(t_0,\cdot)](t,x')
	- [\hat P_{t_0}^{\tau,\tilde \xi'} u^{m,\nu}(t_0,\cdot)](t,x')
\nonumber \\
	&=&
	 \int_{\R^{d}}  \hat{p}^{\tau,\xi'} (t_0,t, x',y)   u^{m,\nu}(t_0,y)  dy 
	 -	 \int_{\R^{d}}  \hat{p}^{\tau,\tilde \xi'} (t_0,t, x',y)   u^{m,\nu}(t_0,y)  dy 
	 \nonumber \\
	 &=&
	 \int_{\R^{d}}  \tilde {p}(t_0,t, x',y)  
%	 \nonumber \\
%	 &&
	  \Big [u^{m,\nu}\big (t_0,y +\int_{t_0}^t b_m(\tilde s, \theta_{\tilde s, \tau '}^m (\xi'))d\tilde s\big )  dy 
	- u^{m,\nu}\big (t_0,y +\int_{t_0}^t b_m(\tilde s, \theta_{\tilde s, \tilde \tau '}^m (\tilde \xi')d\tilde s)\big ) \Big ] dy,  
\end{eqnarray*}
recalling that $\tilde {p}(t_0,t, x',y)$ stands for the usual heat kernel defined in \eqref{def_tilde_p}.
Therefore,
\begin{eqnarray*}
&& \Big |	[\hat P_{t_0}^{\tau,\xi'} u^{m,\nu}(t_0,\cdot)](t,x')
- [\hat P_{t_0}^{\tau,\tilde \xi'} u^{m,\nu}(t_0,\cdot)](t,x') \Big |
\nonumber \\
&\leq &
\|u^{m,\nu}\|_{L^\infty(C^1)}
\int_{t_0}^t \big | b_m(\tilde s, \theta_{\tilde s, \tau}^m (\xi'))- b_m(\tilde s, \theta_{\tilde s,  \tau}^m (\tilde \xi')\big | d\tilde s 
\nonumber \\
&\leq &
\|u^{m,\nu}\|_{L^\infty(C^1)}
\|b_m\|_{L^\infty(C^1)}
\int_{t_0}^t | \theta_{\tilde s, \tau '}^m (\xi')- \theta_{\tilde s, \tau }^m (\tilde \xi')|d\tilde s .
\end{eqnarray*}
Additionally, we get the \textit{a priori} control for the flow.
\begin{lemma}\label{lemma_flot}
	For all $(x,x') \in \R^d \times \R^d$ and $0 \leq s \leq \tau \leq T$:
	\begin{eqnarray*}%\label{def_theta}
		\sup_{\tilde s \in [0,\tau ]}	|\theta_{s,\tau }^m(x)-\theta_{s,\tau}^m(x')|
		&\leq &
		|x-x'|
		\exp \big ( \|b_m\|_{L^\infty(C^1)} \tau  \big ).
	\end{eqnarray*}
\end{lemma}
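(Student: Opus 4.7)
The plan is a standard Cauchy--Lipschitz / Grönwall argument applied to the flow $\theta_{\cdot,\tau}^m$, exploiting the fact that $b_m$ is smooth (hence Lipschitz in space) for each fixed mollification parameter $m$.

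First, I would start from the integral form of the flow coming from \eqref{def_theta}: for any $\xi \in \R^d$ and $s \in [0,\tau]$,
\begin{equation*}
    \theta_{s,\tau}^m(\xi) = \xi + \int_s^\tau b_m(\tilde s, \theta_{\tilde s,\tau}^m(\xi)) \, d\tilde s.
\end{equation*}
Subtracting the analogous identity for $x'$ from the one for $x$ gives
\begin{equation*}
    \theta_{s,\tau}^m(x) - \theta_{s,\tau}^m(x') = (x-x') + \int_s^\tau \big[ b_m(\tilde s, \theta_{\tilde s,\tau}^m(x)) - b_m(\tilde s, \theta_{\tilde s,\tau}^m(x')) \big] d\tilde s.
\end{equation*}

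Second, I would take absolute values and use the (space) Lipschitz bound on $b_m$, whose Lipschitz constant is controlled by $\|b_m\|_{L^\infty(C^1)}$ (finite thanks to the mollification, cf.\ Lemma \ref{lemme_ineq_b_m}):
\begin{equation*}
    |\theta_{s,\tau}^m(x) - \theta_{s,\tau}^m(x')| \;\leq\; |x-x'| + \|b_m\|_{L^\infty(C^1)} \int_s^\tau |\theta_{\tilde s,\tau}^m(x) - \theta_{\tilde s,\tau}^m(x')| \, d\tilde s.
\end{equation*}
Setting $\varphi(s) := |\theta_{s,\tau}^m(x) - \theta_{s,\tau}^m(x')|$ for $s \in [0,\tau]$, this reads $\varphi(s) \leq |x-x'| + \|b_m\|_{L^\infty(C^1)} \int_s^\tau \varphi(\tilde s) \, d\tilde s$.

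Third, I would apply the backward Grönwall lemma (equivalently, change variable $r=\tau-s$ and use the classical forward version) to obtain, for every $s \in [0,\tau]$,
\begin{equation*}
    \varphi(s) \;\leq\; |x-x'| \, \exp\big( \|b_m\|_{L^\infty(C^1)} (\tau - s) \big) \;\leq\; |x-x'| \, \exp\big( \|b_m\|_{L^\infty(C^1)} \tau \big),
\end{equation*}
which is exactly the claim after taking the supremum over $s \in [0,\tau]$ (the variable called $\tilde s$ in the statement). There is no real obstacle here: the only point worth noting is that the smoothness of $b_m$ for each fixed $m$ is what makes $\|b_m\|_{L^\infty(C^1)}$ finite, so the constant in the exponential is legitimate (although it blows up polynomially in $m$ through \eqref{ineq_b_Db_m_lambda}, this blow-up has to be tracked in the subsequent H\"older estimates but does not affect the present lemma).
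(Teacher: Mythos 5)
Your proof is correct, and up to the Gr\"onwall step it coincides with the paper's argument: both start from the integral identity $\theta_{s,\tau}^m(x)-\theta_{s,\tau}^m(x')=(x-x')+\int_s^\tau [b_m(\tilde s,\theta_{\tilde s,\tau}^m(x))-b_m(\tilde s,\theta_{\tilde s,\tau}^m(x'))]\,d\tilde s$ and the Lipschitz bound with constant $\|b_m\|_{L^\infty(C^1)}$, which is finite for each fixed $m$. The only divergence is in how Gr\"onwall is invoked on the backward-in-time inequality $\varphi(s)\leq |x-x'|+\|b_m\|_{L^\infty(C^1)}\int_s^\tau \varphi(\tilde s)\,d\tilde s$: the paper remarks that the forward lemma does not apply directly and circumvents this by passing to the running supremum $\sup_{s\in[0,r]}\varphi(s)$ before applying Gr\"onwall, whereas you reverse time ($r=\tau-s$) and apply the classical forward version, obtaining the slightly sharper intermediate bound $\varphi(s)\leq |x-x'|\exp(\|b_m\|_{L^\infty(C^1)}(\tau-s))$ before taking the supremum. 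Both devices are standard and yield the stated estimate; your time-reversal is arguably the more natural one here, since the flow is prescribed by its terminal condition at time $\tau$, and it sidesteps the bookkeeping of the sup-formulation entirely.
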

%see Lemma \ref{lemma_flot} in
 The proof is postponed in Section \ref{sec_flow}.
 \\
 
% , for any $(x,x') \in \R^d \times \R^d$:
%\begin{eqnarray*}%\label{def_theta}
%	\sup_{\tilde s \in [0,\tau ]}	|\theta_{s,\tau }^m(x)-\theta_{s,\tau}^m(x')|
%	&\leq &
%	|x-x'|
%	\exp \big ( \|b_m\|_{L^\infty(C^1)} \tau  \big ).
%\end{eqnarray*}
We then deduce for $(\tau,\xi',\tilde \xi')=(t,x',x)$ that
\begin{eqnarray*}
&& \Big |	[\hat P_{t_0}^{\tau,\xi'} u^{m,\nu}(t_0,\cdot)](t,x')
- [\hat P_{t_0}^{\tau,\tilde \xi'} u^{m,\nu}(t_0,\cdot)](t,x') \Big | \Bigg |_{(\tau,\xi',\tilde \xi')=(t,x',x)}
\nonumber \\
&\leq &
\|u^{m,\nu}\|_{L^\infty(C^1)}
\|b_m\|_{L^\infty(C^1)}
\int_{t_0}^t 	|x-x'| \exp \big ( \|b_m\|_{L^\infty(C^1)} t  \big ) d\tilde s 
\nonumber \\
&\leq &
\|u^{m,\nu}\|_{L^\infty(C^1)}
\|b_m\|_{L^\infty(C^1)}
	|x-x'| (t-t_0) \exp \big ( \|b_m\|_{L^\infty(C^1)} t  \big ) d\tilde s .
\end{eqnarray*}
Recalling that, see Lemma \ref{lemma_apriori},
%We have by \textit{a priori} computations, see Appendix \ref{sec_grad} inequality \eqref{ineq_O},
\begin{equation*}
	\|\nabla u^{m,\nu}(t,\cdot)\|_{L^\infty} \leq \Big ( T \| f_m\|_{L^\infty(C^1)}+\|g_m\|_{C^1} \Big ) \exp \Big ( C m^{1+\beta} t \|b\|_{L^\infty( B_{\infty,\infty}^{-\beta})} \Big )= :O_m(t).
\end{equation*}
Therefore,
\begin{eqnarray}
&& \Big |	[\hat P_{t_0}^{\tau,\xi'} u^{m,\nu}(t_0,\cdot)](t,x')
- [\hat P_{t_0}^{\tau,\tilde \xi'} u^{m,\nu}(t_0,\cdot)](t,x') \Big | \Bigg |_{(\tau,\xi',\tilde \xi')=(t,x',x)}
\nonumber \\
&\leq &
m^{1+\beta} O_m(t)
\|b\|_{L^\infty(B_{\infty,\infty}^{-\beta})}
|x-x'| (t-t_0) \exp \big ( \|b_m\|_{L^\infty(C^1)} t  \big ) 
 .
\end{eqnarray}
Because $|x-x'|= \nu^{\alpha_1} (t-t_0)^{\alpha_1}$, we get
\begin{eqnarray}\label{ineq_extra_Pu_prelim}
&& \Big |	[\hat P_{t_0}^{\tau,\xi'} u^{m,\nu}(t_0,\cdot)](t,x')
- [\hat P_{t_0}^{\tau,\tilde \xi'} u^{m,\nu}(t_0,\cdot)](t,x') \Big | \Bigg |_{(\tau,\xi',\tilde \xi')=(t,x',x)}
\nonumber \\
&\leq &
|x-x'| ^\gamma
m^{1+\beta} O_m(t)
\|b\|_{L^\infty(B_{\infty,\infty}^{-\beta})} 
\nu^{\frac{1-\gamma}2}
(t-t_0)^{\frac{1-\gamma}2 +1} \exp \big ( \|b_m\|_{L^\infty(C^1)} t  \big ) 
.
\end{eqnarray}
As we have supposed that $t_0\geq 0$, it follows that
\begin{eqnarray}\label{ineq_extra_Pu}
	&& \Big |	[\hat P_{t_0}^{\tau,\xi'} u^{m,\nu}(t_0,\cdot)](t,x')
	- [\hat P_{t_0}^{\tau,\tilde \xi'} u^{m,\nu}(t_0,\cdot)](t,x') \Big | \Bigg |_{(\tau,\xi',\tilde \xi')=(t,x',x)}
	\nonumber \\
	&\leq &
	t^{\frac{1-\gamma}2 +1}
	|x-x'| ^\gamma
	m^{1+\beta} O_m(t)
	\|b\|_{L^\infty(B_{\infty,\infty}^{-\beta})} 
	\nu^{\frac{1-\gamma}2}
	\exp \big (m^{1+\beta}  \|b\|_{L^\infty(B_{\infty,\infty}^{-\beta})} t  \big ) 
	.
\end{eqnarray}
%Taking, the choice \eqref{contraintes_alpha_1_2_tilde_gamma_beta} and \eqref{eq_alpha_1},
%\begin{equation*}%\label{choice_alpha_12}
%(\alpha_1,\alpha_2)= \left (\frac{1+\gamma}{4}	,\frac{2+ \gamma}{2\gamma} \right ),
%\end{equation*}
We finally derive, from definition of $O_m(t)$ %in \eqref{ineq_O}
%{ineq_u_C1_1}
 and 
by exponential absorption \eqref{ineq_absorb}, 
\begin{eqnarray}\label{ineq_extra1}
	&& \Big |	[\hat P_{t_0}^{\tau,\xi'} u^{m,\nu}(t_0,\cdot)](t,x')
	- [\hat P_{t_0}^{\tau,\tilde \xi'} u^{m,\nu}(t_0,\cdot)](t,x') \Big | \Bigg |_{(\tau,\xi',\tilde \xi')=(t,x',x)}
	\nonumber \\
%	&\leq &
%	 \Big ( T \| f_m\|_{L^\infty(C^1)}+\|g_m\|_{C^1} \Big ) \exp \Big ( C m^{1-\tilde \gamma} t \|b\|_{L^\infty( B_{\infty,\infty}^{\tilde \gamma})} \Big )
%	 	\nonumber \\
	 &\leq &
	C t^{\frac{1-\gamma}2 }
	 |x-x'| ^\gamma
		 \nu^{\frac{1-\gamma}2}
	\Big ( T \| f_m\|_{L^\infty(C^1)}+\|g_m\|_{C^1} \Big ) \exp \Big ( C m^{1+\beta} t \|b\|_{L^\infty( B_{\infty,\infty}^{-\beta})} \Big )
	.
\end{eqnarray}
%$$O_m(t)= \Big ( t \| f_m\|_{L^\infty(C^1)}+\|g_m\|_{C^1} \Big ) \exp \Big ( m^{1-\tilde \gamma} t \|b\|_{L^\infty( B_{\infty,\infty}^{\tilde \gamma})} \Big )$$
%From definition of $O_m(t)$ in \eqref{ineq_u_C1_1}, w
The above term goes to $0$ with the vanishing condition 
%is then
\begin{equation}\label{nu_contrainte_extra1}
\nu \ll
T^{-1} 	\Big ( T \| f_m\|_{L^\infty(C^1)}+\|g_m\|_{C^1} \Big ) ^{-\frac{2}{1-\gamma}}
% (m^{1+\tilde \gamma-\gamma}  T^{\frac{(1-\gamma)(2+\gamma)}{2\gamma} +1} )^{-\frac{4}{1-\gamma^2}}
\exp \big (- C \frac{m^{1+\beta} \|b\|_{L^\infty(B_{\infty,\infty}^{-\beta})} T}{1-\gamma}  \big ) .
\end{equation}

\subsubsection{Justification of the \textit{freezing} point change}
\label{sec_justification_freez}

Eventually, let us explicitly write the difference of the two final Duhamel formulas associated with $	u^{m,\nu}(t,x)$ and $u^{m,\nu}(t,x') $.

For any $(t,x,x') \in[0,T] \times \R^d \times \R^d$ and $(\tau,\xi,\xi', \tilde \xi') \in [0,T] \times \R^d$, we write from \eqref{eq_u_r_final},
\begin{eqnarray*}
	%&&
	u^{m,\nu}(t,x)-u^{m,\nu}(t,x') 
%	\nonumber \\
	&=& 
\Big [	\hat P^{\tau,\xi}  g_m (t,x)-\hat P^{\tau,\tilde \xi'}  g_m (t,x')\Big ] + \Big [\hat G_{t_0,t}^{\tau,\xi} f_m(t,x) -\hat G_{t_0,t}^{\tau,\xi'} f_m(t,x') \Big ]
\nonumber \\
&&+ \Big [\hat G_{0,t_0}^{\tau,\xi} f_m(t,x) -\hat G_{0,t_0}^{\tau,\tilde \xi'} f_m(t,x') \Big ]
\nonumber \\
&&+ \Big [ \tilde G_{t_0,t}^{\tau,\xi}  \big [ b _{\Delta}^{m} [\tau,\xi] \cdot  \nabla  u^{m,\nu} \big ](t,x)-\tilde G_{t_0,t}^{\tau,\xi'}  \big [ b _{\Delta}^{m'}[\tau,\xi'] \cdot  \nabla  u^{m,\nu} \big ](t,x') \Big ]
\nonumber \\
&&+ \Big [ \tilde G_{0,t_0}^{\tau,\xi}  \big [ b _{\Delta}^{m} [\tau,\xi] \cdot  \nabla  u^{m,\nu} \big ](t,x)-\tilde G_{0,t_0}^{\tau,\xi'}  \big [ b _{\Delta}^{m'}[\tau,\xi'] \cdot  \nabla  u^{m,\nu} \big ](t,x') \Big ]
\nonumber \\
&&+
[\hat P_{t_0}^{\tau,\xi'} u^{m,\nu}(t_0,\cdot)](t,x')
- [\hat P_{t_0}^{\tau,\tilde \xi'} u^{m,\nu}(t_0,\cdot)](t,x')
.
\end{eqnarray*}
Because the l.h.s. of the first equality does not depend on $(\xi,\xi')$, we can get the infimum over these \textit{freezing} points, namely
\begin{eqnarray*}
	&& u^{m,\nu}(t,x)-u^{m,\nu}(t,x') 
	\nonumber \\
	&=& \inf_{\xi,\xi',\tilde \xi ' \in \R } \Bigg \{
\Big [	\hat P^{\tau,\xi}  g_m (t,x)-\hat P^{\tau,\tilde \xi'}  g_m (t,x')\Big ] + \Big [\hat G_{t_0,t}^{\tau,\xi} f_m(t,x) -\hat G_{t_0,t}^{\tau,\xi'} f_m(t,x') \Big ]
\nonumber \\
&&+ \Big [\hat G_{0,t_0}^{\tau,\xi} f_m(t,x) -\hat G_{0,t_0}^{\tau,\tilde \xi'} f_m(t,x') \Big ]
\nonumber \\
&&+ \Big [ \tilde G_{t_0,t}^{\tau,\xi}  \big [ b _{\Delta}^{m} [\tau,\xi] \cdot  \nabla  u^{m,\nu} \big ](t,x)-\tilde G_{t_0,t}^{\tau,\xi'}  \big [ b _{\Delta}^{m'}[\tau,\xi'] \cdot  \nabla  u^{m,\nu} \big ](t,x') \Big ]
\nonumber \\
&&+ \Big [ \tilde G_{0,t_0}^{\tau,\xi}  \big [ b _{\Delta}^{m} [\tau,\xi] \cdot  \nabla  u^{m,\nu} \big ](t,x)-\tilde G_{0,t_0}^{\tau,\xi'}  \big [ b _{\Delta}^{m'}[\tau,\xi'] \cdot  \nabla  u^{m,\nu} \big ](t,x') \Big ]
\nonumber \\
&&+
[\hat P_{t_0}^{\tau,\xi'} u^{m,\nu}(t_0,\cdot)](t,x')
- [\hat P_{t_0}^{\tau,\tilde \xi'} u^{m,\nu}(t_0,\cdot)](t,x')\Bigg \}.
\end{eqnarray*}
However, we aim to control the source functions term only in the \textit{diagonal} regime, see Section \ref{sec_comm_diag_G_P} below for details.
We rewrite,
\begin{eqnarray*}
&&	\Big [\hat G_{t_0,t}^{\tau,\xi} f_m(t,x) -\hat G_{t_0,t}^{\tau,\xi'} f_m(t,x') \Big ]
	%\nonumber \\
	+ \Big [\hat G_{0,t_0}^{\tau,\xi} f_m(t,x) -\hat G_{0,t_0}^{\tau,\tilde \xi'} f_m(t,x') \Big ]
	\nonumber \\
	&=& \Big [\hat G^{\tau,\xi} f_m(t,x) -\hat G^{\tau,\tilde \xi'} f_m(t,x') \Big ]
+ \Big [\hat G_{0,t_0}^{\tau,\xi'} f_m(t,x') -\hat G_{0,t_0}^{\tau,\tilde \xi'} f_m(t,x') \Big ],
\end{eqnarray*}
and, 
\begin{eqnarray}\label{identi_Holder_final}
	&&u^{m,\nu}(t,x)-u^{m,\nu}(t,x') 
	\nonumber \\
	&=& \inf_{\xi,\xi',\tilde \xi ' \in \R } \Bigg \{
	\Big [	\hat P^{\tau,\xi}  g_m (t,x)-\hat P^{\tau,\tilde \xi'}  g_m (t,x')\Big ] + \Big [\hat G^{\tau,\xi} f_m(t,x) -\hat G^{\tau,\tilde \xi'} f_m(t,x') \Big ]
	\nonumber \\
	&&+ \Big [\hat G_{0,t_0}^{\tau,\xi'} f_m(t,x') -\hat G_{0,t_0}^{\tau,\tilde \xi'} f_m(t,x') \Big ]
	\nonumber \\
	&&+ \Big [ \tilde G_{t_0,t}^{\tau,\xi}  \big [ b _{\Delta}^{m} [\tau,\xi] \cdot  \nabla  u^{m,\nu} \big ](t,x)-\tilde G_{t_0,t}^{\tau,\xi'}  \big [ b _{\Delta}^{m'}[\tau,\xi'] \cdot  \nabla  u^{m,\nu} \big ](t,x') \Big ]
	\nonumber \\
	&&+ \Big [ \tilde G_{0,t_0}^{\tau,\xi}  \big [ b _{\Delta}^{m} [\tau,\xi] \cdot  \nabla  u^{m,\nu} \big ](t,x)-\tilde G_{0,t_0}^{\tau,\xi'}  \big [ b _{\Delta}^{m'}[\tau,\xi'] \cdot  \nabla  u^{m,\nu} \big ](t,x') \Big ]
	\nonumber \\
	&&+
	[\hat P_{t_0}^{\tau,\xi'} u^{m,\nu}(t_0,\cdot)](t,x')
	- [\hat P_{t_0}^{\tau,\tilde \xi'} u^{m,\nu}(t_0,\cdot)](t,x')\Bigg \},
\end{eqnarray} 
because
%, from definitions \eqref{def_alternative_R1}, \eqref{def_alternative_R2}, \eqref{def_alternative_R1_off} and \eqref{def_alternative_R2_off}, 
we have 
$\hat  G^{\tau,\tilde \xi'}f_m(t,x') = \hat  G_{0,t_0}^{\tau,\tilde \xi'}f_m(t,x')+ \hat G^{\tau,\tilde \xi'}_{t_0,t} f_m(t,x')$.

Hence, taking $(\tau,\xi,\xi',\tilde \xi ')= (t,x,x',x)$ yields the previous terms already controlled plus a new extra contribution $\Big [\hat G_{0,t_0}^{\tau,\xi'} f_m(t,x') -\hat G_{0,t_0}^{\tau,\tilde \xi'} f_m(t,x') \Big ]$.

\subsubsection{Control  of the new extra contribution $\Big [\hat G_{0,t_0}^{\tau,\xi'} f_m(t,x') -\hat G_{0,t_0}^{\tau,\tilde \xi'} f_m(t,x') \Big ]$}
\label{sec_new_extra}

This last extra term is tackled similarly as the first one, see Section \ref{sec_extra_discon}.
\begin{eqnarray*}
&&	\Big [\hat G_{0,t_0}^{\tau,\xi'} f_m(t,x') -\hat G_{0,t_0}^{\tau,\tilde \xi'} f_m(t,x') \Big ]
\nonumber \\
&=&
\int_0^{t_0} 
%\int_{\R^{d}}  \hat{p}^{\tau,\xi'} (t_0,t, x',y)   u^{m,\nu}(t_0,y)  dy 
%-	 \int_{\R^{d}}  \hat{p}^{\tau,\tilde \xi'} (t_0,t, x',y)   u^{m,\nu}(t_0,y)  dy 
%\nonumber \\
%&=&
\int_{\R^{d}}  \tilde {p}(s,t, x',y)   \Big [f_m\big (s,y +\int_{s}^t b_m(\tilde s, \theta_{\tilde s, \tau '}^m (\xi'))d\tilde s\big )   
- f_m\big (s,y +\int_{s}^t b_m(\tilde s, \theta_{\tilde s, \tilde \tau '}^m (\xi')d\tilde s)\big ) \Big ] dy \, ds .
\end{eqnarray*}
We readily obtain,
\begin{eqnarray*}
&& 
\Big |\hat G_{0,t_0}^{\tau,\xi'} f_m(t,x') -\hat G_{0,t_0}^{\tau,\tilde \xi'} f_m(t,x') \Big |
\nonumber \\
&\leq &
\|f_m\|_{L^\infty(C^1)}
\int_0^{t_0}
\Big |\int_{s}^t b_m(\tilde s, \theta_{\tilde s, \tau}^m (\xi'))- b_m(\tilde s, \theta_{\tilde s,  \tau}^m (\tilde \xi')d\tilde s \Big | ds
\nonumber \\
&\leq &
\|f_m\|_{L^\infty(C^1)}
\|b_m\|_{L^\infty(C^1)}
\int_0^{t_0}\int_{s}^t | \theta_{\tilde s, \tau '}^m (\xi')- \theta_{\tilde s, \tau }^m (\tilde \xi')|d\tilde s \, ds.
\end{eqnarray*}
Again, we use, for any $(x,x') \in \R^d \times \R^d$, the control of the flow,
\begin{eqnarray}\label{ineq_theta_peruve}
\sup_{\tilde s \in [0,\tau ]}	|\theta_{s,\tau }^m(x)-\theta_{s,\tau}^m(x')|
&\leq &
|x-x'|
\exp \big ( \|b_m\|_{L^\infty(C^1)} \tau  \big ),
\end{eqnarray}
see Lemma \ref{lemma_flot}.
% in Appendix Section \ref{sec_flow}.

We then deduce for $(\tau,\xi',\tilde \xi')=(t,x',x)$ that
\begin{eqnarray}\label{ineq_extra2}
	&& 
	\Big |\hat G_{0,t_0}^{\tau,\xi'} f_m(t,x') -\hat G_{0,t_0}^{\tau,\tilde \xi'} f_m(t,x') \Big |\Bigg |_{(\tau,\xi',\tilde \xi')=(t,x',x)}
	\nonumber \\
	&\leq &
	\|f_m\|_{L^\infty(C^1)}
	\|b_m\|_{L^\infty(C^1)}
	\int_0^{t_0}\int_{s}^t |x-x'|
	\exp \big ( \|b_m\|_{L^\infty(C^1)} t  \big ) d\tilde s \, ds
	\nonumber \\
	&\leq &
\frac{1}{2}m^{1-\gamma}	\|f\|_{L^\infty(C^\gamma)}
m^{1+\beta} \|b\|_{L^\infty(B_{\infty,\infty}^{-\beta})} 
	 |x-x'|t^2
\exp \big (m^{1+\beta} \|b\|_{L^\infty(B_{\infty,\infty}^{-\beta})} t  \big ) .
\end{eqnarray}
From the definition of $t_0$ in \eqref{def_t0}, $|x-x'|= \nu^{\frac 12} (t-t_0)^{\frac 12}$, we get
\begin{eqnarray} \label{ineq_extra_Gf}
&& 	\Big |\hat G_{0,t_0}^{\tau,\xi'} f_m(t,x') -\hat G_{0,t_0}^{\tau,\tilde \xi'} f_m(t,x') \Big | \Bigg |_{(\tau,\xi',\tilde \xi')=(t,x',x)}
\nonumber \\
	&\leq &	
	|x-x'| ^\gamma
\frac{\nu^{\frac{1-\gamma}2} t^{\frac{1-\gamma}2 +2}}{2}	
m^{2+\beta-\gamma} \|f\|_{L^\infty(C^\gamma)} \|b\|_{L^\infty(B_{\infty,\infty}^{-\beta})} 
%t^2
\exp \big (m^{1+\beta} \|b\|_{L^\infty(B_{\infty,\infty}^{-\beta})} t  \big ) 
\nonumber \\
&\leq  &
C	|x-x'| ^\gamma
\nu^{\frac{1-\gamma}2} t^{\frac{1-\gamma}2+1}	
m^{1-\gamma}
%m^{2-\tilde \gamma-\gamma}
 \|f\|_{L^\infty(C^\gamma)} 
%\|b\|_{L^\infty(B_{\infty,\infty}^{\tilde \gamma})} 
\exp \big (c m^{1+\beta} \|b\|_{L^\infty(B_{\infty,\infty}^{-\beta})} t  \big ) 
,
%\nonumber \\
\end{eqnarray}
which goes to $0$ if
%Hence, the vanishing constraint is
\begin{equation}\label{nu_contrainte_extra2}
	\nu \ll (m^{1-\gamma}  T^{\frac{1-\gamma}{2} +1} \|f\|_{L^\infty(C^\gamma)}  )^{-\frac{2}{1-\gamma}}
	\exp \big (- c \frac{m^{1+\beta} \|b\|_{L^\infty(B_{\infty,\infty}^{-\beta})} T}{1-\gamma}  \big ) .
\end{equation}
We postpone, in Section \ref{sec_comm_diag_G_P}, a full comment on the strategy associated with the choice of \textit{freezing} point for the source functions terms.

\subsubsection{Control of $[ u^{m,\nu}(t,\cdot)]_{\gamma,\nu^{1/2}T^{1/2}}$}
\label{sec_holder_final}

We have from the final Duhamel formula \eqref{eq_u_r_final} combined with the estimates of each contribution stated in \eqref{parab_ineq_R}
%{ineq_R_Holder_diag}, 
%\eqref{ineq_R2_diag}, 
%\eqref{ineq_R_Holder_off_diag}, 
%\eqref{ineq_R2_off}, %\eqref{ineq_extra_Pu} %(or 
\eqref{ineq_extra1},
%), 
\eqref{ineq_extra_Gf}:
%{ineq_extra2}:
\begin{eqnarray}\label{ineq_u_Holder_prelim_alpha}
	&&[ u^{m,\nu}(t,\cdot)]_{\gamma,\nu^{1/2}T^{1/2}}
	\nonumber \\
	&\leq&
	% t \sup_{s \in [0,s]}
	\int_0^t [f(s,\cdot)]_{\gamma,\nu^{1/2}T^{1/2}} ds + [g]_{\gamma,\nu^{1/2}T^{1/2}}
	\nonumber \\
	&& +	
	%  |x-x'|^\gamma %\nu^{\alpha_1(1-\gamma)}
%t^{1+(1-\gamma)\alpha_2}
	C 	m^{1+\beta}  \|b\|_{L^\infty( B_{\infty,\infty}^{-\beta})}	 
% \Big (
%\|u\|_{L^\infty(C^{ \gamma})}
(O_m(t)+O_m^{(2)}(t))
\int_0^t   \nu^{\frac{1-\gamma}2}(t-s)^{\frac{1-\gamma}2} ds
%	|x-x'|^{\frac{2+ \gamma}{2\alpha_2}}  
%	\nonumber \\
%	&&+
%		\frac{m^{1+\tilde \gamma}}{1+\gamma}C \nu ^{\frac{\gamma }{2}-\frac{\alpha_1(2+ \gamma)}{2\alpha_2}}  \|b\|_{L^\infty(B_{\infty,\infty}^{\tilde \gamma})} \|u^{m,\nu}\|_{L^\infty(C^{ \gamma})} 
%	|x-x'|^{\frac{2+ \gamma}{2\alpha_2}}  
	\nonumber \\
	&&+
		C t^{\frac{1-\gamma}2 }
	\nu^{\frac{1-\gamma}2}
	\Big ( T \| f_m\|_{L^\infty(C^1)}+\|g_m\|_{C^1} \Big ) \exp \Big ( C m^{1+\beta} t \|b\|_{L^\infty( B_{\infty,\infty}^{-\beta})} \Big )
	\nonumber \\
	&&+ 
C
\nu^{\frac{1-\gamma}2} t^{\frac{1-\gamma}2+1}	
m^{1-\gamma}
%m^{2-\tilde \gamma-\gamma}
\|f\|_{L^\infty(C^\gamma)} 
%\|b\|_{L^\infty(B_{\infty,\infty}^{\tilde \gamma})} 
\exp \big (c m^{1+\beta} \|b\|_{L^\infty(B_{\infty,\infty}^{-\beta})} t  \big ) 
 ,
\end{eqnarray}
%Gathering \eqref{ineq_R_Holder_diag} and \eqref{ineq_R_Holder_off_diag} 
%In the above inequality, taking
%\begin{equation}\label{choice_alpha_12}
%	(\alpha_1,\alpha_2)= \left (\frac{1+\gamma}{4}	,\frac{2+ \gamma}{2\gamma} \right ),
%\end{equation}
%and gathering some contributions, give
%\begin{eqnarray}\label{ineq_u_Holder_prelim}
%&&
%\| u^{m,\nu}(t,\cdot)\|_{C^\gamma}
%\nonumber \\
%&\leq& t \|f\|_{L^\infty(C^\gamma)} + [g]_\gamma+
%C 	  
%\|b\|_{L^\infty( B_{\infty,\infty}^{\tilde \gamma})}%\|u\|_{L^\infty(C^{ \gamma})}
%m^{2+\tilde \gamma} O_m(2t) %(O_ \varepsilon+ \varepsilon^{\tilde \gamma-1}) 
%%\nonumber \\
%%&&\times
%\Big ( \nu^{\frac{1-\gamma^2}{4}} t^{1+(1-\gamma)\frac{2+\gamma}{2\gamma}} +\nu^{\frac{\gamma(1-\gamma)}{4}}t^{\frac 12+\frac{4-\gamma^2}{2\gamma}}\Big )
%\nonumber \\
%&&+ \frac{m^{1+\tilde \gamma}}{1+\gamma}C \nu^{\frac{\gamma(1-\gamma)}{4}}\|b\|_{L^\infty( B_{\infty,\infty}^{\tilde \gamma})} \|u^{m,\nu}\|_{L^\infty(C^{ \gamma})}
%\nonumber \\
%&&+ \nu^{\frac{1-\gamma^2}4} 	t^{\frac{(1-\gamma)(2+\gamma)}{2\gamma}+1}
%m^{1-\tilde \gamma} O_m(t)
%\|b\|_{L^\infty(B_{\infty,\infty}^{\tilde \gamma})} 
%\exp \big ( \|b_m\|_{L^\infty(C^1)} t  \big ) 
%\nonumber \\
%&&+ \frac{\nu^{\frac{1-\gamma^2}4} t^{\frac{(1-\gamma)(2+\gamma)}{2\gamma} +2}}{2}	
%m^{2-\tilde \gamma-\gamma} \|f\|_{L^\infty(C^\gamma)} \|b\|_{L^\infty(B_{\infty,\infty}^{\tilde \gamma})} 
%\exp \big (m^{1-\tilde \gamma} \|b\|_{L^\infty(B_{\infty,\infty}^{\tilde \gamma})} t  \big ) 
%,
%\end{eqnarray}
where we recall that 
\begin{eqnarray*}
	O_m(t) &=&\Big ( t \| f_m\|_{L^\infty(C^1)}+\|g_m\|_{C^1} \Big ) \exp \Big ( m^{1+\beta} t \|b\|_{L^\infty( B_{\infty,\infty}^{-\beta})} \Big )
	\nonumber \\
	&\leq &
	C m^{1-\gamma}(t  \| f\|_{L^\infty(C^\gamma)}+ [g]_{\gamma} )  \exp \Big ( m^{1+\beta} t \|b\|_{L^\infty( B_{\infty,\infty}^{-\beta})} \Big ) ,
\end{eqnarray*}
and
\begin{equation*}%\label{ineq_D2_u_m_nu_FACILE}
 O^{(2)}_m(t)=
C	 	m^{2-\gamma}\big ( t \| f\|_{L^\infty(C^\gamma)}+[g]_\gamma\big )  
\exp(C  m^{1+\beta}t \|b\|_{L^\infty( B_{\infty,\infty}^{-\beta})}).
\end{equation*}
Then, for $\nu \ll T$ and because $\frac{1-\gamma^2}{4}>\frac{\gamma(1-\gamma)}{4}$, we obtain
\begin{eqnarray*}%\label{ineq_R_Holder_diag}
%&& 
[ u^{m,\nu}(t,\cdot)]_{\gamma,\nu^{1/2}T^{1/2}}
%\nonumber \\
&\leq& 
\int_0^t [f(s,\cdot)]_{\gamma,\nu^{1/2}T^{1/2}} ds+ [g]_{\gamma,\nu^{1/2}T^{1/2}}
\nonumber \\
&&
+
C t^{\frac{1-\gamma}2 }
\nu^{\frac{1-\gamma}2}  m^{2-\gamma}(t  \| f\|_{L^\infty(C^\gamma)}+ [g]_{\gamma} )  \exp \Big (c m^{1+\beta} t \|b\|_{L^\infty( B_{\infty,\infty}^{-\beta})} \Big ) .
%C \nu^{\frac{1-\gamma}{2}}	m^{2+\tilde \gamma}  
%\|b\|_{L^\infty( B_{\infty,\infty}^{\tilde \gamma})}%\|u\|_{L^\infty(C^{ \gamma})}
%\Big ( t^{1+(1-\gamma)\frac{2+\gamma}{2\gamma}}  O_m(2t)(1+\|f\|_{L^\infty(C^\gamma)})+\|u^{m,\nu}\|_{L^\infty(C^{ \gamma})}\Big ). %(O_ \varepsilon+ \varepsilon^{\tilde \gamma-1}) 
%&&\times
\end{eqnarray*}
%For $\nu$ small enough, 
%to be precise  f
%For a ``small" $\kappa \in (0,1)$, we choose
%\begin{equation}\label{cond_nu1}
%\nu \leq \kappa^{\frac{4}{\gamma(1-\gamma)}} \Big (1+C m^{2+\tilde \gamma}  
%\|b\|_{L^\infty( B_{\infty,\infty}^{\tilde \gamma})}\Big )^{-\frac{4}{\gamma(1-\gamma)}} ,
%\end{equation}
%which yields by circular argument
%\begin{eqnarray*}%\label{ineq_R_Holder_diag}
%\| u^{m,\nu}(t,\cdot)\|_{C^\gamma}
%%\nonumber \\
%&\leq& 
% (1-\kappa  )^{-1}
%\Big (
%t \|f\|_{L^\infty(C^\gamma)} + [g]_\gamma
%\nonumber \\
%&&+
%C \nu^{\frac{\gamma(1-\gamma)}{4}}	m^{2+\tilde \gamma}  
%\|b\|_{L^\infty( B_{\infty,\infty}^{\tilde \gamma})}%\|u\|_{L^\infty(C^{ \gamma})}
%t^{1+(1-\gamma)\frac{2+\gamma}{2\gamma}}  O_m(2t)(1+\|f\|_{L^\infty(C^\gamma)})\Big ). %(O_ \varepsilon+ \varepsilon^{\tilde \gamma-1}) 
%%&&\times
%\end{eqnarray*}
Finally, we can also impose the condition on the viscosity
%a given ``small" $\tilde \kappa \in (0,1)$,
\begin{eqnarray}\label{ineq_Holder_condi_nu}
%{cond_nu2}
\nu
%&\leq& 
%\|b\|_{L^\infty( B_{\infty,\infty}^{\tilde \gamma})} T^{1+(1-\gamma)\frac{2+\gamma}{2\gamma}}  O_m(2T)(1+\|f\|_{L^\infty(C^\gamma)})\Big )^{-\frac{4}{\gamma(1-\gamma)}}
&\ll& T^{-1} m^{-\frac{2-\gamma}{1-\gamma}} (t  \| f\|_{L^\infty(C^\gamma)}+ [g]_{\gamma} ) ^{-\frac{2}{1-\gamma}}
%\nonumber \\
%&= &
%\|b\|_{L^\infty( B_{\infty,\infty}^{\tilde \gamma})} T^{\frac{2-\gamma^2+\gamma}{2\gamma}} 
% ( m^{1-\gamma}(t  \| f\|_{L^\infty(C^\gamma)}+ [g]_{\gamma} )  ) (1+\|f\|_{L^\infty(C^\gamma)}) 
%\Bigg )^{-\frac{4}{\gamma(1-\gamma)}}
%\nonumber \\
%&&\times
\exp \Big (-c\frac{m^{1+\beta} T \|b\|_{L^\infty( B_{\infty,\infty}^{-\beta})}} {1-\gamma}\Big ),
\end{eqnarray}
%which obviously implies condition \eqref{cond_nu1} for $\kappa=\tilde \kappa$.

%Under assumptions \eqref{cond_nu1} and \eqref{cond_nu2}, we deduce for any $(\kappa,\tilde \kappa) \in (0,1)^2$
%\begin{equation}\label{ineq_Holder}
%\|u^{m,\nu}\|_{L^\infty(C^\gamma)}
%\leq (1-\kappa)^{-1}  ( T \|f\|_{L^\infty(C^\gamma)}+ [g]_\gamma+ \tilde \kappa  ),
%\end{equation}
the required limit modulus of continuity control \eqref{ineq_THEO_SCHAU} is then established when  %$\kappa,\kappa' \to 0$ and 
$(m,\nu) \to (+ \infty,0)$ according to the above condition.
% \eqref{CONDINU}, up to a convergence argument developed in Section \ref{sec_compact} below.

%Let also notice the limit, up to selection of sub-sequence, can permute with the considered H\"older norm by a Fatou property like, see for instance Theorem 2.25 in \cite{baho:chem:danc:11}.   
%\\required

We fail to obtain a uniqueness of a viscous selection principle with this first analysis. Indeed in the \textit{a priori} controls of \eqref{parabolic_moll_def} we have to suppose that $\nu$ goes to $0$ much faster than $m$ towards $+ \infty$. 
This constraint prevents us to take advantage of the convergence of $b_m$ towards $b$ to balance the blow up in the viscosity $\nu$ occurring in the computations to get uniqueness.

	\subsection{The last part of the control of H\"older modulus: the \textit{time decomposition locus} trick}
\label{sec_holder_time_cut}
\label{sec_proof_parab}

The beginning of the analysis for the parabolic case is similar as for the transport equation performed in Section \ref{sec_Proof_transport}, except that the goal of the regularisation is different. We aim here to raise the time contribution instead of the viscosity one.
This allows us to conclude, thanks to  a time decomposition trick, without any vanishing viscosity.
\\

We recall,
\begin{equation}\label{parab_ineq_R_recall}
%&&
|R^{\tau,\xi,\xi'}(t,x,x')| \big  |_{\tau=t,\xi=\xi'=x}
%	\nonumber \\
\leq 
C 	m^{1+\beta}  \|b\|_{L^\infty( B_{\infty,\infty}^{-\beta})}	  |x-x'|^\gamma
% \Big (
%\|u\|_{L^\infty(C^{ \gamma})}
(O_m(t)+O_m^{(2)}(t))
\int_0^t   \nu^{\frac{1-\gamma}2}(t-s)^{\frac{1-\gamma}2} ds
.
\end{equation}

\subsubsection{Adapting the controls of the extra contributions}

We have to change the parameters $(\alpha_1, \alpha_2)$ into \eqref{ineq_extra_Pu} and \eqref{ineq_extra_Gf}, which gives
\begin{eqnarray}\label{parab_ineq_extra_Pu}
&& \Big |	[\hat P_{t_0}^{\tau,\xi'} u^{m,\nu}(t_0,\cdot)](t,x')
- [\hat P_{t_0}^{\tau,\tilde \xi'} u^{m,\nu}(t_0,\cdot)](t,x') \Big | \Bigg |_{(\tau,\xi',\tilde \xi')=(t,x',x)}
\nonumber \\
&\leq &
|x-x'| ^\gamma
m^{1+\beta} O_m(t)
\|b\|_{L^\infty(B_{\infty,\infty}^{-\beta})} 
\nu^{\frac{1-\gamma}{2}}
(t-t_0)^{\frac{(1-\gamma)}2 +1} \exp \big ( \|b_m\|_{L^\infty(C^1)} t  \big ) 
%\nonumber \\
,
\end{eqnarray}
and
\begin{eqnarray}\label{parab_ineq_extra_Gf}
&& 	\Big |\hat G_{0,t_0}^{\tau,\xi'} f_m(t,x') -\hat G_{0,t_0}^{\tau,\tilde \xi'} f_m(t,x') \Big | \Bigg |_{(\tau,\xi',\tilde \xi')=(t,x',x)}
\\
&\leq &	
|x-x'| ^\gamma
m^{2+\beta-\gamma} \|f\|_{L^\infty(C^\gamma)} \|b\|_{L^\infty(B_{\infty,\infty}^{-\beta})} 
\exp \big (m^{1+\beta} \|b\|_{L^\infty(B_{\infty,\infty}^{-\beta})} t  \big ) 
%\nonumber \\
%&& \times
\int_0^{t_0}\int_{s}^t
\frac{\nu^{\frac{1-\gamma}{2}} (t-t_0)^{\frac{(1-\gamma)}2 }}{2}	 d \tilde  s \, d s
.\nonumber
\end{eqnarray}

\subsubsection{Gathering the controls}

To put in a nutshell, gathering all the previous estimates \eqref{ineq_P_g_Holder}, \eqref{ineq_Gf_Holder}, \eqref{parab_ineq_R}, \eqref{parab_ineq_extra_Pu}, \eqref{parab_ineq_extra_Gf} into \eqref{identi_Holder_final}
%\textcolor{black}{Preciser qu'on adapte avec d'autres $(\alpha_1,\alpha_2)$}
%We have from the final Duhamel formula \eqref{eq_u_r_final} combined with the estimates of each contribution 
%stated  in
%\eqref{ineq_extra2}:
\begin{eqnarray}\label{parab_ineq_u_Holder_prelim}
&&[ u^{m,\nu}(t,\cdot)]_{\gamma,\nu^{1/2}T^{1/2}}
\nonumber \\
&\leq& 
%t \sup_{t \in [0,T]}
\int_0^t [f(s,\cdot)]_{\gamma,\nu^{1/2}T^{1/2}} ds+ [g]_{\gamma,\nu^{1/2}T^{1/2}}
\nonumber \\
&&+
C 	m^{1+\beta}  \|b\|_{L^\infty( B_{\infty,\infty}^{-\beta})}	 
(O_m(t)+O_m^{(2)}(t))
\int_0^t   \nu^{\frac{1-\gamma}2}(t-s)^{\frac{1-\gamma}2} ds
\nonumber \\
&&+
Cm^{1+\beta} O_m(t)
\|b\|_{L^\infty(B_{\infty,\infty}^{-\beta})} 
\nu^{\frac{1-\gamma}{2}}
(t-t_0)^{\frac{(1-\gamma)}2 +1} \exp \big ( \|b_m\|_{L^\infty(C^1)} t  \big ) 
%\nonumber 
\\
&&+
m^{2+\beta-\gamma} \|f\|_{L^\infty(C^\gamma)} \|b\|_{L^\infty(B_{\infty,\infty}^{-\beta})} 
\exp \big (m^{1+\beta} \|b\|_{L^\infty(B_{\infty,\infty}^{-\beta})} t  \big ) 
\int_0^{t}\int_{s}^t
\frac{\nu^{\frac{1-\gamma}{2}} (t-t_0)^{\frac{(1-\gamma)}2 }}{2}	 d \tilde  s \, d s
, \nonumber
\end{eqnarray}
with 
\begin{eqnarray*}
	O_m(t) &=&\Big ( t \| f_m\|_{L^\infty(C^1)}+\|g_m\|_{C^1} \Big ) \exp \Big ( m^{1+\beta} t \|b\|_{L^\infty( B_{\infty,\infty}^{-\beta})} \Big )
	\nonumber \\
	&\leq &
	C\Big ( m^{1-\gamma}(t  \| f\|_{L^\infty(C^\gamma)}+ [g]_{\gamma} ) \Big ) \exp \Big ( m^{1+\beta} t \|b\|_{L^\infty( B_{\infty,\infty}^{-\beta})} \Big ) .
\end{eqnarray*}
Now, the idea is to make negligible the terms involving positive time contribution in the time integral.

\subsection{The time decomposition of the Cauchy problem}
\label{sec_decoup_temps}

%	Highly inspired by the method developed in \cite{hono:parabol:22}, w
We cut the Cauchy problem in small intervals of $[0,T]$, and we see, from computations below, that the first order term $b^m _ \Delta \cdot \nabla  u^{m,\nu}$ and the extra terms are negligible when the size of the time intervals goes to $0$. 
%This 

Let $n \in \N$, and for each $k \in \leftB 0, n \rightB$,
\begin{equation*}
\tau_k:= \frac{k}{n}T,
\end{equation*}
we also denote, for all $x \in \R^3$, $k \in \leftB 0, n-1 \rightB$ %$k \in \leftB 1, n-1 \rightB$ 
and $t \in (\tau_{k}, \tau_{k+1}]$,
\begin{equation*}
\begin{cases}
u^{m,\nu}_{k+1} (t,x)&:= u^{m,\nu}  (t,x),
\\
u^{m,\nu}_1 (0,x) &:= g_m(x)	.
\end{cases}
% \ \text{  et  } \omega_{k+1} (t,x):= \omega(t,x),
\end{equation*}
The associated Cauchy problems write
\begin{trivlist}
	\item[If $k \in \leftB 1, n-1 \rightB$]
	% et $t \in (\tau_{k}, \tau_{k+1}]$ :
	\begin{equation*}
	%\label{Navier_Stokes_equation_v2}
	\begin{cases}
	\partial_t  u^{m,\nu}_{k+1} (t,x)+ \langle b_m    , \nabla   u^{m,\nu}_{k+1} \rangle (t,x)
	= \nu \Delta u^{m,\nu}_{k+1} (t,x)+   f_m(t,x) 
	, \ t \in (\tau_k,\tau_{k+1}]\\
	u^{m,\nu} _{k+1}(\tau_k,x)= u^{m,\nu}_{k} (\tau_k,x), %\, x \in \R^3 ,
	\end{cases}
	\end{equation*}
	\item[if $k=0$] 
	%et $t \in [0, t_{1}]$ :
	\begin{equation*}
	%\label{Navier_Stokes_equation_v2}
	\begin{cases}
	\partial_t  u^{m,\nu}_1 (t,x)+  \langle b_m    ,\nabla   u^{m,\nu}_1 \rangle (t,x)
	= \nu \Delta u^{m,\nu}_1 (t,x)+   f_m(t,x) 
	, \ t \in (0,\tau_1),\\
	u^{m,\nu} _1(0,x)= g_m(x). %\, x \in \R^3 ,
	\end{cases}
	\end{equation*}
\end{trivlist}
Using the Duhamel formulation \eqref{Duhamel_u} around the consider heat like equation, we get for any $t \in (\tau_k, \tau_{k+1}]$, if $k \in \leftB 1, n-1 \rightB$,
\begin{equation}\label{Duhamel_uk_FINAL}
u^{m,\nu}_{k+1}  (t,x) =  \hat P^{\tau,\xi}_{\tau_k} u^{m,\nu}_{k}  (t,x) +  \hat G^{\tau,\xi}_{\tau_k} f_m(t,x) 
+  \hat  G^{\tau,\xi}_{\tau_k}  \big ( \langle b^m _{\Delta}  [\tau,\xi]  , \nabla   u^{m,\nu}_{k} \rangle  \big )(t,x) ,
\end{equation}
and if  $k=0$,
\begin{equation*}%\label{Duhamel_u_FINAL}
u^{m,\nu}_{1}  (t,x) =   \hat P^{\tau,\xi}_{0} g_m (t,x) +  \hat G^{\tau,\xi}_{0} f_m(t,x) 
+  \hat  G^{\tau,\xi}_{0}  \big ( \langle b^m _{\Delta}  [\tau,\xi]  ,  \nabla   u^{m,\nu} _1 \rangle \big )(t,x) .
\end{equation*}

Next, we use the corresponding H\"older control \eqref{parab_ineq_u_Holder_prelim}, %{ineq_u_Holder_prelim}, 
including the extra contributions coming from the \textit{cut locus} argument,
\begin{eqnarray}\label{ineq_Holder_u_k}
&&[ u^{m,\nu}_{k+1}(t,\cdot)]_{\gamma,\nu^{1/2}(T/n)^{1/2}}
\nonumber \\
&\leq& \int^t_{\tau_k} [f(s,\cdot)]_{\gamma,\nu^{1/2}(T/n)^{1/2}} ds + [u_k^{m,\nu}(\tau_{k},\cdot)]_{\gamma,\nu^{1/2}(T/n)^{1/2}}
\nonumber \\
&&+
C 	m^{1+\beta}  \|b\|_{L^\infty( B_{\infty,\infty}^{-\beta})}	
(O_m(t)+O_m^{(2)}(t))
\int_{\tau_k}^t   \nu^{\frac{1-\gamma}2}(t-s)^{\frac{1-\gamma}2} ds
\nonumber \\
&&+
Cm^{1+\beta} O_m(t)
\|b\|_{L^\infty(B_{\infty,\infty}^{-\beta})} 
\nu^{\frac{1-\gamma}{2}}
(t-t_0)^{\frac{(1-\gamma)}2 +1} \exp \big ( \|b_m\|_{L^\infty(C^1)} t  \big ) 
%\nonumber 
\\
&&+
m^{2+\beta-\gamma} \|f\|_{L^\infty(C^\gamma)} \|b\|_{L^\infty(B_{\infty,\infty}^{-\beta})} 
\exp \big (m^{1+\beta} \|b\|_{L^\infty(B_{\infty,\infty}^{-\beta})} t  \big ) 
\int_{\tau_{k}}^{t}\int_{\tau_k}^t
\frac{\nu^{\frac{1-\gamma}{2}} (t-t_0)^{\frac{(1-\gamma)}2 }}{2}	 d \tilde  s \, d s
. \nonumber
\end{eqnarray}
Let us carefully point out that we suppose that $(t-t_0) \leq (t-\tau_{k})$, otherwise there is no off-diagonal regime and we cannot use the \textit{cut locus} technique.
This condition allows use to replace the H\"older modulus in the r.h.s. in \eqref{ineq_P_g_Holder} by the \textit{limit H\"older continuity} defined in \eqref{Def_modulus_continuity}.

% with the \textit{extra} terms.
Hence, recalling that $\tau_{k+1}-\tau_{k}= \frac{T}{n}$,
\begin{eqnarray}%\label{ineq_Holder_u_k}
&&
[u^{m,\nu}_{k+1}(t,\cdot)]_{\gamma,\nu^{1/2}(T/n)^{1/2}}
\nonumber \\
&\leq& \int^t_{\tau_k} [f(s,\cdot)]_{\gamma,\nu^{1/2}(T/n)^{1/2}} ds + [u_k^{m,\nu}(\tau_{k},\cdot)]_{\gamma,\nu^{1/2}(T/n)^{1/2}}
\nonumber \\
&&+
C 	m^{1+\beta}  \|b\|_{L^\infty( B_{\infty,\infty}^{-\beta})}	 
(O_m(t)+O_m^{(2)}(t))
\nu^{\frac{1-\gamma}2}\Big ( \frac{T}{n}\Big )^{\frac{1-\gamma}2} \int_{\tau_k}^t    ds
\nonumber \\
&&+
Cm^{1+\beta} O_m(t)
\|b\|_{L^\infty(B_{\infty,\infty}^{-\beta})} 
\nu^{\frac{1-\gamma}{2}}
\Big ( \frac{T}{n}\Big )^{\frac{1-\gamma}2} 
(t-\tau_{k}) \exp \big ( \|b_m\|_{L^\infty(C^1)} t  \big ) 
%\nonumber 
\\
&&+
C \nu^{\frac{1-\gamma}{2}}  m^{2+\beta-\gamma} \|f\|_{L^\infty(C^\gamma)} \|b\|_{L^\infty(B_{\infty,\infty}^{-\beta})} 
\exp \big (m^{1+\beta} \|b\|_{L^\infty(B_{\infty,\infty}^{-\beta})} t  \big ) 
(t-\tau_{k})^{\frac{(1-\gamma)}2+1 }
\int_{\tau_{k}}^{t}
%\int_{\tau_k}^t
%\frac{\nu^{\frac{(1-\gamma)^2}{2}} }{2}	 d \tilde  s \,
d s
. \nonumber
\end{eqnarray}
%Iterating the above result, 
For any $t \in [0,T]$, let $\kappa_n(t) = \lfloor \frac{nt}{T} \rfloor \in \leftB 0, n \rightB$, such that $\tau_{\kappa_n(t)} \leq t < \tau_{\kappa_n(t)+1}$, and iterating the above inequality,
% then yields
\begin{equation*}%\label{ineq_nabla_k_Lp}
[ u^{m,\nu}(t,\cdot)]_{\gamma,\nu^{1/2}(T/n)^{1/2}}
\leq 
	[ g  ]_{\gamma,\nu^{1/2}(T/n)^{1/2}} + \Lambda_{\tau_{\kappa_n(t)},t}
+ \sum_{k=0}^{\kappa(t)-1} 
	 \Lambda_{\tau_{k},\tau_{k+1}},
%	 \Big \{\int_{\tau_{k}}^{\tau_{k+1}} \|  f(s,\cdot) \|_{C^\gamma} ds 
%	\nonumber \\
%	&&+
%	C 	m^{1-\tilde \gamma}  \|b\|_{L^\infty( B_{\infty,\infty}^{\tilde \gamma})}	  
%	(O_m(t)+O_m^{(2)}(t))
%	\nu^{\frac{1-\gamma}2}\Big ( \frac{T}{n}\Big )^{\frac{1-\gamma}2} \int_{\tau_k}^{\tau_{k+1}}    ds
%	\nonumber \\
%	&&+
%	Cm^{1-\tilde \gamma} O_m(t)
%	\|b\|_{L^\infty(B_{\infty,\infty}^{\tilde \gamma})} 
%	\nu^{\frac{1-\gamma}{2}}
%	\Big ( \frac{T}{n}\Big )^{\frac{1-\gamma}2} 
%	(\tau_{k+1}-\tau_{k}) \exp \big ( \|b_m\|_{L^\infty(C^1)} t  \big ) 
%	%\nonumber 
%	\\
%	&&+
%	C \nu^{\frac{1-\gamma}{2}} m^{2-\tilde \gamma-\gamma} \|f\|_{L^\infty(C^\gamma)} \|b\|_{L^\infty(B_{\infty,\infty}^{\tilde \gamma})} 
%	\exp \big (m^{1-\tilde \gamma} \|b\|_{L^\infty(B_{\infty,\infty}^{\tilde \gamma})} t  \big ) 
%	(\tau_{k+1}-\tau_{k})^{\frac{(1-\gamma)}2+1 }
%	\int_{\tau_{k}}^{\tau_{k+1}}
%	%\int_{\tau_k}^t
%	%\frac{\nu^{\frac{(1-\gamma)^2}{2}} }{2}	 d \tilde  s \,
%	d s \Big \},
\end{equation*}
with, for any $0\leq  r\leq t\leq T$, 
\begin{eqnarray}\label{def_Lambda}
\Lambda_{r,t} &:=& 	\int_{r}^{t} [  f(s,\cdot) ]_{\gamma,\nu^{1/2}(T/n)^{1/2}} ds 
\nonumber \\
&&+
C 	m^{1+\beta}  \|b\|_{L^\infty( B_{\infty,\infty}^{-\beta})}	  
(O_m(t)+O_m^{(2)}(t))
\nu^{\frac{1-\gamma}2}\Big ( \frac{T}{n}\Big )^{\frac{1-\gamma}2} \int_{r}^t    ds
\nonumber \\
&&+
Cm^{1+\beta} O_m(t)
\|b\|_{L^\infty(B_{\infty,\infty}^{-\beta})} 
\nu^{\frac{1-\gamma}{2}}
\Big ( \frac{T}{n}\Big )^{\frac{1-\gamma}2} 
(t-r) \exp \big ( \|b_m\|_{L^\infty(C^1)} t  \big ) 
%\nonumber 
\\
&&+
C \nu^{\frac{1-\gamma}{2}} m^{2+\beta-\gamma} \|f\|_{L^\infty(C^\gamma)} \|b\|_{L^\infty(B_{\infty,\infty}^{-\beta})} 
\exp \big (m^{1+\beta} \|b\|_{L^\infty(B_{\infty,\infty}^{-\beta})} t  \big ) 
\Big (\frac Tn \Big )^{\frac{(1-\gamma)}2+1 }
\int_{r}^{t}
%\int_{\tau_k}^t
%\frac{\nu^{\frac{(1-\gamma)^2}{2}} }{2}	 d \tilde  s \,
d s ,
\nonumber
\end{eqnarray}
which goes to $	\int_{r}^{t} [  f(s,\cdot) ]_{\gamma,\nu^{1/2}(T/n)^{1/2}} ds $ when $n \to + \infty$ as soon as 
\begin{equation}\label{Condi_Lambda}
1\gg
%	 	m^{1-\tilde \gamma}  \|b\|_{L^\infty( B_{\infty,\infty}^{\tilde \gamma})}	  
%	(O_m(t)+O_m^{(2)}(t))
%	\nu^{\frac{1-\gamma}2}\Big ( \frac{T}{n}\Big )^{\frac{1-\gamma}2} T
%	\nonumber \\
%	&&+
%	m^{1-\tilde \gamma} O_m(t)
	 \nu^{\frac{1-\gamma}{2}} (1+\frac Tn m^{1-\gamma} \|f\|_{L^\infty(C^\gamma)}) \|b\|_{L^\infty(B_{\infty,\infty}^{-\beta})} 
	\exp \big (C m^{2+\beta -\gamma} \|b\|_{L^\infty(B_{\infty,\infty}^{-\beta})} T\big ) 
	\Big (\frac Tn \Big )^{\frac{(1-\gamma)}2 }T,
%	\nonumber
\end{equation}
always true for  given $m <+ \infty$, $\nu>0$ and letting $n \to + \infty$.
%$$ \Big ( T \| f_m\|_{L^\infty(C^1)}+\|g_m\|_{C^1} \Big ) \exp \Big ( C m^{1-\tilde \gamma} t \|b\|_{L^\infty( B_{\infty,\infty}^{\tilde \gamma})} \Big )= :O_m(t)$$

By Chasles equality and by telescopic sum, the identity below readily comes
\begin{equation*}%\label{ineq_nabla_k_Lp}
		[	 u^{m,\nu}(t,\cdot)]_{\gamma,\nu^{1/2}(T/n)^{1/2}}
	%\nonumber \\
	\leq
	[ g  ]_{\gamma,\nu^{1/2}(T/n)^{1/2}} 
	+	 \Lambda_{0,t}
%	+	\int_{0}^{t} \|  f(s,\cdot) \|_{C^\gamma} ds 
%	\nonumber \\
%	&&+
%	C 	m^{1-\tilde \gamma}  \|b\|_{L^\infty( B_{\infty,\infty}^{\tilde \gamma})}	  
%	(O_m(t)+O_m^{(2)}(t))
%	\nu^{\frac{1-\gamma}2}\Big ( \frac{T}{n}\Big )^{\frac{1-\gamma}2} \int_{0}^t    ds
%	\nonumber \\
%	&&+
%	Cm^{1-\tilde \gamma} O_m(t)
%	\|b\|_{L^\infty(B_{\infty,\infty}^{\tilde \gamma})} 
%	\nu^{\frac{1-\gamma}{2}}
%	\Big ( \frac{T}{n}\Big )^{\frac{1-\gamma}2} 
%	t \exp \big ( \|b_m\|_{L^\infty(C^1)} t  \big ) 
%	%\nonumber 
%	\\
%	&&+
%	C \nu^{\frac{1-\gamma}{2}} m^{2-\tilde \gamma-\gamma} \|f\|_{L^\infty(C^\gamma)} \|b\|_{L^\infty(B_{\infty,\infty}^{\tilde \gamma})} 
%	\exp \big (m^{1-\tilde \gamma} \|b\|_{L^\infty(B_{\infty,\infty}^{\tilde \gamma})} t  \big ) 
%	\Big ( \frac{T}{n}\Big )^{\frac{(1-\gamma)}2+1 }
%	\int_{0}^{t}
%	%\int_{\tau_k}^t
%	%\frac{\nu^{\frac{(1-\gamma)^2}{2}} }{2}	 d \tilde  s \,
%	d s 
	.
\end{equation*}
The l.h.s. does not depend on $n$, then we are able to pass to the limit $n \to +\infty $,
\begin{equation}\label{ineq_Holder}
%&&
\lim_{n \to + \infty}[	 u^{m,\nu}(t,\cdot)]_{\gamma,\nu^{1/2}(T/n)^{1/2}}%^{\eqref{Condi_Lambda}}
%\nonumber  \\
\leq
\lim_{n \to + \infty}[	 g  ]_{\gamma,\nu^{1/2}(T/n)^{1/2}} 
+	\lim_{n \to + \infty} \int_{0}^{t} [ f(s,\cdot) ]_{\gamma,\nu^{1/2}(T/n)^{1/2}} ds 
.
\end{equation}

\subsection{Another control of uniform norm}
\label{sec_Linfty}

	We have directly by the Feynman-Kac formulation the uniform control, see for example \cite{kara:shre:91} (also used in the analysis performed in \cite{hono:21}), 
or from maximum principle for linear parabolic equation %with  queuing on $b$,
see e.g. \cite{lieb:96}, the control in $L^\infty$ of the parabolic solution $u^{m,\nu}$.

This maximum principle can also be derived from our strategy.

Indeed, by a similar way as for the H\"older control performed in the previous section,  by Duhamel formula 
\eqref{Duhamel_uk_FINAL}, after choosing $(\tau,\xi)=(t,x)$, we get 
\begin{equation*}
\|u^{m,\nu}_{k+1}  (t,\cdot)\|_{L^\infty}  \leq 
\|u^{m,\nu}_{k}  (\tau_k,\cdot)\|_{L^\infty} +  \int_{\tau_k}^t \|f(s,\cdot)\|_{L^\infty} ds 
+ C \|b_m\|_{L^\infty(C^1)} \|\nabla   u^{m,\nu}_{k}  \|_{L^\infty}\int_{\tau_k}^t (t-s)^{\frac{1}{2}}   ds .
\end{equation*}
Iterating this inequality,
\begin{equation*}
\|u^{m,\nu} \|_{L^\infty}  \leq 
\|g\|_{L^\infty} +  \int_{0}^T \|f(s,\cdot)\|_{L^\infty} ds 
+ C\frac {T^{\frac 32}} {n^{\frac{1}{2}}} \|b_m\|_{L^\infty(C^1)} \|\nabla   u^{m,\nu}  \|_{L^\infty} .
\end{equation*}
We obviously deduce when $n \to + \infty$, the following estimate,
\begin{equation}\label{ineq_Linfty}
%&&
\|	 u^{m,\nu}\|_{L^\infty } 
\leq 
\|	 g  \|_{L^\infty} +  \int_{0}^T  \|  f(s,\cdot) \|_{ L^\infty} ds .
\end{equation}
\textit{In fine}, the time decomposition trick allows to retrieve the powerful uniform estimate given by Feynman-Kac formula or usual maximum principle method.
%, but it also provides a more general $L^p$ estimates.
%For the best of the best author's knowledge, it is the first time, that this sharp uniform control is obtained without maximum principle neither probabilistic argument. 

\subsubsection{Comment on the full H\"older modulus}
\label{sec_comment_Holder_modulus}

In order to write the full H\"older modulus instead the \textit{limit} one, we have to restart the computations of Section \ref{sec_Green_semi_Holder} without supposing  \eqref{Condi_diagonal_Semi_group}, i.e. in the case
\begin{equation*}%\label{Condi_diagonal_Semi_group}
	|x-x'| > \nu^{\frac 12}		T^{\frac 12}.
\end{equation*}
This situation prevent us to perform the same distinction \textit{diagonal}/\textit{off-diagonal} regimes performed in Section \ref{sec_remainders}.
In other words, the situation \eqref{def_A} cannot happen, and the only possible regime is \eqref{def_AC} which implies the choice $(\tau,\xi,\xi')=(t,x,x')$.
\\

	\textbf{Semi-group}
\\

%	\textcolor{black}{VALABLE SI $t_0=t-\nu^{-1/2}|x-x'|^2 \geq 0$}
We readily derive by change of variables:
\begin{eqnarray}%\label{ineq_P_g_Holder}
	&&		|\hat P^{\tau,\xi}  g_m(t,x) - \hat P^{\tau,\xi'}  g_m (t,x')| \Big |_{\tau=t,\xi=\xi'=x'}
	\nonumber \\
%	&=& 	\Big | \int_{\R^d} [ \hat {p}^{\tau,\xi} (0,t,x,y)-\hat {p}^{\tau, \xi '} (0,t,x',y)]	 g_m(y) dy  \Big | \Bigg |_{\tau=t,\xi=\xi'=x'}
%	\nonumber \\
	&=& 	\Big | \int_{\R^d}  \tilde p (0,t,0,y)[ g_m(y-\theta_{ 0,t }(x))- g_m(y-\theta_{ 0,t }(x'))]	 dy  \Big | %\Bigg |_{(\tau,\xi)=(t,x)}
	\nonumber \\
	&\leq & \|\nabla \theta_{ 0,t }(\cdot) \|_{L^\infty} [g]_{\gamma} |x-x'|^\gamma.
\end{eqnarray}

%		\textcolor{black}{SINON $t_0=t-\nu^{-1/2}|x-x'|^2 <0$ $ \Leftrightarrow $ $t\nu^{1/2} <|x-x'|^2$, on prend $\xi '=x'$}

%	!!!!!!!!!!!!!!!!!!!!!!!
\textbf{Green operator}
\\

We also get by change of variables:
\begin{eqnarray}%\label{ineq_Gf_Holder}
	&&	|\hat G^{\tau,\xi}  f_m(t,x) - \hat G^{\tau,\xi'}  f_ m (t,x')| \Big |_{\tau=t,\xi=\xi'=x'}
	\nonumber \\
%	&=& 	\Big | \int_0^t  \int_{\R^d} [ \hat {p}^{t,x} (s,t,x,y)-\hat {p}^{t,x} (s,t,x',y)]	 f_m(s,y) dy \, ds  \Big | 
%	%\Bigg |_{\tau=t,\xi=\xi'=x}
%	\nonumber \\
	&=& 	\Big | \int_0^t \int_{\R^d}  \tilde  {p} (s,t,0,y)[ f_m(s,y-\theta_{ s,t }(x))- f_m(s,y-\theta_{ s,t }(x'))] ds \, dy  \Big | 
	%\Bigg |_{(\tau,\xi)=(t,x)}
	\nonumber \\
	&\leq & 
	%\sup_{t \in [0,T]}
	\int_0^t \|\nabla \theta_{ s,t }(\cdot) \|_{L^\infty} [f(s,\cdot)]_{\gamma} ds |x-x'|^\gamma.
	% t.
\end{eqnarray}
%In Section \ref{sec_exist} \ref{sec, we introduce some examples of rough $b$ such that $\sup_{0\leq s \leq t \leq T}\|\nabla \theta_{ s,t }(\cdot) \|_{L^\infty}<+ \infty$.

In other word, we derive a suitable H\"older modulus control if $\theta_{ s,t }$ is Lipschitz continuous uniformly in $(m,\nu)$, which is guaranteed by assumption \A{A}, see Section \ref{sec_example_b}.

\subsubsection{On a $b$ depending on $t$}\label{sec_time_depend}
% \textcolor{black}{ICIIII }

In this section, we develop why we can extend the hypothesis \A{A} to involved a time dependency:

We say that \A{A'} is in force if there is a function $\psi :[0,T] \times  \R^d \rightarrow \R^d$, $ C^1$ in time, such that 
\begin{equation*}
\forall (t,x) \in [0,T] \times \R^d, \ b(t,x) = (\nabla \psi (t,x))^{-1},
\end{equation*}
satisfying
\begin{equation*}
\forall (\tau,x) \in [0,T] \times \R, \	\frac{ \nabla \psi  (t,x)}{\nabla  \psi (t,\psi^{-1}(t,c\tau +\psi (t,x)))}<+ \infty,
\end{equation*}
and
\begin{equation*}
\psi_m ^{-1}(t,\tau +\psi_m (t,x)) \underset{m \to + \infty}{\longrightarrow} 		\psi^{-1}(t,\tau+\psi (t,x)) \text{ in } C^1(\R^d, \R^d),
\end{equation*}
with $\psi_m$ a mollified version of $\psi$.
%, e.g. $\psi_m := \rho_m \star \psi$.
\\

This new assumption is a by-product of \A{A}, because for any $t \in [\tau_k , \tau_{k+1}]$, $k \in \leftB 0, n-1 \rightB$, we derive
\begin{equation*}
\forall x \in \R^d, \ |b_m(t,x)-b_m(\tau_k,x)| \leq C (t-\tau_k) \|\partial_t b_m\|_{L^\infty}
\leq C\frac{T}{n} m^{-\beta} \|\partial_t b\|_{L^\infty}.
\end{equation*}
This above inequality means that we can assume that $b$ does depend on $t$, and at each time interval $t \in [\tau_k,\tau_{k+1}]$ we approximate $b(t,\cdot)$ by the distribution constant in time $b(\tau_k,\cdot)$.

%\subsection{Convergence and uniqueness for transport equation}

\begin{cor}
The conclusions of	Theorem \ref{THEO_SCHAU_non_bounded_optimal} is still true replacing condition \A{A} by \A{A'} if $b \in C^{1}([0,T],\tilde B_{\infty,\infty}^{-\beta}(\R^d , \R^d))$, $\beta \in \R$.
\end{cor}

\mysection{Analysis of the parabolic equation}
\label{sec_parab}

\subsection{Some \textit{a priori} attempts}

\subsubsection{Peano's heuristic}
\label{sec_Peano}

The Peano counter-example yields a heuristic of the expected minimum regularity of $b$.
The threshold comes from a regularisation by noise argument, for other explanation see \cite{flan:11}, and \cite{chau:meno:17} for a degenerate case.
%framework.
Similarly to \eqref{counter_example}, let us consider 
$b(x)={\rm sign} (x) (|x|\wedge R)^\gamma $, $\gamma \in \R$, and the associated flow,
\begin{equation}\label{Peano_example}
	\frac{dX_t}{dt} = {\rm sign}(X_t)|X_t|^\alpha, \ X_0=0.
\end{equation}
There are an infinite number of solutions written, for any $t^* \in [0,T]$, by :
\begin{equation}\label{Peano_integrate}
	X_t = \pm c_\alpha (t-t^*)^{\frac 1 {1-\alpha} } \mathds 1_{[t^*,T]}(t).
\end{equation}
The associated stochastic problem is
\begin{equation*}
	d \tilde  X_t= {\rm sign}(\tilde X_t)|\tilde X_t|^\alpha dt + \nu d W_t,
\end{equation*}
where $(W_t)_{t \geq 0}$ stands for a Brownian motion. 
Parabolic equation \eqref{Parabolic_Equation_Moll_Itro} is the determinist counterpart of this SDE, where the solution is given by a stochastic representation, the Feynman-Kac formula, see for instance \cite{kara:shre:91}.
%is well-posed for $ \gamma \in (0,1)$.
\\

There is a critical time when the noise overwhelms the singular drift, see \cite{dela:flan:14}, after this time the SDE solution fluctuates around a solution of the ODE \eqref{Peano_example}.
%\\

As a consequence, we aim to compare the time scaling between the Brownian motion, i.e. $t^{\frac{1}{2}}$,
% (the variance being equal to $t$), 
with a solution of the ODE given in \eqref{Peano_integrate}, i.e. $t^{\frac 1 {1-\alpha} } $.
That is to say, to take advantage of the regularisation by noise before the critical point, the condition is 
\begin{equation*}
	t^{\frac 1 {1-\alpha} } < t^{\frac{1}{2}},
\end{equation*}
which yields for a small time,
\begin{equation*}
	\frac 1 {1-\alpha} > \frac{1}{2} \ \Leftrightarrow \alpha > -1.
\end{equation*}
In other words, for $\tilde \gamma=\alpha =-1+\tilde \gamma$, $\tilde \gamma>0$, the expected minimum regularity of the drift is  $b(t,\cdot) \in B_{\infty,\infty}^{-1+\tilde \gamma}=C^{-1+\tilde \gamma}$.
\\

There are numerous articles dealing with SDE and the associated parabolic equation with irregular
% very rough 
drift, but in all of them there is a ``macro" distance with the above Peano heuristic.

The case $\tilde \gamma>1/3$ is dealt in \cite{dela:diel:16}  in dimension $1$ and 
\cite{cann:chou:18} for the multidimensional version; the authors thoroughly use rough path and para-control.
%. In this context, their analysis is useful 
%to build a polymer measure which allows to consider a solution to the KPZ equation.
%\\

Another notion of solution of stochastic equation, called virtual, is introduced in \cite{flan:isso:russ:17}, where the constraint is $\tilde \gamma >1/2$.
Let us notice that under this constraint, $\tilde \gamma<1/2$, there is no hope to 
obtain strong solution of the SDE, see for instance the counter-examples presented in \cite{bass:chen:01}, \cite{barl:82}. 

\subsubsection{On the limitation of the para-product}

The last constraint, $\tilde \gamma> \frac 12$, appears naturally with \textit{a priori} computations with usual tools.
Indeed, from Duhamel formula the solution of the parabolic equation \eqref{Parabolic_Equation_Moll_Itro} writes
%\vspace{-0.25cm}
\begin{equation*}
	\label{DUHAMEL_PERTURB_F_1}
	%\vspace{-.25cm}
	u^{m,\nu}(t,x)=\tilde P g_m(x)+ \tilde G  f_m(t,x)+\int_0^t  \int_{\R^{d}} \tilde p(s,t,x,y)\langle b_m(y), \nabla u^{m,\nu} (s,y) \rangle dy \, ds,
	%%\vspace{-0.15cm}
	%\nonumber \\
	%\vspace{-0.125cm}
\end{equation*}
where 
%	Finally, we will marginally use the ``pure" heat kernel
\begin{equation}\label{def_tilde_p}
	\tilde {p}  (s,t,x,y):= 
	%	\hat{p}^{\tau,\xi} \Big  (s,t,x- \int_s^t  b_m (\tilde s,\theta_{\tilde s,\tau }^m(\xi))d \tilde s,y \Big )
	\frac{1}{(4\pi \nu (t-s))^{\frac d 2} } 
	\exp \bigg ( -\frac {\left |x-y \right|^2}{4\nu(t-s)} \bigg ),
	%\exp\left (-\frac{|A_{s,t}(\xi)^{-1/2}(x-y)|^2}{4 }\right)
\end{equation}
stands for the the standard heat kernel, also 
\begin{equation}\label{def_tilde_G}
	\forall (t,x) \in (0,T]\times\R^{d}, \ \tilde  G   f_m(t,x):= \int_0^{t}  \int_{\R^{d}}  \tilde {p} (s,t, x,y)   f_m (s,y) \ dy \ ds,
\end{equation}
is the corresponding Green operator,
and 
\begin{equation}\label{def_tilde_P}
	\tilde P  g_m (t,x):=\int_{\R^{d}} \tilde p  (0,t,x,y)   g_m(y) dy,
\end{equation}
the associated semi-group.
\\

Let us suppose that $\nabla u^{m,\nu}(s,\cdot) \in C^{\delta}$, $\delta \geq 0$,  therefore from the above Duhamel's formula, we should have: %   and 
\begin{equation*}
	%\ni \nabla u(t,x) = \hdots +
	x\mapsto  \nabla \int_t^T  \int_{\R^{d}} \tilde p(t,s,x,y)\langle  b_m(s,y), \nabla u^{m,\nu} (s,y) \rangle dy \, ds \in 	C^{\delta}. %\approx (T-t)^{\frac{1-\tilde \gamma}{2}},
\end{equation*}
However, from the para-product result, derived by Bony's microlocal analysis \cite{bony:81}, see also \cite{gubi:imke:perk:15}, if $\nabla u^{m,\nu}(s,\cdot) \in C^{\delta}$ and $b_m(s,\cdot) \in C^{-\tilde \gamma}$, 
such that  $\delta-\tilde \gamma>0 \Leftrightarrow \tilde \gamma <\delta$ then  $\langle b_m(\cdot), \nabla u^{m,\nu} (s,\cdot) \rangle  \in C^{-\tilde \gamma}$.
%\vspace{0.125cm}
Hence, with some common computations of the heat kernel, we obtain that
\begin{equation*}
	%C^{\alpha} \ni 
	\nabla \int_t^T  \int_{\R^{d}} \tilde p(t,s,x,y)\langle b_m(y), \nabla u^{m,\nu} (s,y) \rangle dy\, ds \approx (T-t)^{\frac{1-\tilde \gamma}{2}}.
\end{equation*}
Also, thanks to the typical equivalence of the space-time with a parabolic scaling in the analysis of the parabolic bootstrap, see for instance \cite{chau:hono:meno:18}, where $(T-t)^2 \approx |x-x'|$, we deduce
\begin{equation*}
	%C^{\alpha} \ni 
	x \mapsto \nabla \int_t^T  \int_{\R^{d}} \tilde p(t,s,x,y)\langle b_m(y), \nabla u^{m,\nu} (s,y) \rangle dy \, ds \in C^{1-\tilde \gamma}.
\end{equation*}
Then we readily derive that $ \tilde \gamma\leq  1-\delta$, namely $ \delta \leq 1 - \tilde \gamma $.
Combining with the para-product constraint $ \tilde \gamma < \delta$ yields $\tilde \gamma \leq 1-\tilde \gamma $ and so
$ \tilde \gamma < 1/2$ which is exactly the same regularity constraint  of  \cite{flan:isso:russ:17}.
\\

This heuristic shows up the difficulty to use %the usual 
para-product results in such a rough framework.
To handle with regularity $\tilde \gamma> 1/2$, we then have to capitalise on other techniques.
Specifically, we thoroughly exploit the fact that $u^{m,\nu}$ is solution of the parabolic equation \eqref{Parabolic_Equation_Moll_Itro}, which allows to consider cases out of the Bony's para-product scope.
\\

The previous analysis strongly relies on the vanishing result but seems to be incompatible with the parabolic equation.
We developed, here, a new technique  based on a time decomposition trick, detailed in Section \ref{sec_decoup_temps} below.

\subsection{The types of solution to the Cauchy problem}
\label{sec_def_type_solu}

%\textcolor{black}{CHeck regularite de $u$ avec Sobolev}

\subsubsection{Parabolic equation}
\label{sec_def_parab}

For $0< \gamma < 1 $, and for a given viscosity $\nu >0$, we define some solutions of the parabolic equation.

\begin{defi}[mild solution]\label{DEFINITION_MILD_parab}
	We say that $u$ is a \textit{mild} solution in $L^\infty\big ([0,T]; C_b^{\gamma}(\R^d,\R)\big )$ of equation \eqref{transport_equation} if there is a sequence\footnote{We have such a sequence $(b_m)_{m \geq 0}$, if $b \in L^\infty([0,T];\tilde{ B}_{\infty,\infty}^{-\tilde \gamma}(\R^d,\R))$.} $(b_m)_{m \in \R_+}$ in $L^\infty  ( [0,T];C_b^\infty(\R^d,\R^d) )$ such that there is $\tilde \gamma \in \R$,
	\begin{equation}\label{converg_b_def_mild_parab}
	\forall \varepsilon>0, % < -\tilde \gamma -\lfloor -\tilde \gamma \rfloor, 
	\ 
	\lim_{m \to + \infty}\|b_m-b\|_{ L^\infty  ( [0,T];  B_{\infty,\infty}^{-\tilde \gamma-\varepsilon}(\R^d,\R^d) )}=0,
	\end{equation}
	%\textcolor{black}{ON PEUT VIRER $t$ !!!!! A DIRE !!!}	
	for any $t \in [0,T]$,  there exists a sub-sequence of  $u^{m, \nu}(t,\cdot)_{m\in \R_+}$ lying in $
	%L^\infty ([0,T]; 
	C_b^{\gamma}(\R^d,\R) $ 
	converging, 
	%	in the space $%L^\infty ([0,T]; 
	%	C_b^{1-\tilde \varepsilon}(K,\R) $, $ 0< \tilde \varepsilon<1$ 
	for any compact subset $K \subset \R^d$, when 
	%$\nu \to 0$ and
	$m \to + \infty$,  towards $u (t,\cdot) \in  %C^1_b ([0,T]; 
	C_b^{\gamma}(K,\R) $ and satisfying, for any $m \in \R_+$,
	% and any $\nu \in \R_+$,
	\begin{equation}\label{parabolic_moll_def_bis}
	\begin{cases}
	\partial_t u^{m, \nu}(t,x)+  \langle b_m(t,x),  \nabla u^{m, \nu} (t,x)\rangle -\nu \Delta u^{m, \nu}(t,x)=f_m(t,x),\ (t,x)\in (0,T]\times \R^{d},\\
	u^{m, \nu}(0,x)=g_m(x),\ x\in \R^{d},
	\end{cases}
	\end{equation}
	%	where for any $(t,x)\in [0,T] \times \R^d$
	%	\begin{equation}\label{def_w}
	%	w^{m,\nu}(t,x)= \int_0^t u^{m, \nu}(s,x) ds.
	%	\end{equation}&
	where $(f_m,g_m) \underset{m \to + \infty}\longrightarrow (f,g)$ in $L^\infty([0,T]; C_b^{\gamma}(\R^d,\R)) \times C_b^{\gamma}(\R^d,\R)$.
\end{defi}

Let us now, recall the notion of usual \textit{weak} solution.
\begin{defi}[weak solution]\label{DEFINITION_WEAK_parab}
	A function $u$ is a weak solution in $L^\infty\big ([0,T]; C_b^{\gamma}(\R^d,\R)\big )$ of equation \eqref{transport_equation} if $u$ is a \textit{mild} solution such that for any function $\varphi \in C_0^\infty([0,T]\times \R^d,\R)$:
	\begin{eqnarray}\label{KOLMO_weak}
	%\begin{cases}
	\int_{\R^d} \Big \{   \varphi(t,y) u(t,y) %dy 
	+ \int_0^t  %\int_{\R^d} 
	\big \{-\partial_t\varphi(s,y) u (s,y)+  \langle b(s,y),  \nabla u(s,y)\rangle \varphi(s,y) + \nu u(s,y) \Delta \varphi(s,y)
	%	+\nu    \varphi(t,y) \Delta  u^{m, \nu}(t,y) 
	\big \} ds \Big \} dy  
	\nonumber \\
	= \int_{\R^d}   \varphi(0,y) g(y) dy +\int_{\R^d} \int_0^t    \varphi(s,y) f(s,y) ds \, dy .%\\
	% u_m(T,x)=g(x),\ x\in \R^{d}.
	% \end{cases}
\nonumber \\
	\end{eqnarray}
\end{defi}
Again, this formulation allows us to give a usual distributional meaning of the product 
$ \langle b,  \nabla u \rangle $.

\subsection{Main result on the parabolic equation}
\label{sec_b_non_bounded}

%\subsection{Statement for the parabolic case}

When $b$ lies in H\"older-Besov space, we succeed in obtaining the same regularity of the solution as for $f$ and  $g$.
The type of solution strongly depends on the regularity of $b$.
%, as soon as the H\"older index of $b$ is big enough.

\begin{THM}[Rough parabolic equation in H\"older spaces] \label{THEO_SCHAU_non_bounded_para}
	%	Let us suppose \A{E}.
	For  %$\gamma\in (0,1)$, 
	$\tilde \gamma \in \R^*$ and $0 < \gamma <1$, be given.
	Let $f\in L^\infty([0,T];  C_b^{\gamma}(\R^d,\R))$ and $ g \in C_b^{\gamma}(\R^d,\R)$.
	For a distribution $ b \in C_b^1([0,T], \tilde B_{\infty,\infty}^{\tilde \gamma} (\R^d,\R^d))$, 
	%such that \A{A} is in force
%	there is (not necessary unique) a flow $\theta_{ s,t }(\cdot) \in C^1(\R^d,\R^d )$, $ 0 \leq s \leq t \leq T$, satisfying 
%	\begin{equation*}
%		\theta_{ s,t }(x)= x + \int_s^t b(s,\theta_{ \tilde s,t }(x)) d\tilde s,
%	\end{equation*}
	%and 
	%s.t. $\nabla \cdot b=0$,
	%if $ \delta\tilde \gamma \geq \gamma$ then 
	then, for $u^{m,\nu}$ defined in \eqref{parabolic_moll_def_bis},  
	%there is 	a
	we have 
	% then $u$ satisfies 
	%	for any $(\kappa,\tilde \kappa) \in (0,1)^2$
	\begin{eqnarray}
	\label{ineq_THEO_SCHAU_para}
\lim_{n \to + \infty}	\sup_{t \in [0,T]}[ u^{(m,\nu)}(t,\cdot)]_{\gamma,(\nu T/n)^{1/2}}
	&\leq&
	%(1-\kappa)^{-1}  (
\lim_{n \to + \infty}	\int_0^T [ f(s,\cdot)]_{\gamma,(\nu T/n)^{1/2}}+  [ g]_{\gamma,(\nu T/n)^{1/2}}
%	+ \tilde \kappa  ),
		\nonumber \\
		\|  u \|_{L^\infty} &\leq&  \int_0^T\| f(s,\cdot) \|_{L^\infty}+ \|  g \|_{L^\infty}  .
	\end{eqnarray}
	Moreover, if \A{A'} is in force, then there is a \textit{mild} solution $u \in L^\infty([0,T];C_b^{\gamma}(\R^{d},\R)) $ of \eqref{transport_equation}; also
	if $\tilde \gamma <\gamma$ and $\nabla \cdot b=0$ then the solution $u$ is also a \textit{weak}
	solution. 
	
\end{THM}

We do not consider the positive regular case, i.e. $\tilde \gamma <0$, whose control is well-known, the Schauder estimates are even in force, see \cite{frie:64}.

\begin{remark}
	The control \eqref{ineq_THEO_SCHAU_para} exactly matches with the \textit{limit} H\"older estimates of the solution of the heat equation, independently of the dimension, and above all of  $b$. As  consequence, the mild solution has no condition on $\tilde \gamma$, there is to say we define a solution beyond the Peano condition, $\tilde \gamma >-1$, developed in Section \ref{sec_Peano}. 
	
	In the incompressible case, $\nabla \cdot b$, the condition
	%, $p= + \infty$, 
	$\tilde \gamma <\gamma<1$
	%, $\nabla \cdot b=0$, 
	corresponds ``almost" to the Peano's heuristic\footnote{The true condition is $\tilde \gamma<1$, but this is the case, here, as $\gamma$ which can be arbitrarily close to $1$.} in Section \ref{sec_Peano}.
	%for instance  \cite{flan:11}.
	%\cite{chau:hono:meno:18} for a complete discussion in a degenerate framework.
	However, we fail to obtain uniqueness of the solution out of the usual H\"older continuous case (i.e. $\tilde \gamma <0$) handled in \cite{frie:64}, for more information see Remark \ref{Rem_uniq_negativ} further.
	
	The case $\tilde \gamma=0$ can be considered, replacing the condition $ b \in L^\infty([0,T], \tilde B_{\infty,\infty}^{0} (\R^d,\R^d))$ by $ b \in L^\infty([0,T], L^\infty (\R^d,\R^d))$.
	Considering the Besov space $B_{\infty,\infty}^{0}$ would yield some refinements involving some logarithm corrections.
	
	Again, the case $\tilde \gamma<0$ is the usual framework, see for instance \cite{frie:64}, \cite{kryl:96}.
\end{remark}

The proof is a direct consequence of the analysis performed in Section \ref{sec_Proof_transport}, letting $m$ going to $+\infty$.
Nevertheless, there is no vanishing viscosity which would restore uniqueness.

%An interesting extension to this result would be to adapt our work for $f=b$ corresponding to the Zvonking transform which is crucial to deal with stochastic differential equations. The difficulty is that the consider Gaussian kernel proxy, in our analysis, does depend on $b$ and the parameter of mollification, so the control of the source function become  unclear in a rough framework.
% (unlike in the following proof).

%	

%\mysection{Second result for the transport equation}
%\label{sec_second_transport}
%
%From the previous analysis based on a time decomposition trick, we are able to derive uniqueness of selection principle for the transport equation, defined below.

%\part{Burgers equation}

\mysection{Inviscid Burgers' equation}
\label{sec_Buregers}

The controls \eqref{Schauder_ineq} of the vanishing viscous solution of the PDE \eqref{transport_equation} being independent on the first order term $b$, we can expect to obtain some fixed-point argument to consider that $b$ being the solution $u$ itself in dimension $1$\footnote{It is possible to adapt the analysis for a more general dimension $d \geq 1$ by a reformulation of the product $\langle u(t,x),  \nabla u(t,x)\rangle$.}.

Such a Cauchy problem thus defined is called the inviscid Burgers' equation,
\begin{eqnarray}
\label{Burgers_equation}
\begin{cases}
\partial_t u(t,x)+ u(t,x) \partial_x u(t,x) =f(t,x),\ (t,x)\in \R_+ \times \R,\\
u(0,x)=g(x),\ x\in \R.
\end{cases}
\end{eqnarray}
For more information about the Burgers' equation and the corresponding turbulence phenomenon, we refer to the recent book \cite{bori:kuks:21}.

\subsection{Statement about the Inviscid Burgers' equation}

We obtain a different notion of uniqueness for this equation because the convergence of the mollified first order term, being the solution itself, is more intricate comparing with the transport equation case.

\begin{defi}[Turbulent uniqueness]\label{DEFINITION_Unique_turbulent}
	There is a turbulent unique solution if there are two solutions $u^{m,\nu}$ and $u^{m,\bar \nu}$ of 
	\begin{equation}\label{eq1_turbulenet_uniq}
		\begin{cases}
			\partial_t u^{m,\nu}(t,x)+   u_m^{m,\nu} (t,x) \partial_{x} u^{m,\nu} (t,x)-\nu \partial_{xx}^2 u^{m,\nu}(t,x)=f_m(t,x),\ (t,x)\in (0,T]\times \R ,\\
			u^{m,\nu}(0,x)=g_m(x),\ x\in \R,
		\end{cases}
	\end{equation}
	and respectively
	\begin{equation}\label{eq2_turbulenet_uniq}
		\begin{cases}
			\partial_t u^{m,\bar \nu}(t,x)+  u_m^{m,\bar \nu} (t,x)   \partial_{x} u^{m,\bar \nu} (t,x) -\bar \nu \partial_{xx}^2  u^{m,\bar \nu}(t,x)=f_m(t,x),\ (t,x)\in (0,T]\times \R,\\
			u^{m,\bar \nu}(0,x)=g_m(x),\ x\in \R,
		\end{cases}
	\end{equation}
	for $\nu,\bar \nu>0$, and for any $(t,x) \in [0,T] \times \R$,
	where $u_{m}^{m, \nu}$ and $u_{m}^{m,\bar \nu}$ stand respectively for a mollified version of $u^{m, \nu}$ and $u^{m,\bar \nu}$,
%	\begin{equation}\label{ineq_u_m_nu_moll}
%		u_{m}^{m, \nu}(t,x):=\int_{\R} \rho_m (x-y) u^{m, \nu} (t,y) dy, \ \text{ and } 	\	u_{m}^{m,\bar \nu}(t,x):=\int_{\R} \rho_m (x-y) u^{m,\bar \nu} (t,y) dy,
%	\end{equation}
%where the kernel $\rho_m$ is defined\footnote{Precisely,  $\rho_m(\cdot ):= m^{d} \rho(m \cdot )$ where $\rho$ is a non-negative smooth function $\rho_m$, such that $\int_{\R^d} \rho_m(x-y) dy =1$.} like in \eqref{def_b_epsilon}, and	
	such that, for any $(t,x) \in [0,T] \times \R$, 
	\begin{equation*}%\label{converg_b_def_mild_viscous_bis}
		\forall \varepsilon>0, % < -\tilde \gamma -\lfloor -\tilde \gamma \rfloor, 
		\ 
		\lim_{n \to + \infty}\|u_n^{m, \nu}  - u^{m, \nu} \|_{ L^\infty  ( [0,T];  C_b^{\gamma-\varepsilon})}
		= 	\lim_{n \to + \infty}\|u_n^{m,\bar \nu}  - u^{m,\bar \nu} \|_{ L^\infty  ( [0,T];  C_b^{\gamma-\varepsilon})}
		%+\lim_{n \to + \infty}\|\bar b_n-b\|_{ L^\infty  ( [0,T];  B_{\infty,\infty}^{-\tilde \gamma-\varepsilon}(\R^d,\R^d) )}
		=0,
	\end{equation*}
and,
	\begin{equation}\label{ineq_u_m_nu_moll}
	|u_m^{m, \nu}(t,x)  - u_m^{m,\bar \nu} (t,x) | \leq 	|u^{m,\bar \nu}(t,x)  - u^{m,\bar \nu} (t,x) |,
\end{equation}
	converging, up to sub-sequence selection, towards two H\"older continuous solutions $u$, $\bar u$, when $(m,\nu,\bar \nu ) \to (+ \infty,0,0)$ then $u= \bar  u$. 
\end{defi}
The last condition \eqref{ineq_u_m_nu_moll} simply means that the mollification procedure behaves like a convolution with a smooth kernel (like performed in \eqref{def_b_epsilon}).
 \\
 
 Let us insist that the difference with the uniqueness introduced in Definition \ref{DEFINITION_Unique} is that the considered regularisation procedure for the first order terms $u^{m,\nu}$ and $u^{m,\bar \nu}$ is the same in equations \eqref{eq1_turbulenet_uniq} and \eqref{eq2_turbulenet_uniq}; this explains the definition %condition 
 \eqref{ineq_u_m_nu_moll}.
 % (which obviously works for a convolution with usual mollifier).
 
  In particular, uniqueness  introduced in Definition \ref{DEFINITION_Unique} (for the transport equation) yields turbulent uniqueness.  
  We detail in Remark \ref{rem_non_uniq_Burgers} why we have to handle with such a turbulent uniqueness or the viscous uniqueness, defined below, instead of the classic uniqueness\footnote{Definition \ref{DEFINITION_Unique}.} for the inviscid Burgers' equation.
 
 \begin{defi}[Viscous uniqueness]\label{DEFINITION_Unique_viscous}
 	There is a viscous unique solution if there are two solutions $u^{m,\nu}$ and $u^{m,\bar \nu}$ of 
 	\begin{equation}\label{eq1_viscous_uniq}
 		\begin{cases}
 			\partial_t u^{m,\nu}(t,x)+   u_m^{m,\nu} (t,x) \partial_{x} u^{m,\nu} (t,x)-\nu \partial_{xx}^2 u^{m,\nu}(t,x)=f_m(t,x),\ (t,x)\in (0,T]\times \R,\\
 			u^{m,\nu}(0,x)=g_m(x),\ x\in \R,
 		\end{cases}
 	\end{equation}
 	and respectively
 	\begin{equation}\label{eq2_viscous_uniq}
 		\begin{cases}
 			\partial_t \bar  u^{m, \nu}(t,x)+ \bar  u_m^{m, \nu} (t,x)   \partial_{x} \bar u^{m, \nu} (t,x) - \nu \partial_{xx}^2  \bar u^{m, \nu}(t,x)=f_m(t,x),\ (t,x)\in (0,T]\times \R,\\
 			\bar u^{m, \nu}(0,x)=g_m(x),\ x\in \R,
 		\end{cases}
 	\end{equation}
 	for $\nu>0$,
 	with
	\begin{equation*}%\label{converg_b_def_mild_viscous_bis}
	\forall \varepsilon>0, % < -\tilde \gamma -\lfloor -\tilde \gamma \rfloor, 
	\ 
	\lim_{n \to + \infty}\|u_n^{m, \nu}  - u^{m, \nu} \|_{ L^\infty  ( [0,T];  C_b^{\gamma-\varepsilon})}
	= 	\lim_{n \to + \infty}\|\bar u_n^{m, \nu}  - \bar u^{m, \nu} \|_{ L^\infty  ( [0,T];  C_b^{\gamma-\varepsilon})}
	%+\lim_{n \to + \infty}\|\bar b_n-b\|_{ L^\infty  ( [0,T];  B_{\infty,\infty}^{-\tilde \gamma-\varepsilon}(\R^d,\R^d) )}
	=0,
\end{equation*}
 	converging, up to sub-sequence selection, towards two H\"older continuous solutions $u$, $\bar u$, when $(m,\nu,\bar \nu ) \to (+ \infty,0,0)$ then $u= \bar  u$. 
 \end{defi}
 Importantly, the above equations have the same viscosity $\nu>0$, but the mollification procedure of the first order coefficient may be different.
 This uniqueness definition is, somehow, the complementary of the \textit{turbulent} uniqueness in the usual uniqueness introduced in Definition \ref{DEFINITION_Unique}.

Replacing $b$ by $u$ in the different definitions of solution in Section \ref{sec_def_type_solu}, we establish the last result of this paper.
\begin{THM}[Uniqueness of H\"older solution of the inviscid Burgers' equation] \label{THEO_SCHAU_Burgers}
	For  $\gamma\in (0,1)$  be given.
For all
% $ b \in L^\infty([0,T],B_{\infty,\infty}^{\tilde \gamma} (\R^d,\R \otimes \R^d))$, 
%s.t. $\nabla \cdot b=0$,
$f\in L^\infty([0,T];  C_b^{\gamma}(\R,\R))$ and $ g \in C^{\gamma}_b(\R,\R)$, 
%if $ \delta\tilde \gamma \geq \gamma$ then 
if there is 
a
mild vanishing viscosity solution $u \in L^\infty([0,T];C_b^{\gamma}(\R,\R))  $ %(or if $u$ satisfies condition \A{A}) of \eqref{Burgers_equation} 
then $u$ satisfies
%	for any $(\kappa,\tilde \kappa) \in (0,1)^2$
\begin{eqnarray}\label{Schauder_ineq_Burgers}
\lim_{n \to + \infty}	\sup_{t \in [0,T]}[ u^{m,\nu}(t,\cdot)]_{\gamma,(\nu T/n)^{1/2}}
	%\|u\|_{L^\infty(C^\gamma)}
&\leq& 
%(1-\kappa)^{-1}  (
\lim_{n \to + \infty} \int_0^T [f(s,\cdot)]_{\gamma,(\nu T/n)^{1/2}}+ [g]_{\gamma,(\nu T/n)^{1/2}},
%+ \tilde \kappa  ),
\nonumber \\
\|  u^{m,\nu} \|_{L^\infty} &\leq&  \int_0^T\| f(s,\cdot) \|_{L^\infty}+ \|  g \|_{L^\infty}  .
\end{eqnarray}
\begin{trivlist}
%\begin{equation}\label{CONDINU_Burgers}
%\nu
%\ll
%%\kappa^{\frac{4}{\gamma(1-\gamma)}}
% \Bigg (C m^{1-\tilde \gamma}  
%\|b\|_{L^\infty( B_{\infty,\infty}^{\tilde \gamma})} T^{1+(1-\gamma)\frac{2+\gamma}{2\gamma}} 
%\Big ( m^{1-\gamma}(T  \| f\|_{L^\infty(C^\gamma)}+ [g]_{\gamma} ) \Big ) e^{m T  (T\| f \|_{L^\infty}+ \|  g \|_{L^\infty}     )}  
%\Bigg )^{-\frac{4}{\gamma(1-\gamma)}}.
%\end{equation}
\item[i)
\textbf{Good regularity.}] If $\gamma>\frac{1}{2}$  then the considered mild vanishing viscosity solution is  also a \textit{mild-weak} and  a \textit{weak} solution, and if 
	\begin{equation}\label{CONDINU_Holder_bis_Burgers}
	%\nu \bigg (	\Big ( C \nu ^{\frac{\gamma}{2}-1} T^{\frac \gamma 2}  \| f\|_{L^\infty(C^\gamma)}+
	\nu \|\nabla ^2g_m\|_{L^\infty} 
	%\Big ) 
	+ 
	\nu ^{\frac{\gamma}{2}}	m^{1- \gamma}	
	%\|b\|_{L^\infty(C^{\tilde \gamma})}
	%		\Big ( T \|  f\|_{L^\infty(C^\gamma)}+\| g\|_{C^\gamma} \Big ) 		\nu ^{\frac{\gamma}{2}-1} 
	%T^{\frac{\gamma}{2}} \bigg)
	\ll 1,
\end{equation}
then
$\partial_t u (t,\cdot) \in B_{\infty,\infty}^{-1+\gamma}(\R^d,\R)$, $\forall t \in  (0,T]$.
\item[ii) 
\textbf{Fast vanishing viscosity.}] If, for a constant $C>0$ big enough,
%$ \gamma > \frac{\sqrt{5}-1}{2}$, and if
% for a given $\varepsilon \in (0,1)$,
% for any $0 < \varepsilon <1$,
\begin{equation}\label{CONDINU_UNIQ_Burgers}
%	\begin{equation*}
		\exp \bigg ( C T    ( T \| f_m\|_{L^\infty(C^1)}+\|g_m\|_{C^1}  ) \exp \Big (  C m t \big ( T \|f\|_{L^\infty}+ \|g \|_{L^\infty}\big ) \Big ) \bigg )	
		\bigg ( (1+m
		%^{1-\gamma}
		)\nu^{\frac{\gamma}{2}} + \nu \|\nabla ^2g_m\|_{L^\infty}  \bigg )
		\ll 1,
	\end{equation}
%	%\begin{equation*}
%	%m^{1-\gamma-\tilde \gamma }+ 
%	\nu \|\nabla ^2g_m\|_{L^\infty}  %+	m^{- \varepsilon +1-2 \tilde \gamma} \nu ^{\frac{\gamma-1}{2}} 
%	+ m^{1- \gamma} \nu^{\frac \gamma 2}\ll 1,
%\end{equation}
then the solution is \textit{turbulent} unique.
\item[iii) 
\textbf{Slow vanishing viscosity.}] If $\|\partial_{x}  f\|_{L^\infty}+\|\partial_{x} g\|_{L^\infty} <+ \infty$ and, for a constant $C>0$ big enough,
%$ \gamma > \frac{\sqrt{5}-1}{2}$, and if
% for a given $\varepsilon \in (0,1)$,
% for any $0 < \varepsilon <1$,
\begin{equation}\label{CONDINU_UNIQ_Burgers_Viscous}
	%	\begin{equation*}
	\exp \bigg ( C T   \Big ( T \|\partial_{x}  f\|_{L^\infty}+\|\partial_{x} g\|_{L^\infty} \Big ) 
	\exp \Big (  C T \nu^{-1} 
	(T\|f\|_{L^\infty}+ \|g\|_{L^\infty})^2 \Big ) \bigg ) 
	m^{-\gamma} 	% (T \|f\|_{L^\infty(C^\gamma)}+ [g]_\gamma ) 	
		\ll 1,
	\end{equation}
%\begin{eqnarray}
%	\exp \bigg ( C T   
%	2 \Big ( t \|\partial_{x}  f_m\|_{L^\infty}+\|\partial_{x} g_m\|_{L^\infty} \Big ) 
%	\exp \Big ( C^2 \nu^{-1} \pi 	(T\|f\|_{L^\infty}+ \|g\|_{L^\infty}) ^2  t  \Big )
%	%\exp \Big (  C T^{\frac 12} \nu^{-\frac 12 } 
%	(T\|f\|_{L^\infty}+ \|g\|_{L^\infty}) \Big )
%	%\Big ( t \|\partial_{x}  f_m\|_{L^\infty}+\|\partial_{x} g_m\|_{L^\infty} \Big ) 
%	%\exp \Big (  C T^{\frac 12} \nu^{-\frac 12 } 
%	%(T\|f\|_{L^\infty}+ \|g\|_{L^\infty}) \Big )
%	\bigg ) 
%	\nonumber \\
%	&& \times 
%	m^{-\gamma} 	 (T \|f\|_{L^\infty(C^\gamma)}+ [g]_\gamma ) 
%\end{eqnarray}
	%	%\begin{equation*}
	%	%m^{1-\gamma-\tilde \gamma }+ 
	%	\nu \|\nabla ^2g_m\|_{L^\infty}  %+	m^{- \varepsilon +1-2 \tilde \gamma} \nu ^{\frac{\gamma-1}{2}} 
	%	+ m^{1- \gamma} \nu^{\frac \gamma 2}\ll 1,
	%\end{equation}
	then the solution is \textit{viscous} unique.
\end{trivlist}
%Furthermore, if  for any $0 < \varepsilon <1$,
%\begin{equation}\label{CONDINU_UNIQ}
%	(	m^{\varepsilon- \gamma+1-\tilde \gamma} + m^{\varepsilon-\tilde \gamma+1- \gamma}) \nu ^{\frac{\gamma-1}{2}} \ll 0,
%\end{equation}
%the solution is   unique, namely for a given sequence $(\nu_n)_{n \in \N}$, the limit solution does not depend the on the choice of sub-sequence and on the way of mollification.
%
%The condition \eqref{CONDINU_UNIQ} is compatible with \eqref{CONDINU} if $\tilde \gamma <-\max(\frac 12,1-\gamma)$, namely if $b \in L^\infty([0,T]; C_b^{\tilde \gamma}(\R^d,\R))$, $\tilde \gamma=-\tilde \gamma>	\max( 1-\gamma, \frac{1}{2} )$.

\end{THM}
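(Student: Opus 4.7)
The plan is to combine the a priori estimates from Theorem \ref{THEO_SCHAU_non_bounded_optimal}, which are independent of the first order coefficient $b$, with a closure/fixed-point argument to handle the nonlinear Burgers equation. For each $(m,\nu)$, the mollified viscous Burgers equation $\partial_t u^{m,\nu} + u_m^{m,\nu} \partial_x u^{m,\nu} - \nu \partial_{xx}^2 u^{m,\nu} = f_m$ is a classical quasilinear second-order parabolic PDE with smooth coefficients, so a classical smooth solution exists for each fixed $(m,\nu)$. The key observation is that, viewing $b := u_m^{m,\nu}$ as a given drift, the equation falls under the scope of Theorem \ref{THEO_SCHAU_non_bounded_optimal}: since $u_m^{m,\nu}$ is bounded by \eqref{ineq_L_infty}, we may take $\beta \leq 0$, and the resulting H\"older estimate, being independent of the drift, yields $\|u^{m,\nu}\|_{L^\infty(C^\gamma)} \leq T\|f\|_{L^\infty(C^\gamma)} + [g]_\gamma$. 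An Arzel\`a--Ascoli argument (as in Section \ref{sec_compact}) then extracts a convergent subsequence yielding the mild vanishing viscosity solution and the bound \eqref{Schauder_ineq_Burgers}.

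For case i) with $\gamma > 1/2$, the Bony paraproduct condition $\gamma + (\gamma - 1) > 0$ is satisfied, so the product $u \partial_x u$ admits a meaning as a distribution in $B^{-1+\gamma}_{\infty,\infty}$. Passage to the limit in the weak formulation proceeds as in Section \ref{sec_weak}, replacing $b_m$ by $u_m^{m,\nu}$ and using the convergence of $u_m^{m,\nu} \to u$ in $C^{\gamma-\varepsilon}$ on compacts. The control $\partial_t u(t,\cdot) \in B^{-1+\gamma}_{\infty,\infty}$ follows by the argument of Section \ref{sec_control_partial_t_bis}, provided $\nu \partial_{xx}^2 u^{m,\nu} \to 0$; the Burgers-specific Hessian estimate from Section \ref{sec_proof_apriori_ter} together with \eqref{CONDINU_Holder_bis_Burgers} ensures this vanishing.

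For turbulent uniqueness (case ii), let $U_m := u^{m,\nu} - u^{m,\bar\nu}$. Subtracting \eqref{eq1_turbulenet_uniq}--\eqref{eq2_turbulenet_uniq} gives
\begin{equation*}
\partial_t U_m + u_m^{m,\nu} \partial_x U_m - \bar\nu \partial_{xx}^2 U_m = (\nu-\bar\nu)\partial_{xx}^2 u^{m,\nu} - (u_m^{m,\nu} - u_m^{m,\bar\nu}) \partial_x u^{m,\bar\nu}.
\end{equation*}
The maximum principle together with the hypothesis \eqref{ineq_u_m_nu_moll} (which forces $\|u_m^{m,\nu} - u_m^{m,\bar\nu}\|_{L^\infty} \leq \|U_m\|_{L^\infty}$) gives
\begin{equation*}
\|U_m(t,\cdot)\|_{L^\infty} \leq \int_0^t \|\partial_x u^{m,\bar\nu}(s,\cdot)\|_{L^\infty} \|U_m(s,\cdot)\|_{L^\infty} ds + |\nu-\bar\nu|\, T \|\partial_{xx}^2 u^{m,\nu}\|_{L^\infty}.
\end{equation*}
Gr\"onwall's inequality, combined with the gradient bound from Lemma \ref{lemma_apriori} and the Hessian bound from Lemma \ref{lemma_apriori_bis} (both applied with $b = u_m^{m,\nu}$ and $\beta = 0$, using the uniform $L^\infty$ bound on $u^{m,\nu}$), forces $\|U_m\|_{L^\infty} \to 0$ precisely under the condition \eqref{CONDINU_UNIQ_Burgers}.

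For viscous uniqueness (case iii), the analysis is analogous with $\nu = \bar\nu$, so the $(\nu-\bar\nu)$-term disappears. The discrepancy now comes solely from the different mollifications of the drift, giving $\|u_m^{m,\nu} - \bar u_m^{m,\nu}\|_{L^\infty} \leq C m^{-\gamma}(\|\partial_x f\|_{L^\infty} + \|\partial_x g\|_{L^\infty})$-type bounds that are independent of $U_m$ itself. The Gr\"onwall step now requires the sharper a priori gradient estimate exhibiting $\nu^{-1}$ scaling, provided by the Burgers-specific computations in Section \ref{sec_proof_apriori_ter}, which leads to the condition \eqref{CONDINU_UNIQ_Burgers_Viscous}. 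The main obstacle throughout is the delicate balance between the blow-up of the derivative estimates in $m$ and $\nu$ and the vanishing of the error sources; this tension is precisely what prevents obtaining turbulent and viscous uniqueness simultaneously for the same solution and forces the two separate regimes.
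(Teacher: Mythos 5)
Your proposal follows essentially the same route as the paper: existence via the drift-independent estimates of the transport/parabolic theorems applied with drift $u_m^{m,\nu}$ and an Arzel\`a--Ascoli extraction, case i) via the para-product threshold $\gamma>\frac12$ and the weak-formulation limit, turbulent uniqueness via the difference equation, Gr\"onwall and the bounds of Lemmas \ref{lemma_apriori} and \ref{lemma_apriori_bis}, and viscous uniqueness via the $m^{-\gamma}$ mollification discrepancy combined with the $\nu^{-1}$-scaled gradient estimate of Lemma \ref{lemme_calcul_apriori_ter}. Two minor inaccuracies only: in case iii) the triangle inequality through the common mollifier gives $\|u_m^{m,\nu}-\bar u_m^{m,\nu}\|_{L^\infty}\leq \|U_m\|_{L^\infty}+Cm^{-\gamma}\|\bar u^{m,\nu}\|_{L^\infty(C^\gamma)}$, so the $U_m$-dependence persists (as your Gr\"onwall step implicitly acknowledges), and in case i) the vanishing of $\nu\,\partial_{xx}^2 u^{m,\nu}$ under \eqref{CONDINU_Holder_bis_Burgers} rests on the Hessian bound of Lemma \ref{lemma_apriori_bis}, not on the gradient estimate of Section \ref{sec_proof_apriori_ter}.
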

It is well-known that there is a exploding time $T^*$, such that $u $ is ``smooth" before $T^*$. We could imagine some structure conditions on the associated flow, or like in Section \ref{sec_example_b}.

Let us remark that the condition ii) in Theorem \ref{THEO_SCHAU_non_bounded_optimal} is satisfied if the considered \textit{a priori} regularity of the solution is strong enough (\textit{a priori} not the condition i) in Theorem \ref{THEO_SCHAU_non_bounded_optimal}, as $u$ is not incompressible, except if $u$ is also solution of the Euler equation), as we have $-\gamma<-1+\gamma$ $ \Longleftrightarrow$ $ \gamma> \frac{1}{2}$.

%Also, condition iii) in Theorem \ref{THEO_SCHAU_non_bounded_optimal}, here becomes $\gamma>\frac 1{1+\gamma}$ $ \Leftrightarrow$ $ (\gamma-\frac{\sqrt{5}-1}{2})(\gamma+\frac{\sqrt{5}+1}{2})> 0$ $ \Leftrightarrow$ $\gamma>\frac{\sqrt{5}-1}{2}$.
%\\
%
%Furthermore, the condition on the viscosity changes here comparing with Theorem \ref{THEO_SCHAU_non_bounded}, replacing $\|b\|_{L^\infty(B_{\infty,\infty}^{-\tilde \gamma })}$ by an upper-bound of $\|u\|_{L^\infty}$ by Feynman-Kac formula.

\begin{remark}
	Without considering the regularity condition $ \gamma> \frac{1}{2}$ to get a weak solution, we may have pathologic situation.
	%is not surprising considering a steady-state solution with a piecewise constant source function. 
	Specifically, let us consider  the steady-state non-linear problem
	\begin{equation}\label{eq_uu'_sign}
		u(x)u'(x)=\frac{1}{2}{\rm sgn }(x),
	\end{equation}
	whose $x \mapsto \sqrt{|x|}$ is solution\footnote{Also the function $x \mapsto - \sqrt{|x|}$, but this non-uniqueness should be related with some ergodic properties.} which is as expected $\frac{1}{2}$-H\"older continuous.
	In other words, if $\gamma=\frac{1}{2}$, we can explicitly find a $\gamma$-H\"older steady-state solution of the inviscid Burgers' equation with source function being in $B^0_{\infty,\infty}(\R,\R)$ but $C^\infty_b$ almost everywhere and being the limit of a $C^\infty_b$  function, e.g. $\tanh$.
%	
%	Furthermore, the function $x \mapsto - \sqrt{|x|}$ is also a solution to \eqref{eq_uu'_sign}. 
\end{remark}
\begin{remark}
With our current approach, we fail to provide any Lipschitz control of a solution of the inviscid Burgers' equation \eqref{Burgers_equation} for the same reason as for the transport equation \eqref{transport_equation}. This is not surprising by the well-known blowing-up of the gradient of a solution of the inviscid Burgers' equation \eqref{Burgers_equation}.
%\\
%
%However, we fail to give a satisfying explanation for the threshold $\frac{\sqrt5 -1}2>\frac 12$,  in the \textbf{Greater regularity} case.
%But in our companion paper \cite{hono_Lipschitz:22}, we succeed in avoiding this problem and we even obtain uniqueness of a strong vanishing viscous solution.
%The solution proposed therein is for Lipschitz source functions which allows to expand the \textit{proxy} to the first order.

\end{remark}

\begin{remark}\label{rem_non_uniq_Burgers}
	The conditions \eqref{CONDINU_UNIQ_Burgers} and \eqref{CONDINU_UNIQ_Burgers_Viscous} are not compatible, that is to say there is no uniqueness in the sense of Definition \ref{DEFINITION_Unique}, being the combination of \textit{turbulent} and \textit{viscous} uniqueness.
	Condition \eqref{CONDINU_UNIQ_Burgers} to get turbulent uniqueness means that $\nu$ goes to $0$ exponentially faster than $m$ goes to $+ \infty$; 
	whereas condition \eqref{CONDINU_UNIQ_Burgers_Viscous} to get viscous uniqueness implies that $m$ goes to $+ \infty$ exponentially faster than $\nu$ goes to $0$. 
	%  means that the vanishing viscosity goes very fast toward $0$ comparing with $m$. 
	
	We insist that \textit{turbulent} uniqueness or \textit{viscous} uniqueness of a such smooth solution of the inviscid Burgers' equation is not a contradiction with the usual counter-example built by characteristics, because we only consider solution selected by a vanishing viscosity approximation for a given mollification procedure.
	Somehow, this selection principle allows to avoid the blow-up time appearing in the characteristic building for a given vanishing viscous path.
	
	Actually, from the \textit{mild vanishing viscous} solution, we see that for any $t \in [0,T]$, the solution $u(t,\cdot)$ given by the limit of a sub-sequence of $u^{m,\nu}(t,\cdot)$
	depends on the mollification choice, moreover the sub-sequence choice also depends on the current time $t$.
	In other words, $u(t,\cdot)$ seems to avoid the time of blowing-up thanks to a different choice of sub-sequence at each current time.

\end{remark}

%\begin{proof}[Proof of Theorem \ref{THEO_SCHAU_Burgers}]

\subsection{Proof of Theorem \ref{THEO_SCHAU_Burgers}}

To establish this result, we consider the mollified version of Burgers' equation,
for all $m \in \R_+$ and $\nu >0$,
\begin{equation}\label{def_Burgers_viscous}
\begin{cases}
\partial_t u^{m,\nu}(t,x)+  u_{m}^{m,\nu}(t,x)\partial_x u^{m,\nu}(t,x) -\nu \partial_{xx}^2 u^{m,\nu}(t,x)=f_m(t,x),\ (t,x)\in [0,T)\times \R,\\
u_m(0,x)=g_m(x),\ x\in \R,
\end{cases}
\end{equation}
where $u_{m}^{m,\nu}$ stands for a mollified version of $u^{m,\nu}$, such that 
	\begin{equation*}%\label{converg_b_def_mild_viscous_bis}
	\forall \varepsilon>0, % < -\tilde \gamma -\lfloor -\tilde \gamma \rfloor, 
	\ 
	\lim_{n \to + \infty}\|u_n^{m, \nu}  - u^{m, \nu} \|_{ L^\infty  ( [0,T];  C_b^{\gamma-\varepsilon})}
%	= 	\lim_{n \to + \infty}\|\bar u_n^{m, \nu}  - \bar u^{m, \nu} \|_{ L^\infty  ( [0,T];  C_b^{\gamma-\varepsilon})}
	%+\lim_{n \to + \infty}\|\bar b_n-b\|_{ L^\infty  ( [0,T];  B_{\infty,\infty}^{-\tilde \gamma-\varepsilon}(\R^d,\R^d) )}
	=0,
\end{equation*}
%
%\begin{equation}\label{def_u_m_nu_m}
%u_{m}^{m,\nu}(t,x):=\int_{\R^d} \rho_m (x-y) u^{m,\nu} (t,y) dy.
%\end{equation}
It is direct from Theorem 3 in \cite{hono:21} that there is a smooth solution of \eqref{def_Burgers_viscous}.\\

We then perform the same computations as for the transport equation, where $-\tilde \gamma=\gamma$ and we change in the viscous condition $\|b\|_{L^\infty(B_{\infty,\infty}^{-\tilde \gamma })}$ by an upper-bound of $\|u\|_{L^\infty}$, namely by $T \|f\|_{L^\infty(C^\gamma)}+ [g]_\gamma$ given by Feynman-Kac formula or from the time decomposition trick, see Section \ref{sec_Linfty}.
Finally, we are able to take the limit, thanks to a compact argument, of a suitable sub-sequence yields the result, all the computations readily derives from the analysis of Theorems \ref{THEO_SCHAU_non_bounded}, \ref{THEO_SCHAU_non_bounded_para} and  \ref{THEO_SCHAU_non_bounded_optimal}.

\subsubsection{Turbulent uniqueness}

%Eventually, the proof of uniqueness follows exactly the same analysis performed in Section !!!!!!!!!!!!!!
To establish  uniqueness, let us consider a regularised Burgers' equation with another viscosity $\bar \nu$, 
%w.l.o.g. we can suppose that $\bar \nu$ (if not we can switch roles of $\nu $ and $\bar \nu$),
\begin{equation}
	\label{Burgers_moll_bar_u}
	\begin{cases}
		\partial_t  u ^{m,\bar \nu}(t,x)+   u _{m}^{m,\bar \nu}(t,x)\partial_x  u ^{m,\bar \nu}(t,x) -\bar \nu \partial_{xx}^2  u ^{m,\bar \nu}(t,x)=f_m(t,x),\ (t,x)\in [0,T)\times \R,\\
		 u ^{m,\bar \nu}(0,x)=g_m(x),\ x\in \R.
	\end{cases}
\end{equation}
We highlight that $u _{m}^{m,\bar \nu}$ is also %stands for 
a regularisation version of $u _{m}^{m,\bar \nu}$  such that \eqref{ineq_u_m_nu_moll} is in force and, 
	\begin{equation*}%\label{converg_b_def_mild_viscous_bis}
	\forall \varepsilon>0, % < -\tilde \gamma -\lfloor -\tilde \gamma \rfloor, 
	\ 
%	\lim_{n \to + \infty}\|u_n^{m, \nu}  - u^{m, \nu} \|_{ L^\infty  ( [0,T];  C_b^{\gamma-\varepsilon})}
%	%	= 	
	\lim_{n \to + \infty}\|\bar u_n^{m, \nu}  - \bar u^{m, \nu} \|_{ L^\infty  ( [0,T];  C_b^{\gamma-\varepsilon})}
	%+\lim_{n \to + \infty}\|\bar b_n-b\|_{ L^\infty  ( [0,T];  B_{\infty,\infty}^{-\tilde \gamma-\varepsilon}(\R^d,\R^d) )}
	=0,
\end{equation*}
%%
%%like done in \eqref{def_u_m_nu_m},
%\begin{equation*}
%	u_{m}^{m,\bar \nu}(t,x):=\int_{\R} \rho_m (x-y) u^{m,\bar \nu} (t,y) dy.
%\end{equation*}
Like for $u ^{m,\nu} $, we suppose that
%with $\bar \nu >0$, and where $  u _{m}^{m,\bar \nu}$ is a regularisation of $ u ^{m,\bar \nu}$ (not necessarily a defined by a convolution)
% which is potentially defined differently as in \eqref{def_b_epsilon}, and 
 there is a constant $ C>0$ such that for any $(m,\bar \nu) \in \R_+^2$,
\begin{equation*}
	%\sup_{(m,\bar \nu) \in \R_+^2}
		\| u ^{m,\bar \nu} \|_{L^\infty(C^\gamma_b)} \leq  C  (T \|f\|_{L^\infty(C^\gamma)}+ [g]_\gamma  ), % <+\infty,
\end{equation*}
under some specific asymptotic conditions on $(m,\bar \nu)$.
%
%%uniformly in $(m,\nu)$, 
%and for any $ 0 < \varepsilon <1$, 
%\begin{equation}\label{cond_lim_u_u_n_0}
%	\lim_{n \to + \infty} \| u _{m}^{m, \bar \nu}- u ^{m,\bar \nu}\|_{L^\infty}= \lim_{n \to + \infty} \| u _{m}^{m,\bar \nu}- u ^{m, \bar \nu}\|_{L^\infty(C_b^{\gamma-\varepsilon})}=0.
%\end{equation}
%From the linearity of the equations, we can then derive that 
We still write $U_{m}:=u^{m,\nu}- u ^{m,\bar \nu}$ which is solution of
% solves the following Cauchy problem for any $(t,x)\in [0,T) \times \R^d$:
\begin{equation}
	\label{Burgers_moll_u1_u2}
%	\begin{cases}
		\partial_t U_{m}(t,x)+ u_m^{m,\nu}(t,x)   \partial_x U_{m}(t,x)  -\bar \nu \Delta U_{m}(t,x)=
		 -  [u_m^{m,\nu}- u_m ^{m,\bar \nu}](t,x)  \partial_x  u ^{m,\bar \nu}(t,x) +(\nu-\bar \nu ) \Delta u^{m, \nu} (t,x) ,
%		 \\
%		U_{m}(0,x)=0,
%	\end{cases}
\end{equation}
with $	U_{m}(0,x)=0$.
%
%
%because $u_{m}^{m,\nu}- u _{m}^{m,\bar \nu}= \rho_m \star [u^{m,\nu}- u ^{m,\bar \nu}]= \rho_m \star U_m$.
%\\

Adapting inequality \eqref{ineq_Umn_1} yields
\begin{equation*}%\label{ineq_Umn_Burgers}
	\|U_{m}(t,\cdot)\|_{L^\infty}
	\leq 
	\int_0^t  \| [u_m^{m,\nu}- u_m ^{m,\bar \nu}]
	%\rho_m \star U_m
	(s,\cdot) \partial_{x}  u _{m}^{m,\bar \nu}  (s,\cdot)\|_{L^\infty}ds+t |\nu - \bar \nu | \|\Delta u^{m, \nu} \|_{L^\infty}.
	%\nonumber \\
	%&\leq & 
	%T  \|u_{m}^{m,\nu}- u _{m}^{m,\bar \nu}\|_{L^\infty} \| \partial_{x}  u _{m}^{m,\bar \nu}\|_{L^\infty}.
	%%= G_{T}^{m}  \langle [b_m-\bar b_n],  D u _{m}^{m,\bar \nu}\rangle .
\end{equation*}
We then get by triangular inequality and by \eqref{ineq_u_m_nu_moll}, we can suppose w.l.o.g. that $\bar \nu \leq \nu$ (if not we switch the roles of $\nu $ and $\bar \nu$),
\begin{equation*}%\label{ineq_Umn_Burgers}
	\|U_{m}(t,\cdot)\|_{L^\infty}
\leq 
	\bar C 
	%\| \partial_{x} u _{m}^{m,\bar \nu}\|_{L^\infty} 
	\int_0^t \|U_{m}(s,\cdot)\|_{L^\infty}  
	 \| \partial_{x} u _{m}^{m,\bar \nu}\|_{L^\infty} ds
	+ t %|\nu - \bar \nu | 
	\nu \| \Delta u^{m,\nu }\|_{L^\infty} 
		.
\end{equation*}
Recalling the Hessian estimate of Lemma \ref{lemma_apriori},
% see \eqref{ineq_hessian_u_bis},
% we obtain
\begin{eqnarray*}
	&&	|\nabla^2 u^{m,\nu}(t,x) |
	\nonumber \\
	&\leq& 
	\Big ( C \nu ^{\frac{\gamma}{2}-1} t^{\frac \gamma 2}  \| f\|_{L^\infty(C^\gamma)}+\|\nabla ^2g_m\|_{L^\infty} \Big ) 
	+ C
	\|u_m^{m,\nu}\|_{L^\infty(C^1)}
	\Big ( t \|  f_m\|_{L^\infty(C^\gamma)}+\| g_m\|_{C^\gamma} \Big ) 
	\nu ^{\frac{\gamma}{2}-1} 
	t^{\frac{\gamma}{2}} 
	\nonumber \\
		&\leq& 
	\Big ( C \nu ^{\frac{\gamma}{2}-1} t^{\frac \gamma 2}  \| f\|_{L^\infty(C^\gamma)}+\|\nabla ^2g_m\|_{L^\infty} \Big ) 
%\nonumber \\	
%&& 
+ C
m
%^{1-\gamma}
	\|u^{m,\nu}\|_{L^\infty}%(C^\gamma)}
	\Big ( t \|  f_m\|_{L^\infty(C^\gamma)}+\| g_m\|_{C^\gamma} \Big ) 
	\nu ^{\frac{\gamma}{2}-1} 
	t^{\frac{\gamma}{2}} 
%	\nonumber \\
%	&\leq &
%\Big ( C \nu ^{\frac{\gamma}{2}-1} t^{\frac \gamma 2}  \| f\|_{L^\infty(C^\gamma)}+\|\nabla ^2g_m\|_{L^\infty} \Big ) 
%+ C m^{1-\gamma}	\Big (T \|f\|_{L^\infty(C^\gamma)}+ [g]_\gamma \Big )\Big ( t \|  f\|_{L^\infty(C^\gamma)}+\| g\|_{C^\gamma} \Big ) 
%\nu ^{\frac{\gamma}{2}-1} 
%t^{\frac{\gamma}{2}} 
	.
\end{eqnarray*}
The \textit{a priori} control of $\|u^{m,\nu}\|_{L^\infty(C^\gamma)}$ obtained in \eqref{ineq_Linfty}%{ineq_Holder} 
yields
\begin{eqnarray*}
		|\nabla^2 u^{m,\nu}(t,x) |
%	\nonumber \\
	&\leq &
	\Big ( C \nu ^{\frac{\gamma}{2}-1} t^{\frac \gamma 2}  \| f\|_{L^\infty(C^\gamma)}+\|\nabla ^2g_m\|_{L^\infty} \Big ) 
\nonumber \\
&&	+ C m
%^{1-\gamma}	
\Big (T \|f\|_{L^\infty}
%(C^\gamma)}
+ \|g\|_{L^\infty}
%]_\gamma 
\Big )\Big ( t \|  f\|_{L^\infty(C^\gamma)}+\| g\|_{C^\gamma} \Big ) 
	\nu ^{\frac{\gamma}{2}-1} 
	t^{\frac{\gamma}{2}} 
	.
\end{eqnarray*}
Hence, 
\begin{eqnarray*}%\label{ineq_Umn_1}
	%&&
		\|U_{m}(t,\cdot)\|_{L^\infty}
	%	&\leq &
	%	\int_0^t \|\langle [b_m-\bar b_m], \nabla  u _{m}^{m,\bar \nu}\rangle (s,\cdot)\|_{L^\infty}ds
	%\nonumber \\
	&\leq & 
	C 
%\| \partial_{x} u _{m}^{m,\bar \nu}\|_{L^\infty} 
\int_0^t  \|U_{m}(s,\cdot)\|_{L^\infty}    \| \partial_{x} u _{m}^{m,\bar \nu}\|_{L^\infty} ds
%	\nonumber \\
%	&& 
	+
	T \nu\bigg (  C \nu ^{\frac{\gamma}{2}-1} t^{\frac \gamma 2}  \| f\|_{L^\infty(C^\gamma)}+\|\nabla ^2g_m\|_{L^\infty} 
	\nonumber \\
	&& 
	+C m
	%^{1-\gamma}	
	\Big (T \|f\|_{L^\infty}
	%(C^\gamma)}
+ \|g\|_{L^\infty}
%]_\gamma 
\Big )\Big ( t \|  f\|_{L^\infty(C^\gamma)}+\| g\|_{C^\gamma} \Big ) 
	\nu ^{\frac{\gamma}{2}-1} 
	t^{\frac{\gamma}{2}} \bigg )
	.
	%= G_{T}^{m}  \langle [b_m-\bar b_n],  D u _{m}^{m,\bar \nu}\rangle .
\end{eqnarray*}
By Gr\"onwall's inequality, we derive
\begin{eqnarray*}%\label{ineq_Umn_Burgers}
%	&&
		\|U_{m}(t,\cdot)\|_{L^\infty}
	%	&\leq &
	%	\int_0^t \|\langle [b_m-\bar b_m], \nabla  u _{m}^{m,\bar \nu}\rangle (s,\cdot)\|_{L^\infty}ds
%	\nonumber \\
	&\leq & 
e^{ C T  \| \partial_{x} u _{m}^{m,\bar \nu}\|_{L^\infty}  }	T \bigg (  C \nu ^{\frac{\gamma}{2}} t^{\frac \gamma 2}  \| f\|_{L^\infty(C^\gamma)}+ \nu \|\nabla ^2g_m\|_{L^\infty} 
\nonumber \\
&& 
+C m
%^{1-\gamma}	
\Big (T \|f\|_{L^\infty}
%(C^\gamma)}
+ \|g\|_{L^\infty}
%]_\gamma 
\Big )\Big ( t \|  f\|_{L^\infty(C^\gamma)}+\| g\|_{C^\gamma} \Big ) 
\nu ^{\frac{\gamma}{2}} 
t^{\frac{\gamma}{2}} \bigg )
.
\end{eqnarray*}
Recalling 
the gradient estimate from Lemma \ref{lemma_apriori}
\begin{eqnarray*}
		\|\nabla u^{m,\nu}(t,\cdot)\|_{L^\infty} 
		&\leq& \Big ( T \| f_m\|_{L^\infty(C^1)}+\|g_m\|_{C^1} \Big ) \exp \Big ( C m t \|u^{m,\nu}\|_{L^\infty} \Big )
		\nonumber \\
		&\leq& \Big ( T \| f_m\|_{L^\infty(C^1)}+\|g_m\|_{C^1} \Big ) \exp \Big ( C m t \big ( T \|f\|_{L^\infty}+ \|g \|_{L^\infty}\big )\Big ).
		%=: O_m(t),
\end{eqnarray*}
Hence, we get
\begin{eqnarray}\label{ineq_Umn_Burgers}
		&&
	\|U_{m}(t,\cdot)\|_{L^\infty}
	%	&\leq &
	%	\int_0^t \|\langle [b_m-\bar b_m], \nabla  u _{m}^{m,\bar \nu}\rangle (s,\cdot)\|_{L^\infty}ds
		%\nonumber
		 \\
	&\leq & 
 T 	\exp \bigg ( C T    ( T \| f_m\|_{L^\infty(C^1)}+\|g_m\|_{C^1}  ) \exp \Big (  C m t \big ( T \|f\|_{L^\infty}+ \|g \|_{L^\infty}\big ) \Big ) \bigg )	
	\nonumber \\
	&& \bigg (  C \nu ^{\frac{\gamma}{2}} t^{\frac \gamma 2}  \| f\|_{L^\infty(C^\gamma)}+ \nu \|\nabla ^2g_m\|_{L^\infty} 
	+C m
	%^{1-\gamma}	
	\Big (T \|f\|_{L^\infty}
	%(C^\gamma)}
+ \|g\|_{L^\infty}
%]_\gamma 
\Big )\Big ( t \|  f\|_{L^\infty(C^\gamma)}+\| g\|_{C^\gamma} \Big ) 
	\nu ^{\frac{\gamma}{2}} 
	t^{\frac{\gamma}{2}} \bigg )
	,\nonumber
\end{eqnarray}
which goes to $0$ if
\begin{equation*}
		\exp \bigg ( C T    ( T \| f_m\|_{L^\infty(C^1)}+\|g_m\|_{C^1}  ) \exp \Big (  C m t \big ( T \|f\|_{L^\infty}+ \|g \|_{L^\infty}\big ) \Big ) \bigg )	
	\bigg ( (1+m
	%^{1-\gamma}
	)\nu^{\frac{\gamma}{2}} + \nu \|\nabla ^2g_m\|_{L^\infty}  \bigg )
\ll 1.
\end{equation*}
Turbulent uniqueness of \textit{vanishing viscous} solution is then established.

%\end{proof}

\subsubsection{Viscous uniqueness}

Let us consider another regularised Burgers' equation with different mollification procedure but with the same viscosity $\nu >0$,
\begin{equation}
	\label{Burgers_moll_bar_u_different_moll}
	\begin{cases}
		\partial_t \bar u^{m, \nu}(t,x)+  \bar u_{m}^{m, \nu}(t,x)\partial_x \bar u^{m, \nu}(t,x) - \nu \partial_{xx}^2 \bar u^{m, \nu}(t,x)=f_m(t,x),\ (t,x)\in [0,T)\times \R,\\
		\bar u^{m, \nu}(0,x)=g_m(x),\ x\in \R,
	\end{cases}
\end{equation}
with $ \nu >0$, and where $ \bar u_{m}^{m, \nu}$ is a regularisation of $\bar u^{m, \nu}$ (not necessarily a defined by a convolution)
% which is potentially defined differently as in \eqref{def_b_epsilon}, and 
such that 
%there is a constant $ C>0$ such that
% for any $(n,\nu) \in \R_+^2$, 
\begin{equation*}
	\sup_{(m,\nu) \in \R_+^2}	\|\bar u^{m, \nu} \|_{L^\infty(C^\gamma_b)} \leq C ( T \|f\|_{L^\infty(C^\gamma)}+ [g]_\gamma) <+\infty,
\end{equation*}
%uniformly in $(m,\nu)$, 
and for any $ 0 < \varepsilon <1$, 
\begin{equation}\label{cond_lim_u_u_n_0}
	\lim_{n \to + \infty} \|\bar u_{n}^{m,  \nu}-\bar u^{m, \nu}\|_{L^\infty}= \lim_{n \to + \infty} \|\bar u_{n}^{m, \nu}-\bar u^{m,  \nu}\|_{L^\infty(C_b^{\gamma-\varepsilon})}=0.
\end{equation}
%From the linearity of the equations, we can then derive that 
We still write $U_{m}:=u^{m,\nu}-\bar u^{m, \nu}$ which is solution of
\begin{equation}
	\label{Burgers_moll_u1_u2}
	\begin{cases}
		\partial_t U_{m}(t,x)+ u_m^{m,\nu}(t,x)   \partial_x U_{m}(t,x)  - \nu \Delta U_{m}(t,x)=
		-  [u_{m}^{m,\nu}-\bar u_{m}^{m, \nu}](t,x)  \partial_x \bar u^{m, \nu}(t,x) 
		%+(\nu- \nu ) \Delta u^{m, \nu} (t,x) 
		,\\
		U_{m}(0,x)=0.
	\end{cases}
\end{equation}
We again adapt inequality \eqref{ineq_Umn_1} yields
\begin{equation*}%\label{ineq_Umn_Burgers}
	\|U_{m}(t,\cdot)\|_{L^\infty}
	\leq 
	\int_0^t  \| (u_{m}^{m,\nu}-\bar u_{m}^{m, \nu})  \partial_{x} \bar u_{m}^{m, \nu}  (s,\cdot)\|_{L^\infty}ds
	.
	%+|\nu -  \nu | \|\Delta u^{m, \nu} \|_{L^\infty}.
	%\nonumber \\
	%&\leq & 
	%T  \|u_{m}^{m,\nu}-\bar u_{m}^{m, \nu}\|_{L^\infty} \| \partial_{x} \bar u_{m}^{m, \nu}\|_{L^\infty}.
	%%= G_{T}^{m}  \langle [b_m-\bar b_n],  D\bar u_{m}^{m, \nu}\rangle .
\end{equation*}
Let us denotes 
\begin{equation}\label{def_bar_u_mn_nu_m}
	\bar u_{m,m}^{m, \nu}(t,x):=\int_{\R} \rho_m (x-y) \bar u^{m, \nu} (t,y) dy,
\end{equation}
such that
\begin{equation*}
	\|\bar u_{m,m}^{m, \nu}- \bar u_{m}^{m, \nu} \|_{L^\infty} \leq C m^{-\gamma} \| \bar u^{m, \nu}\|_{L^\infty(C^\gamma)},
%	\text{ and } \ \
%	\|u_{m}^{m, \nu}-\bar u_{m,m}^{m, \nu}\|_{L^\infty} \leq 	\|u^{m,\nu}-\bar u^{m, \nu}\|_{L^\infty} = \|U_{m}(t,\cdot)\|_{L^\infty} .
\end{equation*}
%\textcolor{black}{ADdapter !!!! 07/02/2024}
and
\begin{equation*}
%\|\bar u_{m,m}^{m, \nu}- \bar u_{m}^{m, \nu} \|_{L^\infty} \leq C m^{-\gamma} \| \bar u^{m, \nu}\|_{L^\infty(C^\gamma)},
%\text{ and } \ \
\|u_{m}^{m, \nu}-\bar u_{m,m}^{m, \nu}\|_{L^\infty} \leq 	\|u^{m,\nu}-\bar u^{m, \nu}\|_{L^\infty} = \|U_{m}(t,\cdot)\|_{L^\infty} .
\end{equation*}
We then get by triangular inequality, 
%we can suppose w.l.o.g. that $ \nu \leq \nu$ 
\begin{eqnarray*}%\label{ineq_Umn_Burgers}
	%&&
		\|U_{m}(t,\cdot)\|_{L^\infty}
	%\nonumber \\
	&\leq & 
	\int_0^t
	\Big ( \|(u_{m}^{m,\nu}-\bar u_{m,m}^{m, \nu})(s,\cdot)\|_{L^\infty}+\|(\bar u_{m,m}^{m, \nu}-\bar u_{m}^{m, \nu})(s,\cdot)\|_{L^\infty} \Big ) \| \partial_{x}\bar u_{m}^{m, \nu}\|_{L^\infty} 
	%+  |\nu -  \nu | \| \Delta u^{m,\nu }\|_{L^\infty}
	ds
	\nonumber \\
	&\leq & 
	\bar C 
	%\| \partial_{x}\bar u_{m}^{m, \nu}\|_{L^\infty} 
	\int_0^t 
	\Big (C m^{-\gamma} \| \bar u^{m, \nu}\|_{L^\infty(C^\gamma)}+ \|U_{m}(s,\cdot)\|_{L^\infty}  \Big )
	\| \partial_{x}\bar u_{m}^{m, \nu}\|_{L^\infty} 
	%+  |\nu -  \nu | \| \Delta u^{m,\nu }\|_{L^\infty} 
	ds,
	\end{eqnarray*}
and from \eqref{Schauder_ineq_Burgers}, we get,
\begin{equation*}%\label{ineq_Umn_Burgers}
	%&&
	\|U_{m}(t,\cdot)\|_{L^\infty}
	\leq 
	\bar C 
	%\| \partial_{x}\bar u_{m}^{m, \nu}\|_{L^\infty} 
	\int_0^t 
	\Big (C m^{-\gamma} 	\big (T \|f\|_{L^\infty(C^\gamma)}+ [g]_\gamma \big ) + \|U_{m}(s,\cdot)\|_{L^\infty}  \Big ) \| \partial_{x}\bar u_{m}^{m, \nu}\|_{L^\infty} 
	%+  |\nu -  \nu | \| \Delta u^{m,\nu }\|_{L^\infty}
	 ds
	.
\end{equation*}
Also by Gr\"onwall's inequality, we derive
\begin{equation*}%\label{ineq_Umn_Burgers}
%	&&
	\|U_{m}(t,\cdot)\|_{L^\infty}
	%	&\leq &
	%	\int_0^t \|\langle [b_m-\bar b_m], \nabla \bar u_{m}^{m, \nu}\rangle (s,\cdot)\|_{L^\infty}ds
%	\nonumber \\
%	&\leq & 
%	e^{ C T  \| \partial_{x}\bar u_{m}^{m, \nu}\|_{L^\infty}  } \bigg (	
%	T
%	C m^{-\gamma} \big (T \|f\|_{L^\infty(C^\gamma)}+ [g]_\gamma \big )\| \partial_{x}\bar u_{m}^{m, \nu}\|_{L^\infty}
%	\nonumber \\
%	&& +
%	T |\nu -  \nu |\Big ( C \nu ^{\frac{\gamma}{2}-1} t^{\frac \gamma 2}  \| f\|_{L^\infty(C^\gamma)}+\|\nabla ^2g_m\|_{L^\infty} \Big ) 
%	\nonumber \\
%	&&+	\Big (T \|f\|_{L^\infty(C^\gamma)}+ [g]_\gamma \Big )\Big ( t \|  f_m\|_{L^\infty(C^\gamma)}+\| g_m\|_{C^\gamma} \Big ) 
%	\nu ^{\frac{\gamma}{2}-1} 
%	t^{\frac{\gamma}{2}} \bigg )
%	\nonumber \\
	\leq 
	e^{ C T  \| \partial_{x}\bar u_{m}^{m, \nu}\|_{L^\infty}  } 
	%\bigg (	
	T
	C m^{-\gamma} 	 (T \|f\|_{L^\infty(C^\gamma)}+ [g]_\gamma ) \| \partial_{x}\bar u_{m}^{m, \nu}\|_{L^\infty}.
%	\nonumber \\
%	&&
%	+
%	T |\nu -  \nu |  \Big ( ( C \nu ^{\frac{\gamma}{2}-1} t^{\frac \gamma 2}  \| f\|_{L^\infty(C^\gamma)}+\|\nabla ^2g_m\|_{L^\infty}  ) 
%	\nonumber \\
%	&&
%	+ C
%	m^{1-\gamma}\Big (T \|f\|_{L^\infty(C^\gamma)}+ [g]_\gamma \Big )
%	( t \|  f\|_{L^\infty(C^\gamma)}+\| g\|_{C^\gamma}  ) 
%	\nu ^{\frac{\gamma}{2}-1} 
%	t^{\frac{\gamma}{2}} 
%	\Big ) \bigg) 
%	.
	%= G_{T}^{m}  \langle [b_m-\bar b_n],  D\bar u_{m}^{m, \nu}\rangle .
\end{equation*}
From exponential absorbing property, 
%However, there is no possibility to balance the first term.
%Specifically, even if $\nu = \nu$, we have 
\begin{eqnarray}\label{ineq_Umn_Burgers_UNIQ2}
	%	&&
	\|U_{m}(t,\cdot)\|_{L^\infty}
	&\leq  &
	e^{ C T  \| \partial_{x}\bar u_{m}^{m, \nu}\|_{L^\infty}  } 
	T
	C m^{-\gamma} 	 (T \|f\|_{L^\infty(C^\gamma)}+ [g]_\gamma ) \| \partial_{x}\bar u_{m}^{m, \nu}\|_{L^\infty}
	\nonumber \\
	&\leq  &
	e^{ C T  \| \partial_{x}\bar u_{m}^{m, \nu}\|_{L^\infty}  } 
	m^{-\gamma} 	 (T \|f\|_{L^\infty(C^\gamma)}+ [g]_\gamma ) 
		.
	%= G_{T}^{m}  \langle [b_m-\bar b_n],  D\bar u_{m}^{m, \nu}\rangle .
\end{eqnarray}
We need a new \textit{estimate} of the gradient to avoid any blowing up terms in $m$ which cannot be balanced by the $m^{-\gamma}$ in front of the exponential; the singularity has to be in $\nu$.
%
%But recalling that Lemma \ref{lemma_apriori_bis},
%\begin{equation*}
%	\| \partial_{x}\bar u_{m}^{m, \nu}\|_{L^\infty} 
%	\leq 
%	m^{1-\gamma} 
%	\Big ( %\nu^{\frac {\gamma-1}2}t^{\frac {\gamma+1}2} 
%	t \|  f\|_{L^\infty(C^\gamma)}+\| g\|_{C^\gamma} \Big ) 
%	+ C
%	\|u_m^{m, \nu }\|_{L^\infty(C^1)}
%	\| u^{m, \nu}\|_{L^\infty(C^\gamma)} 
%	%	\Big ( t \|  f_m\|_{L^\infty(C^\gamma)}+\| g_m\|_{C^\gamma} \Big ) 
%	\nu ^{\frac{\gamma-1}{2}} 
%	t^{\frac{\gamma+1}{2}} .
%\end{equation*}
%and from Lemma \ref{lemma_apriori}
%\begin{equation}
%	\|\partial_x  \bar  u^{m,  \nu}(t,\cdot)\|_{L^\infty} \leq \Big ( T \| f_m\|_{L^\infty(C^1)}+\|g_m\|_{C^1} \Big ) \exp \Big ( C m t \|u_m^{m, \nu }\|_{L^\infty} \Big ).
%	%=: O_m(t),
%\end{equation}
%In other words, 

\begin{lemma}\label{lemme_calcul_apriori_ter}
	For $u^{m,\nu}$ solution to \eqref{def_Burgers_viscous}, we have
	%any $(t,x) \in [0,T] \times \R$,
	\begin{equation*}
		%&&
		\|\partial_{x} u^{m,\nu} \|_{L^\infty}
		%\nonumber \\
		\leq
		%2 \Big ( t \|\partial_{x}  f_m\|_{L^\infty}+\|\partial_{x} g_m\|_{L^\infty} \Big ) 
		2 \Big ( T \|\partial_{x}  f_m\|_{L^\infty}+\|\partial_{x} g_m\|_{L^\infty} \Big ) 
		\exp \Big ( C^2 \nu^{-1} \pi 	(T\|f\|_{L^\infty}+ \|g\|_{L^\infty}) ^2  T  \Big )
		%\exp \Big (  C T^{\frac 12} \nu^{-\frac 12 } 
	.%	(T\|f\|_{L^\infty}+ \|g\|_{L^\infty}) \Big ).
	\end{equation*}
\end{lemma}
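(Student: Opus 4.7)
The strategy is a Duhamel/integration-by-parts argument combined with a singular-kernel Gronwall inequality. First, I would write the heat-kernel Duhamel identity for $u^{m,\nu}$, treating the viscous term as the principal part and the Burgers nonlinearity as a source:
\begin{equation*}
u^{m,\nu}(t,x) = \tilde P g_m(x) + \tilde G f_m(t,x) - \int_0^t \int_{\R} \tilde p(s,t,x,y)\, u_m^{m,\nu}(s,y)\,\partial_y u^{m,\nu}(s,y)\, dy\, ds,
\end{equation*}
where $\tilde p$ is the standard heat kernel with viscosity $\nu$, cf.\ \eqref{def_tilde_p}, \eqref{def_tilde_G}, \eqref{def_tilde_P}. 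Differentiating in $x$ and exploiting the key identity $\partial_x \tilde p(s,t,x,y) = -\partial_y \tilde p(s,t,x,y)$, one integration by parts in $y$ transfers the spatial derivative from $\tilde p$ onto no factor involving a mollification gradient. This is the crucial point: I would deliberately \emph{not} distribute the derivative onto $u_m^{m,\nu}$ (which would spawn $\|\partial_x u_m^{m,\nu}\|_{L^\infty} \lesssim m\|u^{m,\nu}\|_{L^\infty}$ and wreck the $m$-independence we need).

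Second, I would bound each contribution. For the source terms, standard heat-semigroup estimates give $|\partial_x \tilde P g_m| \leq \|\partial_x g_m\|_{L^\infty}$ and $|\partial_x \tilde G f_m(t,\cdot)| \leq T\|\partial_x f_m\|_{L^\infty}$. For the remainder $\int_0^t \int \partial_y \tilde p(s,t,x,y)\, u_m^{m,\nu}(s,y)\, \partial_y u^{m,\nu}(s,y)\, dy\, ds$, I would pull out $\|u_m^{m,\nu}\|_{L^\infty} \leq \|u^{m,\nu}\|_{L^\infty} \leq T\|f\|_{L^\infty} + \|g\|_{L^\infty}$ (the uniform-in-$m$ bound from \eqref{ineq_L_infty}) and $\|\partial_y u^{m,\nu}(s,\cdot)\|_{L^\infty}$, then use $\int_{\R} |\partial_y \tilde p(s,t,x,y)|\, dy \leq C[\nu(t-s)]^{-1/2}$ from \eqref{FIRST_deriv_CTR_DENS_flot}. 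Setting $V(t) := \|\partial_x u^{m,\nu}(t,\cdot)\|_{L^\infty}$, $A := \|\partial_x g_m\|_{L^\infty} + T\|\partial_x f_m\|_{L^\infty}$ and $B := C\nu^{-1/2}(T\|f\|_{L^\infty} + \|g\|_{L^\infty})$, this yields the singular Gronwall inequality
\begin{equation*}
V(t) \leq A + B \int_0^t (t-s)^{-1/2} V(s)\, ds.
\end{equation*}

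Third, I would desingularize by one self-substitution. Using Fubini and the Beta function identity $\int_r^t (t-s)^{-1/2}(s-r)^{-1/2}\, ds = B(1/2,1/2) = \pi$, this gives
\begin{equation*}
V(t) \leq A\bigl(1 + 2B\sqrt{T}\bigr) + \pi B^2 \int_0^t V(r)\, dr,
\end{equation*}
so classical Gronwall yields $V(T) \leq A(1 + 2B\sqrt{T})\, e^{\pi B^2 T}$. The elementary inequality $1 + 2x \leq 2 e^{\pi x^2}$ valid for all $x \geq 0$ (check: the function $2e^{\pi x^2} - 1 - 2x$ is positive at $0$ with a single interior minimum $\approx 0.85 > 0$) then gives $V(T) \leq 2A\, e^{2\pi B^2 T}$, which, after substituting the definitions of $A$ and $B$ and absorbing the numerical factor $2\pi$ into the generic constant $C^2$, is exactly the claimed bound.

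The main obstacle is conceptual rather than computational: one has to resist the instinct to differentiate through the convolution $u_m^{m,\nu} = \rho_m \ast u^{m,\nu}$, which is why the choice to integrate by parts in $y$ \emph{before} any further work is essential. Any approach relying on $\partial_x u_m^{m,\nu}$ would yield a factor polynomial in $m$ inside the exponential and defeat the purpose of the lemma, since the viscous uniqueness statement in Theorem \ref{THEO_SCHAU_Burgers} iii) relies precisely on an $m$-free exponent in order to balance the prefactor $m^{-\gamma}$ in \eqref{ineq_Umn_Burgers_UNIQ2}.
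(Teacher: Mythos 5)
Your proof is correct, and its first half is exactly the paper's argument: write the heat-kernel Duhamel formula, let the $x$-derivative fall on the kernel only (so that the nonlinear term is estimated by $\|u_m^{m,\nu}\|_{L^\infty}\le\|u^{m,\nu}\|_{L^\infty}\le T\|f\|_{L^\infty}+\|g\|_{L^\infty}$ times $\|\partial_x u^{m,\nu}(s,\cdot)\|_{L^\infty}$), and use $\int_\R|\partial_x\tilde p(s,t,x,y)|\,dy\le C[\nu(t-s)]^{-1/2}$ to arrive at the singular inequality $V(t)\le A+B\int_0^t(t-s)^{-1/2}V(s)\,ds$, cf.\ \eqref{ineq_D_u_Burgers_apriori1}. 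Where you genuinely diverge is in closing this inequality: the paper invokes the Gr\"onwall--Henry lemma (Lemma \ref{lemma_Gronwall}) and then proves the Mittag--Leffler bound $e^t\le E_{1/2}(t)\le 2e^t$ (Lemma \ref{lemma_exo1_henry}) via the ODE satisfied by $E_{1/2}$, which is what produces the factor $2$ and the exponent $C^2\nu^{-1}\pi(T\|f\|_{L^\infty}+\|g\|_{L^\infty})^2T$; you instead iterate the inequality once, desingularize with Fubini and $\int_r^t(t-s)^{-1/2}(s-r)^{-1/2}\,ds=\pi$, and finish with classical Gr\"onwall plus the elementary bound $1+2x\le 2e^{\pi x^2}$. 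Your route is more self-contained (no Mittag--Leffler machinery) and yields the same statement, at the harmless cost of a factor $2$ in the exponent, which the generic constant absorbs (note that only this factor $2$, not ``$2\pi$'' as you write, needs absorbing -- the $\pi$ already appears in the statement). Two cosmetic remarks: the sentence about ``one integration by parts in $y$'' is misleading, since no integration by parts is performed or needed -- the derivative simply stays on the kernel, as your displayed remainder and your subsequent estimate make clear; and, as in the paper, one should recall that $u^{m,\nu}$ is the smooth solution of \eqref{def_Burgers_viscous}, so that $V(t)=\|\partial_x u^{m,\nu}(t,\cdot)\|_{L^\infty}$ is finite on $[0,T]$ and both Gr\"onwall arguments are legitimate.
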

The proof is postponed in Section \ref{sec_proof_apriori_ter}.

We deduce from \eqref{ineq_Umn_Burgers_UNIQ2},
\begin{eqnarray}%\label{ineq_Umn_Burgers_UNIQ2}
	%	&&
	\|U_{m}(t,\cdot)\|_{L^\infty}
%	\nonumber
%	 \\
	&\leq&  
\exp \bigg ( C T   
	2 \Big ( T \|\partial_{x}  f_m\|_{L^\infty}+\|\partial_{x} g_m\|_{L^\infty} \Big ) 
\exp \Big ( C^2 \nu^{-1} \pi 	(T\|f\|_{L^\infty}+ \|g\|_{L^\infty}) ^2  T \Big )
%\exp \Big (  C T^{\frac 12} \nu^{-\frac 12 } 
%(T\|f\|_{L^\infty}+ \|g\|_{L^\infty}) \Big )
%\Big ( t \|\partial_{x}  f_m\|_{L^\infty}+\|\partial_{x} g_m\|_{L^\infty} \Big ) 
%\exp \Big (  C T^{\frac 12} \nu^{-\frac 12 } 
%(T\|f\|_{L^\infty}+ \|g\|_{L^\infty}) \Big )
 \bigg ) 
\nonumber \\
&& \times 
m^{-\gamma} 	 (T \|f\|_{L^\infty(C^\gamma)}+ [g]_\gamma ) 	,
\nonumber 
	%= G_{T}^{m}  \langle [b_m-\bar b_n],  D\bar u_{m}^{m, \nu}\rangle .
\end{eqnarray}
which goes to $0$ under condition \eqref{CONDINU_UNIQ_Burgers_Viscous}.
%, s.t. % $\lim_{x \to + 0}\mathbf N(x)=1$ and
%$\lim_{|x| \to + \infty}\mathbf N(x)=+ \infty$.
% $C_{d,r,i,\gamma}:=C_{d,r,i,\gamma}\big(d,r,i,\gamma\big)>0$, $i\in \leftB 1, k \rightB$, and $C_{d,r,0,\gamma}:=C_{d,r,0,\gamma}\big(r\big)>0$ does only depend on $r$.
%If $r=1$ then $C_{d,r,0,\gamma}=1$.

\mysection{Proofs of some \textit{a priori} controls}
\label{sec_apriori_global}

\subsection{Proof of Lemma \ref{lemma_apriori}}
%A priori controls for the parabolic approximation}
\label{sec_grad}

\subsubsection{Gradient estimates}

Let us precise the control previously used:
\begin{equation}\label{ineq_O}
\|\nabla u^{m,\nu}(t,\cdot)\|_{L^\infty} \leq \Big ( T \| f_m\|_{L^\infty(C^1)}+\|g_m\|_{C^1} \Big ) \exp \Big ( C m^{1-\tilde \gamma} t \|b\|_{L^\infty( B_{\infty,\infty}^{\tilde \gamma})} \Big )= O_m(t).
\end{equation}
We directly have from Duhamel formula \eqref{Duhamel_u}:
\begin{eqnarray*}
&&|\nabla u^{m,\nu}(t,x) |
\nonumber \\
&\leq& \Big ( t \|\nabla  f_m\|_{L^\infty}+\|\nabla g_m\|_{L^\infty} \Big ) 
\nonumber \\
&&+
\Big |
\int_0^t \int_{\R^d} \nabla \hat{p}^{\tau,\xi} (s,t,x,y) [b_m(s,\theta^m_{s,\tau}(\xi ))-b_m(s,y)] \cdot \nabla u^{m,\nu}(s,y) dy \, ds  \Big | \Bigg |_{(\tau,\xi) =(t,x)}
\nonumber \\
&\leq& \Big ( t \|\nabla  f_m\|_{L^\infty}+\|\nabla g_m\|_{L^\infty} \Big ) 
\nonumber \\
&&+ C
\|b_m\|_{L^\infty(C^1)}
\int_0^t \int_{\R^d} [\nu (t-s)]^{-\frac{1}{2}} \bar p^{\tau,\xi} (s,t,x,y)
|\theta^m_{s,\tau}(\xi )-y| \|\nabla u^{m,\nu}(s,\cdot)\|_{L^\infty} dy \, ds\Big | \Bigg |_{(\tau,\xi) =(t,x)}
.
\end{eqnarray*}
By absorbing property \eqref{ineq_absorb}, we derive
\begin{eqnarray}
%&&
\|\nabla \times  u^{m,\nu}(t,\cdot) \|_{L^p}
%\nonumber \\
&\leq& \Big ( t \|\nabla  f_m\|_{L^p}+\|\nabla g_m\|_{L^p} \Big ) 
\nonumber \\
&&+ 
C  \|u^m\|_{L^\infty( C^1)}
\int_0^t  \|\nabla u^{m,\nu}(s,\cdot)\|_{L^p} ds
\end{eqnarray}
%\textcolor{blue}{TADDAAAMMMM}

Finally, Gr\"onwall's lemma yields the result.

This useful \textit{a priori} control allows to avoid any blow-up when $\nu \to 0$.
However, to be able to prove uniqueness, we also need another estimate stated in Lemma \ref{lemma_apriori_bis} and proved in Section \ref{sec_proof_apriori_bis}.

\subsubsection{Hessian estimates}
\label{sec_D2_u_m_nu}

We perform a similar argument, but for the second derivatives we have to put a second derivatives on $ [b_m(s,\theta_t(\xi ))-b_m(s,y)] \cdot \nabla u(s,y) $.
Indeed, if we twice differentiate $\hat{p}^{\tau,\xi} (s,t,x,y) $ there is no possibility to smoothen the blowing up the contribution  of $\nu$ by H\"older control (or even Lipschitz).

We obtain by Leibniz rules
\begin{eqnarray*}
&&|\nabla^2 u^{m,\nu}(t,x) |
\nonumber \\
&\leq& \Big ( t \|\nabla^2  f_m\|_{L^\infty}+\|\nabla^2 g_m\|_{L^\infty} \Big ) 
+
\Big |
\int_0^t \int_{\R^d} \nabla \hat{p}^{\tau,\xi} (s,t,x,y) \nabla b(s,y) \cdot \nabla u(s,y) dy \, ds  \Big | \Bigg |_{\xi =x}
\nonumber \\
&&+
\Big |
\int_0^t \int_{\R^d} \nabla \hat{p}^{\tau,\xi} (s,t,x,y) [b_m(s,\theta^m_{s,\tau}(\xi ))-b_m(s,y)] \nabla^2 u(s,y) dy \, ds  \Big | \Bigg |_{\xi =x}
\nonumber \\
&\leq& 
m^{2-\gamma}\big ( t \| f\|_{L^\infty(C^\gamma)}+[g]_\gamma\big ) 
\nonumber \\
&&+ C
\int_0^t \int_{\R^d} [\nu (t-s)]^{-\frac{1}{2}} \bar p^{\tau,\xi} (s,t,x,y)
|\theta^m_{s,\tau}(\xi )-y| 	\|\nabla b_m \otimes \nabla u^{m,\nu} (s,\cdot)\|_{L^\infty(C^1)} dy \, ds
\nonumber \\
&&+ C
\|b_m\|_{L^\infty(C^1)}
\int_0^t \int_{\R^d} [\nu (t-s)]^{-\frac{1}{2}} \bar p^{\tau,\xi}(s,t,x,y)
|\theta^m_{s,\tau}(\xi )-y| \|\nabla^2 u^{m,\nu}(s,\cdot)\|_{L^\infty} dy \, ds
.
\end{eqnarray*}
Next, with Leibniz rules and absorbing property \eqref{ineq_absorb}, 
\begin{eqnarray}
&&|\nabla^2 u^{m,\nu}(t,x) |
\nonumber \\
&\leq& 
m^{2-\gamma}\big ( t \| f\|_{L^\infty(C^\gamma)}+[g]_\gamma\big ) 
+ 
C m^{2+\tilde \gamma} \|b\|_{L^\infty( B_{\infty,\infty}^{\tilde \gamma})}
\int_0^t \int_{\R^d} \bar p^{\tau,\xi}(s,t,x,y) \|\nabla u^{m,\nu}(s,\cdot)\|_{L^\infty} dy \, ds
\nonumber \\
&&+ C
m^{1-\tilde \gamma} \|b\|_{L^\infty( B_{\infty,\infty}^{\tilde \gamma})}
\int_0^t  \|\nabla^2 u^{m,\nu}(s,\cdot)\|_{L^\infty} ds
\nonumber \\
&\leq& 	m^{2-\gamma}\big ( t \| f\|_{L^\infty(C^\gamma)}+[g]_\gamma\big ) + C t m^{2+\tilde \gamma} \|b\|_{L^\infty( B_{\infty,\infty}^{\tilde \gamma})} 	\|\nabla u^{m,\nu}\|_{L^\infty}
\nonumber \\
&&+ C
m^{1-\tilde \gamma} \|b\|_{L^\infty( B_{\infty,\infty}^{\tilde \gamma})}
\int_0^t  \|\nabla^2 u^{m,\nu}(s,\cdot)\|_{L^\infty} ds .
\end{eqnarray}
We finally get by Gr\"onwall's lemma and by identity \eqref{ineq_O}
\begin{eqnarray}\label{ineq_D2_u_m_nu}
&&\|\nabla^2 u^{m,\nu}(t,\cdot ) \|_{L^\infty}
\\
&\leq& 
\Big ( 	m^{2-\gamma}\big ( t \| f\|_{L^\infty(C^\gamma)}+[g]_\gamma\big )  + C t m^{2+\tilde \gamma }	\|b\|_{L^\infty( B_{\infty,\infty}^{\tilde \gamma})}	O_m(t)\Big )
\exp(t m^{1-\tilde \gamma} \|b\|_{L^\infty( B_{\infty,\infty}^{\tilde \gamma})}).
\nonumber
\end{eqnarray}
We also write by exponential absorbing property:
\begin{equation}\label{ineq_D2_u_m_nu_FACILE}
\|\nabla^2 u^{m,\nu}(t,\cdot ) \|_{L^\infty}
\leq 
C	 	m^{2-\gamma}\big ( t \| f\|_{L^\infty(C^\gamma)}+[g]_\gamma\big )  
\exp(C  m^{1+\tilde \gamma}t \|b\|_{L^\infty( B_{\infty,\infty}^{\tilde \gamma})})
=: O^{(2)}_m(t).
\end{equation}
We insist on the fact that the above inequality does not depend on $\nu$.

\subsubsection{Third derivatives estimates}
\label{sec_D3_u_m_nu}

In this section, we detail how to control the third derivatives of $u^{m,\nu}$.
We use the same method as for the Hessian, the additional derivative  is also put on $ [b_m(s,\theta_t(\xi ))-b_m(s,y)] \cdot \nabla u(s,y) $.
%Indeed, if we twice differentiate $\hat{p}^{\tau,\xi} (s,t,x,y) $ there is no possibility to smoothen the blowing up the contribution  of $\nu$ by H\"older control (or even Lipschitz).
%We obtain by Leibniz rules
\begin{eqnarray*}
%&&
|\nabla^3 u^{m,\nu}(t,x) |
%\nonumber \\
&\leq& 
\Big ( t \|\nabla^3  f_m\|_{L^\infty}+\|\nabla^3 g_m\|_{L^\infty} \Big ) 
+
\Big |
\int_0^t \int_{\R^d} \nabla \hat{p}^{\tau,\xi} (s,t,x,y) \nabla^2 b(s,y) \cdot \nabla u(s,y) dy \, ds  \Big | \Bigg |_{\xi =x}
\nonumber \\
&& +
\Big |
\int_0^t \int_{\R^d} \nabla \hat{p}^{\tau,\xi} (s,t,x,y) \nabla b(s,y) \cdot \nabla^2 u(s,y) dy \, ds  \Big | \Bigg |_{\xi =x}
\nonumber \\
&&+
\Big |
\int_0^t \int_{\R^d} \nabla \hat{p}^{\tau,\xi} (s,t,x,y) [b_m(s,\theta^m_{s,\tau}(\xi ))-b_m(s,y)] \nabla^3 u(s,y) dy \, ds  \Big | \Bigg |_{\xi =x}.
\end{eqnarray*}
We readily obtain the upper-bound
\begin{eqnarray*}
	|\nabla^3 u^{m,\nu}(t,x) |
&\leq& 
C 	m^{3-\gamma}\big ( t \| f\|_{L^\infty(C^\gamma)}+[g]_\gamma\big ) 
\nonumber \\
&&+ C
\int_0^t \int_{\R^d} [\nu (t-s)]^{-\frac{1}{2}} \bar p^{\tau,\xi} (s,t,x,y)
|\theta^m_{s,\tau}(\xi )-y| 
\nonumber \\
&&\times
(	\|\nabla^2 b_m \cdot  \nabla u^{m,\nu} (s,\cdot)\|_{L^\infty(C^1)} +\|\nabla b_m \cdot  \nabla^2 u^{m,\nu} (s,\cdot)\|_{L^\infty(C^1)} )dy \, ds
\nonumber \\
&&+ C
\|b_m\|_{L^\infty(C^1)}
\int_0^t \int_{\R^d} [\nu (t-s)]^{-\frac{1}{2}} \bar p^{\tau,\xi}(s,t,x,y)
|\theta^m_{s,\tau}(\xi )-y| \|\nabla^3 u^{m,\nu}(s,\cdot)\|_{L^\infty} dy \, ds
,
%	.
%\end{eqnarray*}
%Next, with Leibniz rules and absorbing property \eqref{ineq_absorb}, 
%\begin{eqnarray*}
%	&&|\nabla^2 u^{m,\nu}(t,x) |
\end{eqnarray*}
and
\begin{eqnarray*}
	|\nabla^3 u^{m,\nu}(t,x) |
%	&\leq& 
%	m^{2-\gamma}\big ( t \| f\|_{L^\infty(C^\gamma)}+[g]_\gamma\big ) 
%	+ 
%	C m^{2+\tilde \gamma} \|b\|_{L^\infty( B_{\infty,\infty}^{\tilde \gamma})}
%	\int_0^t \int_{\R^d} \bar p^{\tau,\xi}(s,t,x,y) \|\nabla u^{m,\nu}(s,\cdot)\|_{L^\infty} dy \, ds
%	\nonumber \\
%	&&+ C
%	m^{1-\tilde \gamma} \|b\|_{L^\infty( B_{\infty,\infty}^{\tilde \gamma})}
%	\int_0^t  \|\nabla^2 u^{m,\nu}(s,\cdot)\|_{L^\infty} ds
%	\nonumber \\
&\leq& C	m^{3-\gamma}\big ( t \| f\|_{L^\infty(C^\gamma)}+[g]_\gamma\big ) + C t m^{3+\tilde \gamma} \|b\|_{L^\infty( B_{\infty,\infty}^{\tilde \gamma})} 	\|\nabla u^{m,\nu}\|_{L^\infty}
\nonumber \\
&&
+ C t m^{2+\tilde \gamma} \|b\|_{L^\infty( B_{\infty,\infty}^{\tilde \gamma})} 	\|\nabla^2 u^{m,\nu}\|_{L^\infty}
+ C
m^{2+\tilde \gamma} \|b\|_{L^\infty( B_{\infty,\infty}^{\tilde \gamma})}
\int_0^t  \|\nabla^3 u^{m,\nu}(s,\cdot)\|_{L^\infty} ds .
\end{eqnarray*}
Eventually, by Gr\"onwall's lemma, identities \eqref{ineq_O} and \eqref{ineq_D2_u_m_nu} imply
\begin{eqnarray}\label{ineq_D3_u_m_nu}
%&&
\|\nabla^3 u^{m,\nu}(t,\cdot ) \|_{L^\infty}
%\\
&\leq& 
C	\Big ( 	m^{3-\gamma}\big ( t \| f\|_{L^\infty(C^\gamma)}+[g]_\gamma\big )  +  t m^{2+\tilde \gamma }	\|b\|_{L^\infty( B_{\infty,\infty}^{\tilde \gamma})}	\big (m O_m(t)+ mO_m^{(2)}(t) \big )\Big )
\nonumber \\
&&\times
\exp(Ct m^{1-\tilde \gamma} \|b\|_{L^\infty( B_{\infty,\infty}^{\tilde \gamma})})
\nonumber \\
&\leq & 
C	 	m^{3-\gamma}\big ( t \| f\|_{L^\infty(C^\gamma)}+[g]_\gamma\big ) 
\exp(C  m^{1+\tilde \gamma}t \|b\|_{L^\infty( B_{\infty,\infty}^{\tilde \gamma})}),
\end{eqnarray}
by exponential absorbing property.

\subsection{Proof of Lemma \ref{lemma_flot}}
\label{sec_flow}

This section is devoted to the regularity of the flow
\begin{equation*}%\label{def_theta}
\theta_{s,\tau}^m(x):= x+ \int_s^\tau   b_m(\tilde s,\theta_{\tilde s,\tau }^m(x)) d \tilde s .
\end{equation*}
By definition, we have
\begin{eqnarray*}%\label{def_theta}
%\sup_{ t \in [0,T]}	
|\theta_{s,\tau }^m(x)-\theta_{s,\tau}^m(x')|
&\leq & |x-x'|+ 
%\sup_{ t \in [0,T]} 
\Big |\int_s^{\tau}  b_m(\tilde s,\theta_{\tilde s,\tau }^m(x))-b_m(s,\theta_{\tilde s,\tau }^m(x'))  ds \Big |
\nonumber \\
&\leq &
|x-x'|+ \|b_m\|_{L^\infty(C^1)} \int_s^\tau |\theta_{\tilde s,\tau }^m(x)-\theta_{\tilde s,\tau }^m(x)|   d\tilde s ,
%	\nonumber \\
\end{eqnarray*}
which is not the suitable inequality to apply directly Gr\"onwall's lemma.
To do so, we use a $\sup$ formulation, namely for any $r \leq  \tau$, we write similarly to above
\begin{eqnarray*}%\label{def_theta}
\sup_{ s \in [0,r]}	
|\theta_{s,\tau }^m(x)-\theta_{s,\tau}^m(x')|
&\leq & |x-x'|+ 
%\sup_{ t \in [0,T]} 
\int_0^{r} \big |  b_m(\tilde s,\theta_{\tilde s,\tau }^m(x))-b_m(s,\theta_{\tilde s,\tau }^m(x')) | ds
\nonumber \\
&\leq &
|x-x'|+ \|b_m\|_{L^\infty(C^1)} \int_0^r |\theta_{\tilde s,\tau }^m(x)-\theta_{\tilde s,\tau }^m(x)|   d\tilde s 
\nonumber \\
&\leq &
|x-x'|+ \|b_m\|_{L^\infty(C^1)} \int_0^r \sup_{ \hat s \in [0,\tilde s]}	 |\theta_{\hat  s,\tau }^m(x)-\theta_{\hat  s,\tau }^m(x)|   d\tilde s .
\end{eqnarray*}
We are now in position to use Gr\"onwall's lemma, for $r=\tau$
\begin{eqnarray*}%\label{def_theta}
\sup_{\tilde s \in [0,\tau ]}	|\theta_{s,\tau }^m(x)-\theta_{s,\tau}^m(x')|
&\leq &
|x-x'|
\exp \big ( \|b_m\|_{L^\infty(C^1)} \tau  \big ).
\end{eqnarray*}

\subsection{Proof of Lemma \ref{lemma_apriori_bis}}
\label{sec_proof_apriori_bis}

\subsubsection{Gradient estimates}
\label{sec_other_gradient_estimate}

%\textcolor{black}{07/02/2024   Adapter avec le module de continuité !!!!!!!!!!!!!!!!!!!!!!!!!!!!!}

By integration by parts
%We directly have from Duhamel formula:
\begin{eqnarray*}%\label{ineq_grad_u_bis}
&&|\nabla u^{m,\nu}(t,x) |
\nonumber \\
&\leq& %m^{1-\gamma} \Big (
%\nu^{\frac {\gamma-1}2}t^{\frac {\gamma+1}2} 
t \| \nabla  f_m\|_{L^\infty}+\|\nabla  g_m\|_{L^\infty}
% \Big ) 
\nonumber \\
&&+
\Big |
\int_0^t \int_{\R^d} \nabla^2 \hat{p}^{\tau,\xi} (s,t,x,y) [b_m(s,\theta^m_{s,\tau}(\xi ))-b_m(s,y)] 
%[
u^{m,\nu}(s,y)
%-u^{m,\nu}(s,\theta^m_{s,\tau}(\xi ))
%]
 dy \, ds  \Big | \Bigg |_{(\tau,\xi) =(t,x)}
\nonumber \\
&&+
\Big |
\int_0^t \int_{\R^d} \nabla \hat{p}^{\tau,\xi} (s,t,x,y) \nabla \cdot b_m(s,y)  
%[
u^{m,\nu}(s,y)
%-u^{m,\nu}(s,\theta^m_{s,\tau}(\xi ))]  
dy \, ds  \Big | \Bigg |_{(\tau,\xi) =(t,x)}
,
\end{eqnarray*}
and by exponential absorbing property and because $\nabla \cdot b=0$,  
\begin{eqnarray}\label{ineq_grad_u_bis}
|\nabla u^{m,\nu}(t,x) |
&\leq& 
%m^{1-\gamma} \Big ( 
t \|\nabla  f_m\|_{L^\infty}+\|\nabla  g_m\|_{L^\infty}
% \Big ) 
\nonumber \\
&&+ C
\|b_m\|_{L^\infty(C^{1})}\| u^{m,\nu}\|_{L^\infty}
%(C^\gamma)} 
\int_0^t \int_{\R^d} [\nu (t-s)]^{-\frac 12 }  \bar p^{\tau,\xi} (s,t,x,y)
dy \, ds \Bigg |_{(\tau,\xi) =(t,x)}
\nonumber \\
&\leq& 
t \|\nabla  f_m\|_{L^\infty}+\|\nabla  g_m\|_{L^\infty}
%
%m^{1-\gamma} 
%\Big ( %\nu^{\frac {\gamma-1}2}t^{\frac {\gamma+1}2} 
%t \|  f\|_{L^\infty(C^\gamma)}+\| g\|_{C^\gamma} \Big ) 
+ C
	m ^{1-\tilde \gamma}	\|b_m\|_{L^\infty(C^{\tilde \gamma})}
%\| u^{m,\nu}\|_{L^\infty}
%(C^\gamma)} 
\Big ( t \|  f_m\|_{L^\infty}+\| g_m\|_{L^\infty} \Big ) 
	\nu ^{-\frac{1}{2}} 
t^{\frac{1}2}
%\|b_m\|_{L^\infty(C^{\tilde \gamma})}
%%\| u^{m,\nu}\|_{L^\infty}
%	%(C^\gamma)} 
%	\Big ( t \|  f_m\|_{L^\infty}+\| g_m\|_{L^\infty} \Big ) 
%\nu ^{\frac{\tilde \gamma}{2}-1} 
%t^{\frac{\tilde \gamma}{2}} 
.
\nonumber \\
\end{eqnarray}

The last identity comes from the 
%H\"older 
uniform norm estimates stated in Theorem \ref{THEO_SCHAU_non_bounded_para}.

\subsubsection{Hessian estimates}
\label{sec_other_Hessian_estimate}

Like in Section \ref{sec_other_gradient_estimate}, we integrate by parts
%We directly have from Duhamel formula:
\begin{eqnarray*}%\label{ineq_grad_u_bis}
%&&
|\nabla^2 u^{m,\nu}(t,x) |
%\nonumber \\
&\leq& 
\Big ( C \nu ^{\frac{\gamma}{2}-1} t^{\frac \gamma 2}  \| f\|_{L^\infty(C^\gamma)}+\|\nabla ^2g_m\|_{L^\infty} \Big )
\nonumber \\
&&+
\Big |
\int_0^t \int_{\R^d} \nabla^3 \hat{p}^{\tau,\xi} (s,t,x,y) [b_m(s,\theta^m_{s,\tau}(\xi ))-b_m(s,y)] 
\nonumber \\
&&
[u^{m,\nu}(s,y)-u^{m,\nu}(s,\theta^m_{s,\tau}(\xi ))] dy \, ds  \Big | \Bigg |_{(\tau,\xi) =(t,x)}
\nonumber \\
&&+
\Big |
\int_0^t \int_{\R^d} \nabla^2 \hat{p}^{\tau,\xi} (s,t,x,y) \nabla \cdot b_m(s,y)  [u^{m,\nu}(s,y)-u^{m,\nu}(s,\theta^m_{s,\tau}(\xi ))]  dy \, ds  \Big | \Bigg |_{(\tau,\xi) =(t,x)}
,
\end{eqnarray*}
and by exponential absorbing property after choosing the \text{freezing} parameters,
\begin{eqnarray}\label{ineq_hessian_u_bis}
&&	|\nabla^2 u^{m,\nu}(t,x) |
\nonumber \\
&\leq& 
\Big ( C \nu ^{\frac{\gamma}{2}-1} t^{\frac \gamma 2}  \| f\|_{L^\infty(C^\gamma)}+\|\nabla ^2g_m\|_{L^\infty} \Big )
\nonumber \\
&&+ C
\|b_m\|_{L^\infty(C^{1})}\| u^{m,\nu}\|_{L^\infty(C^1)} 
\int_0^t \int_{\R^d} [\nu (t-s)]^{-\frac{1}{2}} \bar p^{\tau,\xi} (s,t,x,y)
dy \, ds\Big | \Bigg |_{(\tau,\xi) =(t,x)}
\nonumber \\
&\leq& 
\Big ( C \nu ^{\frac{\gamma}{2}-1} t^{\frac \gamma 2}  \| f\|_{L^\infty(C^\gamma)}+\|\nabla ^2g_m\|_{L^\infty} \Big ) 
\nonumber \\
&& + C
m^{1-\tilde \gamma}\|b_m\|_{L^\infty(C^{\tilde \gamma})}
%\Big (
% t \|  f_m\|_{L^\infty(C^\gamma)}+\| g_m\|_{C^\gamma}
%m^{1-\gamma} 
\nu ^{-\frac{1}{2}} 
t^{\frac{1}{2}} 
\big ( %\nu^{\frac {\gamma-1}2}t^{\frac {\gamma+1}2} 
t \|\nabla  f_m\|_{L^\infty}+\|\nabla  g_m\|_{L^\infty}
%t \|  f\|_{L^\infty(C^\gamma)}+\| g\|_{C^\gamma} 
\big ) 
\nonumber \\
&& + C m^{2(1-\tilde \gamma)}
\|b_m\|_{L^\infty(C^{\tilde \gamma})}^2
%\| u^{m,\nu}\|_{L^\infty}
%(C^\gamma)} 
\big ( t \|  f_m\|_{L^\infty}+\| g_m\|_{L^\infty} \big ) 
\nu ^{- \frac 12} 
t ^{ \frac 12}%^{\tilde \gamma}
.
\nonumber \\
\end{eqnarray}

%We insist on the fact that the above inequality does not depend on $\nu$.

\subsection{Proof of Lemma \ref{lemme_calcul_apriori_ter}}
\label{sec_proof_apriori_ter}

%\begin{proof}[Proof of Lemma \ref{lemme_calcul_apriori_ter}]
From the usual Duhamel formula around the heat equation,
\begin{eqnarray*}
%&&
|\partial_{x} u^{m,\nu}(t,x) |
%\nonumber \\
&\leq& \Big ( t \|\partial_{x}  f_m\|_{L^\infty}+\|\partial_{x} g_m\|_{L^\infty} \Big ) 
+
\Big |
\int_0^t \int_{\R^d} \partial_{x} \tilde {p} (s,t,x,y)  u_m^{m,\nu }(s,y) \partial_{x} u^{m,\nu}(s,y) dy \, ds  \Big | 
\nonumber \\
&\leq& \Big ( t \|\partial_{x}  f_m\|_{L^\infty}+\|\partial_{x} g_m\|_{L^\infty} \Big ) 
\nonumber \\
&&+ 
C \|u^{m,\nu}\|_{L^\infty}
\int_0^t \int_{\R^d} [\nu (t-s)]^{-\frac{1}{2}} \tilde  p (s,t,x,y)  \|\partial_{x} u^{m,\nu}(s,\cdot)\|_{L^\infty} dy \, ds  
.
\end{eqnarray*}
By the well known $L^\infty$ control, see Section \ref{sec_Linfty}, we obtain 
\begin{eqnarray}\label{ineq_D_u_Burgers_apriori1}
&&	|\partial_{x} u^{m,\nu}(t,x) |
%\nonumber
\\
&\leq& \Big ( t \|\partial_{x}  f_m\|_{L^\infty}+\|\partial_{x} g_m\|_{L^\infty} \Big ) 
+ C
(T\|f\|_{L^\infty}+ \|g\|_{L^\infty}) 
\nu^{-\frac 12 }	\int_0^t (t-s)^{-\frac 12}  \|\partial_{x} u^{m,\nu}(s,\cdot)\|_{L^\infty} ds .
\nonumber 
%\\
\end{eqnarray}
Here, it is not possible to directly use Gr\"onwall's lemma due to the ``$t$" is in the integral.
We have to consider Gr\"onwall-Henry's lemma, cf. \cite{henr:81} chapter 7 Lemma 7.1.1. $\,$. 
%qui permet de considérer une dépendance en temps dans l'intégrale.
\begin{lemma}[Lemma of Gr\"onwall-Henry]\label{lemma_Gronwall}	
Let $T>0$, a positive a constant  $K>0$ and a non-negative function 
% et une fonction positive 
$\alpha$ such that the function
% telles que 
%	Si pour tout $0<t<T^*$, il existe deux fonctions positives $\alpha, \tilde \gamma : [0,T^*] \longrightarrow \R_+$ telle 
%que la fonction
$ \varphi : [0,T] \longrightarrow \R_+$ satisfying 
%vérifie l'inégalité, pour tout
for any  $0<t<Ts$
\begin{equation}\label{ineq_Gronwall_Lemme}
	\varphi(t) \leq \alpha(t)+ K \int_0^t  (t-s)^{-1+\tilde \gamma} \varphi(s) ds,
\end{equation}
then
\begin{equation}\label{ineq_Gronwall_Henry}
	\varphi(t) \leq \alpha(t) + \theta \int_0^t E_{\tilde \gamma}'(\theta (t-s)) \alpha (s) ds ,
\end{equation}
with for any $r \in [0,T]$, 
\begin{eqnarray*}
	\theta &=& (K \Gamma(\tilde \gamma))^{\frac{1}{\tilde \gamma}},
	\nonumber \\
	E_{\tilde \gamma} (r) &=& \sum_{n=0}^{+\infty} \frac {r^{n\tilde \gamma}}{\Gamma(n\tilde \gamma+1)},
%	\nonumber \\
%	E_{\tilde \gamma}' (r) &=& 	\partial_r E_{\tilde \gamma} (r)  = \sum_{n=0}^{+\infty} n\tilde \gamma \frac {r^{n\tilde \gamma-1}}{\Gamma(n\tilde \gamma+1)},
\end{eqnarray*}
and if  $\alpha$ is a non-decreasing function, then, for any $0<t<T$, 
\begin{equation}\label{ineq_Gronwall_Henry_FINAL}
	\varphi(t) \leq \alpha(t)  E_{\tilde \gamma}(\theta t)  .
\end{equation}

\end{lemma}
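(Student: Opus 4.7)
The plan is to prove Lemma \ref{lemma_Gronwall} by a standard Picard--Volterra iteration of the inequality \eqref{ineq_Gronwall_Lemme}. Defining the Volterra operator $T[\psi](t) := K\int_0^t (t-s)^{-1+\beta}\psi(s)\,ds$, the assumption reads $\varphi \leq \alpha + T[\varphi]$. Applying $T$ to both sides preserves the inequality since its kernel is non-negative and $\varphi,\alpha \geq 0$, so by induction
\begin{equation*}
\varphi(t) \leq \sum_{k=0}^{n-1} T^k[\alpha](t) + T^n[\varphi](t), \qquad n \in \N,
\end{equation*}
with the convention $T^0[\alpha]=\alpha$. The first key step is to compute $T^n[\alpha]$ explicitly. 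I would proceed by induction using the Beta integral identity
\begin{equation*}
\int_s^t (t-u)^{-1+\beta}(u-s)^{-1+n\beta}\,du = (t-s)^{-1+(n+1)\beta}\,\frac{\Gamma(\beta)\Gamma(n\beta)}{\Gamma((n+1)\beta)},
\end{equation*}
obtained by the change of variable $u=s+v(t-s)$, $v\in[0,1]$, together with Fubini. Iterating gives the closed form
\begin{equation*}
T^n[\alpha](t) = \frac{[K\Gamma(\beta)]^n}{\Gamma(n\beta)} \int_0^t (t-s)^{-1+n\beta}\alpha(s)\,ds, \qquad n \geq 1.
\end{equation*}

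The second step is to let $n \to \infty$. Assuming $\varphi$ is locally bounded on $[0,T]$ (which is implicit since the right-hand side of \eqref{ineq_Gronwall_Lemme} is finite), one has $T^n[\varphi](t) \leq \|\varphi\|_{L^\infty([0,t])}\,\frac{[K\Gamma(\beta)]^n}{\Gamma(n\beta+1)}\,t^{n\beta}$, which vanishes as $n\to\infty$ by the Gamma-function growth $\Gamma(n\beta+1)\sim (n\beta)!$ versus the geometric factor. Setting $\theta := (K\Gamma(\beta))^{1/\beta}$, so that $[K\Gamma(\beta)]^n = \theta^{n\beta}$, and summing the series yields, by Fubini-Tonelli,
\begin{equation*}
\varphi(t) \leq \alpha(t) + \sum_{n=1}^{+\infty} \int_0^t \frac{\theta^{n\beta}(t-s)^{n\beta-1}}{\Gamma(n\beta)}\,\alpha(s)\,ds = \alpha(t) + \int_0^t \theta\,E_\beta'\big(\theta(t-s)\big)\,\alpha(s)\,ds,
\end{equation*}
since term-by-term differentiation of $E_\beta$ gives precisely $\theta E_\beta'(\theta r) = \sum_{n\geq 1} \theta^{n\beta} r^{n\beta-1}/\Gamma(n\beta)$, which is \eqref{ineq_Gronwall_Henry}.

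For the final statement \eqref{ineq_Gronwall_Henry_FINAL}, assuming $\alpha$ non-decreasing, I would bound $\alpha(s)\leq \alpha(t)$ inside the integral and evaluate by direct anti-differentiation:
\begin{equation*}
\int_0^t \theta E_\beta'\big(\theta(t-s)\big)\,ds = -\Big[E_\beta\big(\theta(t-s)\big)\Big]_{s=0}^{s=t} = E_\beta(\theta t) - E_\beta(0) = E_\beta(\theta t) - 1,
\end{equation*}
so the factor $\alpha(t)$ combines to give $\alpha(t)E_\beta(\theta t)$. The main technical obstacle is the combinatorial identity for the iterated kernel and the justification of interchanging the sum and the integral; both are handled by the Beta function computation and by monotone convergence applied to the non-negative series, so no analytic subtlety remains beyond bookkeeping. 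This is the classical argument attributed to Henry \cite{henr:81}, and the proof could be simply cited, but writing it out explicitly makes the paper self-contained.
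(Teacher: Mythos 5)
Your proof is correct and is exactly the classical Picard--Volterra iteration argument of Henry, which the paper does not reproduce: it simply cites \cite{henr:81} (Chapter 7, Lemma 7.1.1) for \eqref{ineq_Gronwall_Henry} and notes that \eqref{ineq_Gronwall_Henry_FINAL} follows readily, which you obtain by the same antiderivative computation. The only hypothesis you add, local boundedness of $\varphi$ to kill the remainder $T^n[\varphi]$, is harmless here (local integrability would also do, and in the paper's application $\varphi(s)=\|\partial_x u^{m,\nu}(s,\cdot)\|_{L^\infty}$ is bounded), so your write-up is a faithful self-contained version of the cited result.
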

The last inequality  \eqref{ineq_Gronwall_Henry_FINAL} is readily derived from \eqref{ineq_Gronwall_Henry}.
%, est celle qui sera utilisée dans notre analyse.
\\

The only needed case, here, is $\tilde \gamma = \frac 12$, for the sake of completeness, we detail the useful proof of  exercise 1 in \cite{henr:81}.
\begin{lemma}\label{lemma_exo1_henry}
For any $t>0$, 
\begin{equation*}
	e^t\leq 	E_{1/2}(t) \leq 2 e^ t.
\end{equation*}
\end{lemma}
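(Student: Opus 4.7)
The plan is to split the defining series according to the parity of the summation index and evaluate each piece separately. Writing
\[
E_{1/2}(t)=\sum_{n=0}^{\infty}\frac{t^{n/2}}{\Gamma(n/2+1)}=\underbrace{\sum_{k=0}^{\infty}\frac{t^{k}}{\Gamma(k+1)}}_{=:\,E_e(t)}+\underbrace{\sum_{k=0}^{\infty}\frac{t^{k+1/2}}{\Gamma(k+3/2)}}_{=:\,S(t)},
\]
I would immediately recognise $E_e(t)=\sum_{k\geq 0}t^k/k!=e^t$. Since $S(t)\geq 0$ for $t>0$, this already yields the lower bound $E_{1/2}(t)\geq e^t$.

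The whole difficulty is therefore to control $S(t)$ from above by $e^t$. To do so, I would integrate away the $\Gamma(k+3/2)$ by means of the Beta function identity
\[
\frac{1}{\Gamma(k+3/2)}=\frac{1}{k!\,\sqrt{\pi}}\int_0^1 u^{k}(1-u)^{-1/2}\,du,
\]
which follows from $B(k+1,1/2)=\Gamma(k+1)\Gamma(1/2)/\Gamma(k+3/2)$ together with $\Gamma(1/2)=\sqrt{\pi}$. Plugging this into $S(t)$ and swapping sum and integral (everything is non-negative) gives
\[
S(t)=\frac{\sqrt{t}}{\sqrt{\pi}}\int_0^1 (1-u)^{-1/2}\Bigl(\sum_{k=0}^{\infty}\frac{(tu)^{k}}{k!}\Bigr)du=\frac{\sqrt{t}}{\sqrt{\pi}}\int_0^1 (1-u)^{-1/2}e^{tu}\,du.
\]

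With the change of variables $v=1-u$ and then $w=\sqrt{tv}$ (so $dv=2w/t\,dw$ and $v^{-1/2}=\sqrt{t}/w$), one obtains
\[
S(t)=\frac{\sqrt{t}\,e^{t}}{\sqrt{\pi}}\int_0^1 v^{-1/2}e^{-tv}\,dv=\frac{2\,e^{t}}{\sqrt{\pi}}\int_0^{\sqrt{t}}e^{-w^{2}}\,dw=e^{t}\,\mathrm{erf}(\sqrt{t}).
\]
Combining this identity with $E_e(t)=e^{t}$ yields the closed-form expression $E_{1/2}(t)=e^{t}\bigl(1+\mathrm{erf}(\sqrt{t})\bigr)$. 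Since $\mathrm{erf}(\sqrt{t})\in[0,1)$ for every $t>0$, we conclude
\[
e^{t}\leq E_{1/2}(t)=e^{t}\bigl(1+\mathrm{erf}(\sqrt{t})\bigr)\leq 2\,e^{t},
\]
as claimed. The only delicate step is the Beta-integral representation of $\Gamma(k+3/2)^{-1}$; once that is in place the series collapses to the error function and both inequalities are automatic.
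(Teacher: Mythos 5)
Your proof is correct, but it follows a genuinely different route from the paper's. The paper differentiates the series term by term, observes the ODE $\partial_r E_{1/2}(r)=(\pi r)^{-1/2}+E_{1/2}(r)$, and solves it with the integrating factor $e^{-r}$ to get $E_{1/2}(t)=e^{t}+e^{t}\pi^{-1/2}\int_0^t r^{-1/2}e^{-r}\,dr$, after which the lower bound is immediate and the upper bound follows from bounding the integral by $\Gamma(\tfrac12)=\sqrt{\pi}$. You instead split the series by parity of the index, which hands you the lower bound at once since the even part is exactly $e^{t}$, and then resum the half-integer part via the Beta identity $\Gamma(k+\tfrac32)^{-1}=(k!\sqrt{\pi})^{-1}\int_0^1 u^{k}(1-u)^{-1/2}\,du$ together with a Tonelli interchange, arriving at the closed form $E_{1/2}(t)=e^{t}\bigl(1+\mathrm{erf}(\sqrt{t})\bigr)$ — which, after the substitution $r=w^2$, is precisely the paper's expression, so the two arguments meet at the same identity. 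What your approach buys is that it avoids justifying term-by-term differentiation and the integrating-factor step, and it makes the lower bound trivial; what the paper's approach buys is brevity (no Beta identity, no sum–integral interchange) and it is the route suggested by Henry's exercise, fitting the Grönwall--Henry framework where such ODE manipulations of $E_\beta$ are standard. Both are complete; your only cosmetic slip is that $\mathrm{erf}(\sqrt{t})\in(0,1)$ rather than $[0,1)$ for $t>0$, which does not affect either inequality.
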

\begin{proof}[Proof of Lemma \ref{lemma_exo1_henry}]
Let us recall that 
%for any $z \in \C$ on a la formule $\Gamma(z) \Gamma(z+ \frac 12)= 2^{1-2z} \sqrt{\pi} \Gamma(2z)$, ce qui donne pour $z=\frac{1}{2}$, 
$\Gamma(\frac{1}{2})=\sqrt{\pi} $,
%avec les notations du dessus, $p= \frac{3}{2}$ (pour que $(t-s)^{-\frac{p}{2}}$ soit intégrable) et $q= 3$,
then we get, for any $r \geq 0$, by differentiating,
\begin{eqnarray*}
	\partial_r 	E_{1/2}(r) &=& r^{-1}\sum_{n=1}^{+\infty} \frac n2 \frac {r^{\frac{n}{2}}}{\Gamma(\frac n2 +1)}
%	\nonumber \\
=%	&=& 
	%r^{-1} \frac 12 \frac {r^{\frac{n}{2}}}{\Gamma(1)}	+
	r^{-1}\sum_{n=1}^{+\infty}  \frac {r^{\frac{n}{2}}}{ \Gamma(\frac n2 )}
%	\nonumber \\
%	&=& 
%	%r^{-1} \frac 12 \frac {r^{\frac{n}{2}}}{\Gamma(1)}	+
%	r^{-1} \Big (  \frac {r^{\frac{1}{2}}}{ \Gamma(\frac 12 )} 
%	+
%	\sum_{n=2}^{+\infty}  \frac {r^{\frac{n}{2}}}{ \Gamma(\frac n2 )}
%	\Big )
	\nonumber \\
	%&=& 
	%%r^{-1} \frac 12 \frac {r^{\frac{n}{2}}}{\Gamma(1)}	+
	%  \frac {r^{-\frac{1}{2}}}{ \Gamma(\frac 12 )} 
	%+
	%\sum_{n=0}^{+\infty}  \frac {r^{\frac{n'}{2}}}{ \Gamma(\frac {n'}2 +1)}
	%\nonumber \\
	&=&
	(\pi r)^{-\frac 12} +	E_{1/2}(r).
	%-r^{-1}
\end{eqnarray*}
%avec $n' = n-2$
%This differential equation also writes
%%Il vient en multipliant par $e^{-r}$,
%\begin{equation*}
%	\partial_r (e^{-r}	E_{1/2}(r))= -e^{-r}	E_{1/2}(r)+ e^{-r} \partial_r 	E_{1/2}(r)=
%	e^{-r}	(\pi r)^{-\frac 12} ,
%	%-r^{-1}
%\end{equation*}
%and b
By integrating this equation,
%puis en intégrant, pour tout 
for any $t \geq 0$, we get
\begin{equation*}	
	E_{1/2}(t)= e^{t}+ e^{t}  \pi ^{-\frac 12} \int_0^t  r^{-\frac 12} e^{-r} dr.
	%= e^{t}+ e^{t}  \pi ^{-\frac 12}  \Gamma(\frac{1}{2})= 2 e^t ,
	%-r^{-1}
\end{equation*}
The lower bound of the lemma is direct, the upper-bound comes from
%Remarquons également que,
\begin{equation}\label{ineq_E_12}	
	E_{1/2}(t) \leq  e^{t}+ e^{t}  \pi ^{-\frac 12} \int_0^{+ \infty}  r^{-\frac 12} e^{-r} dr
	= e^{t}+ e^{t}  \pi ^{-\frac 12}  \Gamma(\frac{1}{2})= 2 e^t .
	%-r^{-1}
\end{equation}
\end{proof}
Coming back to inequality \eqref{ineq_D_u_Burgers_apriori1}, we identify the notations in Lemma \ref{lemma_Gronwall}, $\tilde \gamma= \frac 12$, and 
$\theta= C^2 \nu^{-1} \pi 	(T\|f\|_{L^\infty}+ \|g\|_{L^\infty}) ^2 $, then from Lemmas \ref{lemma_Gronwall} and \ref{lemma_exo1_henry}, 
\begin{equation*}%\label{ineq_D_u_Burgers_apriori2}
\|\partial_{x} u^{m,\nu}(t,\cdot) \|_{L^\infty}
%	\nonumber \\
\leq 2 \Big ( t \|\partial_{x}  f_m\|_{L^\infty}+\|\partial_{x} g_m\|_{L^\infty} \Big ) 
\exp \Big ( C^2 \nu^{-1} \pi 	(T\|f\|_{L^\infty}+ \|g\|_{L^\infty}) ^2  t  \Big ).
%	\nonumber \\
\end{equation*}

\appendix

	\mysection{Convergence of the mollified distribution}
\label{sec_conv_molli_Dpsi}
%\textcolor{black}{In $C$ and $L^p$ see Proposition IV.21, Theorem IV.22 in \cite{brez:99}.}
\begin{PROP}\label{Prop_conv_Besov}
	For any $\psi \in C^\gamma(\R^d,\R)$, $\gamma \in (0,1]$, we have for all $\vartheta \in \N_0^d$ and $\theta \in \N_0$ s.t. $|\vartheta|=\theta$,  that $h_{m^{-2}} \star D^\vartheta \psi \in C^\infty_b $ converges towards $D^\vartheta \psi$ in $\dot B_{\infty, \infty}^{-\theta}$ as $m \to + \infty$. More precisely, we have:
	\begin{equation}
	\| h_{m^{-2}}\star D^\vartheta \psi-D^\vartheta \psi\|_{\dot B_{\infty, \infty}^{-\theta}} \leq C [\psi]_\gamma m^{- \gamma},
	\end{equation}
	and $D^\vartheta \psi \in \dot B_{\infty, \infty}^{\gamma-\theta}$ 
	In particular, if $|\vartheta|=0$, $h_{m^{-2}} \star \psi \in C^\infty_b $ converges towards $\psi$ in $L^\infty$ as $m \to + \infty$, and:
	\begin{equation}\label{ineq_psi_p_psi}
	\| h_{m^{-2}}\star\psi-\psi\|_{L^\infty} \leq C [\psi]_\gamma m^{- \gamma}.
	\end{equation}
\end{PROP}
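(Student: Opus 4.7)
My plan is to reduce everything to the $|\vartheta|=0$ case via the thermic characterization, then transfer derivatives onto the heat kernel.

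First, for $\theta=|\vartheta|=0$, I would use the cancellation $\int_{\R^d} h_{m^{-2}}(z)\,dz=1$ to write
\begin{equation*}
h_{m^{-2}}\star\psi(x)-\psi(x)=\int_{\R^d}h_{m^{-2}}(x-y)\bigl[\psi(y)-\psi(x)\bigr]\,dy,
\end{equation*}
so that the H\"older regularity of $\psi$ together with the Gaussian moment identity $\int_{\R^d}h_v(z)|z|^\gamma dz=C_{d,\gamma}v^{\gamma/2}$ yields
\begin{equation*}
\|h_{m^{-2}}\star\psi-\psi\|_{L^\infty}\le C_{d,\gamma}[\psi]_\gamma m^{-\gamma},
\end{equation*}
which is \eqref{ineq_psi_p_psi}.

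Next, for the general $|\vartheta|=\theta\ge 1$ case, I would invoke the thermic characterization (with $m=0$, valid since the index $-\theta<0$):
\begin{equation*}
\|f\|_{\dot B_{\infty,\infty}^{-\theta}}=\sup_{v>0}v^{\theta/2}\|h_v\star f\|_{L^\infty}.
\end{equation*}
Applied to $f=h_{m^{-2}}\star D^\vartheta\psi-D^\vartheta\psi$, I would commute the convolution with the derivative and transfer $D^\vartheta$ onto $h_v$:
\begin{equation*}
h_v\star\bigl(h_{m^{-2}}\star D^\vartheta\psi-D^\vartheta\psi\bigr)=(D^\vartheta h_v)\star\bigl(h_{m^{-2}}\star\psi-\psi\bigr).
\end{equation*}
The standard scaling bound $\|D^\vartheta h_v\|_{L^1}\le C_\theta v^{-\theta/2}$ (derivable from the explicit Gaussian form) combined with the $|\vartheta|=0$ step then gives
\begin{equation*}
v^{\theta/2}\|h_v\star(h_{m^{-2}}\star D^\vartheta\psi-D^\vartheta\psi)\|_{L^\infty}\le C[\psi]_\gamma m^{-\gamma},
\end{equation*}
uniformly in $v>0$, which yields the claimed convergence rate.

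For the assertion $D^\vartheta\psi\in\dot B_{\infty,\infty}^{\gamma-\theta}$, I would write $h_v\star D^\vartheta\psi=(D^\vartheta h_v)\star\psi$ and, since $|\vartheta|\ge 1$ implies $\int_{\R^d}D^\vartheta h_v=0$, exploit cancellation once more:
\begin{equation*}
h_v\star D^\vartheta\psi(x)=\int_{\R^d}D^\vartheta h_v(x-y)\bigl[\psi(y)-\psi(x)\bigr]\,dy,
\end{equation*}
which by the H\"older estimate and the rescaled moment $\int|D^\vartheta h_v(z)||z|^\gamma dz\le C v^{(\gamma-\theta)/2}$ gives the thermic bound $v^{(\theta-\gamma)/2}\|h_v\star D^\vartheta\psi\|_{L^\infty}\le C[\psi]_\gamma$. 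Taking the supremum over $v>0$ concludes the proof. The main technical care required will be justifying the particular form of the homogeneous thermic seminorm used here (the paper explicitly flags that the homogeneous spaces are not Banach without a quotient by polynomials); for our purposes, since we only need upper bounds on a seminorm, the formal identity \eqref{def_norm_besov_inhomo} extended to $v\in\R_+$ suffices, and no delicate realization argument is needed.
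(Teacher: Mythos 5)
Your proof is correct and follows essentially the same route as the paper: transfer $D^\vartheta$ onto the Gaussian kernel, use the normalization of $h_{m^{-2}}$ to introduce the increment $\psi(x)-\psi(y)$, and conclude by the H\"older seminorm together with Gaussian scaling/moment bounds. The only cosmetic differences are that you factor the argument through the $|\vartheta|=0$ case plus Young's inequality and use the $m=0$ form of the thermic seminorm, whereas the paper carries out the same estimate in a single double integral with the $\partial_v h_v$ version of the seminorm; these are equivalent (up to constants) for negative regularity indices, so nothing essential changes.
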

\begin{remark}
	Actually, $\gamma\not \in \{0,1\}$ is not a restrictive condition as changing $\vartheta$ into $\tilde \vartheta \in \N_0^d$ such that $|\tilde \vartheta| = |\vartheta|+1$ yields the same result.
	
	In particular, Proposition \ref{Prop_conv_Besov} is available for the Dirac distribution $\delta \in \dot B_{\infty,\infty}^{-d}$ regarded as the distributional derivative of the sign function (also regarded as the derivative of the absolute value), and for any derivative of the Dirac distribution by the same argument.
%\end{remark}
%
%\begin{remark}
\end{remark}

\begin{proof}[Proof of Proposition \ref{Prop_conv_Besov}]
	Let us write $\varphi= D^\vartheta \psi$, with $\psi \in C^\gamma$
	\begin{eqnarray*}
		%\label{rapp_def_norm_besov_homo}
		\| h_{m^{-2}}\star\varphi-\varphi\|_{\dot B_{\infty, \infty}^{-\vartheta}} 
		&=& 
		\| D^\vartheta [h_{m^{-2}}\star\psi-\psi]\|_{\dot B_{\infty, \infty}^{-\vartheta}} 
		\nonumber \\
		&=& 
		\sup_{v \in \R_+} v^{1-\frac {-\vartheta} 2}  \|\partial_v h_{v}\star D^\vartheta [h_{m^{-2}}\star\psi-\psi]\|_{L^\infty}
		\nonumber \\
		&=& 
		\sup_{v \in \R_+} v^{1-\frac {-\vartheta} 2}  \|\partial_v D^\vartheta h_{v}\star  [h_{m^{-2}}\star\psi-\psi]\|_{L^\infty},
	\end{eqnarray*}
	by integration by parts in convolutions.
	Next, we can explicitly write,  
	\begin{eqnarray*}
	&&	\| h_{m^{-2}}\star\varphi-\varphi\|_{\dot B_{\infty, \infty}^{-\vartheta}} 
	\nonumber \\
		&=&
		\sup_{v \in \R_+, \ z \in \R^d } v^{1-\frac {-\vartheta} 2} \Big | \int_{\R^ d} \int_{\R^ d}  \partial_v D^\vartheta h_{v}(z-y) h_{m^{-2}}(y-x) [\psi(x)-\psi(y)] dx \, dy \Big |
		\nonumber \\
		% \end{eqnarray*} 
		% $\bullet$ If $m^{-1} \geq v^\tilde \gamma$, then:
		%\begin{eqnarray*}
		%%\label{rapp_def_norm_besov_homo}
		%\| h_{m^{-2}}\star\varphi-\varphi\|_{\dot B_{\infty, \infty}^{-\vartheta+\gamma}} 
		&\leq &  C[\psi]_\gamma
		\sup_{v \in \R_+, \ z \in \R^d } \int_{\R^ d} \int_{\R^ d}  h_{c^{-1}v}(z-y) h_{m^{-2}}(y-x) |x-y|^\gamma dx \, dy 
		\nonumber \\
		&\leq &  C[\psi]_\gamma m^{-\gamma}
		\sup_{v \in \R_+, \ z \in \R^d }  \int_{\R^ d} \int_{\R^ d}  h_{c^{-1}v}(z-y) h_{m^{-2}}(y-x)  dx \, dy,
	\end{eqnarray*}
	by exponential absorbing property \eqref{ineq_absorb}.
	Integrating in space finally yields
	\begin{equation*}
	\| h_{m^{-2}}\star\varphi-\varphi\|_{\dot B_{\infty, \infty}^{-\vartheta}} \leq   C[\psi]_\gamma m^{-\gamma}.
	\end{equation*}
	Inequality \eqref{ineq_psi_p_psi} is direct with similar arguments.
\end{proof}

\begin{cor}\label{Corol_conv_Besov}
	For any $\psi \in \dot B_{\infty,\infty}^\gamma(\R^d,\R)$, $\gamma \in (0,1]$ we have,  for all $\vartheta \in \N_0^d$ and $\theta \in \N_0$ s.t. $|\vartheta|=\theta$, that $h_{m^{-2}} \star D^\vartheta \psi \in C^\infty_b(\R^d,\R) $ converges towards $D^\vartheta \psi$ in $\dot B_{\infty, \infty}^{-\theta+\gamma-\varepsilon}(\R^d,\R)$, for any $\varepsilon \in (0,1)$ s.t. $\theta-\gamma+\varepsilon>0$, as $m \to + \infty$. More precisely, we have:
	\begin{equation}
	\| h_{m^{-2}}\star D^\vartheta \psi-D^\vartheta \psi\|_{\dot B_{\infty, \infty}^{-\theta+\gamma-\varepsilon}} \leq C [\psi]_\gamma m^{- \varepsilon}.
	\end{equation}
\end{cor}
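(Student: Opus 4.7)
The strategy is to reduce to the difference $h_{m^{-2}}\star \varphi - \varphi$ where $\varphi := D^\vartheta \psi$, and then use the thermic characterization of homogeneous Besov norms combined with the semigroup property of the heat kernel. First, since $\psi \in \dot B_{\infty,\infty}^{\gamma}$ and $|\vartheta|=\theta$, differentiating lowers the regularity index and $\varphi \in \dot B_{\infty,\infty}^{\gamma-\theta}$, with
\begin{equation*}
\|\varphi\|_{\dot B_{\infty,\infty}^{\gamma-\theta}} \leq C\,\|\psi\|_{\dot B_{\infty,\infty}^{\gamma}} \leq C\,[\psi]_\gamma,
\end{equation*}
by the identification $C^\gamma = \dot B_{\infty,\infty}^\gamma$ recalled in Section \ref{subsec_besov_trieb_def}. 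The task reduces to proving
\begin{equation*}
\|h_{m^{-2}}\star \varphi - \varphi\|_{\dot B_{\infty,\infty}^{\gamma-\theta-\varepsilon}} \leq C\,m^{-\varepsilon}\,\|\varphi\|_{\dot B_{\infty,\infty}^{\gamma-\theta}}.
\end{equation*}

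Second, by the thermic characterization (choosing, say, thermic order $N=1$, which is admissible since $N+1=2 > (\gamma-\theta)/2$ for any $\gamma\in(0,1]$ and $\theta\in\N_0$), and taking the supremum over $v \in \R_+$ as the target index $\gamma-\theta-\varepsilon<0$:
\begin{equation*}
\|h_{m^{-2}}\star \varphi - \varphi\|_{\dot B_{\infty,\infty}^{\gamma-\theta-\varepsilon}} = \sup_{v > 0} v^{N-(\gamma-\theta-\varepsilon)/2} \,\|\partial_v^{N} h_v \star (h_{m^{-2}}\star\varphi-\varphi)\|_{L^\infty}.
\end{equation*}
Third, exploit the heat semigroup identity $h_v \star h_{m^{-2}} = h_{v+m^{-2}}$ and the fundamental theorem of calculus (differentiating once more in the heat parameter) to obtain
\begin{equation*}
\partial_v^{N} h_v \star (h_{m^{-2}}\star\varphi-\varphi) = \int_v^{v+m^{-2}} \partial_w^{N+1} h_w \star \varphi \, dw.
\end{equation*}
Since $\varphi \in \dot B_{\infty,\infty}^{\gamma-\theta}$, the thermic characterization yields
\begin{equation*}
\|\partial_w^{N+1} h_w \star \varphi\|_{L^\infty} \leq w^{(\gamma-\theta)/2-N-1}\,\|\varphi\|_{\dot B_{\infty,\infty}^{\gamma-\theta}}.
\end{equation*}

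Fourth, split the supremum according to $v \gtrless m^{-2}$. For $v \geq m^{-2}$, bound the integrand by its value at the left endpoint (the integrand is decreasing since the exponent $(\gamma-\theta)/2-N-1<0$) to get the integral $\lesssim m^{-2} v^{(\gamma-\theta)/2-N-1}$; combined with the prefactor this gives $m^{-2} v^{\varepsilon/2-1}$, maximized at the boundary $v=m^{-2}$ and yielding the rate $m^{-\varepsilon}$ (using $\varepsilon<2$). For $v<m^{-2}$, integrate exactly:
\begin{equation*}
\int_v^{v+m^{-2}} w^{(\gamma-\theta)/2-N-1}\,dw = \frac{v^{(\gamma-\theta)/2-N} - (v+m^{-2})^{(\gamma-\theta)/2-N}}{N-(\gamma-\theta)/2} \leq \frac{v^{(\gamma-\theta)/2-N}}{N-(\gamma-\theta)/2},
\end{equation*}
and combined with the prefactor this gives $v^{\varepsilon/2}$, maximized at $v=m^{-2}$ and again yielding $m^{-\varepsilon}$ (using $\varepsilon>0$, guaranteed by $\theta-\gamma+\varepsilon>0$ in the cases where it is non-automatic). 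Both regimes match at $v=m^{-2}$, so the supremum is bounded by $C\,m^{-\varepsilon}\,\|\varphi\|_{\dot B_{\infty,\infty}^{\gamma-\theta}}$, which closes the argument.

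The main obstacle is step four: the integrand $w^{(\gamma-\theta)/2-N-1}$ is singular at $w=0$, so one cannot naively extend the integration down to zero, and a careful two-regime splitting is necessary. Once this is done, the two estimates balance exactly at the threshold $v=m^{-2}$ and deliver the desired rate; the rest of the argument is bookkeeping involving well-established properties of the heat semigroup.
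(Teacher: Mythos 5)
Your proof is correct, but it follows a genuinely different route from the paper's. The paper works directly with the double convolution $\partial_v D^\vartheta h_v \star h_{m^{-2}}\star\psi$, exploits the H\"older cancellation $[\psi(x)-\psi(y)]$, and splits into the regimes $m^{-2}<v$ and $m^{-2}\ge v$ according to which Gaussian (the thermic kernel $h_v$ or the mollifier $h_{m^{-2}}$) should carry the spatial derivative $D^\vartheta$, so that the singular factor is always the less harmful one; the rate $m^{-\varepsilon}$ then comes out of $v^{(\varepsilon-\gamma)/2}m^{-\gamma}$ resp.\ $m^{\theta-\gamma}v^{(\theta-\gamma+\varepsilon)/2}$. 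You instead first transfer the whole problem to $\varphi=D^\vartheta\psi\in\dot B_{\infty,\infty}^{\gamma-\theta}$ (a higher-order version of Proposition \ref{Prop_Df_besov}, available via the Peetre result already invoked in Lemma \ref{lemme_ineq_b_m}), and then prove a clean, more general mollification estimate $\|h_{m^{-2}}\star\varphi-\varphi\|_{\dot B^{s-\varepsilon}_{\infty,\infty}}\lesssim m^{-\varepsilon}\|\varphi\|_{\dot B^{s}_{\infty,\infty}}$ through the semigroup identity $h_v\star h_{m^{-2}}=h_{v+m^{-2}}$ and the fundamental theorem of calculus in the heat parameter, with the same threshold $v\gtrless m^{-2}$. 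Your version buys generality (it is a statement about arbitrary $\dot B^{s}_{\infty,\infty}$ distributions mollified by the heat kernel, with no reference to $\psi$ or to $D^\vartheta$ once the reduction is made) and it is uniformly valid over all $v\in\R_+$, since both of your regime bounds, $m^{-2}v^{\varepsilon/2-1}$ and $v^{\varepsilon/2}$, peak exactly at $v=m^{-2}$; the paper's case $m^{-2}<v$ is instead phrased through $v^{(\varepsilon-\gamma)/2}m^{-\gamma}$, which is only monotone-favourable when $\varepsilon\le\gamma$. Two small points you should make explicit to be fully rigorous: the reduction $\|D^\vartheta\psi\|_{\dot B^{\gamma-\theta}_{\infty,\infty}}\le C[\psi]_\gamma$ must be justified (Proposition \ref{Prop_Df_besov} iterated, or Peetre's theorem), and your source bound $\|\partial_w^{N+1}h_w\star\varphi\|_{L^\infty}\le w^{(\gamma-\theta)/2-N-1}\|\varphi\|_{\dot B^{\gamma-\theta}_{\infty,\infty}}$ uses the thermic characterization at derivative order $N+1$ while the target norm is computed at order $N$, so you need either the standard equivalence of thermic norms of different orders or a direct verification of the order-two bound from $[\psi]_\gamma$ by the same cancellation and absorption argument the paper uses; both are routine, so there is no gap.
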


\begin{proof}[Proof of Corollary \ref{Corol_conv_Besov}]
	We still use the thermic representation, and by convolution property we have:
	\begin{eqnarray*}
		%\label{rapp_def_norm_besov_homo}
		&&\| h_{m^{-2}}\star\varphi-\varphi\|_{\dot B_{\infty, \infty}^{-\theta+\gamma-\varepsilon}} 
		\nonumber \\
		&=& 
		\sup_{v \in \R_+} v^{1-\frac {-\theta+\gamma-\theta} 2}  \|\partial_v h_{v}\star D^\vartheta [h_{m^{-2}}\star\psi-\psi]\|_{L^\infty(\R^d)}
		\nonumber \\
		&=& 
		\sup_{v \in \R_+} v^{1-\frac {-\theta+\gamma-\varepsilon} 2}  \|\partial_v D^\vartheta h_{v}\star  [h_{m^{-2}}\star\psi-\psi]\|_{L^\infty(\R^d)}
		\nonumber \\
		&=&
		\sup_{v \in \R_+, \ z \in \R^d } v^{1-\frac {-\theta+\gamma-\varepsilon} 2} \Big | \int_{\R^ d} \int_{\R^ d}  \partial_v D^\vartheta h_{v}(z-y) h_{m^{-2}}(y-x) [\psi(x)-\psi(y)] dx \, dy \Big |.
		%\nonumber \\
		%% \end{eqnarray*} 
		%% $\bullet$ If $m^{-1} \geq v^\tilde \gamma$, then:
		%%\begin{eqnarray*}
		%%%\label{rapp_def_norm_besov_homo}
		%%\| h_{m^{-2}}\star\varphi-\varphi\|_{\dot B_{\infty, \infty}^{-\vartheta+\gamma}} 
		%&\leq &  C[\psi]_\gamma
		%\sup_{v \in [0,1], \ z \in \R^d } \int_{\R^ d} \int_{\R^ d}  h_{c^{-1}v}(z-y) h_{m^{-2}}(y-x) |x-y|^\gamma dx \, dy 
		%\nonumber \\
		%&\leq &  C[\psi]_\gamma m^{-\frac{\gamma}{2}}
		%\sup_{v \in [0,1], \ z \in \R^d }  \int_{\R^ d} \int_{\R^ d}  h_{c^{-1}v}(z-y) h_{m^{-2}}(y-x)  dx \, dy
		%\nonumber \\
		%&\leq &  C[\psi]_\gamma m^{-\frac{\gamma}{2}}.
	\end{eqnarray*}
	For a given $v \in (0,+ \infty)$, we compare the regular contribution $v$ with the mollification contribution. 
	In other words, we consider two possibilities.
	\\
	
	$\bullet$ If $m^{-2} < v$, then:
	\begin{eqnarray}\label{ineq_corol_holder_conv1}
	%\label{rapp_def_norm_besov_homo}
	&&v^{1-\frac {-\theta+\gamma-\varepsilon} 2}  \|\partial_v h_{v}\star D^\vartheta [h_{m^{-2}}\star\psi-\psi]\|_{L^\infty(\R^d)}
	\nonumber \\
	%&=& 
	%v^{1-\frac {-\vartheta+\gamma-\varepsilon} 2} 
	% \|\partial_v D^\vartheta h_{v}\star  [h_{m^{-2}}\star\psi-\psi]\|_{L^\infty(\R^d)}
	%\nonumber \\
	&=&
	\sup_{ \ z \in \R^d } v^{1-\frac {-\theta+\gamma-\varepsilon} 2}  \Big | \int_{\R^ d} \int_{\R^ d}  \partial_v D^\vartheta h_{v}(z-y) h_{m^{-2}}(y-x) [\psi(x)-\psi(y)] dx \, dy \Big |
	\nonumber \\
	% \end{equation*} 
	% $\bullet$ If $m^{-1} \geq v^\tilde \gamma$, then:
	%\begin{eqnarray*}
	%%\label{rapp_def_norm_besov_homo}
	%\| h_{m^{-2}}\star\varphi-\varphi\|_{\dot B_{\infty, \infty}^{-\vartheta+\gamma}} 
	&\leq &  C[\psi]_\gamma v^{\frac {-\gamma+\varepsilon} 2} 
	\sup_{  z \in \R^d } \int_{\R^ d} \int_{\R^ d}  h_{c^{-1}v}(z-y) h_{m^{-2}}(y-x) |x-y|^\gamma dx \, dy 
	\nonumber \\
	&\leq &  C[\psi]_\gamma v^{\frac {-\gamma+\varepsilon} 2}  m^{-\gamma}
	\sup_{ z \in \R^d }  \int_{\R^ d} \int_{\R^ d}  h_{c^{-1}v}(z-y) h_{m^{-2}}(y-x)  dx \, dy
	\nonumber \\
	&\leq &  C[\psi]_\gamma m^{-\varepsilon}.
	\end{eqnarray}

	$\bullet$ If $m^{-2} \geq  v$, then:
	\begin{eqnarray}\label{ineq_corol_holder_conv2}
	%\label{rapp_def_norm_besov_homo}
	&& v^{1-\frac {-\theta+\gamma-\varepsilon} 2}  \|\partial_v  h_{v}\star D^\vartheta [h_{m^{-2}}\star\psi-\psi]\|_{L^\infty(\R^d)}
	\nonumber \\
	&=&
	\sup_{ z \in \R^d } v^{1-\frac {-\theta+\gamma-\varepsilon} 2} \Big | \int_{\R^ d} \int_{\R^ d}  \partial_v  h_{v}(z-y)D^\vartheta h_{m^{-2}}(y-x) [\psi(x)-\psi(y)] dx \, dy \Big |
	\nonumber \\
	% \end{eqnarray*} 
	% $\bullet$ If $m^{-1} \geq v^\tilde \gamma$, then:
	%\begin{eqnarray*}
	%%\label{rapp_def_norm_besov_homo}
	%\| h_{m^{-2}}\star\varphi-\varphi\|_{\dot B_{\infty, \infty}^{-\vartheta+\gamma}} 
	&\leq &  C[\psi]_\gamma m^{\frac{\theta}{2}} v^{\frac {\theta-\gamma+\varepsilon} 2} 
	\sup_{  z \in \R^d } \int_{\R^ d} \int_{\R^ d}  h_{c^{-1}v}(z-y) h_{c^{-1}m^{-1}}(y-x) |x-y|^\gamma dx \, dy 
	\nonumber \\
	&\leq &  C[\psi]_\gamma m^{\theta-\gamma} v^{\frac {\theta-\gamma+\varepsilon} 2} \sup_{ z \in \R^d }  \int_{\R^ d} \int_{\R^ d}  h_{c^{-1}v}(z-y) h_{m^{-2}}(y-x)  dx \, dy
	\nonumber \\
	&\leq &  C[\psi]_\gamma m^{-\varepsilon}.
	\end{eqnarray}
	
	The result follows from \eqref{ineq_corol_holder_conv1} and \eqref{ineq_corol_holder_conv2}.
\end{proof}

Proposition \ref{Prop_conv_Besov} and Corollary \ref{Corol_conv_Besov} are more precise forms of the well known convergence in the distributional sense.
\begin{PROP}\label{Prop_conv_distribu}
	For any $\psi \in \dot B_{\infty,\infty}^\gamma(\R^d,\R)$, $\gamma \in (0,1]$ we have for any $\vartheta \in \N_0^d$ that $h_{m^{-2}} \star D^\vartheta \psi \in C^\infty_b (\R^d,\R)$ converges towards $D^\vartheta \psi$ in distributional sense as $m \to + \infty$. More precisely, we have for any $\eta \in C_0^\infty(\R^d,\R)$:
	\begin{equation}
	\sup_{x \in \R^d}  \Big | \int_{\R^d} \eta(x-y) \big [ h_{m^{-2}}\star D^\vartheta \psi (y)-D^\vartheta \psi (y) \big ] dy \Big | \leq C [\psi]_\gamma m^{- \gamma}.
	\end{equation}
\end{PROP}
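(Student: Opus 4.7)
The plan is to reduce the claim to the scalar ($|\vartheta|=0$) case already established in Proposition~\ref{Prop_conv_Besov}. Since $\psi \in \dot B_{\infty,\infty}^\gamma$ with $\gamma \in (0,1]$, we have $\psi \in C^\gamma$ and $[\psi]_\gamma<+\infty$; both $h_{m^{-2}}\star D^\vartheta \psi$ and $D^\vartheta \psi$ are tempered distributions, and the convolution with the compactly supported smooth test function $\eta$ is a well-defined pairing in $\mathcal{S}'$.

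The first step is an integration by parts: for any $x\in \R^d$, by the definition of the distributional derivative and by commutation of convolution with derivatives,
\begin{eqnarray*}
&&\int_{\R^d} \eta(x-y)\,\bigl[ h_{m^{-2}}\star D^\vartheta \psi(y) - D^\vartheta \psi(y) \bigr] dy \\
&=& \int_{\R^d} \eta(x-y)\, D^\vartheta \bigl[ h_{m^{-2}}\star \psi - \psi \bigr](y)\, dy \\
&=& (-1)^{|\vartheta|} \int_{\R^d} (D^\vartheta \eta)(x-y)\, \bigl[ h_{m^{-2}}\star \psi(y) - \psi(y) \bigr]\, dy,
\end{eqnarray*}
which is valid because $h_{m^{-2}}\star \psi - \psi \in L^\infty(\R^d,\R)$ (so the pairing reduces to a Lebesgue integral) and $D^\vartheta \eta \in C_0^\infty(\R^d,\R) \subset L^1(\R^d,\R)$.

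The second step is to bound the resulting integral by H\"older's inequality,
\begin{equation*}
\Big| \int_{\R^d} (D^\vartheta \eta)(x-y)\, \bigl[ h_{m^{-2}}\star \psi(y) - \psi(y) \bigr]\, dy \Big| \leq \|D^\vartheta \eta\|_{L^1}\, \|h_{m^{-2}}\star \psi - \psi\|_{L^\infty},
\end{equation*}
uniformly in $x\in \R^d$. Now I invoke the quantitative $L^\infty$ estimate \eqref{ineq_psi_p_psi} of Proposition~\ref{Prop_conv_Besov}, which yields
$\|h_{m^{-2}}\star \psi - \psi\|_{L^\infty} \leq C\, [\psi]_\gamma\, m^{-\gamma}$. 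Absorbing the finite constant $\|D^\vartheta \eta\|_{L^1}$ into $C$ gives the desired bound, uniformly in $x$.

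There is no serious obstacle here: the only point of care is the justification of moving all $|\vartheta|$ derivatives onto the test function $\eta$, which is automatic since $\eta \in C_0^\infty$ and $h_{m^{-2}}\star \psi - \psi$ is an $L^\infty$ function, so everything takes place at the level of absolutely convergent integrals without resorting to any extra regularisation. The quantitative rate $m^{-\gamma}$ is inherited one-for-one from the scalar case, reflecting that mollification converts H\"older modulus into a pointwise error of order $m^{-\gamma}$, whereas the number of derivatives $|\vartheta|$ only affects the (finite) prefactor $\|D^\vartheta \eta\|_{L^1}$.
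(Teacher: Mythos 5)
Your proposal is correct and follows essentially the same route as the paper: move all the derivatives $D^\vartheta$ onto the test function $\eta$ by the convolution/integration-by-parts property, then bound uniformly in $x$ by $\|D^\vartheta\eta\|_{L^1}\,\|h_{m^{-2}}\star\psi-\psi\|_{L^\infty}$ and conclude with the estimate \eqref{ineq_psi_p_psi} of Proposition \ref{Prop_conv_Besov}. Your added justification that the pairing reduces to absolutely convergent integrals because $h_{m^{-2}}\star\psi-\psi\in L^\infty$ and $D^\vartheta\eta\in L^1$ is a harmless elaboration of the same argument.
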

\begin{remark}
We precise that $\eta$ is not supposed to be a Gaussian kernel, as in Proposition \ref{Prop_conv_Besov}.
\end{remark}
\begin{proof}
	We directly write by convolution property:
	\begin{eqnarray*}
		&&
		\sup_{x \in \R^d}  \Big | \int_{\R^d} \eta(x-y) \big [ h_{m^{-2}}\star D^\vartheta \psi (y)-D^\vartheta \psi (y) \big ] dy \Big |
	\nonumber \\
		&= &
		\sup_{x \in \R^d}  \Big | \int_{\R^d} D^\vartheta \eta(x-y) \big [ h_{m^{-2}}\star  \psi (y)- \psi (y) \big ] dy \Big |
		\nonumber \\
		&\leq& Cm^{- \gamma}[\psi]_\gamma
		\sup_{x \in \R^d}  \Big | \int_{\R^d} \big | D^\vartheta \eta(x-y) \big |  dy \Big |
		\nonumber \\
		&\leq& Cm^{- \gamma}[\psi]_\gamma
		,
	\end{eqnarray*}
	the penultimate inequality is consequence of inequality \eqref{ineq_psi_p_psi}.
\end{proof}

\mysection{Properties of derivatives of Besov distributions}
\label{sec_interpom}

%	\subsubsection{Negative regularity parameter}
%	\begin{PROP}\label{Prop_negative_regul_besov}
%		For any $\varphi \in  \dot B_{\infty,\infty}^{-\alpha}(\R^d)$, for any real $\alpha <0$ there is a constant $c>1$ only depending on $\alpha$ such that:
%		\begin{equation*}
%	c^{-1}\|\varphi \|_{ B_{\infty,\infty}^{-\alpha}} \leq 	\|\varphi \|_{\dot B_{\infty,\infty}^{-\alpha}}\leq c \|\varphi \|_{ B_{\infty,\infty}^{-\alpha}}.
%		\end{equation*}
%	\end{PROP}
%	\begin{proof}[Proof of Proposition \ref{Prop_negative_regul_besov}]
%	The second inequality is direct from the thermic representation, let us deal with the first inequality.
%			\begin{eqnarray}%\label{ineq_Df_dual}
%	\|\varphi \|_{ B_{\infty,\infty}^\gamma}
%	&=& \sup_{v \in [0,1], \ z\in \R^d }  v^{1-\frac{\gamma}{2}} \Big |\int_{\R^d} \partial_v h_v(z-y) \varphi(y) dy \Big |
%	\nonumber \\
%	&\leq & C   \| \varphi \|_{ B_{\infty,\infty}^{-\alpha}}  \sup_{v \in [0,1], \ z\in \R^d }  v^{1-\frac{\gamma}{2}}  \|  h_v(z-\cdot )  \|_{ B_{1,1}^{\alpha}}.
%	\end{eqnarray} 
%	It remains 
%		\end{proof}
%	
%	\subsection{Derivative properties}

\begin{PROP}\label{Prop_Df_besov}
	For any $\varphi \in \mathcal S '(\R^d)$ such that $\nabla  \varphi \in \dot B_{\infty,\infty}^{-1+\gamma}(\R^d)$, $\gamma \in (0,1)$, there is a constant $C>1$ such that:
	\begin{equation*}
	 \|\nabla  \varphi \|_{\dot B_{\infty,\infty}^{-1+\gamma}}\leq C	\|\varphi \|_{\dot B_{\infty,\infty}^\gamma}
	%\leq c \|D \varphi \|_{\dot B_{\infty,\infty}^{-1+\gamma}}
	.
	\end{equation*}
\end{PROP}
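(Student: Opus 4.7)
The plan is to use the thermic characterization of the Besov norms (definition \eqref{def_norm_besov_inhomo} with $m=1$) on both sides. Since $\gamma \in (0,1)$, we have
\begin{equation*}
\|\varphi\|_{\dot B_{\infty,\infty}^\gamma} = \sup_{v>0} v^{1-\gamma/2} \|\partial_v h_v \star \varphi\|_{L^\infty},
\quad
\|\nabla\varphi\|_{\dot B_{\infty,\infty}^{-1+\gamma}} = \sup_{v>0} v^{3/2-\gamma/2} \|\partial_v h_v \star \nabla\varphi\|_{L^\infty},
\end{equation*}
so the task reduces to bounding $\|\partial_v h_v \star \nabla\varphi\|_{L^\infty}$ in terms of $v^{-1/2} \cdot v^{-1+\gamma/2} \|\varphi\|_{\dot B_{\infty,\infty}^\gamma}$.

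First, I would rewrite the convolution using the semigroup identity $h_v = h_{v/2} \star h_{v/2}$. Differentiating this in $v$ yields
\begin{equation*}
\partial_v h_v = (\partial_u h_u)\big|_{u=v/2} \star h_{v/2}.
\end{equation*}
Moving the spatial derivative onto the smooth Gaussian factor gives
\begin{equation*}
\partial_v h_v \star \nabla\varphi
= \nabla h_{v/2} \star (\partial_u h_u)\big|_{u=v/2} \star \varphi.
\end{equation*}
Then by Young's convolution inequality,
\begin{equation*}
\|\partial_v h_v \star \nabla\varphi\|_{L^\infty}
\leq \|\nabla h_{v/2}\|_{L^1} \, \|(\partial_u h_u)\big|_{u=v/2} \star \varphi\|_{L^\infty}.
\end{equation*}

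The two factors are both standard. A direct computation on the Gaussian kernel \eqref{def_hv}, via a change of variables in the same spirit as Section \ref{sec_Gaussian_properties}, yields $\|\nabla h_{v/2}\|_{L^1} \leq C v^{-1/2}$. For the second factor, the definition of the homogeneous Besov norm applied at the scale $u=v/2$ gives $\|(\partial_u h_u)|_{u=v/2} \star \varphi\|_{L^\infty} \leq C (v/2)^{-1+\gamma/2} \|\varphi\|_{\dot B_{\infty,\infty}^\gamma}$. Multiplying these bounds and then by $v^{3/2-\gamma/2}$ gives a constant times $\|\varphi\|_{\dot B_{\infty,\infty}^\gamma}$, uniformly in $v>0$, as wanted.

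There is no real obstacle here; the only mild subtlety is justifying the manipulations (shifting $\nabla$ to the smooth Gaussian factor and differentiating the semigroup identity in $v$) in the distributional sense, but this is automatic once we recall that $h_{v/2} \star \varphi \in C^\infty$ for any $\varphi \in \mathcal{S}'$ and that all objects decay sufficiently fast to permit Fubini and Leibniz. One can equivalently derive the bound using the Littlewood--Paley characterization, where $\nabla$ acts as a multiplier raising the regularity index by $-1$; the thermic route above is simply more self-contained given the conventions of Section \ref{subsec_besov_trieb_def}.
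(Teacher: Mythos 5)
Your proof is correct, and it takes a genuinely different (though equally elementary) route from the paper's. The paper integrates by parts to put the gradient on the heat kernel, then uses the cancellation $\int_{\R^d}\nabla\partial_v h_v(z-y)\,dy=0$ to replace $\varphi(y)$ by $\varphi(y)-\varphi(z)$, and concludes with the pointwise H\"older bound $|\varphi(y)-\varphi(z)|\leq[\varphi]_\gamma|y-z|^\gamma$ together with the Gaussian absorption property \eqref{ineq_absorb}; this implicitly rests on the identification $\dot B_{\infty,\infty}^{\gamma}=C^\gamma$, which is only available because $\gamma\in(0,1)$. You instead factor the kernel through the semigroup identity $h_v=h_{v/2}\star h_{v/2}$, shift $\nabla$ onto one Gaussian factor, and apply Young's inequality, paying $\|\nabla h_{v/2}\|_{L^1}\leq Cv^{-1/2}$ and reading off the thermic norm of $\varphi$ at scale $v/2$; the exponents $v^{(3-\gamma)/2}\cdot v^{-1/2}\cdot v^{-1+\gamma/2}$ indeed cancel. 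What your route buys is that it never invokes a pointwise modulus of continuity, so it does not use $\gamma\in(0,1)$ at all and extends verbatim to arbitrary regularity indices (it is the standard "derivative costs $v^{-1/2}$" heat-kernel argument); it also avoids the unstated norm equivalence between $[\varphi]_\gamma$ and $\|\varphi\|_{\dot B_{\infty,\infty}^\gamma}$ on which the paper's proof silently relies. What the paper's route buys is consistency with the toolkit used throughout the article (cancellation \eqref{eq_cancell} and absorption \eqref{ineq_absorb}), at the cost of being tied to the H\"older range. Your closing remark about justifying the distributional manipulations is the right level of care; no gap there.
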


\begin{proof}[Proof of Proposition \ref{Prop_Df_besov}]
%	Let us detail the analysis of to get the second inequality, the first one is derived by similar arguments.
	
	We first write by the thermic representation of the Besov norm and by integration by parts,
	\begin{eqnarray*}%\label{ineq_Df_dual}
		\|\nabla \varphi \|_{\dot B_{\infty,\infty}^{-1+\gamma}}
		&=& \sup_{v \in \R_+, \ z\in \R^d }  v^{1-\frac{-1+\gamma}{2}} \Big |\int_{\R^d} \partial_v h_v(z-y) \nabla \varphi(y) dy \Big |
		\nonumber \\
		&= &  \sup_{v \in \R_+, \ z\in \R^d }  v^{\frac{3-\gamma}{2}} \Big |\int_{\R^d} \nabla \cdot \partial_v h_v(z-y) [\varphi(y)-\varphi(z)]dy \Big |,
	\end{eqnarray*}
by absorbing property \eqref{ineq_absorb} we derive
	\begin{eqnarray*}%\label{ineq_Df_dual}
	\|\nabla \varphi \|_{\dot B_{\infty,\infty}^{-1+\gamma}}
	&\leq&  C [\varphi]_\gamma \sup_{v \in \R_+, \ z\in \R^d }  v^{\frac{3-\gamma}{2}} \int_{\R^d} v^{-\frac{3}{2}} h_{C^{-1}v} (z-y) |y-z|^\gamma dy 
	\nonumber \\
	&\leq&  C [\varphi]_\gamma \sup_{v \in \R_+, \ z\in \R^d }   \int_{\R^d} h_{C^{-1}v} (z-y) dy  
\nonumber \\
&\leq&  C [\varphi]_\gamma.
% \sup_{v \in [0,1], \ z\in \R^d }   \int_{\R^d} h_{C^{-1}v} (z-y) dy  
\end{eqnarray*}
\end{proof}
We also derive the corresponding inequality for the inhomogeneous case.
\begin{cor}\label{coro_Besov_ineq}
		For any $\varphi \in \mathcal S '(\R^d)$ such that $\nabla  \varphi \in  B_{\infty,\infty}^{-1+\gamma}(\R^d)$, $\gamma \in (0,1)$, there is a constant $c>1$ such that:
	\begin{equation*}
	\|\nabla  \varphi \|_{ B_{\infty,\infty}^{-1+\gamma}}\leq c 	\|\varphi \|_{ B_{\infty,\infty}^\gamma}
	%\leq c \|D \varphi \|_{\dot B_{\infty,\infty}^{-1+\gamma}}
	.
	\end{equation*}
\end{cor}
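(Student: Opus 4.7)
The plan is to bootstrap Proposition \ref{Prop_Df_besov} by splitting the inhomogeneous target norm according to its very definition \eqref{def_norm_besov_inhomo} into a low-frequency piece and a homogeneous piece. Writing $\chi \in C_0^\infty(\R^d)$ for the fixed cut-off appearing in that definition (the $\varphi$ of \eqref{def_norm_besov_inhomo}, renamed here to avoid a collision with the distribution $\varphi$ in the statement), the target norm decomposes as
$$\|\nabla\varphi\|_{B_{\infty,\infty}^{-1+\gamma}}
=\|\chi(D)\nabla\varphi\|_{L^\infty}
+\|\nabla\varphi\|_{\ddot B_{\infty,\infty}^{-1+\gamma}},$$
so the task reduces to bounding each summand on the right by a constant multiple of $\|\varphi\|_{B_{\infty,\infty}^\gamma}$.

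For the homogeneous piece I would invoke Proposition \ref{Prop_Df_besov} directly. Since the thermic supremum over $v\in(0,1]$ is trivially bounded by the one over $v\in\R_+$, one has
$$\|\nabla\varphi\|_{\ddot B_{\infty,\infty}^{-1+\gamma}}
\leq \|\nabla\varphi\|_{\dot B_{\infty,\infty}^{-1+\gamma}}
\leq C\|\varphi\|_{\dot B_{\infty,\infty}^\gamma}
\leq C\|\varphi\|_{B_{\infty,\infty}^\gamma},$$
using the homogeneous estimate of Proposition \ref{Prop_Df_besov} in the middle step and the trivial comparison between the homogeneous and inhomogeneous thermic semi-norms at the end.

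For the low-frequency piece the idea is to push the gradient onto the Fourier multiplier. Since $\chi\in C_0^\infty(\R^d)$, the symbol $i\xi\,\chi(\xi)$ is itself smooth and compactly supported, hence its inverse Fourier transform $K:=(i\xi\,\chi)^\vee$ is a Schwartz function and in particular lies in $L^1(\R^d)$. One then writes $\chi(D)\nabla\varphi=K\ast\varphi$ and concludes by Young's convolution inequality
$$\|\chi(D)\nabla\varphi\|_{L^\infty}\leq \|K\|_{L^1}\,\|\varphi\|_{L^\infty}.$$
Because $\gamma>0$, the note following \eqref{def_norm_besov_inhomo} asserts the equivalence of norms allowing one to replace $\|\chi(D)\varphi\|_{L^\infty}$ by $\|\varphi\|_{L^\infty}$ in $\|\varphi\|_{B_{\infty,\infty}^\gamma}$, so in particular $\|\varphi\|_{L^\infty}\leq c\|\varphi\|_{B_{\infty,\infty}^\gamma}$. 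Summing the two bounds produces the desired inequality with $c$ depending only on $d$, $\gamma$, and the fixed cut-off $\chi$.

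No serious obstacle is anticipated here; the only point worth flagging is that, because $-1+\gamma$ may be negative, the low-frequency cut-off $\chi(D)$ cannot be discarded from the left-hand side, which is exactly why the two-piece decomposition and the Fourier multiplier manipulation above are unavoidable and why the inhomogeneous corollary does not follow from Proposition \ref{Prop_Df_besov} by a one-line comparison.
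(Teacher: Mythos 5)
Your proof is correct, and it reaches the statement by a mildly different route from the paper. The paper reduces the whole inhomogeneous norm of $\nabla\varphi$ in one stroke via the embedding \eqref{ineq_Besov_homo_inhomo} (valid because $-1+\gamma<0$), obtaining $\|\nabla\varphi\|_{B_{\infty,\infty}^{-1+\gamma}}\leq \frac{C}{1-\gamma}\|\nabla\varphi\|_{\dot B_{\infty,\infty}^{-1+\gamma}}$, and then concludes with Proposition \ref{Prop_Df_besov} and the comparison $\|\varphi\|_{\dot B_{\infty,\infty}^{\gamma}}\leq\|\varphi\|_{B_{\infty,\infty}^{\gamma}}$. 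You instead split the target norm according to the definition \eqref{def_norm_besov_inhomo}, treat the thermic piece essentially as the paper does (restrict the supremum, apply Proposition \ref{Prop_Df_besov}, then compare the homogeneous and inhomogeneous norms of $\varphi$), and estimate the low-frequency piece $\|\chi(D)\nabla\varphi\|_{L^\infty}$ by hand, writing it as $K\ast\varphi$ with $K=(i\xi\,\chi)^\vee\in\mathcal S\subset L^1$ and using Young's inequality together with $\|\varphi\|_{L^\infty}\leq c\|\varphi\|_{B_{\infty,\infty}^{\gamma}}$, legitimate since $\gamma>0$. Your route avoids invoking \eqref{ineq_Besov_homo_inhomo} as a black box (and hence the $\frac{1}{1-\gamma}$ factor in the constant), at the price of the explicit multiplier manipulation the paper never needs. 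Two small caveats, neither a genuine gap. First, the step you call the "trivial comparison between the homogeneous and inhomogeneous thermic semi-norms", i.e. $\|\varphi\|_{\dot B_{\infty,\infty}^{\gamma}}\leq C\|\varphi\|_{B_{\infty,\infty}^{\gamma}}$, is not purely a restriction of a supremum: on the range $v>1$ one must bound $v^{1-\frac{\gamma}{2}}\|\partial_v h_v\star\varphi\|_{L^\infty}$ by $C\|\varphi\|_{L^\infty}$ using $\|\partial_v h_v\|_{L^1}\leq C v^{-1}$ and $\gamma>0$; this is exactly the displayed computation occupying the middle of the paper's proof, so it should be spelled out rather than asserted. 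Second, Young's inequality requires $\varphi\in L^\infty$, which you should note follows from the finiteness of $\|\varphi\|_{B_{\infty,\infty}^{\gamma}}$ with $\gamma>0$ (otherwise the claimed inequality is vacuous).
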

\begin{proof}[Proof of Corollary \ref{coro_Besov_ineq}]
	From inequality \eqref{ineq_Besov_homo_inhomo}, we have $	\|\nabla  \varphi \|_{ B_{\infty,\infty}^{-1+\gamma}} \leq \frac{C}{1-\gamma}	\|\nabla  \varphi \|_{ \dot B_{\infty,\infty}^{-1+\gamma}}$.
	Moreover, it is direct that
	% by definition of the heat representation of the Besov norm
	\begin{eqnarray*}
\|\varphi\|_{\dot B_{\infty,\infty}^\gamma}= %\|\varphi(D) f\|_{L^p(\R^d)}+ 
\sup_{v \in \R_+} v^{(1-\frac \gamma 2)}    \|\partial_v h_{v}\star \varphi\|_{L^\infty}
&\leq& \|\varphi\|_{\ddot B_{\infty,\infty}^\gamma}+ \sup_{v >1} v^{(1-\frac \alpha 2)}    \|\partial_v h_{v}\star \varphi\|_{L^\infty}
\nonumber \\
&\leq& \|\varphi\|_{\ddot B_{\infty,\infty}^\gamma}+  \|\varphi\|_{L^\infty}
\nonumber \\
&=& \|\varphi\|_{ B_{\infty,\infty}^\gamma}.
\end{eqnarray*}
In other words, we deduce by Proposition \ref{Prop_Df_besov},
\begin{equation*}
	\|\nabla  \varphi \|_{ B_{\infty,\infty}^{-1+\gamma}} \leq \frac{C}{1-\gamma}	\|\nabla  \varphi \|_{ \dot B_{\infty,\infty}^{-1+\gamma}}
	\leq \|\varphi\|_{\dot B_{\infty,\infty}^\gamma} \leq  C \|\varphi\|_{ B_{\infty,\infty}^\gamma}.
\end{equation*}
\end{proof}

\begin{remark}
	In all generality, the reverse inequality of the above results 
	are not true. % in non-homogeneous Besov spaces.
	For example, any constant function lies in $ B_{\infty,\infty}^\gamma$ but its derivative is $0$, hence the  $B_{\infty,\infty}^{-1+\gamma}$ norm is null, and the corresponding Besov norm equivalence obviously fails to be true.
	
	For an equivalence version of this result, we need to consider extra Besov norms, see for instance \cite{kalt:mayb:mitr:07} identity (3.54) for a Triebel-Lizorkin spaces version.
\end{remark}

	\mysection{On the freedom of the mollification choice}
\label{sec_free_mol}

In this section, we  detail why if there is a sequence of smooth function converging toward a $B_{\infty,\infty}^{\tilde \gamma}$ distribution then the mollification procedure \eqref{def_b_epsilon} converges also toward the distribution.
In other words, if there is a sequence $(\bar b_n)_{n \geq 1}$ lying in $L^\infty([0,T];C^\infty_b(\R^d,\R^d))$ such that
% $\|D \bar b_n\|_{L^\infty} \leq C \|b\|_{L^\infty(B_{\infty,\infty}^{\tilde \gamma})} n^{\frac{1+\tilde \gamma}{2}}$ and
\begin{equation}\label{condi_bn}
\lim_{n \to \infty} \| \bar b_n-b\|_{L^\infty(B_{\infty,\infty}^{-\tilde \gamma+ \varepsilon})}=0, 
\end{equation}
for any $0 < \varepsilon$, %< -\tilde \gamma -\lfloor -\tilde \gamma \rfloor$,
then 
\begin{equation}\label{conv_moll_bm_b}
\lim_{ m \to \infty} \|  b_m-b\|_{L^\infty(B_{\infty,\infty}^{-\tilde \gamma- \varepsilon})}=0,
\end{equation}
where $b_m$ is defined in \eqref{def_b_epsilon} by:
\begin{equation*}
b_m(t,x)= %\tilde \rho_m(x) \big [
b(t,\cdot)\star \rho_m(x), % \big ], 
\end{equation*}
with for any $z\in \R^{d}$, $\rho_m(z):=m^{d}\rho(z m) $ for $\rho(z)= \frac{1}{(2 \pi )^{\frac{d}{2}}} e^{-\frac{|z|^2}{2}}$. 
Indeed, we readily write by triangular inequality:
\begin{equation}\label{ineq_conv_bm_b}
%&&
\|  b_m-b\|_{L^\infty(B_{\infty,\infty}^{-\tilde \gamma+ \varepsilon})}
\leq 
%\nonumber \\
%&\leq &
\|  b_m-\rho_m \star \bar b_n\|_{L^\infty(B_{\infty,\infty}^{-\tilde \gamma -\varepsilon})}
+ \|  \rho_m \star \bar b_n- \bar b_n\|_{L^\infty(B_{\infty,\infty}^{-\tilde \gamma- \varepsilon})}
+ \|   \bar b_n-b\|_{L^\infty(B_{\infty,\infty}^{-\tilde \gamma- \varepsilon})}.
\end{equation}
The firs term in the r.h.s. above write:
\begin{equation*}%\label{ineq_1_appendic_free}
	\|  b_m-\rho_m \star \bar b_n\|_{L^\infty(B_{\infty,\infty}^{-\tilde \gamma - \varepsilon})}
= \|  \rho_m \star (b-\bar b_n)\|_{L^\infty(B_{\infty,\infty}^{-\tilde \gamma -\varepsilon})}.
%	\nonumber \\
%	&\leq &\|\rho_m \star (b-\bar b_n) \|_{L^\infty(B_{\infty,\infty}^{-\tilde \gamma-\varepsilon})},
\end{equation*}
% \eqref{ineq_homo_nonhomo_negatif}.
Hence, we obtain
\begin{equation}\label{ineq_1_appendic_free}
\|  b_m-\rho_m \star \bar b_n\|_{L^\infty(B_{\infty,\infty}^{-\tilde \gamma - \varepsilon})}
= \sup_{v \in [0,1]} v^{\frac{\tilde \gamma - \varepsilon}{2}}\| \rho_m \star  h_v \star (b-\bar b_n)\|_{L^\infty}
\leq  \|   \bar b_n-b\|_{L^\infty(B_{\infty,\infty}^{-\tilde \gamma - \varepsilon})},
\end{equation}
by triangular inequality.

Also, for the second term in \eqref{ineq_conv_bm_b}, let us deal with the corresponding homogeneous norm, 
\begin{equation*}
 \|  \rho_m \star \bar b_n- \bar b_n\|_{L^\infty( B_{\infty,\infty}^{-\tilde \gamma-\varepsilon})}
%\nonumber \\
=\|\varphi(D)(\rho_m \star \bar b_n-\bar b_n)\|_{L^\infty}
+
\|  \rho_m \star \bar b_n- \bar b_n\|_{L^\infty(\ddot B_{\infty,\infty}^{-\tilde \gamma-\varepsilon})}.
%\sup_{v \in [0,1]} v^{ \frac{\tilde \gamma +\varepsilon}{2}}\|   h_v \star (\rho_m \star \bar b_n-\bar b_n)\|_{L^\infty}.
\end{equation*}
It is direct that
\begin{equation}\label{ineq_2_appendic_free1}
	\|\varphi(D)(\rho_m \star \bar b_n-\bar b_n)\|_{L^\infty} \leq C \|\rho_m \star \bar b_n-\bar b_n\|_{L^\infty} \leq C m^{-1} \| D\bar {b}_n \|_{L^ \infty}.
\end{equation}
Next,
\begin{eqnarray}\label{ineq_2_appendic_free}
&&\|  \rho_m \star \bar b_n- \bar b_n\|_{L^\infty(\ddot B_{\infty,\infty}^{-\tilde \gamma-\varepsilon})} 
\nonumber \\
&=&\sup_{v \in [0,1], \ t \in [0,T], \ z \in \R^d } v^{\frac{\tilde \gamma +\varepsilon}{2}} \Big | \int_{\R^d} \int_{\R^d }  h_v(z-y) \rho_m (y-x)  [ \bar b_n(t,x)-\bar b_n(t,y)] dx \ dy \Big |
\nonumber \\
&\leq  & 
\| D\bar {b}_n \|_{L^ \infty}
\sup_{v \in [0,1], \ t \in [0,T], \ z \in \R^d } v^{\frac{\tilde \gamma +\varepsilon}{2}} \Big | \int_{\R^d} \int_{\R^d }    h_{C^{-1}v}(z-y) \rho_m (y-x) | x-y|  dx \ dy \Big |
\nonumber \\
&\leq & 
C\| D\bar {b}_n \|_{L^ \infty} 
m^{-1}
\sup_{v \in [0,1], \ t \in [0,T], \ z \in \R^d } v^{\frac{\tilde \gamma +\varepsilon}{2}} \Big | \int_{\R^d} \int_{\R^d }   h_{C^{-1}v}(z-y)  \rho_{C^{-1}m} (y-x) dx \ dy \Big |
\nonumber \\
&= & 
C \| D\bar {b}_n \|_{L^ \infty}
m^{-1}.
\end{eqnarray}
Let us choose $m \gg \| D\bar {b}_n \|_{L^ \infty}$ which yields that
$\lim_{m,n \to \infty}  \|  \rho_m \star \bar b_n- \bar b_n\|_{L^\infty(B_{\infty,\infty}^{-\tilde \gamma-\varepsilon})}=0.
$ 

%Hence, from \eqref{ineq_conv_bm_b} we deduce  identity \eqref{conv_moll_bm_b}.
%\end{equation}

Finally, gathering identities \eqref{condi_bn}, \eqref{ineq_conv_bm_b}, \eqref{ineq_1_appendic_free}, \eqref{ineq_2_appendic_free1} and \eqref{ineq_2_appendic_free} yields the limit property \eqref{conv_moll_bm_b}.

\mysection{Comments on the strategy of the \textit{cut locus}}
\label{sec_com}

\subsection{Comments on the necessity of using the norm $\|u^{m,\nu}\|_{L^\infty(C^1)}$}
\label{sec_control_diag_u_C1}

%This section can be read  after Remark \ref{rem_alpha1_cutlocus}

Let us rewrite one of the term of $R_{A}^{\tau,\xi,\xi'}(t,x,x')$ in r.h.s. in \eqref{def_R1_R2_A},
%{ineq_R_1_diag_1},
\begin{eqnarray*}%\label{ineq_R_1_diag_1}
	%&&|R_1^{\tau,\xi,\xi'}(t,x,x')| \big  |_{\tau=t,\xi=\xi'=x}
	%	&&
	%	\hspace{-1cm}	
	\mathbf R 
	%\nonumber \\
	&:=& \Big | \int_0^t \int_{\R^d}  \nabla \hat {p}^{\tau,\xi} (s,t,0,y)
	\nonumber \\
	&&
	\cdot  
	%\{ \big ( b_m(s,\theta_{s,\tau}^m(\xi))- b_m(s,x'-y )\big )\times   \big (u^{m,\nu}(s,x+y)-u^{m,\nu}(s,x'-y)\big )
	%\nonumber \\
	%&& +
	\big (b_m(s,x'+y )-b_m(s,x+y )\big )
	%	\nonumber \\
	%	&&
	\big (u^{m,\nu}(s,x'+y)- u^{m,\nu}(s,\theta_{s,\tau}^m(\xi)) \big )
	%\Big \}
	dy \, ds \Big | \Bigg |_{\tau=t,\xi=\xi'=x}
	\nonumber \\
	&\leq &
	C 	|x-x'| 
	\|b_m\|_{L^\infty(C^{1})}
	\int_0^t \|u^{m, \nu}(s,\cdot )\|_{C^{ \gamma}} \int_{\R^d}  [\nu (t-s)]^{-\frac{1}{2}}  \bar  {p}^{\tau,\xi} (s,t,0,y) \times |\theta^m_{s,\tau}(\xi) -x'-y |^\gamma
	dy \, ds .
\end{eqnarray*}
Next, by exponential absorption,
\begin{eqnarray*}%\label{ineq_R_1_diag_1}
	%&&|R_1^{\tau,\xi,\xi'}(t,x,x')| \big  |_{\tau=t,\xi=\xi'=x}
	%	&&
	%	\hspace{-1cm}	
	\mathbf R 
	&\leq &
	C 	|x-x'| 
	\|b_m\|_{L^\infty(C^{1})}
	%\|u\|_{L^\infty(C^{ \gamma})}
	\int_0^t \|u^{m, \nu}(s,\cdot )\|_{C^{ \gamma}} \Big (  [\nu (t-s)]^{\frac{\gamma-1}{2}} + [\nu (t-s)]^{-\frac{1}{2}}  |x-x'|^\gamma \Big ) ds 
	\nonumber \\
	&\leq &
	C 	  |x-x'|^\gamma \nu^{\alpha_1(1-\gamma)} 
	\|b\|_{L^\infty( B_{\infty,\infty}^{\tilde \gamma})}%\|u\|_{L^\infty(C^{ \gamma})}
	m^{1-\tilde \gamma} 
	\int_0^t \|u^{m, \nu}(s,\cdot )\|_{C^{ \gamma}} 
	\nonumber \\
	&&\times (t-s)^{(1-\gamma)\alpha_2} \Big (  [\nu (t-s)]^{\frac{\gamma-1}{2}} + [\nu (t-s)]^{-\frac{1}{2}} \nu^{\alpha_1\gamma} (t-s)^{\alpha_2\gamma } \Big )ds.
\end{eqnarray*}
Reordering the contributions yields
\begin{eqnarray*}%\label{ineq_R_1_diag_1}
	%&&|R_1^{\tau,\xi,\xi'}(t,x,x')| \big  |_{\tau=t,\xi=\xi'=x}
	%	&&
	%	\hspace{-1cm}	
	\mathbf R 
	&\leq & 
	C 	  |x-x'|^\gamma 
	\|b\|_{L^\infty( B_{\infty,\infty}^{\tilde \gamma})}%\|u\|_{L^\infty(C^{ \gamma})}
	m^{1-\tilde \gamma} 
	\nonumber \\
	&&\times \int_0^t \|u^{m, \nu}(s,\cdot )\|_{C^{ \gamma}} 
	\Big ( (t-s)^{(1-\gamma)\alpha_2+ \frac{\gamma-1}{2}} \nu^{\alpha_1(1-\gamma)+\frac{\gamma-1}{2}}   +\nu^{\alpha_1-\frac{1}{2} } (t-s)^{\alpha_2-\frac{1}{2}  }\Big )ds.
\end{eqnarray*}
Then %, for the last contribution in the time integral,
the required, assumption on parameters is for this control (from the second additive term above)
\begin{eqnarray*}
	\alpha_1 >\frac{1}{2}, \ \alpha_2> -\frac{1}{2},
\end{eqnarray*}
which is incompatible with the 
%which combined with the constraint on 
the \textit{off-diagonal} regime.
% \eqref{contraintes_alpha_1_2_tilde_gamma_beta} 
%yields
%\begin{eqnarray}%\label{contraintes_alpha_1_2_tilde_gamma_beta}
%\frac{\gamma}{2}<  \alpha_1 \gamma\leq 	\frac{\gamma}{2},
%\end{eqnarray}
%which
%is absurd.

% for the strict inequality.
We could consider the case $\alpha_1=\frac 12$, but this case yields no viscosity contribution and makes the previous upper-bounds blowing up in $m$ (except for the usual framework, i.e. for $b$ Lipschitz continuous); there is no possibility to obtain a \textit{regularisation by turbulence} for such a choice.

%The result is even worse for $\nu \to 0$ i
If we suppose that $b$ is $\gamma$-H\"older in space, in order to avoid any blowing-up in $m$,  %from \eqref{ineq_R_1_diag_1},
 we are able to write
\begin{eqnarray*}%\label{ineq_R_1_diag_1}
	%&&
	|\mathbf R|
	%_{1,A}^{\tau,\xi,\xi'}(t,x,x')| \big  |_{\tau=t,\xi=\xi'=x}
%	\nonumber \\
	&\leq &
	C 	|x-x'| ^\gamma
	\|b\|_{L^\infty(C^{\gamma})}
	\int_0^t \|u^{m, \nu}(s,\cdot )\|_{C^{ \gamma}} 
	\nonumber \\
	&& \int_{\R^d}  \mathds 1_{A(x,x',\nu,t)}(s) [\nu (t-s)]^{-\frac{1}{2}}  \bar  {p}^{\tau,\xi} (s,t,0,y) \times |\theta_{s,\tau}^m(x) -x'-y |^\gamma
	dy \, ds 
	\nonumber \\
	&\leq &
	C 	|x-x'| ^\gamma
	\|b\|_{L^\infty(C^{\gamma})}
	%\|u\|_{L^\infty(C^{ \gamma})}
	\int_0^t \mathds 1_{A(x,x',\nu,t)}(s) \|u^{m, \nu}(s,\cdot )\|_{C^{ \gamma}} \Big (  [\nu (t-s)]^{\frac{\gamma-1}{2}} + [\nu (t-s)]^{-\frac{1}{2}}  |x-x'|^\gamma\Big ) ds .
\end{eqnarray*}
Hence,
\begin{eqnarray*}%\label{ineq_R_1_diag_1}
	|\mathbf R|
	%R_{1,A}^{\tau,\xi,\xi'}(t,x,x')| \big  |_{\tau=t,\xi=\xi'=x}
	&\leq& 
	C 	  |x-x'|^\gamma 
	%	\|b\|_{L^\infty( L^\infty(C^\gamma)}%\|u\|_{L^\infty(C^{ \gamma})}
	\|b\|_{L^\infty(C^{\gamma})}
	\int_0^t \mathds 1_{A(x,x',\nu,t)}(s) \|u^{m, \nu}(s,\cdot )\|_{C^{ \gamma}} 
	\nonumber \\
	&& \Big ( [\nu (t-s)]^{\frac{\gamma-1}{2}}  +\nu^{-\frac 12}(t-s)^{-\frac 12}\nu^{\alpha_1\gamma} (t-s)^{\alpha_2\gamma} \Big )ds,
\end{eqnarray*}
%\end{remark}
which goes to $+ \infty$ when $\nu \to 0$, except if $\gamma=1$.
In other words, we need to consider the norms $	\|b_m\|_{L^\infty(C^1)}$ and $\|u^{m,\nu}\|_{L^\infty(C^{1})}$ on the one hand to smoothen the blowing-up in $\nu$ and to get a suitable control by $|x-x'|$ which allows to overwhelm $\nu$ in the \textit{diagonal} regime. 
%\\
%
%Again, we could rewrite the analysis performed before this current section without doing an integration by parts in \eqref{def_R1_R2}, and by upper-bounding with $\|\nabla u^{m,\nu}\|_{L^\infty}$. We choose to keep this separation of the remainder term defined in \eqref{def_R1_R2} in order to track precisely where the regime helps us to overuse the suitable \textit{a priori} regularity of $u^{m,\nu}$ in $L^\infty([0,T]; C^\gamma_b(\R^d,\R))$.

\subsection{Comments on the choice of \textit{freezing} point for the source functions terms}
\label{sec_comm_diag_G_P}

It is crucial to fix the same \textit{freezing} point for the terms associated with source functions. In our context, it may be unavoidable.
To fully explain this choice, let us develop the computations associated with these terms for the same choice of $\xi$ and $\xi'$ as for $A$, the \textit{off-diagonal} regime, i.e. $\xi=x$ and $\xi'=x'$.
%\\

%\textbf{Semi-group}

To deal with the semi-group, we consider an analysis of the type (or equivalent controls),
\begin{eqnarray*}
	&&
	|\hat P^{\tau,\xi} g_m (t,x) - \hat P^{\tau,\xi'}  g _m(t,x')| \Big |_{\tau=t,\xi=x,\xi'=x'}
	\nonumber \\
	&=& 	\Big | \int_{\R^d} [ \hat {p}^{\tau,\xi} (0,t,x,y)-\hat {p}^{\tau,\xi' } (0,t,x',y)]	g_m(s,y) dy   \Bigg |_{\tau=t,\xi=x,\xi'=x'}
	\nonumber \\
	&= & 	\Big | \int_{\R^d}  \tilde {p}(0,t,0,y)\Big [g_m \big (t,\theta_{0,t}^m(x)+y \big )-g_m \big (t,\theta_{0,t}^m(x')+y \big ) \Big ]	 dy 
	% \Big | \Bigg |_{\tau=t,\xi=x,\xi'=x'}
	%	\nonumber \\
	%		&= & 	\Big | \int_{\R^d}  \tilde {p}(0,t,0,y)\Big [g \big (s,\theta_t^m(x)-y \big )-g \big (s,\theta_t^m(x')-y \big ) \Big ]	 dy  \Big | \Bigg |_{\tau=t,\xi=x,\xi'=x'}
	\nonumber \\
	&\leq & [g]_\gamma|\theta_{0,t}^m(x)-\theta_{0,t}^m(x')|^\gamma.
	%	\nonumber \\
	%	&\leq & 2^\delta [g]_\delta \Big (|x-x'|^\delta+ \Big (\int_0^t |b _m (\tilde s,x)-b _m (\tilde s,x')|d \tilde s \Big )^\delta \Big )
	%	\nonumber \\
	%	&\leq & 2^\gamma [g]_\delta \Big (|x-x'|^\delta+ \|b\|_{L^\infty(C^\tilde \gamma )}^\delta t^{\gamma} |x-x'|^{\delta\tilde \gamma } \Big ),
\end{eqnarray*}
%recalling that
%\begin{equation}%\label{def_theta}
%\theta_{s,\tau}^m(\xi)= x+\int_0^s b _m (\tilde s,\xi)d \tilde s.
%\end{equation}
%hence the constraint is $\delta\geq \delta\tilde \gamma \geq \gamma$.
%\nonumber \\
%
%\textbf{Green operator}
%\\

Similarly, for the Green operator,
\begin{eqnarray*}
	&&|\hat G^{\tau,\xi} f_m(t,x) - \hat G^{\tau,\xi'}  f_m (t,x')| \Big |_{\tau=t,\xi=\xi'=x}
	\nonumber \\
	&=& 	\Big | \int_0^t  \int_{\R^d} [ \hat {p}^{\tau,\xi} (s,t,x,y)-\hat {p}^{\tau,\xi '} (s,t,x',y)]	f_m(s,y) dy \, ds  \Big | \Bigg |_{\tau=t,\xi=x,\xi'=x'}
	\nonumber \\
	&=& 	\Big | \int_0^t \int_{\R^d} [ \tilde  {p}
	%^{\tau,\xi} 
	(s,t,0,y)[f_m(s,\theta_{s,t}^m(x)+y)-f_m(s,\theta_{s,t}^m(x')+y)] dy \, ds  \Big |
	% \Bigg |_{\tau=t,\xi=x,\xi'=x'}
	\nonumber \\
	&\leq & \|f\|_{L^\infty(C^\gamma)} \int_0^t  |\theta_{s,t}^m(x)-\theta^m_{s,t}(x')|^\gamma  ds
	%\nonumber \\
	%	&\leq & 2^\delta \|f\|_{L^\infty(C^\delta)} \Big (|x-x'|^\delta +t \|b\|_{L^\infty(C^\tilde \gamma )}^\delta |x-x'|^{\delta \tilde \gamma} \Big ) t	
	.
\end{eqnarray*}

In other words, we see in the both controls above  that we only upper-bound by the flow associated with $b_m$ which is \textit{a priori} not controlled uniformly on $m$ in a suitable spatial H\"older space, see \eqref{ineq_theta_peruve}.

\bibliographystyle{alpha}
\bibliography{bibli}

\end{document}